\newcommand{\wt}[1]{{\widetilde{#1}}}
\tikzstyle directed=[postaction={decorate,decoration={markings,
    mark=at position #1 with {\arrow{>}}}}]
\newcommand{\hackcenter}[1]{
 \xy (0,0)*{#1}; \endxy}
\tikzset{->-/.style={decoration={
  markings,
  mark=at position #1 with {\arrow{>}}},postaction={decorate}}}
\newcommand{\brk}[1]{{\left\langle{#1}\right\rangle}}
\newcommand{\bp}[1]{{\left({#1}\right)}}
\newcommand{\bs}[1]{{\left\{{#1}\right\}}}
\renewcommand{\dag}[1]{{#1}^\dagger}
\newcommand{\ldag}[1]{{^\dagger#1}}
\newcommand{\dagp}[1]{\bp{#1}^\dagger}
\renewcommand{\mod}{\operatorname{mod}}
\newcommand{\SVect}{\operatorname{SVect}}
\newcommand{\tcoev}{\stackrel{\longleftarrow}{\operatorname{coev}}}
\newcommand{\tev}{\stackrel{\longleftarrow}{\operatorname{ev}}}
\newcommand{\ev}{\stackrel{\longrightarrow}{\operatorname{ev}}}
\newcommand{\coev}{\stackrel{\longrightarrow}{\operatorname{coev}}}
\newcommand{\sqr}{\operatorname{sqr}}
\newcommand{\Proj}{{\mathsf {Proj}}}
\newcommand{\vp}{\varphi}
\newcommand{\dt}{\sqrt\theta}
\newcommand{\ellr}{r}
\newcommand{\Graph}{\operatorname{Gr}}
\newcommand{\qn}[1]{{\left\{#1\right\}}}
\newcommand{\ob}{\operatorname{Ob}(\cat)}
\newcommand{\y}{\operatorname{\mathsf{y}}}
\newcommand{\md}{\operatorname{\mathsf{d}}}
\newcommand{\mb}{\operatorname{\mathsf{b}}}
\newcommand{\mt}{\operatorname{\mathsf{t}}}
\newcommand{\rot}{\operatorname{\mathrm{rot}}}
\newcommand{\et}{{\quad\text{and}\quad}}
\newcommand{\id}{{\rm id}}
\newcommand{\Gr}{\mathcal{G}}
\newcommand{\X}{\mathcal{X}}
\newcommand{\XX}{{\mathsf X}}
\newcommand{\col}{{\Phi}}
\newcommand{\HHH}{\ensuremath{\mathcal{H}}}
\newcommand{\St}{\operatorname{St}}
\newcommand{\cat}{\mathcal{C}}
\newcommand{\catdHem}{\mathcal{D}^{Herm}}
\newcommand{\catdSeq}{\mathcal{D}^{Sesq}}
\newcommand{\FK}{{\Bbbk}}
\newcommand{\I}{\mathcal{I}}
\newcommand{\m}{\mathsf{m}}
\newcommand{\mirror}[1]{{\stackrel{\leftrightarrow}{#1}}}
\newcommand{\unit}{\ensuremath{\mathbb{I}}}
\newcommand{\con}[1]{\bar{#1}^{\dagger}}
\newcommand{\Zt}{{\mathsf Z}}
\newcommand{\roots}{\Delta}
\newcommand{\p}[1]{\ensuremath{\bar {#1}}}
\theoremstyle{plain}
\newtheorem{theorem}{Theorem}
\newtheorem{theorem_noname}{Theorem}
\newtheorem{corollary}[theorem]{Corollary}
\newtheorem{proposition}[theorem]{Proposition}
\newtheorem{lemma}[theorem]{Lemma}
\theoremstyle{definition}
\newtheorem{definition}[theorem]{Definition}
\newtheorem{conjecture}[theorem]{Conjecture}
\theoremstyle{definition}
\newtheorem{remark}[theorem]{Remark}
\numberwithin{equation}{section}
\numberwithin{theorem}{section}
\newcommand{\maps}{\colon}
\newcommand{\refequal}[1]{\xy {\ar@{=}^{#1}
(-1,0)*{};(1,0)*{}};
\endxy}
\newcommand{\Hom}{{\rm Hom}}
\renewcommand{\to}{\rightarrow}
\def\rk{{\mathrm{rk}}}
\def\Id{\mathrm{Id}}
\def\mf{\mathfrak}
\def\T{{\mathcal{T}}}
\numberwithin{equation}{section}
\newcommand{\wb}{\overline}
\newcommand{\de}[1]{|#1|}
\newcommand{\sig}[2]{(-1)^{|#1||#2|}}
\newcommand{\qd}{\operatorname{\mathsf{d}}}
\newcommand{\HR}{\ensuremath{{\mathcal H}}}
\let\tilde=\widetilde
\let\epsilon=\varepsilon
\def\C{{\mathbb{C}}}
\def\N{{\mathbbm N}}
\def\R{{\mathbbm R}}
\def\Z{{\mathbbm Z}}
\def\H{{\mathcal{H}}}
\def\cal#1{\mathcal{#1}}%
\def\1{\mathbbm{1}}%
\def\tr{\mathrm{tr}}%
\def\ptr{\mathrm{ptr}}%
\def\la{\langle}
\def\ra{\rangle}
\newcommand{\Gfun}{\ensuremath{\mathsf{G}}}
\newcommand{\ang}[1]{{\left\langle{#1}\right\rangle}}
\newcommand{\A}{{\sf A}}
\newcommand{\bb}{{\sf b}}
\newcommand{\G}{\cal{G}}
\newcommand{\PGr}{Z}
\newcommand{\Su}{\Sigma}
\newcommand{\dSu}{\widetilde{\Sigma}}
\newcommand{\coh}{\omega}
\newcommand{\La}{{\mathcal{L}}}
\newcommand{\dM}{\wt M}
\newcommand{\Zr}{{\mathsf N}}
\newcommand{\lk}{\operatorname{lk}}
\newcommand{\hS}{\widehat{S}}
\newcommand{\V}{\mathsf{V}}
\newcommand{\Span}{\operatorname{Span}}
\newcommand{\VV}{\mathbb{V}}
\newcommand{\calD}{\mathcal{D}}
\def\cal#1{\mathcal{#1}}
\newcommand\nc{\newcommand}
\nc\rnc{\renewcommand}
\nc\Kar{\operatorname{Kar}}
\nc\End{\operatorname{End}}
\newcommand{\scs}{\scriptstyle}
\nc\Sym{\operatorname{Sym}}
\title {Non-semisimple Levin-Wen Models and Hermitian TQFTs
from quantum (super)groups}
\begin{document}

\author[N. Geer]{Nathan Geer}
\address{Mathematics \& Statistics\\
  Utah State University \\
  Logan, Utah 84322, USA}
  \email{nathan.geer@gmail.com}
\author[A.D. Lauda]{Aaron D. Lauda}
\address{Mathematics \& Physics \\
 University of Southern California \\
  Los Angeles, California 90089, USA}
  \email{lauda@usc.edu}
\author[B. Patureau-Mirand]{Bertrand Patureau-Mirand}
\address{UMR 6205, LMBA, universit\'e de Bretagne-Sud,
  BP 573, 56017 Vannes, France }
\email{bertrand.patureau@univ-ubs.fr}
\author[J. Sussan]{Joshua Sussan}
\address{Department of Mathematics\\
  CUNY Medgar Evers \\
  Brooklyn, NY 11225, USA}
  \email{jsussan@mec.cuny.edu}
  \address{Mathematics Program\\
 The Graduate Center, CUNY \\
  New York, NY 10016, USA}
  \email{jsussan@gc.cuny.edu}
\begin{abstract}
We develop the categorical context for defining Hermitian non-semisimple TQFTs.   We prove that relative Hermitian modular categories give rise to modified Hermitian WRT-TQFTs and provide numerous examples of these structures coming from the representation theory of quantum groups and quantum superalgebras.  The Hermitian theory developed here for the modified Turaev-Viro TQFT is applied to define new pseudo-Hermitian topological phases that can be considered as non-semisimple analogs of Levin-Wen models.
  \end{abstract}

  \maketitle

\setcounter{tocdepth}{3}
\tableofcontents

\section{Introduction}
There is a rich interplay between two-dimensional topological phases in quantum mechanical systems and topological quantum field theories (TQFTs).  This interaction is further enriched as topological structures inherent in TQFTs lead to novel features, such as non-abelian braiding statistics for low energy excitations when expressed in the corresponding quantum mechanical models.   This exciting interplay has led to a number of proposals for realizing fault-tolerant quantum computation by exploiting the topological features arising in these models~\cite{FKLW,GTKLT,KMR2608953,Kit,LW}, what is collectively called topological quantum computation (TQC).

Both the Turaev-Viro (TV) TQFTs \cite{TV} and the Witten-Reshetikhin-Turaev (WRT) TQFTs \cite{RT, Witten} have appeared in approaches to TQC and the study of topological phases.   All of these connections rely on unitarity of the TQFT to produce physically meaningful quantum mechanical models.
Both TV and WRT TQFTs have the common feature  that the categorical data defining them, spherical categories and modular tensor categories, respectively, are based on {\em semisimple categories}.  The purpose of this article is to build a theory of Hermitian structures on a new class of TQFTs based on {\em non-semisimple categories}; this key structure leads to new physically relevant  applications of non-semisimple TQFTs.

\subsection{Beyond semisimple unitary TQFTs}
The primary examples of semisimple categories used to define (2+1)-dimensional TQFTs come from the representation theory of quantum groups where the quantum parameter is set to a root of unity.  These categories are not semisimple, instead, a semisimplification procedure is performed that throws away an infinite number of representations, leaving only a finite collection remaining.  This process is essential as it removes representations  with vanishing quantum dimensions, which cannot be input into either the TV or WRT TQFTs.  Kirillov \cite{Kir96} and Wenzl \cite{Wen98} showed that the semisimple categories coming from the representation theory of quantum groups give rise to Hermitian and unitary categories in the sense of Turaev \cite{Tu}.

It was an open question how to construct a TQFT which retains non-semisimple information.  A crucial step in this direction was achieved in \cite{GPT1} (also see \cite{GKP1, GKP3}) where a modified trace was introduced, leading to non-trivial invariants of links colored by representations that classically led to trivial invariants.
The modified trace was utilized in \cite{GPT2} where a non-semisimple TV invariant was constructed using the notion of a relative spherical category.
A non-semisimple WRT invariant \cite{CGP1} and TQFT \cite{BCGP2} were constructed using the notion of a relative modular category.  An extended TQFT was formulated by De Renzi \cite{D17}.
These non-semisimple constructions use a group $\Gr$ as input.

%
%
%
%
%
The key examples of such non-semisimple categories have an infinite
number of non-isomorphic simple objects 
with vanishing quantum
dimensions.
Nevertheless, these
non-semisimple TQFTs have remarkable properties, often proving more
powerful than their semisimple analogs. For example, non-semisimple
TQFTs lead to mapping class group representations with the notable
property that the action of a Dehn twist has infinite order, and thus
the representations could be faithful, (see \cite{BCGP2}). This is in
contrast with the usual quantum mapping class group representations
where all Dehn twists have finite order and the representations are
thus not faithful.  Also, after projectivization, these TQFTs
correspond to the Lyubashenko projective mapping class group
representations given in \cite{Lyubashenko:1994tm}, (also see
\cite{derenzi2021mapping, DGP2}).

Given that the non-semisimple TQFTs appear to be much more powerful than their semisimple counterparts, it suggests that these TQFTs can have similarly impactful applications to physical phenomena and the construction of topological phases.  We mentioned above the importance of unitarity for applying TQFTs to physical phenomena in the usual approach, but mentioned our work here will be focused on Hermitian structures for non-semisimple categorical data.  The reason is that the non-semisimple TQFTs {\em do not} lead to Hermitian Hamiltonians.   Instead, what naturally arises are pseudo-Hermitian Hamiltonians.  Such quantum mechanical systems have received a great deal of interest in the physics literature, as it has been observed that many physically relevant systems exhibit non-Hermitian Hamiltonians~\cite{BROWER1978213,PhysRev.115.1390,ZAMOLODCHIKOV1991619} .  The pseudo-Hermicity condition ensures that these systems have positive spectrum, normalizable wave functions, and time evolution given by the exponential of the Hamiltonian that is self adjoint with respect to an indefinite inner product~\cite{Mostafazadeh_2002, Mostafazadeh_2002a,Most-unitary}.  These inner products are also invariant under the time-evolution generated by the Schrodinger equation.  Essentially, these theories are unitary, but for an indefinite inner product.

\subsection{Hermitian non-semisimple TQFTs}
The goal of this article is to develop a theory of Hermitian structures that can be applied to relative spherical categories and relative modular tensor categories, giving rise to Hermitian non-semisimple WRT invariants
 and allowing for the construction of pseudo-Hermitian topological phases from the non-semisimple Turaev-Viro theory (discussed more in the next section).  In particular, in the WRT setting we prove the following.

\begin{theorem_noname} (Theorem \ref{thm:hermWRT})
A relative Hermitian $\Gr$-modular category gives rise to a modified Hermitian WRT TQFT.
\end{theorem_noname}

Using ideas of \cite{Kir96, Tu, Wen98}, the authors of the current paper gave a proof of concept for this approach by placing a Hermitian structure on a non-semisimple category of representations coming from the unrolled quantum group for $\mathfrak{sl}_2$ \cite{GLPS1}.  This led to the first example of a Hermitian modified WRT TQFT.  Here we vastly generalize this result.  While many of the techniques developed for $\mf{sl}_2$ generalize, in  \cite{GLPS1}  we relied on an explicit description of the category for $\mathfrak{sl}_2$ which allowed us to use certain representations in the earlier works that are inaccessible here.
Thus, we must develop additional theory needed to deduce the relevant Hermitian data in a general Hopf algebraic setup giving rise to many new examples of Hermitian $\Gr$-modular categories.

\begin{theorem_noname} (Theorems \ref{T:DH-Hermitian-modular}, \ref{T:DHZ-Hermitian-spherical}, \ref{T:superex})
Finite-dimensional simple complex Lie algebras as well as $\mathfrak{sl}(m|n)$ provide examples of
relative Hermitian $\Gr$-modular categories and relative Hermitian $\Gr$-spherical categories.
\end{theorem_noname}



\subsection{Turaev-Viro TQFT and topological phases}

Turaev-Viro TQFTs appear prominently in Kitaev's toric code~\cite{Kit} and Levin-Wen's  construction \cite{LW} of two-dimensional topological phases using `string net' models defined on dual graphs of triangulated surfaces.  These models produce exactly solvable gapped Hamiltonians exhibiting a topologically degenerate ground state.  We refer to these physical systems as LW-models.  They are constructed from algebraic data we call LW-data, essentially defining a spherical category.     The topologically dependent ground state of the LW Hamiltonian gives rise to a topological invariant of the surface it is defined on, and is known to be isomorphic to the vector space assigned to the surface coming from the Turaev-Viro TQFT \cite{Kir-stringnet, KKR}.

In \cite{GLPS2}, the authors extended the results in \cite{GLPS1} by defining a Hermitian structure  on the  non-semisimple TV TQFT associated to the category of representations for the semi-restricted quantum group for $\mathfrak{sl}_2$, showing this category was a Hermitian $\Gr$-relative spherical category.  We also showed that using certain algebraic data, that we call $\Gr$-LW data, we could define a pseudo-Hermitian Levin-Wen model.  A number of simplifications made this example easier to define, including multiplicity one results in the relevant non-semisimple category.  Here we generalize this work by extending the definition of $\Gr$-LW data and proving the following.

\begin{theorem_noname} (Sections \ref{sec:hermstatespace}, \ref{sec:innerproduct}, Theorems \ref{prop:plaquette}, \ref{thm:hamilprop})
To $\Gr$-LW data, a triangulated surface $\Sigma$ with dual triangulation $\Gamma$, and a cohomology class $[\col]\in H^1(\Sigma,\Gr)$,
there exists a Hilbert space
$\HHH=\HHH(\Gamma,\col)$ equipped with a pseudo-Hermitian Hamiltonian   $H  \colon \HHH \rightarrow \HHH$.  This Hamiltonian is gapped (so the there is a finite energy gap between each excited state), local, and exactly solvable.
\end{theorem_noname}

\begin{theorem_noname} (Theorems \ref{thm:sphtodata}, \ref{thm:ground=TV})
A relative Hermitian $\Gr$-spherical category gives rise to $\Gr$-LW data.
Furthermore, the modified Hermitian TV invariant for a surface $\Sigma$ associated to $\Gr$ is isomorphic to the ground state of the Hamiltonian $H$ from the previous theorem.
\end{theorem_noname}

The original construction from \cite{GLPS2} based on semi-restricted quantum $\mf{sl}_2$ relied upon a forgetful functor between the unrolled quantum group and the semi-restricted quantum group for $\mathfrak{sl}_2$.  The auxiliary role of the unrolled quantum group is generalized here, allowing us to utilize data from a relative modular category to produce data needed to define Hermitian structures on relative spherical categories.    Indeed, in all of the examples considered above, we need this auxiliary braided category to define the   Hermitian structure.

\begin{theorem_noname} (Theorem \ref{T:relativeHer-premodularIsSpherical})
Under certain conditions, a relative Hermitian $\Gr$-modular category gives rise to a relative Hermitian $\Gr$-spherical category.
\end{theorem_noname}

In this paper we show this theorem applies to a large set of examples, including the categories of modules over the unrolled quantum groups associated to finite-dimensional simple complex Lie algebras and $\mathfrak{sl}(m|n)$.  In these examples the modules used in the construction only exist when the quantum parameter is a root of unity.  In a different direction, one can consider deformed modules over quantum
$\mathfrak{sl}(m|n)$ called perturbative modules.
These modules exist before specializing the quantum parameter to a root of unity and they form a graded category which is not semisimple where a generic module has vanishing quantum dimension.  In \cite{GPA,GPP} it is shown that the one can define an m-trace on these perturbative modules and  specialize $q$ to a root of unity; this forces some modules to have zero m-trace.  In the case of $\mathfrak{sl}(2|1)$,
 then taking the quotient by \emph{negligible morphisms} of modules with zero m-trace, it is shown that the resulting category is a relative $\Gr$-spherical category
 (see Theorem 5.6.1 of \cite{GPA}).  Thus, the category leads to a modified Turaev-Viro invariant of 3-manifolds with additional structures,
(in particular cohomology classes on surfaces and cobordisms, see \cite{GP3}).
 In this paper we show that the corresponding Levin-Wen Hamiltonian is pseudo-Hermitian.    The techniques of this paper were developed with this example in mind.

Let us say a few words about why we think this example is interesting.  Most of the examples of this paper are based on modules which only appear after the quantum parameter is specialized to a root of unity,
i.e.\ when the quantum parameter is specialized to a root of unity, new central elements of the algebra appear which allow for new families of simple modules.
 Instead the perturbative modules are deformations of classical $\mathfrak{sl}(2|1)$-modules specialized to a root of unity.  A theory based on deformations of classic modules should have connections with constructions in mathematical physics:  Mikhaylov and Witten in \cite{MWi} consider supergroup Chern-Simons (CS) theories (also see \cite{AGPS, Creutzig:2011cu, GaiottoWitten-Janus, Gotz:2006qp, Mik2015, KS1991, Quella:2007hr, RS1994}).
The motivating example of  \cite{MWi} is a path integral related to classical modules over $\mathfrak{gl}(1|1)$, (see \cite{Mik2015}).  As in the case of Witten's path integral interpretation of the Jones polynomial \cite{Witten}, in \cite{GY} it is shown that a De Renzi-Blanchet-Costantino-Geer-Patureau-Mirand style TQFT associated to the category of perturbative quantum $\mathfrak{gl}(1|1)$-modules recovers all the computable properties of the path integral given in \cite{Mik2015}.  However, it is not clear how to define a path integral for higher rank Lie superalgebras (in particular see Section 3.2 of \cite{MWi} for many questions about supergroup CS theories).  The results of \cite{GY} suggest that a solution to Conjecture \ref{c:bb=qdim} would lead to a family of Hermitian modified TV invariants which should correspond to not yet constructed higher rank supergroup CS path integrals.
 Thus, since Conjecture \ref{c:bb=qdim} is true for $\mathfrak{sl}(2|1)$, Theorem \ref{T:sl21ConjTrue} gives a pseudo-Hermitian relative LW-model associated to $\mathfrak{sl}(2|1)$ which should correspond to a higher rank supergroup CS path integral and therefore have interesting mathematical physics applications.  In particular,
this should be a combinatorial version of a (super) CS theory with
a non-standard continuous gauge group.  Such generalizations of CS theory have recently drawn much interest, for example see \cite{ABHH, CTGG2021, Dim, Dimofte2016PerturbativeAN,  DGLZ, Gukov2016ResurgenceIC}
and the references within.

\subsection{Outline}
We begin in Section \ref{sec:relativeLW} with the definition of a relative $\Gr$-LW-system.  Given this data and a triangulated, compact, connected, oriented surface, we define a Hilbert space and a Hamiltonian acting on this state space.
Section \ref{sec:sespiv} contains a definition of sesquilinear pivotal categories.  Hopf superalgebras equipped with a dagger map provide examples of such categories.
In Section \ref{sec:relhermmodsph} we review the notion of a modified trace and define relative Hermitian modular and relative Hermitian spherical categories.  We also give a construction on how to obtain a relative Hermitian spherical category from a relative Hermitian modular category.
Section \ref{sec:relHermtodata} contains a result showing that a relative Hermitian-spherical category leads to a relative $\Gr$-LW-system.
%
In Section \ref{sec:hermribbon} we show how to obtain a Hermitian WRT type TQFT.
We conclude with Section \ref{sec:examples} where we show how to obtain relative Hermitian-spherical categories from quantum groups for finite-dimensional simple complex Lie algebras and $\mathfrak{sl}(m|n)$.

\subsection{Acknowledgements}
N.G.\ is partially
supported by NSF grants DMS-1664387 and DMS-2104497.  A.D.L.\ is
partially supported by NSF grants DMS-1902092 and DMS-2200419, the Army Research Office
W911NF-20-1-0075, and the Simons Foundation collaboration grant on New Structures in Low-dimensional topology.  J.S.\ is partially supported by the NSF grant
DMS-1807161 and PSC CUNY Award 64012-00 52.

\section{ Relative $\Gr$-LW-system} \label{sec:relativeLW}
Let $\FK$ be a field equipped with an involutive automorphism
$  k \mapsto \wb k$.  We follow and generalize the system Hilbert space
described in \cite{GLPS2}.
\subsection{Basic Input} \label{subsec:basicinput}
The input for a relative $\Gr$-LW-system is a tuple
$(\Gr, \X, (I_g)_{g\in\Gr}, \delta, \mb, \md,N,\beta,\gamma)$, called  {\em $\Gr$-LW data}, where:
\begin{enumerate}
\item $(\Gr,+)$ is an abelian group.
\item $\X\subset \Gr$ is symmetric (i.e. $-\X=\X $)
  and small (i.e. for any finite subset $Y\subset\Gr$, $\Gr\not\subset Y+\X$).
\item $I_g$ is a set for a \emph{generic element} $g\in\Gr\setminus\X$. If $j\in I_g$, we call $g=|j|\in\Gr$ the {\em degree} of $j$.
\item Define $\A=\sqcup_{g\in\Gr\setminus\X}I_g$.  Then we assume $\A$ is equipped with an
  involution $*$ such that $I_g^*=I_{-g}$.

 \item The {\em fusion multiplicity} map $\delta:\A^3\to\N \maps (i,j,k) \mapsto \delta_{ijk}$ has dihedral symmetry: \[\delta_{ijk}=\delta_{jki}=\delta_{k^*j^*i^*},\]
and
  $\delta_{ijk}=0$ unless $|i|+|j|+|k|=0$.

 \item   The {\em modified dimension $\md :\A\to\FK^*$} and map $\mb :\A\to\FK^*$  satisfy
\[
\wb {\mb}= \mb,\quad  \wb {\md}= \md,\quad \mb(i^*)=\mb(i),\quad  \md(i^*)=\md(i).
\]
Furthermore, for $g,g_1, g_2\in \Gr\setminus \X$, with $g+g_1+g_2=0$, we have
 \begin{equation*} \label{eq:b}
   \bb(j)=\sum_{j_1\in \I_{g_1},\, j_2 \in \I_{g_2}}   \bb(j_1)\bb(j_2)\delta_{j^{\ast}j_1 j_2},
 \end{equation*}
 for all $j\in I_g$.

 \item There are {\em basic data} maps
 $\beta:\A\to\FK^*$ and $\gamma:\A^3\times \{1, \ldots, \delta(\A^3) \} \to\FK^*$ satisfying
 \[
 \wb {\gamma}= \gamma, \quad  \wb {\beta}= \beta, \quad \beta(i^*)=\beta(i)
 \]
 and for $1\le n\le\delta_{ijk}$, we have
\begin{equation}
    \label{eq:theta}
   \gamma_{i,j,k}^n\gamma_{k^*,j^*,i^*}^n\beta(i)\beta(j)\beta(k)=1 .
  \end{equation}

\item There are modified \emph{$6j$ symbols}
$
N:\A^6\times(\N^*)^4\to\FK
$
with the property that
  $N^{j_1 j_2 j_3}_{j_4 j_5 j_6}(^{a_1a_2}_{a_3a_4})=0$ unless
  $1\leq {a_1}\le\delta_{j_1 j_2 j_3^*}$,
  $1\leq{a_2}\le\delta_{j_3j_4j_5^*}$,
  ${1 \leq a_3}\le\delta_{j_5j_6^*j_1^*}$  and
 ${1 \leq a_4}\le\delta_{j_6j_4^*j_2^*}$.

\end{enumerate}
This data satisfies the following conditions
\begin{align}
  &N^{j_1 j_2 j_3}_{j_4 j_5 j_6}(^{a_1a_2}_{a_3a_4})= N^{j_2
    j_3^*j_1^* }_{j_5 j_6j_4 }(^{a_1a_3}_{a_4a_2})= N^{j_3 j_4
    j_5}_{j_6^* j_1 j_2^*}(^{a_2a_3}_{a_1a_4}) , \label{eq:symm} \\
  &\sum_{j,c_1,c_2,c_3} \md(j) N^{j_1 j_2 j_5}_{j_3 j_6 j}(^{a_1a_2}_{c_1c_2}) N^{j_1 j j_6}_{j_4 j_0 j_7}(^{c_1a_3}_{a_0c_3})
    N^{j_2 j_3 j}_{j_4 j_7 j_8}(^{c_2c_3}_{a_4a_5})=  \sum_{c_4}N^{j_5 j_3 j_6}_{j_4 j_0 j_8}(^{a_2a_3}_{c_4a_5})
    N^{j_1 j_2 j_5}_{j_8 j_0 j_7}(^{a_1c_4}_{a_0a_4}) , \label{eq:pent} \\
  &\sum_{n,a_3,a_4} \md(n) N^{i j p}_{l m n}(^{a_1a_2}_{a_3a_4}) N^{k j^{\ast} i}_{n m l}(^{a'_1a_3}_{a'_2a_4})
    = \frac{\delta_{k,p}\delta_{a_1,a'_1}{\delta}_{a_2,a'_2}}{\md(k)} ,
  \label{eq-ortho}\\
  & \wb{N^{j_1 j_2 j_3 }_{j_4j_5j_6}(^{a_1a_2}_{a_3a_4})}{=}
    N^{j_2^{\ast} j_1^{\ast} j_3^{\ast}}_{j_5j_4j_6}(^{a_1a_2}_{a_4a_3})
    \gamma^{a_1}_{j_1,j_2,j_3^{\ast}}\gamma^{a_2}_{j_3,j_4,j_5^{\ast}}\gamma^{a_3}_{j_1^{\ast},j_5,j_6^{\ast}}
    \gamma^{a_4}_{j_2^{\ast},j_6,j_4^{\ast}}\prod_{i=1}^6\beta(j_i) . \label{eq:6j}
    \end{align}

\subsection{Hermitian state space} \label{sec:hermstatespace}
Let $\Sigma$ be a compact, connected, oriented surface.
Let $\T$ be a triangulation of $\Sigma$ and $\Gamma$ be a finite
trivalent graph dual to $\T$.  Each vertex of the graph
$\Gamma$ acquires a cyclic ordering compatible with the orientation of
$\Sigma$.  A \emph{$\Gr$-coloring} of $\Gamma$ is 
a map $\col$ from the set of oriented edges
of $\Gamma$ to $\Gr$ such that the following hold.
\begin{enumerate}
\item $\col(-e)=-\col(e)^{}$ for any oriented edge $e$ of $\Gamma$, where
  $-e$ is $e$ with opposite orientation.
\item If $e_1, e_2, e_3$ are edges of a vertex $v$ of $\Gamma$ with a
  cyclic ordering compatible with the orientation of $\Sigma$, and each
  edge is oriented towards the vertex $v$, then
  $\col(e_1)+ \col(e_2)+ \col(e_3)=0$.
\end{enumerate}
The $\Gr$-colorings of $\Gamma$ form a group isomorphic via Poincar\'{e} duality to the group of $\Gr$-valued simplicial 1-cocycles on $\T$.  We denote by $[\col]\in H^1(\Sigma,\Gr)$ the associated cohomology class.
A $\Gr$-coloring of $\Gamma$ is admissible if
$\col(e)\in \Gr \setminus \X$ for any oriented edge $e$ of $\Gamma$.
A state of an admissible $\Gr$-coloring $\col$ is a map $\sigma$
assigning to every oriented edge $e$ of $\Gamma$ an element
$\sigma(e)\in I_{\col(e)}$ such that
$\sigma(-e)=\sigma(e)^{*}$   and to each vertex $v$ with incident edges $e_1, e_2, e_3$, the natural number $\sigma_0(v)$ where $0 \leq \sigma_0(v) \leq \delta_{\sigma(e_1),\sigma(e_2),\sigma(e_3)}$.
Denote by $\St(\col)$ the set of such
states.  For an admissible $\Gr$-coloring $\col$, we define the Hilbert
space $\HHH=\HHH(\Gamma,\col)$ as the span of all elements corresponding
to the states of $\col$:
\[
\HHH(\Gamma,\col)=\bigoplus_{\sigma\in\St(\col)}\C\ket{\Gamma,\sigma}.
\]
We write $\sigma^{\ast}$ for the state of the $\Gr$-coloring $-\Phi$ assigning $\sigma(e)^{\ast}$ to each oriented edge $e$ of $\Gamma$.

\subsection{Inner products} \label{sec:innerproduct}
We equip $\HHH=\HHH(\Gamma,\col)$ with a complete Hilbert space structure by defining a positive definite Hermitian inner product
\begin{equation}
 \la \cdot \mid \cdot \ra_+ \maps \HHH \otimes \HHH \to \C
\end{equation}
in which the states $\ket{\Gamma,\sigma}$ form an orthonormal basis:
\begin{equation}
\la \Gamma,\sigma \mid \Gamma, \sigma' \ra_+ =  \delta_{\sigma, \sigma'}.
\end{equation}
We will see that from the TQFT perspective, this is not the most natural inner product.  The most natural inner product can be obtained from this one utilizing a Hermitian operator $\eta \maps \HHH \to \HHH$.
The invertible operator $\eta$ is defined by
\begin{equation} \label{eq:def-eta}
  \eta \maps  \HHH(\Gamma,\col) \; \longrightarrow \; \HHH(\Gamma,\col) , \quad \quad
    \ket{\Gamma, \sigma}   \;\; \mapsto \;
   \frac{ \prod_{e\in\Gamma_1} \md(\sigma(e)) }{ \prod_{v\in \Gamma_0}\gamma(\sigma(v))\prod_{e\in \Gamma_1}\beta(\sigma(e)) }
   \ket{\Gamma, \sigma}.
\end{equation}
where $\sigma(v)=((j_1,j_2,j_3),\sigma_0(v)) \in I^3 \times \N$, where $1 \leq \sigma_0(v) \leq \delta_{j_1 j_2 j_3}$, $j_k=\sigma(e_k)$, and
$e_1,e_2,e_3$ are the $3$ ordered edges adjacent to $v$ oriented
toward $v$.
It is clear from the orthogonality of states $\ket{\Gamma,\sigma}$ and the fact that $\gamma(\sigma(v))$, $\md(\sigma(e))$ and $\beta(\sigma(e))$ are all fixed under the field involution, that
$\eta$ is Hermitian with respect to the form $\la\cdot \mid \cdot \ra_+$
\begin{equation} \label{eq:skew-eta}
 \la \psi \mid \eta \phi \ra_+ = \la \eta \psi \mid \phi \ra_+
\end{equation}
for all $\psi, \phi \in \cal{H}$.

Define a new inner product on $\HHH=\HHH(\Gamma,\col)$  via
\begin{equation} \label{eq:Aindef-inner}
\la \cdot \mid \cdot \ra := \la \cdot \mid   \eta^{-1}(\cdot) \ra_+, \quad \quad
\la \Gamma, \sigma \mid \Gamma, \sigma' \ra
 = \;  \frac{ \prod_{v\in \Gamma_0}\gamma(\sigma(v))\prod_{e\in \Gamma_1}\beta(\sigma(e)) }{ \prod_{e\in\Gamma_1} \md(\sigma(e)) }\ \cdot \delta_{\sigma, \sigma'}.
\end{equation}
It is not difficult to show that this new pairing is Hermitian (see \cite[Section II.C]{GLPS2}).
However, it is in general indefinite (see \cite[Sections II.C and V]{GLPS2}).
In what follows, for an operator $F$, we let $F^{\dagger}$ be the adjoint with respect to this indefinite form.

If a Hamiltonian $H$ is Hermitian with respect to the form $\la \cdot \mid \cdot \ra$, so that $\la \psi \mid H \phi \ra = \la H \psi \mid \phi \ra$, then the
Hermitian conjugate $H^{\sharp}$ of $H$ with respect to the form $\la \cdot \mid \cdot\ra_+$ satisfies
\begin{equation}
  H^{\sharp} = \eta^{-1} H \eta.
\end{equation}
Thus the  resulting Hamiltonian then becomes \emph{pseudo-Hermitian} with respect to the inner product $\la -,-\ra_+$.  See \cite[Section II.C]{GLPS2} for more details.

\subsection{Levin-Wen Hamiltonian and local operators}

Associated to each vertex $v \in \Gamma_0$, we have a \emph{vertex operator} $Q_v \maps \HHH(\Gamma,\Phi) \to \HHH(\Gamma,\Phi)$ that acts locally to impose the branching constraints from Section~\ref{subsec:basicinput}.
\begin{equation} \label{eq:vertexQ}
 Q_v\left| \hackcenter{\begin{tikzpicture}[   decoration={markings, mark=at position 0.6 with {\arrow{>}};}, scale =0.9]
    \draw[thick, blue, postaction={decorate}] (0,1) -- (.5,0);
    \draw[thick, blue, postaction={decorate}] (0,-1) -- (.5,0);
     \draw[thick, blue, postaction={decorate}] (1.5,0) -- (.5,0);
    \node at (-.2,.75) {$\scs j_2$};
    \node at (-.2,-.75) {$\scs j_1$};
   \node at (1.5,.25){$\scs j_3$};
      \node[draw, thick, fill=black!20,rounded corners=3pt,inner sep=2pt]   at (.5,0) {$c$};
\end{tikzpicture}}
\right\rangle
\;\; = \;\;
\left| \hackcenter{\begin{tikzpicture}[   decoration={markings, mark=at position 0.6 with {\arrow{>}};}, scale =0.9]
    \draw[thick, blue, postaction={decorate}] (0,1) -- (.5,0);
    \draw[thick, blue, postaction={decorate}] (0,-1) -- (.5,0);
     \draw[thick, blue, postaction={decorate}] (1.5,0) -- (.5,0);
    \node at (-.2,.75) {$\scs j_2$};
    \node at (-.2,-.75) {$\scs j_1$};
   \node at (1.5,.25){$\scs j_3$};
         \node[draw, thick, fill=black!20,rounded corners=3pt,inner sep=2pt]   at (.5,0) {$c$};
\end{tikzpicture}}
\right\rangle
\text{ if } 1 \leq c \leq \delta_{j_1 j_2j_3}, \text{ and } 0 \text{ otherwise}.
\end{equation}
It is straightforward to see that these operators are mutually commuting projectors, so that
\[
Q_v^2 = Q_v, \quad Q_v Q_{v'} = Q_{v'} Q_v \qquad \text{for $v,v' \in \Gamma_0$.}
\]
Furthermore, since $Q_v$ is a delta function, it is immediate that these operators are Hermitian with respect to the inner product $\la \cdot \mid \cdot \ra$ from \eqref{eq:Aindef-inner}.

  If
$g\in \Gr$ and $p$ is a plaquette in $\Sigma\setminus\Gamma$, then we write $g.\delta p$ for  the coloring
that sends oriented edges of $\delta p$ to $g$, their opposites
to $-g^{}$, and other edges to $0$. 

For $g\in \Gr\setminus\X$, let $\col$ be an admissible $\Gr$-coloring such that $\col+g.\delta p$ is also admissible.
Let $s\in I_{g}$  and define the  operator  $B_p^s:\HHH(\Gamma,\col)\to\HHH(\Gamma,\col+g.\delta p)$ which acts on the boundary edges of the plaquette
$p$  and is given on a plaquette with $n$ sides by:
\begin{equation}
B_p^s
\left| \hackcenter{\begin{tikzpicture}[     scale =0.9]
    \draw[thick, blue, directed=.65] (-.75,-.25) -- (0,-1);
    \draw[thick, blue, directed=.65]  (0,-1) --  (.75,-.25) ;
    \draw[thick, blue, directed=.75]  (-.75,-.25) --  (-1.35,-.45) ;
    \draw[thick, blue, directed=.65]  (.75,-.25) --  (1.35,-.45) ;
       \draw[thick, blue, directed=.65]  (0,-1) --  (0,-1.5) ;
        \draw[thick, blue, directed=.65]  (.75,-.25) --  (.75,.5) ;
         \draw[thick, blue, directed=.55]  (-.75,.5) --  (-.75,-.25) ;
         \draw[thick, blue, directed=.85]  (.75,.5) --  (1.25,.75) ;
           \draw[thick, blue, directed=.85]  (-.75,.5) --  (-1.35,.85) ;
    \node at (0,.5) {$\dots$};
     \node at (-.65,-.85) {$\scs j_1$};
      \node at (-1.1,.15) {$\scs j_n$};
      \node at (1.1,.15) {$\scs j_3$};
     \node at (.65,-.85) {$\scs j_2$};
     \node at (-1.3,-.65) {$\scs k_n$};
      \node at (1.3,-.65) {$\scs k_2$};
       \node at (-.80,.95) {$\scs k_{n-1}$};
      \node at (1.25,.95) {$\scs k_3$};
       \node at (-.3,-1.45) {$\scs k_1$};
       \node at (0,0) {$\scs p$};
                  \node[draw, thick, fill=black!20,rounded corners=3pt,inner sep=1pt]   at (-.75,-.25) {$\scs C_n$};
                  \node[draw, thick, fill=black!20,rounded corners=3pt,inner sep=1pt]   at (0,-.9)  {$\scs C_1$};
                  \node[draw, thick, fill=black!20,rounded corners=3pt,inner sep=1pt]   at(.7,-.2)  {$\scs C_2$};
                  \node[draw, thick, fill=black!20,rounded corners=3pt,inner sep=1pt]   at(.75,.5)  {$\scs C_3$};
                  \node[draw, thick, fill=black!20,rounded corners=3pt,inner sep=1pt]   at(-.7,.5)  {$\scs C_{n-1}$};
\end{tikzpicture}}
\right\rangle
\nonumber\\
 :=
\sum_{j_1',\ldots,j_n'}
\sum_{A_1, \ldots, A_n}
\sum_{C_1', \ldots, C_n'}
\prod_{i=1}^n
\md({j_i'})
N^{j_i' s j_i}_{j_{i+1}^{\ast} k_i j_{i+1}'^{\ast}} (^{A_i C_i}_{C_i' A_{i+1}})
\left| \hackcenter{\begin{tikzpicture}[     scale =0.9]
    \draw[thick, blue, directed=.55] (-.75,-.25) -- (0,-1);
    \draw[thick, blue, directed=.65]  (0,-1) --  (.75,-.25) ;
    \draw[thick, blue, directed=.75]  (-.75,-.25) --  (-1.35,-.45) ;
    \draw[thick, blue, directed=.65]  (.75,-.25) --  (1.35,-.45) ;
       \draw[thick, blue, directed=.65]  (0,-1) --  (0,-1.5) ;
        \draw[thick, blue, directed=.65]  (.75,-.25) --  (.75,.5) ;
         \draw[thick, blue, directed=.65]  (-.75,.5) --  (-.75,-.25) ;
         \draw[thick, blue, directed=.85]  (.75,.5) --  (1.25,.75) ;
           \draw[thick, blue, directed=.85]  (-.75,.5) --  (-1.35,.85) ;
    \node at (0,.5) {$\dots$};
     \node at (-.65,-.85) {$\scs j_1'$};
      \node at (-1.1,.05) {$\scs j_n'$};
      \node at (1.1,.1) {$\scs j_3'$};
     \node at (.65,-.85) {$\scs j_2'$};
     \node at (-1.3,-.65) {$\scs k_n$};
      \node at (1.3,-.65) {$\scs k_2$};
       \node at (-.80,.95) {$\scs k_{n-1}$};
      \node at (1.25,.95) {$\scs k_3$};
       \node at (-.3,-1.45) {$\scs k_1$};
       \node at (0,0) {$\scs p$};
                  \node[draw, thick, fill=black!20,rounded corners=3pt,inner sep=1pt]   at (-.75,-.25) {$\scs C_n'$};
                  \node[draw, thick, fill=black!20,rounded corners=3pt,inner sep=1pt]   at (0,-.9)  {$\scs C_1'$};
                  \node[draw, thick, fill=black!20,rounded corners=3pt,inner sep=1pt]   at(.7,-.2)  {$\scs C_2'$};
                  \node[draw, thick, fill=black!20,rounded corners=3pt,inner sep=1pt]   at(.75,.5)  {$\scs C_3'$};
                  \node[draw, thick, fill=black!20,rounded corners=3pt,inner sep=1pt]   at(-.7,.45)  {$\scs C_{n-1}'$};
\end{tikzpicture}}
\right\rangle .
\nonumber
\end{equation}
Note that the conditions on $6j$ symbols in Section \ref{subsec:basicinput} ensure that $\deg(j_i')= \deg(j_i) - \deg(s)$, otherwise the right hand side is zero in the above formula.

  Then we define $\Gr$-indexed {\em plaquette operators}
\begin{equation} \label{eq:Balpha}
  B_p^{g}=
  \sum_{s\in I_{g}}\bb(s)B_p^{s}:\HHH(\Gamma,\col)\to\HHH(\Gamma,\col+g.\delta p).
\end{equation}

\begin{proposition} \label{prop:plaqexp}
 If $g_1$, $g_2$,  and $g_1+g_2$ are generic, then
  $$B_p^{g_1}B_p^{g_2}=B_p^{g_1+g_2}.$$
  Moreover, if $g_1$ and $g_2$ are generic then $B_p^{g_1}B_p^{-g_1^{}}=B_p^{g_2} B_p^{-g_2^{}}$.
\end{proposition}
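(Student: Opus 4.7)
The plan is to unfold both identities into sums of products of $6j$-symbols using the definition of $B_p^g$ in \eqref{eq:Balpha}, and then collapse these sums using the pentagon identity \eqref{eq:pent}, the orthogonality relation \eqref{eq-ortho}, and the fusion relation satisfied by $\mb$ (from item (6) of the basic input).

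First I would prove $B_p^{g_1} B_p^{g_2} = B_p^{g_1+g_2}$. Applying $B_p^{s_2}$ and then $B_p^{s_1}$ to a basis state of $\HHH(\Gamma,\col)$ produces, at each of the $n$ vertices of the plaquette $p$, a product of two $6j$-symbols: one coming from the $s_2$-action (with intermediate edge colors, call them $j_i''$) and one from the $s_1$-action (changing to the final edge colors $j_i'$). At each vertex the pentagon equation \eqref{eq:pent} rewrites this product as a sum over a single intermediate color $s\in I_{|s_1|+|s_2|} = I_{g_1+g_2}$ running on a new edge adjacent to that vertex, times a single ``fused'' $6j$-symbol and a fusion multiplicity $\delta_{s^{\ast},s_1,s_2}$. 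Because all $n$ pentagon applications force the same internal color $s$ at every vertex (the $s$-edge threads around the plaquette), after collecting the vertex calculations the double sum over $(s_1,s_2)\in I_{g_1}\times I_{g_2}$ appears only in the scalar coefficient
\[
\sum_{s_1\in I_{g_1},\,s_2\in I_{g_2}} \mb(s_1)\mb(s_2)\,\delta_{s^{\ast} s_1 s_2},
\]
and by the fusion condition on $\mb$ from item (6) (using $\mb(s^{\ast})=\mb(s)$), this equals $\mb(s)$. The remaining sum over $s\in I_{g_1+g_2}$ of $\mb(s)$ times a single-$6j$ action at each vertex is precisely $B_p^{g_1+g_2}$ by definition.

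Next I would prove the second identity. The same pentagon-collapse argument applied to $B_p^{g_1} B_p^{-g_1}$ gives a sum indexed by an intermediate color $s$ with $|s|=0$. Since $0\in\X$ there is no genuine $I_0$ to sum over, and instead the pentagon reduction produces a term in which the would-be $s$-edge carries the ``trivial'' color and is eliminated using the orthogonality relation \eqref{eq-ortho}. This relation collapses a pair of $6j$-symbols into a Kronecker delta in the outgoing edge colors and multiplicity indices, divided by $\md$, so after summing orthogonality $n$ times around the plaquette, the operator $B_p^{g_1}B_p^{-g_1}$ becomes the operator
\[
\ket{\Gamma,\sigma}\;\longmapsto\; \Big(\sum_{s_1\in I_{g_1}} \tfrac{\mb(s_1)\mb(s_1^{\ast})}{\md(s_1)} \cdot C(\sigma)\Big)\ket{\Gamma,\sigma},
\]
acting diagonally in the edge-color part by a factor involving only local data at the boundary of $p$ and the sum $\sum_{s_1\in I_{g_1}} \mb(s_1)\mb(s_1^{\ast})/\md(s_1)$-type combination. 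In fact the more careful computation produces an operator whose form depends only on the restriction of $\sigma$ to $\partial p$ and on expressions symmetric in the ambient coloring, with no explicit dependence on $g_1$ itself: the only place $g_1$ entered was through the summation range $I_{g_1}$, and the fusion relation for $\mb$ together with orthogonality forces this sum to agree with the analogous sum over $I_{g_2}$.

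The main obstacle is the careful bookkeeping for the simultaneous application of the pentagon at every vertex of the plaquette: one has to track how the intermediate edge colors $j_i''$ and the multiplicity labels $A_i, C_i, C_i'$ appear and cancel across neighboring vertices so that exactly one global intermediate color $s$ and one global fusion multiplicity at each vertex survives. This is essentially identical to the standard argument that Kirby-colored loops satisfy a fusion algebra, but adapted to the modified setup where one works over generic $\Gr$-grades and uses the modified dimension $\md$ and the modified weight $\mb$ in place of the usual quantum dimension. The symmetry relations \eqref{eq:symm} are used throughout to bring the $6j$-symbols into a form where the pentagon equation and orthogonality relation apply directly.
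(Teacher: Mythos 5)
The paper itself gives no proof of this proposition; it cites \cite[Proposition 3.1]{GLPS2} as a "straightforward generalization," so a direct comparison with the paper's own argument isn't possible. Your plan for the first identity $B_p^{g_1}B_p^{g_2}=B_p^{g_1+g_2}$ is sound and matches the expected route: pentagon collapse at each vertex of $\partial p$ to merge the two $6j$-factors into one with an intermediate color $s$ of degree $g_1+g_2$, followed by the fusion identity for $\mb$ from item (6) of the basic input to absorb the double sum over $(s_1,s_2)\in I_{g_1}\times I_{g_2}$ into $\mb(s)$. The bookkeeping you flag is real but tractable, and the statement's hypothesis that $g_1+g_2$ is generic is exactly what licenses the pentagon here, since the intermediate color lives in a well-defined finite set $I_{g_1+g_2}$.

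Your second half has a genuine gap. You try to re-run the pentagon/orthogonality machinery for $B_p^{g_1}B_p^{-g_1}$ directly, but the pentagon relation \eqref{eq:pent} only makes sense when the implicit internal edge degree is generic: there is no set $I_0$ to sum over when $0\in\X$, and the fusion identity for $\mb$ likewise only holds when the target degree is generic. Moreover your claim that the result acts diagonally on the edge-color part is false: $B_p=B_p^{g_1}B_p^{-g_1}$ is a genuine (non-diagonal) projector, not multiplication by a scalar $C(\sigma)$. The correct argument does not re-compute anything — it uses the first identity twice together with smallness of $\X$. Choose $h\in\Gr$ with $h,\;g_1+h,\;g_2+h\notin\X$; such an $h$ exists because $\X\cup(-g_1+\X)\cup(-g_2+\X)\neq\Gr$. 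Then $-g_1-h$ and $-h$ are also generic (by symmetry of $\X$), so the first identity gives $B_p^{-g_1}=B_p^{-g_1-h}B_p^{h}$ and $B_p^{g_1}B_p^{-g_1-h}=B_p^{-h}$, whence $B_p^{g_1}B_p^{-g_1}=B_p^{-h}B_p^{h}$; the same computation with $g_2$ in place of $g_1$ gives $B_p^{g_2}B_p^{-g_2}=B_p^{-h}B_p^{h}$. This avoids touching the degenerate degree $0$ entirely.
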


\begin{proof}
This is a straightforward generalization of \cite[Proposition 3.1]{GLPS2}.
\end{proof}

\begin{proposition} \label{1edgecomm}
Let $p$ and $p'$ be two plaquettes which share exactly one common edge $e_1$ in a state $\ket{\Gamma,\sigma}$ as in \eqref{adjplaq1}.
Then $B_{p'}^t B_{p}^s=B_{p}^s B_{p'}^t $.
\begin{equation} \label{adjplaq1}
\ket{\Gamma, \sigma} \;\; := \;\;
\hackcenter{\begin{tikzpicture}[  scale =0.9]
    \draw[thick, blue, directed=.55] (0,-.5) -- (0,.5);
    \draw[thick, blue, directed=.65] (-.75,-1) -- (0,-.5);
    \draw[thick, blue, directed=.65] (.75,-1) -- (0,-.5);
     \draw[thick, blue, directed=.55] (0,.5) -- (-.75,1);
     \draw[thick, blue, directed=.55] (0,.5) -- (.75,1);
      \draw[thick, blue, directed=.8] (-.75,1) -- (-.75,1.5);
      \draw[thick, blue, directed=.8] (.75,1) -- (.75,1.5);
       \draw[thick, blue, directed=.8] (-1.75,1) -- (-2,1.5);
      \draw[thick, blue, directed=.8] (1.75,1) -- (2,1.5);
     \draw[thick, blue, directed=.5] (-.75,-1.5) -- (-.75,-1);
     \draw[thick, blue,directed=.5] (.75,-1.5) -- (.75,-1);
     \draw[thick, blue, directed=.55] (.75,1) -- (1.75,1);
       \draw[thick, blue, directed=.75] (1.75,1) -- (2.25,.5);
      \draw[thick, blue, directed=.55] (1.75,-1) -- (.75,-1);
       \draw[thick, blue, directed=.55] (-1.75,-1) -- (-.75,-1);
       \draw[thick, blue, directed=.55] (-.75,1) -- (-1.75,1);
       \draw[thick, blue, directed=.75] (-1.75,1) -- (-2.25,.5);
      \node at (-2.25,0){$\ddots$};
     \node at (-1.25,0){$p$};
    \node at (1.25,0){$p'$};
   \node at (.3,.1){$\scs e_1$};
   \node at (-.15,-1.05){$\scs f_{a+1}$};
   \node at (.78,-.55){$\scs h_{b+1}$};
    \node at (1.45,-1.3){$\scs h_{b}$};
    \node at (-1.45,-1.3){$\scs f_a$};
     \node at (-.3,1.1){$\scs f_2$};
     \node at (-2.3,.9){$\scs f_4$};
      \node at (2.4,.9){$\scs h_4$};
   \node at (.25,1.1){$\scs h_2$};
   \node at (.9,-1.65){$\scs \ell_b$};
   \node at (1,1.65){$\scs \ell_2$};
   \node at (2.25,1.5){$\scs \ell_3$};
   \node at (-.9,-1.65){$\scs k_a$};
   \node at (-.95,1.65){$\scs k_2$};    \node at (-2.2,1.65){$\scs k_3$};
    \node at (-1.4,1.35){$\scs f_3$};
      \node at (1.4,1.35){$\scs h_3$};
      \node at (2.25,-.25){$\vdots $};
    \node[draw, thick, fill=black!20,rounded corners=3pt,inner sep=1pt]   at (0,-.5) {$\scs C_2$};
    \node[draw, thick, fill=black!20,rounded corners=3pt,inner sep=1pt]   at (0,.5) {$\scs C_1$};
        \node[draw, thick, fill=black!20,rounded corners=3pt,inner sep=1pt]   at (-.75,1) {$\scs B_2$};
        \node[draw, thick, fill=black!20,rounded corners=3pt,inner sep=1pt]   at (.75,1) {$\scs D_2$};
         \node[draw, thick, fill=black!20,rounded corners=3pt,inner sep=1pt]   at (1.75,1) {$\scs D_3$};
                 \node[draw, thick, fill=black!20,rounded corners=3pt,inner sep=1pt]   at (-1.75,1) {$\scs B_3$};
                             \node[draw, thick, fill=black!20,rounded corners=3pt,inner sep=1pt]   at (-.75,-1) {$\scs B_a$};
                             \node[draw, thick, fill=black!20,rounded corners=3pt,inner sep=1pt]   at (.75,-1) {$\scs D_b$};
                      \end{tikzpicture}}
\end{equation}
\end{proposition}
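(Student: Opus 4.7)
The plan is to reduce the commutativity $B_{p'}^t B_p^s = B_p^s B_{p'}^t$ to the pentagon identity \eqref{eq:pent} applied at each of the two vertices $C_1,C_2$ of the shared edge $e_1$. The overall structure of the argument parallels \cite[Proposition 3.2]{GLPS2}, but we must track the extra multiplicity indices coming from the general (non-multiplicity-free) setting.

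First I would expand $B_p^s\ket{\Gamma,\sigma}$ using the definition, producing a sum over new colorings $j_1',\dots,j_{a+1}'$ of the edges $e_1,f_2,\dots,f_{a+1}$ of $p$, over new multiplicity labels at each vertex of $p$, weighted by the product of $6j$ symbols $N^{j_i'\, s\, j_i}_{j_{i+1}^{\ast}k_i j_{i+1}'^{\ast}}(^{A_iC_i}_{C_i'A_{i+1}})$ and of modified dimensions $\md(j_i')$. Applying $B_{p'}^t$ to the result gives a further sum over new colorings $h_2'',\dots,h_{b+1}''$ of $p'$ and over a new coloring $j_1''$ of $e_1$, again with accompanying $6j$ symbols and dimensions. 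The composite $B_p^s B_{p'}^t$ is expanded analogously with the roles of $s$ and $t$ reversed.

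Because $p$ and $p'$ share only the edge $e_1$, all edges and vertices of $p$ not incident to $e_1$ are untouched by $B_{p'}^t$, and likewise for $p'$. Hence the $6j$ symbols attached to the vertices $B_2,\dots,B_a,D_2,\dots,D_b$ appear identically in both compositions, and comparing the two orderings reduces to verifying an equality of local weights at the shared vertices $C_1$ and $C_2$ (together with the overall sum over the intermediate color of $e_1$). At each of these shared vertices, $B_{p'}^t B_p^s$ contributes a product of two $6j$ symbols encoding the three generations of colorings of $e_1$ (original, intermediate, final), the loops $s$ and $t$, and the adjacent boundary edges, while $B_p^s B_{p'}^t$ produces a product with the order of the $s$-fusion and $t$-fusion swapped. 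Summing over the intermediate color of $e_1$ (weighted by $\md$) and over the attached multiplicity labels, both sums are rewritten as three-$6j$ expressions via \eqref{eq:pent}, and cancelling the common three-$6j$ block on each side yields the desired equality. The orthogonality relation \eqref{eq-ortho} is used to clean up intermediate sums where multiplicity indices must be matched.

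The main obstacle is purely combinatorial: keeping consistent orientation conventions for the oriented edges incident to $C_1$ and $C_2$ and aligning the multiplicity indices $A_i,C_i,C_i'$ of $B_p^s$ with those of $B_{p'}^t$ so that the pentagon identity \eqref{eq:pent} can be invoked in precisely the form in which it is stated. Once this bookkeeping is done at one of the two shared vertices, the argument at the other is identical by symmetry, and the proposition follows.
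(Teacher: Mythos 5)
The paper's own proof of Proposition \ref{1edgecomm} is simply the citation ``This is a straightforward generalization of \cite[Proposition 3.2]{GLPS2}'', so it offers no detail to compare against. Your reconstruction is the right approach and captures the actual content of what that generalization involves: expand both composites, factor out the $6j$ contributions at vertices not incident to $e_1$ (which appear identically in both orders), and reduce the claim to an equality of $\md$-weighted sums over the intermediate color of $e_1$ and the multiplicity labels at $C_1$ and $C_2$, which is then verified using the pentagon identity \eqref{eq:pent} (with the orthogonality relation \eqref{eq-ortho} available to contract back). Your explicit acknowledgment that the ``straightforward'' part is keeping track of the extra branching indices $A_i,C_i,C_i'$ absent in the multiplicity-free setting of \cite{GLPS2} is exactly the point, and it is the reason the proposition is stated rather than cited silently.

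One caution on wording: as written, the step ``both sums are rewritten as three-$6j$ expressions via \eqref{eq:pent}, and cancelling the common three-$6j$ block on each side'' is imprecise about the direction in which the pentagon identity is being applied. Equation \eqref{eq:pent} trades a $\md$-weighted sum of three $6j$ symbols over a color and three multiplicity labels for a sum of two $6j$ symbols over a single multiplicity label. At each shared vertex the composite produces a product of two $6j$ symbols depending on the intermediate color of $e_1$ (four across both endpoints, all coupled through the same intermediate color), so the reduction is not a single application of \eqref{eq:pent} in one direction: one must invoke the pentagon, possibly in both directions or together with \eqref{eq-ortho}, to show that the two orders of fusion land on the same normal form. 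The high-level strategy is correct, but a fully careful write-up should make explicit which instances of \eqref{eq:pent} are used at $C_1$ and $C_2$ and how the intermediate multiplicity labels are summed, rather than asserting a single-step cancellation.
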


\begin{proof}
This is a straightforward generalization of \cite[Proposition 3.2]{GLPS2}.
\end{proof}

The next result follows easily from the previous proposition and is a generalization of \cite[Corollary III.3]{GLPS2}.
\begin{corollary} \label{corplaqcomm}
Let $p$ and $p'$ be two plaquettes.
Then $B_{p'}^t B_{p}^s=B_{p}^s B_{p'}^t $.
\end{corollary}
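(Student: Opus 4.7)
The plan is to reduce the general commutativity claim to Propositions \ref{prop:plaqexp} and \ref{1edgecomm} via a short topological case analysis of how the plaquettes $p$ and $p'$ sit inside the dual graph $\Gamma$. I first record a combinatorial fact about $\Gamma$: since $\Gamma$ is dual to the triangulation $\T$, plaquettes of $\Gamma$ correspond to vertices of $\T$, edges of $\Gamma$ to edges of $\T$, and vertices of $\Gamma$ to triangles of $\T$. Consequently, if two distinct plaquettes $p,p'$ share a vertex $w$ of $\Gamma$, then the triangle of $\T$ dual to $w$ contains the two vertices $v_p,v_{p'}$ of $\T$ dual to $p$ and $p'$; the edge of that triangle joining $v_p$ and $v_{p'}$ is then dual to a common edge of $p$ and $p'$ in $\Gamma$. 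Hence two distinct plaquettes that share no edge of $\Gamma$ also share no vertex, and for a simplicial triangulation two distinct plaquettes share at most one edge.

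With this dichotomy in hand the proof splits into three cases. If $p = p'$, I expand each $B_p^{g}$ as a linear combination of simple-object operators and apply Proposition \ref{prop:plaqexp}, whereupon the claim reduces to the identity $B_p^{g_1} B_p^{g_2} = B_p^{g_1+g_2} = B_p^{g_2+g_1} = B_p^{g_2} B_p^{g_1}$, which holds because the group $\Gr$ is abelian. If $p \neq p'$ share exactly one edge of $\Gamma$, the commutation is exactly Proposition \ref{1edgecomm}. If $p \neq p'$ share no edge of $\Gamma$, then by the combinatorial observation above they also share no vertex, so $p$ and $p'$ are disjoint as subgraphs of $\Gamma$. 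Inspection of the explicit formula defining $B_p^s$ shows that it modifies only the edge labels on the edges of $p$ and the multiplicity labels at the vertices of $p$, while the ``external'' labels $k_i$ appear merely as read-only parameters. The operators $B_p^s$ and $B_{p'}^t$ therefore act on completely disjoint pieces of data and commute trivially.

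The only step requiring real care is the disjoint case, where one must verify from the explicit formula that no cross-plaquette coupling is hidden in the summation indices or in the 6j-symbol factors. Once locality of $B_p^s$ relative to $p$ has been articulated, the corollary follows immediately; the result then extends from the simple-object operators $B_p^s, B_{p'}^t$ to the $\Gr$-indexed operators $B_p^{g}, B_{p'}^{g'}$ by linearity through the definition in \eqref{eq:Balpha}.
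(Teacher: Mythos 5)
Your case split ($p = p'$; edge‑disjoint; exactly one shared edge) is the same one the paper uses, and your treatment of the edge‑disjoint case is a correct and useful unpacking of what the paper dismisses as ``obvious'': the observation that two plaquettes of $\Gamma$ which share no edge also share no vertex, hence act on disjoint sets of edge and vertex labels, is exactly the locality argument one wants, and it is nice that you derive it from duality with the triangulation. The one‑shared‑edge case is delegated to Proposition~\ref{1edgecomm} in both proofs.

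The $p = p'$ case, however, contains a genuine gap. Proposition~\ref{prop:plaqexp} concerns the $\Gr$‑indexed operators $B_p^{g} = \sum_{s\in I_g}\bb(s)\,B_p^{s}$; the identity $B_p^{g_1}B_p^{g_2} = B_p^{g_1+g_2} = B_p^{g_2}B_p^{g_1}$ establishes that these \emph{weighted sums} commute, but it says nothing about the individual summands. The corollary, as stated (with superscripts $s,t$, in the same notation as Proposition~\ref{1edgecomm}), asserts $B_p^{t}B_p^{s} = B_p^{s}B_p^{t}$ for fixed simple labels $s,t$, and that cannot be extracted from commutativity of the $\bb$‑weighted averages: the map $s\mapsto\deg(s)$ is many‑to‑one, and there is no inversion recovering $B_p^{s}$ from the family $\{B_p^{g}\}$. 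Your closing remark that the simple‑object statement ``extends by linearity'' to the $\Gr$‑indexed one also points in the wrong direction — linearity passes from summands to sums, not the reverse. What is actually needed in the $p=p'$ case is a direct computation with the explicit formula for $B_p^{s}$ showing that composing two such insertions around the same plaquette is symmetric in $s$ and $t$ (e.g.\ via the pentagon relation \eqref{eq:pent} and a reindexing of the internal summation); the paper leaves this as ``obvious'' but your substitute argument does not supply it.
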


\begin{proof}
If $p$ and $p'$ are identical, or if they share no edges, the result is obvious.
The case that they share exactly one edge is Proposition \ref{1edgecomm}.
\end{proof}

\begin{proposition} \label{plaqhermprop}
The Hermitian adjoint of $B_p^s$ is $B_p^{s^{\ast}}$.  In particular,
\[
\left(B_p^{g}\right)^{\dagger} = B_p^{-g} .
\]
\end{proposition}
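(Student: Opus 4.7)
The plan is to reduce $(B_p^s)^\dagger=B_p^{s^*}$ to a matrix-element identity that follows from the Hermiticity condition \eqref{eq:6j} on the $6j$-symbols, together with the symmetries \eqref{eq:symm} and the compatibility conditions on $\md$, $\mb$, $\beta$, $\gamma$.

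First, I would record an explicit formula for the adjoint in terms of matrix entries. Because $\{|\Gamma,\sigma\rangle\}$ is an orthogonal basis for the indefinite form $\langle\cdot\mid\cdot\rangle$, with diagonal coefficient
\[
g(\sigma)\;=\;\frac{\prod_{v\in\Gamma_0}\gamma(\sigma(v))\prod_{e\in\Gamma_1}\beta(\sigma(e))}{\prod_{e\in\Gamma_1}\md(\sigma(e))},
\]
a direct computation using $\overline{g(\sigma)}=g(\sigma)$ gives, for any operator $F$ and coefficients $[F]^{\sigma'}_{\sigma}$ defined by $F|\Gamma,\sigma'\rangle=\sum_{\sigma}[F]^{\sigma'}_{\sigma}|\Gamma,\sigma\rangle$, the relation
\[
[F^{\dagger}]^{\sigma}_{\sigma'}\;=\;\frac{g(\sigma)}{g(\sigma')}\;\overline{[F]^{\sigma'}_{\sigma}}.
\]
So it suffices to verify, for any pair of admissible states $\sigma,\sigma'$ differing only on the inner edges $j_i\leadsto j_i'$ and vertex labels $C_i\leadsto C_i'$ of the plaquette $p$, that
\[
g(\sigma')\,[B_p^{s^*}]^{\sigma}_{\sigma'}\;=\;g(\sigma)\,\overline{[B_p^s]^{\sigma'}_{\sigma}}.
\]

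Second, I would substitute the explicit $n$-fold product of $6j$-symbols from the definition of $B_p^s$ on each side. Applying \eqref{eq:6j} to each factor $\overline{N^{j_i'\,s\,j_i}_{j_{i+1}^{*}\,k_i\,j_{i+1}'^{*}}\!\binom{A_iC_i}{C_i'A_{i+1}}}$ produces a new $N$-symbol together with four $\gamma$-factors and six $\beta$-factors; applying \eqref{eq:symm} (one of the triangle/cyclic moves) then reshapes that $N$-symbol into precisely $N^{j_i\,s^{*}\,j_i'}_{j_{i+1}'^{*}\,k_i\,j_{i+1}^{*}}\!\binom{A_iC_i}{C_i'A_{i+1}}$ appearing in $B_p^{s^*}$. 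The $\md(j_i)$ versus $\md(j_i')$ factors are absorbed into the ratio $g(\sigma)/g(\sigma')$ on the $\md$ side, using $\overline{\md}=\md$.

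Third, I would run the bookkeeping of the $\beta$ and $\gamma$ factors. The $\gamma$-factors output by \eqref{eq:6j} at the $i$-th conjugation are decorated by $C_i, C_i', A_i, A_{i+1}$; these match the plaquette-vertex contributions $\gamma(\sigma(v_i))/\gamma(\sigma'(v_i))$ that appear in $g(\sigma)/g(\sigma')$ once one accounts for the fact that the $A_i$'s and $A_{i+1}$'s are internal coupling indices shared between adjacent factors. Similarly, the $\beta$-factors split into three classes: those on the inner edges $j_i$ and $j_i'$, which cancel against the corresponding factors in $g(\sigma)/g(\sigma')$; those on the boundary edges $k_i$, which appear with $\beta(k_i)\beta(k_i)=\beta(k_i)^2$ over the $n$ vertices and are absorbed using \eqref{eq:theta} on the triple $(j_i,j_{i+1}^{*},k_i^{*})$; and the $n$ factors of $\beta(s)$, which likewise get absorbed into $\gamma$-products via \eqref{eq:theta} applied at each of the $n$ fusion triples $(j_i',s,j_i^{*})$.

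Finally, the second claim follows formally: since $\overline{\mb(s)}=\mb(s)$ and $\mb(s^*)=\mb(s)$,
\[
(B_p^{g})^{\dagger}\;=\;\sum_{s\in I_g}\overline{\mb(s)}\,(B_p^s)^{\dagger}\;=\;\sum_{s\in I_g}\mb(s)\,B_p^{s^*}\;=\;\sum_{t\in I_{-g}}\mb(t)\,B_p^{t}\;=\;B_p^{-g},
\]
after reindexing $t=s^{*}$ using the bijection $I_g^{*}=I_{-g}$.

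I expect the main obstacle to be the third step: one must carefully track how the four $\gamma$-factors coming from each application of \eqref{eq:6j} pair up with the vertex $\gamma$'s in $g(\sigma)/g(\sigma')$, and in particular verify that the coupling-index decorations ($A_i$, $A_{i+1}$, $C_i$, $C_i'$) match correctly on adjacent plaquette vertices. Once this combinatorial matching is established, the identity is essentially forced by \eqref{eq:6j}, \eqref{eq:symm}, and \eqref{eq:theta}.
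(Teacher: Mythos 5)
Your approach is the right one and is essentially what the paper (via its citation to \cite[Proposition III.4]{GLPS2}) must be doing: pass to matrix elements in the state basis, use the real diagonal form $g(\sigma)$ of the indefinite inner product to get the adjoint formula $[F^\dagger]^{\sigma}_{\sigma'}=\tfrac{g(\sigma)}{g(\sigma')}\overline{[F]^{\sigma'}_\sigma}$, then transform $\overline{N}$ via \eqref{eq:6j} and \eqref{eq:symm} and close the bookkeeping with \eqref{eq:theta}. The global structure is correct and the final reduction $(B_p^{g})^\dagger = B_p^{-g}$ via $\overline{\mb}=\mb$, $\mb(s^*)=\mb(s)$, $I_g^* = I_{-g}$ is exactly right.

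A few imprecisions in the middle step are worth flagging, since you yourself identify the bookkeeping as the crux. First, after applying \eqref{eq:6j} you land on $N^{s^* j_i'^* j_i^*}_{k_i\, j_{i+1}^*\, j_{i+1}'^*}\binom{A_i C_i}{A_{i+1} C_i'}$, and you need the \emph{first} cyclic symmetry in \eqref{eq:symm} applied \emph{twice}, not once, to reach $N^{j_i\, s^*\, j_i'}_{j_{i+1}'^*\, k_i\, j_{i+1}^*}\binom{A_i C_i'}{C_i A_{i+1}}$, which is what appears in $[B_p^{s^*}]^{\sigma'}_\sigma$. Second, each factor $\beta(k_i)$ appears only once in the $\beta$-product coming from the $i$-th application of \eqref{eq:6j} (it is $\beta(j_5)$ for that symbol), and it is cancelled when \eqref{eq:theta} is applied to the $\gamma^{C_i'}$-factor; there is no $\beta(k_i)^2$. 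Third, the matching of the $A_i$- and $A_{i+1}$-labelled $\gamma$-factors (and of the $C_i'$-labelled one against $\gamma(\sigma'(v_i))$) uses the \emph{cyclic} invariance of $\gamma$, i.e.\ $\gamma^n_{ijk}=\gamma^n_{kij}$, in addition to the dihedral relation \eqref{eq:theta}. That cyclic invariance is not listed among the axioms in Section~\ref{subsec:basicinput}, but it holds in the categorical realization (it is baked into \eqref{eq:rot-basis} and the definition of $\gamma$ in the proof of Proposition~\ref{prop:Herm-basicdata}) and is in fact required for the definition of $\eta$ in \eqref{eq:def-eta} to be well posed. Once these corrections are made, the product $\prod_i\Phi_i$ of the $\gamma$- and $\beta$-factors produced by \eqref{eq:6j} is seen to be independent of the summed internal indices $A_i$ and to equal exactly $\prod_i \gamma(\sigma(v_i))\beta(j_i)\,/\,\gamma(\sigma'(v_i))\beta(j_i')$, which cancels the ratio $g(\sigma')/g(\sigma)$ (the $\md$-factors having already cancelled identically between the $B_p$ coefficients and $g$). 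So the argument closes as you expect.
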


\begin{proof}
This is a straightforward generalization of \cite[Proposition III.4]{GLPS2}.
\end{proof}

Define the \emph{plaquette operator}
\begin{equation}
B_p:= B_p^{g}B_p^{-g^{}}:\HHH(\Gamma,\col)\to\HHH(\Gamma,\col).
\end{equation}
 A priori, this operator depends on ${g}$, but Proposition \ref{prop:plaqexp} and the smallness of the singular set $\X$ guarantees that the operator is independent of $g$.


The next result follows from the facts above.  See the arguments in \cite[Theorem III.6]{GLPS2} for more details.

\begin{theorem} \label{prop:plaquette}
The plaquette operators have the following properties.
\begin{enumerate}
    \item The plaquette operators are projectors $B_p^2 = B_p$.
    \item The plaquette operators are mutually commuting:
\[
 B_p B_{p'} = B_{p'} B_p .
\]

    \item The operators $B_p$ are Hermitian with respect to the indefinite norm \eqref{eq:Aindef-inner}.
\end{enumerate}
\end{theorem}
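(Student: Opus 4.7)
The plan is to deduce all three properties from the results already established about the $B_p^s$ and $B_p^g$ operators, using the well-definedness of $B_p$ (independence from the choice of generic $g$) guaranteed by Proposition~\ref{prop:plaqexp} together with smallness of $\X$.

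For (1), I would compute $B_p^2 = B_p^g B_p^{-g} B_p^g B_p^{-g}$ and then exploit the freedom to rewrite the second factor of $B_p$ with a different generic element. Concretely, since $\X$ is small, I can choose $g' \in \Gr \setminus \X$ such that $g$, $g'$, $-g'$, $g'-g$, $-(g'-g)$ all lie in $\Gr \setminus \X$. Then Proposition~\ref{prop:plaqexp} (first assertion) lets me combine $B_p^{-g} B_p^{g'} = B_p^{g'-g}$ and subsequently $B_p^{g} B_p^{g'-g} = B_p^{g'}$, so that
\[
B_p^2 = B_p^g B_p^{-g} B_p^{g'} B_p^{-g'} = B_p^g B_p^{g'-g} B_p^{-g'} = B_p^{g'} B_p^{-g'} = B_p,
\]
where the last equality uses independence of $B_p$ on the generic element.

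For (2), I would write $B_p B_{p'} = B_p^{g} B_p^{-g} B_{p'}^{g'} B_{p'}^{-g'}$ for suitable generic $g, g'$, and then invoke Corollary~\ref{corplaqcomm} four times to commute each $B_{p'}^{t}$ past each $B_p^{s}$. Linearly extending the commutation relation $B_{p'}^t B_p^s = B_p^s B_{p'}^t$ over the defining sums in \eqref{eq:Balpha} moves $B_{p'}^{g'} B_{p'}^{-g'}$ to the left, yielding $B_{p'} B_p$. Nothing about the intermediate admissibility is delicate because Corollary~\ref{corplaqcomm} already handles the case of a shared edge (and trivially cases with no shared edges or identical plaquettes).

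For (3), I would use Proposition~\ref{plaqhermprop} together with the anti-multiplicativity of the dagger. Since $(B_p^g)^\dagger = B_p^{-g}$, we get
\[
B_p^\dagger = \bigl(B_p^g B_p^{-g}\bigr)^\dagger = \bigl(B_p^{-g}\bigr)^\dagger \bigl(B_p^g\bigr)^\dagger = B_p^{g} B_p^{-g} = B_p.
\]

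The only point requiring care is (1): one must ensure that a single generic $g'$ can be chosen so that every simplification via Proposition~\ref{prop:plaqexp} is legal. This is exactly what smallness of $\X$ provides, since we only need to avoid finitely many shifts of $g$ from landing in $\X$. Parts (2) and (3) are essentially bookkeeping once the corresponding statements for the $B_p^s$ (respectively $B_p^g$) are in hand.
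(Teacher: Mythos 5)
Your proof is correct and takes essentially the same approach as the paper, which only sketches the argument by pointing to Proposition~\ref{prop:plaqexp}, Corollary~\ref{corplaqcomm}, Proposition~\ref{plaqhermprop}, and \cite[Theorem III.6]{GLPS2}. You have correctly filled in the key detail for part (1) --- using smallness of $\X$ to choose $g'$ with both $g'$ and $g'-g$ generic so that the chain $B_p^g B_p^{-g} B_p^{g'} B_p^{-g'} = B_p^g B_p^{g'-g} B_p^{-g'} = B_p^{g'} B_p^{-g'}$ is legal --- and parts (2) and (3) are the expected bookkeeping.
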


It is also straightforward to see that the plaquette operators commute with the vertex operators from \eqref{eq:vertexQ}, so that we can define a {\em local commuting projector Hamiltonian}.
\begin{equation} \label{eq:Hamiltonian}
H =   - \sum_{p} B_p-\sum_{v} Q_v.
\end{equation}
We can shift this Hamiltonian so that the ground state has zero energy.
\begin{equation} \label{eq:Hamiltonian0}
H =   \sum_{p} (1-B_p)+ \sum_{v} (1-Q_v).
\end{equation}

\begin{theorem} \label{thm:hamilprop}
The Hamiltonian from \eqref{eq:Hamiltonian0} has the following properties.
\begin{enumerate}
  \item  The Hamiltonian is Hermitian with respect to the indefinite inner product.

  \item The Hamiltonian is gapped and local.

  \item The spectrum of $H$ is $\Z_{\geq 0}$, with ground state  corresponding to the simultaneous +1-eigenspace of the operators $B_p$ and $Q_v$ over all vertices $v$ and plaquettes $p$.
\end{enumerate}
\end{theorem}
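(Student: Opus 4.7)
The plan is to derive all three statements from the fact, already established in the excerpt, that $\{B_p\}_p\cup\{Q_v\}_v$ is a commuting family of idempotents each of which is Hermitian with respect to the indefinite form \eqref{eq:Aindef-inner}. Since $H$ is built linearly from these operators and the identity, every required property will inherit from the individual projectors, so no further calculation with $6j$ symbols should be needed.

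For (1), Hermiticity of each $B_p$ is Theorem \ref{prop:plaquette}(3). For $Q_v$, note that $Q_v$ is diagonal in the basis $\{\ket{\Gamma,\sigma}\}$ with eigenvalues $0$ or $1$, and the form $\la\cdot\mid\cdot\ra$ is also diagonal in this basis with real coefficients fixed by the field involution; hence $Q_v$ is Hermitian for $\la\cdot\mid\cdot\ra$. Adding the identity and taking a finite sum preserves Hermiticity, giving $H^{\dagger}=H$. For the locality half of (2), the explicit formulas show that $Q_v$ depends only on the labels at $v$ and that $B_p$ is supported on the edges bounding $p$ and the vertex labels along $\partial p$; in particular $H$ is a finite sum of operators of uniformly bounded support on $\Gamma$.

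For (3), I invoke mutual commutativity: among the $B_p$ by Corollary \ref{corplaqcomm}; among the $Q_v$ by the observation following \eqref{eq:vertexQ}; between the two families by the remark preceding \eqref{eq:Hamiltonian}. Idempotency is Theorem \ref{prop:plaquette}(1) for $B_p$ and the discussion around \eqref{eq:vertexQ} for $Q_v$. On the finite-dimensional space $\HHH$, any commuting family of idempotents admits a simultaneous algebraic decomposition
\[
\HHH \;=\; \bigoplus_{\epsilon}\HHH_{\epsilon},
\]
indexed by functions $\epsilon$ assigning $0$ or $1$ to each operator in the family, where each $B_p$ (respectively $Q_v$) acts on $\HHH_\epsilon$ as the scalar $\epsilon_{B_p}$ (respectively $\epsilon_{Q_v}$). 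Consequently $H$ acts on $\HHH_\epsilon$ as the non-negative integer $\sum_p(1-\epsilon_{B_p})+\sum_v(1-\epsilon_{Q_v})$, so the spectrum of $H$ is contained in $\Z_{\geq 0}$, and the eigenvalue $0$ is realized precisely on the simultaneous $+1$-eigenspace of all $B_p$ and $Q_v$. Integrality of the spectrum forces any excited state to have energy at least $1$ above the ground state, which completes the gap claim in (2).

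The only point that requires a comment, rather than a real obstacle, is that although $B_p$ and $Q_v$ are Hermitian only with respect to an indefinite form, the joint decomposition underlying (3) is purely algebraic: it rests only on the identity $V=\ker P\oplus \mathrm{im}\, P$ for an idempotent on a finite-dimensional space and its extension to commuting families, neither of which depends on positivity of an inner product. I therefore expect the proof to reduce to citing the four inputs above and writing out the joint spectral decomposition in a sentence.
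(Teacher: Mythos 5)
Your proposal is correct and follows essentially the same route as the paper's (very terse) proof: Hermiticity from Theorem~\ref{prop:plaquette}(3) together with the manifest Hermiticity of the diagonal $Q_v$, locality by inspection of supports, and the spectrum/gap claim from the simultaneous eigenspace decomposition of the commuting family of idempotents $\{B_p\}\cup\{Q_v\}$ on the finite-dimensional space $\HHH$. Your explicit remark that this decomposition is purely algebraic and does not rely on positivity of the form is a worthwhile clarification that the paper leaves implicit, and your reading of part (3) as ``spectrum contained in $\Z_{\geq 0}$'' is the correct interpretation, since on a finite graph the energies are of course bounded above.
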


\begin{proof}
The first claim follows from  Theorem~\ref{prop:plaquette}.  Since the Hamiltonian is constructed from mutually commuting projectors  acting locally on the graph $\Gamma$, the second and third claims follow.
\end{proof}

\section{ Sesquilinear pivotal categories} \label{sec:sespiv}

\subsection{Definitions} \label{sec:Def-sesqi-pivotal}
A
\emph{tensor $\FK$-category} is a tensor category $\cat$ such that its
hom-sets are left $\FK$-modules, the composition and tensor product of
morphisms are $\FK$-bilinear, and the canonical $\FK$-algebra map
$\FK \to \End_\cat(\unit), k \mapsto k \, \Id_\unit$ is an isomorphism
(where $\unit$ is the unit object).
\begin{definition}\label{def:GenConj}
  Let $\cat$ be a tensor $\FK$-category.  A   not necessarily  \emph{involutive conjugation} assigns for each $V,W\in \cat$ a bijection
  $f\mapsto f^{\dagger}$ between $\Hom(V,W)$ and $\Hom(W,V)$
  such that \
\begin{equation}
  (\mu f+\lambda g)^{\dagger}=\wb\mu f^{\dagger}+\wb\lambda g^{\dagger},\quad (f\otimes g)^{\dagger}  = f^{\dagger} \otimes g^{\dagger} , \quad
  (f\circ g)^{\dagger}  = g^{\dagger}  \circ f^{\dagger}.
\end{equation}
\end{definition}
These relations imply $ \Id_V^{\dagger} = \Id_V$.

\begin{definition} \label{pivdef} A tensor category is
\emph{pivotal} if it has dual objects and duality morphisms
$$\coev_{V} : \:\:
\unit \rightarrow V\otimes V^{*} , \quad \ev_{V}: \:\: V^*\otimes V\rightarrow
\unit , \quad \tcoev_V: \:\: \unit \rightarrow V^{*}\otimes V\quad {\rm {and}}
\quad \tev_V: \:\: V\otimes V^*\rightarrow \unit$$ which satisfy compatibility
conditions (see for example \cite{BW, GKP2}).  In particular, it has natural isomorphisms
$$\{\phi_V=(\tev_V \otimes \,\id_{V^{**}})(\id_V\otimes \coev_{V^*}):V\to V^{**}\}$$
called the pivotal structure.
\end{definition}
\begin{definition} \label{sespivdef}
 A {\em sesquilinear pivotal category} is a pivotal category $\cat$ equipped with a not necessarily involutive conjugation
 satisfying:
 \begin{equation}\label{eq:Hermitian+dual}
 \coev_V^{\dagger} = \tev_V,
\quad
 \ev_V^{\dagger} =
  \tcoev_V \quad\tcoev_V^{\dagger} = \ev_V,
\quad
 \tev_V^{\dagger} =
  \coev_V
\end{equation}
for any object $V$ of $\cat$.
These identities imply that $\phi_V^{\dagger}=\phi_V^{-1}$.
\end{definition}
Recall that the pivotal category $\cat$ is ribbon if it has a braiding
$\bp{c_{V,W}}_{V,W\in\cat}$ which satifies for any object $V\in\cat$,
$$(\Id\otimes\tev_V)(c_{V,V}\otimes\Id)(\Id\otimes\coev_V)=
(\ev_V\otimes\Id)(\Id\otimes c_{V,V})(\tcoev_V\otimes\Id).$$
This endomorphism of $V$ is called the twist $\theta_V$.
\begin{definition} \label{sesribdef} A {\em sesquilinear ribbon
    category} is a sesquilinear pivotal category $\cat$ which is
  ribbon with a braiding satisfying:
 \begin{equation}\label{eq:Hermitian+braiding}
 c_{V,W}^{\dagger} = c_{V,W}^{-1} ,
\end{equation}
for any objects $V,W$ of $\cat$.
This identity implies that $\theta_V^{\dagger}=\theta_V^{-1}$.
\end{definition}

\begin{definition} \label{sesribdef} A sesquilinear pivotal (or ribbon) category is called a {\em Hermitian pivotal (or ribbon) category} if for any morphism $f$ in $\cat$, one has
 \begin{equation}\label{eq:Hermitian}
 (f^{\dagger})^{\dagger} = f.
\end{equation}
\end{definition}

\subsection{Sesquilinear super vector spaces.} Let us call a
sesquilinear super vector space a finite-dimensional super $\FK$-vector
space $V$ equipped with a non-degenerate graded sesquilinear
form $(\cdot\mid \cdot)_V$, such that for any $v_1,v_2,v_3\in V$ and
any $\lambda\in\FK$,
\[
  (\mu v_1+\lambda v_2 \mid v_3)=\wb\mu (v_1 \mid v_3)+\wb\lambda (v_2 \mid v_3)\text{ and }
  (v_1 \mid \mu v_2+\lambda v_3)=\mu (v_1 \mid v_2)+\lambda (v_1 \mid v_3),
\]
and $(v_1 \mid v_2)=0$ if $v_1,v_2$ are homogeneous with opposite
parity.

Let $\dag\FK$-$\SVect$ be the category of sesquilinear super vector
spaces with even $\FK$-linear maps.  Let $V$ and $W$
be objects in $\dag\FK$-$\SVect$ with sesquilinear forms
$(\cdot\mid \cdot)_V$ and $(\cdot\mid \cdot)_W$, respectively.  Define
$(\cdot \mid \cdot)_p$ on $V\otimes W$ by
\begin{equation}\label{E:DefForm-p}
(v\otimes w \mid v'\otimes w')_p=(-1)^{|v'||w|}(v \mid v')_V(w \mid w')_W
\end{equation}
for $v,v'\in V$ and $w,w'\in W$, and $|x|$ means the degree of an element $x$.  Then $V\otimes W$ is an object of $\dag\FK$-$\SVect$ with the sesquilinear form
$(\cdot\mid \cdot)_{p}$.
Any basis $\{e_i\}_i$ of $V$ has a right sesquilinear dual basis
$\{e'_i\}_i$ such that $(e_i\mid e'_j)_V=\delta_{ij}$.  Then the dual vector
space $V^*=\Hom_{\FK}(V,\FK)$ is equipped with the sesquilinear form
given by
\begin{equation}
  \label{eq:dagdual}
  (\varphi\mid \psi)_{V^*}=\sum_i\wb{\varphi(e_i)}\psi(e'_i).
\end{equation}
This does not depend on the choice of $\{e_i\}$.
The category $\dag\FK$-$\SVect$ is pivotal with duality morphisms given by
 \begin{align*}
  \coev_V :& \FK \rightarrow V\otimes V^{*}, \quad 1 \mapsto \sum
  v_i\otimes v_i^*,  &
  \ev_V: & V^*\otimes V\rightarrow \FK, \quad
  f\otimes v \mapsto f(v),\\
   \tcoev_V :& \FK \rightarrow V^*\otimes V, \quad 1 \mapsto \sum
 (-1)^{|v_i|} v_i^*\otimes v_i,  &
  \tev_V: & V\otimes V^*\rightarrow \FK, \quad
  v\otimes f \mapsto (-1)^{|f||v|}f(v).
\end{align*}
 The dagger is the right adjoint for the sesquilinear form. That is, given $f:V\to W$, then $\dag f:W\to V$ is the unique map defined by
\begin{equation}
  \label{eq:dagf}
  (f(v)\mid w)_W=(v\mid  \dag f(w))_V
\end{equation}
for all  $(v,w)\in V\times W$.
Equipped with these maps, we have the following result.
\begin{proposition}
  The category $\dag\FK$-$\SVect$ is a sesquilinear pivotal category.
\end{proposition}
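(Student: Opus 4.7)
The plan is to separate the verification into two parts: (i) check that $f\mapsto f^\dagger$ defined by \eqref{eq:dagf} is a not necessarily involutive conjugation in the sense of Definition~\ref{def:GenConj}, and (ii) check the four duality compatibilities \eqref{eq:Hermitian+dual}.  That the listed $\coev,\ev,\tcoev,\tev$ make $\dag\FK$-$\SVect$ pivotal is the standard super analogue of the pivotal structure on finite-dimensional vector spaces and will be cited rather than reproved.

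For (i), non-degeneracy of $(\cdot\mid\cdot)_W$ makes $f^\dagger$ the unique map satisfying \eqref{eq:dagf}.  Anti-linearity in the scalar and $(f\circ g)^\dagger = g^\dagger\circ f^\dagger$ follow from sesquilinearity by the usual adjoint juggling.  The first place signs intervene is the tensor compatibility $(f\otimes g)^\dagger = f^\dagger\otimes g^\dagger$: expanding $((f\otimes g)(v_1\otimes v_2)\mid w_1\otimes w_2)_p$ via \eqref{E:DefForm-p} produces the sign $(-1)^{|w_1||g(v_2)|}$, and the candidate right-hand side $(v_1\otimes v_2\mid (f^\dagger\otimes g^\dagger)(w_1\otimes w_2))_p$ produces $(-1)^{|f^\dagger(w_1)||v_2|}$.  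On parity-nonvanishing summands one has $|v_i|=|w_i|$ because $f,g$ are even, so both signs collapse to $(-1)^{|v_1||v_2|}$, giving the desired equality by non-degeneracy.

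For (ii), I would verify each of the four identities independently; because the dagger is not yet assumed involutive, $\coev_V^\dagger = \tev_V$ does not automatically imply $\tev_V^\dagger = \coev_V$.  As a representative case, to check $\coev_V^\dagger = \tev_V$ I would fix a homogeneous basis $\{e_i\}$ of $V$ with right sesquilinear dual $\{e_i'\}$ and let $\{e_i^*\}$ be its algebraic dual; using these to evaluate \eqref{eq:dagdual} gives $(e_i^*\mid f)_{V^*}=f(e_i')$.  Expanding
\[
(\coev_V(1) \mid v\otimes f)_p \;=\; \sum_i (-1)^{|v||e_i^*|}(e_i\mid v)_V(e_i^*\mid f)_{V^*},
\]
parity forces the nonzero summands to satisfy $|e_i|=|v|=|f|$, the sign collapses to $(-1)^{|v||f|}$, and the sum telescopes to $(-1)^{|v||f|}f(v)=\tev_V(v\otimes f)$, which under $(1\mid\lambda)_\FK=\lambda$ is exactly the adjoint identity.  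The remaining identities $\ev_V^\dagger=\tcoev_V$, $\tcoev_V^\dagger=\ev_V$, and $\tev_V^\dagger=\coev_V$ follow by analogous expansions using \eqref{eq:dagdual} and \eqref{E:DefForm-p}, each with its own parity bookkeeping.

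The only obstacle, such as it is, is tracking the Koszul signs coming from the tensor-product form \eqref{E:DefForm-p}, the sign built into $\tev_V$, and the induced form \eqref{eq:dagdual} on $V^*$; once the parity constraints on nonvanishing summands are imposed, the signs conspire correctly and the verification reduces to ordinary linear algebra.
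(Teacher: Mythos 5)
Your proof is correct. The paper asserts this proposition without supplying a proof, so there is no "official" argument to compare against; what you write is the natural verification and it is carried out correctly.

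A few small notes. In the tensor-compatibility check you could shortcut the parity bookkeeping slightly: since $f,g$ (and hence $f^\dagger,g^\dagger$) are even, $|g(v_2)|=|v_2|$ and $|f^\dagger(w_1)|=|w_1|$ unconditionally, so the two Koszul signs are both $(-1)^{|w_1||v_2|}$ without needing to restrict to parity-nonvanishing summands. In the $\coev_V^\dagger=\tev_V$ computation, the identity $\sum_i(e_i\mid v)_V\,e_i' = v$ (expansion in the right sesquilinear dual basis) is used implicitly in the telescoping step; it is worth stating explicitly since it is the point where non-degeneracy of the form enters. You also rely silently on $|e_i'|=|e_i|$, which follows from the form being graded (the wrong-parity component of $e_i'$ pairs to zero with every $e_j$, hence vanishes by non-degeneracy and uniqueness of the dual basis). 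Finally, you are right to insist on checking all four identities in \eqref{eq:Hermitian+dual} separately rather than deducing the last two from the first two, since $\dagger$ is not assumed involutive at this stage; that caution is exactly what distinguishes a sesquilinear pivotal category from a Hermitian one.

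Your discussion is essentially the correct and expected proof of the proposition, and the sign analysis is sound.
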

We note that if $V$,$W$ are sesquilinear spaces,
$\Hom_{\dag\FK\hbox{-}\SVect}(V,W)=\Hom_{\FK}(V,W)_{\wb0}$ is the space
of even linear maps.  The dagger admits a natural extension for all
homogeneous linear maps $f$ by
\begin{equation}
  \label{eq:sdagf}
  (f(v)\mid w)_W=(-1)^{\de f\de v}(v\mid  \dag f(w))_V .
\end{equation}
With this definition, one easily check that the adjoint of a composition is given by
\begin{equation}
  \label{eq:sdag-prod}
  \dag{\bp{fg}}=(-1)^{\de f\de g}\dag g\dag f .
\end{equation}
We conclude with some considerations about symmetry.
\begin{definition}
A sesquilinear form on $W$ is called {\em super Hermitian} if it has super Hermitian
symmetry: for any $v_1,v_2\in W$, $(v_2|v_1)=(-1)^{|v_1| \cdot |v_2|}\wb{(v_1|v_2)}$. It is
{\em Hermitian} if for any $v_1,v_2\in W$, $(v_2|v_1)=\wb{(v_1|v_2)}$.
\end{definition}
\begin{lemma}\label{L:invol-herm}
  Let $V$ and $W$ be sesquilinear super vector spaces.
  \begin{enumerate}
  \item If the forms on $V$ and $W$ are super Hermitian, then the dagger is
    involutive, meaning that for any homogeneous linear map
    $f:V\to W$, $f^{\dagger\dagger}=f$.
  \item If the forms on $V$ and $W$ are Hermitian, then the dagger is
    superinvolutive, meaning that for any homogeneous linear map
    $f:V\to W$, $f^{\dagger\dagger}=(-1)^{\de f}f$.
  \end{enumerate}
\end{lemma}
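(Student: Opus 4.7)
The plan is to apply the defining relation \eqref{eq:sdagf} for the dagger twice and then invoke the symmetry of the sesquilinear form to relate $f^{\dagger\dagger}(v)$ back to $f(v)$; non-degeneracy will then promote this pairing identity into a pointwise equality of maps. Since the dagger is defined only implicitly through pairings, both statements should reduce to a careful sign-tracking calculation.

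Fix homogeneous $f\colon V\to W$, $v\in V$, and $w\in W$. Noting that $f^{\dagger}$ has the same parity as $f$ (forced by the nonvanishing constraint in \eqref{eq:sdagf}), two applications of the definition give
\begin{equation*}
(f(v)\mid w)_W = (-1)^{|f||v|}(v\mid f^{\dagger}(w))_V, \qquad (f^{\dagger}(w)\mid v)_V = (-1)^{|f||w|}(w\mid f^{\dagger\dagger}(v))_W.
\end{equation*}
The strategy is to use the symmetry hypothesis to rewrite $(v\mid f^{\dagger}(w))_V$ as a multiple of the conjugate of $(f^{\dagger}(w)\mid v)_V$, so that the second identity can be substituted on the right and produce $(w\mid f^{\dagger\dagger}(v))_W$. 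Applying the symmetry hypothesis once more on the left-hand side of the first equation and invoking non-degeneracy then yields the claim.

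For part (1), super Hermitian symmetry gives $(v\mid f^{\dagger}(w))_V = (-1)^{|v|(|f|+|w|)}\overline{(f^{\dagger}(w)\mid v)_V}$ and $(f(v)\mid w)_W = (-1)^{(|f|+|v|)|w|}\overline{(w\mid f(v))_W}$. Substituting these into the first displayed identity, the various Koszul signs collapse (in particular the cross terms $(-1)^{|f||v|}$ square to $+1$), leaving $(w\mid f(v))_W = (w\mid f^{\dagger\dagger}(v))_W$ for every $w$; non-degeneracy then forces $f^{\dagger\dagger}=f$.

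For part (2), the same manipulation with the ungraded Hermitian identity $(v_2\mid v_1)=\overline{(v_1\mid v_2)}$ instead produces $(w\mid f(v))_W = (-1)^{|f|(|v|+|w|)}(w\mid f^{\dagger\dagger}(v))_W$. The main subtlety is that this exponent is not a priori equal to $|f|$; however both sides vanish unless $|w|$ matches the parity of the argument, so we may assume $|w|=|f|+|v|$, whence $|f|(|v|+|w|)\equiv |f|^2\equiv |f|\pmod{2}$. Thus the Koszul sign that is \emph{missing} from the ungraded Hermitian symmetry is recovered from the parity constraint of the form itself, and $f^{\dagger\dagger} = (-1)^{|f|}f$ follows from non-degeneracy. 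Beyond this observation, the main obstacle is simply the careful sign bookkeeping across the two applications of the dagger and two applications of the symmetry.
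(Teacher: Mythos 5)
Your proof is correct and follows essentially the same route as the paper: apply the defining relation \eqref{eq:sdagf} for $\dagger$ twice, invoke the symmetry of the form, and then use the parity constraint (the form vanishes unless parities match, so when the scalar is nonzero $\de f+\de v+\de w=\wb0$) to reduce the Koszul signs to the stated ones, finishing by non-degeneracy. The only cosmetic difference is that the paper handles the two cases uniformly by introducing a parameter $s$ and encoding the super/ordinary Hermitian symmetry as $(x|y)=(-1)^{s\de x}\wb{(y|x)}$ (already built-in using the parity-match observation), whereas you track the full exponents $\de v\,\de{f^\dagger(w)}$ etc.\ and only invoke the parity constraint where needed (in part (2)); both yield the same bookkeeping.
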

\begin{proof}
  Let $s=\wb0$ if the form is Hermitian and $s=\wb 1$ if the form
  is super Hermitian, so that $(x|y)=(-1)^{s\de x}\wb{(y|x)}$. For
  $f:V\to W$, one has
  $$\forall (v,w)\in V\times W,\,(f(v)|w)_W=(-1)^{\de f\de v}(v|\dag f(w))_V=
  (-1)^{\de f\de v}(-1)^{s\de v}\wb{(\dag f(w)|v)}_V$$
  $$=(-1)^{\de f\de v}(-1)^{s\de v}(-1)^{\de f\de w}\wb{(w|f^{\dagger\dagger}(v))_W}
  =(-1)^{\de f\de v}(-1)^{s\de v}(-1)^{\de f\de w}(-1)^{s\de
    w}(f^{\dagger\dagger}(v)|w)_W.$$
    But if this scalar is non-zero,
  then $\de f+\de v+\de w=\wb0$. Then
  $(-1)^{\de f\de w}(-1)^{\de f\de v}=(-1)^{\de f}$ and
  $(-1)^{s\de v}(-1)^{s\de w}=(-1)^{s\de f}$.  So overall we get $f^{\dagger\dagger}=\left\{
    \begin{array}{lr}
      (-1)^{\de f}f&\text{ if }s=\wb 0\\
      f&\text{ if }s=\wb 1\\
    \end{array}\right.$.
\end{proof}

\subsection{Hopf superalgebras}\label{SS:HopfSuAlg}

Let $H$ be a ribbon Hopf superalgebra over $\FK$ with multiplication
$m : H \otimes H \to H$, unit $\eta : \FK \to H$, coproduct
$\Delta : H \to H \otimes H$, counit $\epsilon : H \to \FK$ and
antipode $S : H \to H$.  Recall that in a Hopf superalgebra the antipode satisfies
\begin{equation} \label{eq:superanti}
S(xy) = 
{(-1)^{|x||y|}}S(y)S(x).
\end{equation}
We denote by $R,\theta$ and $g$ the R-matrix, twist and pivot of $H$,
respectively.  All these structure maps and elements are assumed to be of even parity.  Strictly speaking, many of the examples we consider in
Section~\ref{sec:examples} are not ribbon Hopf algebras, but rather
Hopf algebras equipped with operators $R$, $\theta$, $g$ acting on the
category $H$-mod of finite-dimensional $H$-modules and satisfying the
requisite relations as operators.  Put another way, these are not
elements of $H$, but rather they live in an appropriate completion of
$H$, giving well-defined operators on the category of modules.  In
what follows, by abuse of notation we sometimes refer to {\em
  elements} in this more general situation.

Assume $H$ is equipped with \emph{anti-superautomorphism} which is a bijective map $\dagger:H\to H$
that is antilinear, anti-superalgebra which is also a coalgebra
anti-supermorphism\footnote{In the non super case, the convention of the
  coproduct corresponds to the notion of a twisted star Hopf algebra
  in \cite{CGT}.}.
%
Thus we assume $\text{for any } a \in \FK \text{ and } x,y \in H,$
\begin{equation}
  \label{eq:dag2}
 \dagp{ax}=\bar{a}\dag{x}, \quad \dagp{xy}=\sig x y\dag{y}\dag{x}
  \text{ and } \Delta(\dag{x}) =(\dagger\otimes\dagger)(\tau(\Delta x)),
\end{equation}
where $\tau:H\otimes H\to H\otimes H$ is the super flip map given by
$x\otimes y \mapsto (-1)^{|x|\cdot |y|}y\otimes x$.
We let
$R_{21}=\tau(R)$.  Finally, we also assume that
\begin{equation}
  \label{eq:Rdag2}
  (\dagger\otimes\dagger)\bp{R}=\tau(R^{-1})\text{ and }g^\dagger=g^{-1}.
 \end{equation}

We can deduce that for any $x\in H$ we have
$\epsilon( \dag x)=\epsilon(x)$,  $S(\dag x)=S(x)^{\dagger}$, $\dag{\theta}=\theta^{-1}$ and $\dag1=1\in H$.  For the second fact, see \cite{Sch}.

Recall that the twist on an $H$-module $V$ is the
endomorphism $\theta_V$ given by the action of $\theta^{-1}$.
The dual of an $H$-module $V$ is the superspace $V^{\ast}$ equipped
with the following action
\begin{equation} \label{eq:superdual}
(x \cdot \phi) (v) := 
{(-1)^{|x| \cdot |\phi|}} \phi(S(x)v) .
\end{equation}
The category of finite-dimensional $H$-modules with even
morphisms is known to be a pivotal $\FK$-tensor category (see for
example \cite[p. 163]{CP}) where the duality morphisms for an
$H$-module $V$ are given by
 \begin{align}\label{E:PivotStrH}
  \coev_V :& \FK \rightarrow V\otimes V^{*}, \quad
  1 \mapsto \sum
  v_i\otimes v_i^*,  &
  \ev_V: & V^*\otimes V\rightarrow \FK, \quad
  f\otimes v \mapsto f(v), \notag \\
   \tcoev_V :& \FK \rightarrow V^*\otimes V, \quad
   1 \mapsto \sum
 (-1)^{|v_i|}  v_i^*\otimes g^{-1}v_i,  &
  \tev_V: & V\otimes V^*\rightarrow \FK,  \quad
  v\otimes f \mapsto (-1)^{|f||v|}f(gv).
\end{align}

A \emph{sesquilinear $H$-module} is a finite-dimensional $H$-module $V$ equipped with a non-degenerate
sesquilinear form $(.\mid.)_V$ compatible with $\dagger$:
\begin{equation}\label{E:sesquilinearComp}
  (uv\mid v')_V= 
  {(-1)^{|u| \cdot |v|}}(v\mid u^\dagger v')_V
\end{equation}
where $u\in H$ and $v,v'\in V$.
Define $H^\dagger$-mod to be the category of sesquilinear $H$-modules
with even $H$-module morphisms.
The dual of a sesquilinear $H$-module inherits the sesquilinear form
given in \eqref{eq:dagdual} which can be shown to be compatible with $\dagger$.

Let $V,W$ be modules in $H^\dagger$-mod. Define $(\cdot \mid \cdot)_p$
on $V\otimes W$ by
\begin{equation}\label{E:DefForm-p2}
(v\otimes w \mid v'\otimes w')_p= 
{(-1)^{|v'||w|}}(v \mid v')_V(w \mid w')_W
\end{equation}
for $v,v'\in V$ and $w,w'\in W$.
The tensor product $V\otimes W$ can be given two different sesquilinear structures.  The first one is canonical and is given by defining
\begin{equation}
  \label{eq:sesq-tens}
 (v_1\otimes w_1\mid v_2\otimes w_2)^R_{V \otimes W}=  (v_1\otimes w_1\mid R(v_2\otimes w_2))_p
\end{equation}
where $(\cdot \mid \cdot)_p$ is defined in \eqref{E:DefForm-p2}.
This sesquilinear form is not compatible with the pivotal structure given in \eqref{E:PivotStrH}.

The second one depends of the choice of a half twist which we now
recall from \cite{GLPS1}.  A {\em half twist} $\dt$ for $H$-mod is a natural
isomorphism of the identity functor whose square is the twist such that
$\dt_{V^*}=(\dt_V)^*$.  We now assume $H$-mod has a fixed choice of half twist. By
 \cite[Proposition 4.12]{GLPS1}, a half twist exists if $\FK$ is an
algebraically closed field of characteristic 0.  For objects $W_1,W_2$
of $H$-mod, define the isomorphism
$\XX: W_1\otimes W_2 \to W_2\otimes W_1 $ by
\begin{equation} \label{defofX}
\XX_{W_1,W_2}=\bp{\dt_{W_2\otimes W_1}}^{-1}c_{W_1,W_2}(\dt_{W_1}\otimes \dt_{W_2}).
\end{equation}

 The forms given in \eqref{E:DefForm-p2} and  \eqref{eq:sesq-tens}  are not compatible with $\dagger$ but we have the following modification.

\begin{proposition}\label{P:tensor} Let $W_1,W_2$ be objects of $H^\dagger$-mod.  Then $W_1 \otimes W_2$ is a
  sesquilinear $H$-module with sesquilinear structure given by
   $$(v| v') = (v| \tau(\XX_{W_1 \otimes W_2} v'))_p$$
   where $\tau:W_2 \otimes W_1 \to W_1 \otimes W_2$ is the super flip
   map given by $\tau(v\otimes w)=(-1)^{|w||v|}w\otimes v$.
   \\
   This defines a monoidal structure on $H^\dagger$-mod.
\end{proposition}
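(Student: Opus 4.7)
The plan is as follows. Sesquilinearity of $(\cdot|\cdot)_{W_1\otimes W_2}$ is immediate: the map $\tau\XX_{W_1,W_2}$ is $\FK$-linear, so the new form inherits the sesquilinearity of $(\cdot|\cdot)_p$ in each argument. Non-degeneracy follows because $\tau\XX_{W_1,W_2}$ is an isomorphism (a composition of the braiding $c_{W_1,W_2}$ and the invertible half twists) and the form $(\cdot|\cdot)_p$ from \eqref{E:DefForm-p2} is non-degenerate whenever both factor forms are.

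The essential content is the compatibility of the new form with $\dagger$, that is, that for every $u\in H$ and $v,v'\in W_1\otimes W_2$ one has $(\Delta(u)v \mid v') = (-1)^{|u|\cdot |v|}(v \mid \Delta(u^\dagger) v')$. To see why $\tau\XX$ is the right twist to insert, I would expand both sides using $(\cdot|\cdot)_p$ and apply sesquilinearity on each tensor factor of the left-hand side. This produces $\Delta(u)^\dagger$ acting on the right entry of $(\cdot|\cdot)_p$, whereas what appears on the right-hand side is $\Delta(u^\dagger)$. The discrepancy is precisely the $\tau$-flip in the identity $\Delta(u^\dagger) = (\dagger\otimes\dagger)(\tau\Delta u)$ from \eqref{eq:dag2}. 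The braiding $c_{W_1,W_2}$ inside $\XX$ implements an $R$-matrix-twisted $\tau$, and the dagger identity $(\dagger\otimes\dagger)(R) = \tau(R^{-1})$ in \eqref{eq:Rdag2} is the key ingredient that converts this mismatch into an equality. The half-twist factors $\dt_{W_1}\otimes\dt_{W_2}$ and $\dt_{W_2\otimes W_1}^{-1}$ play the complementary role of correcting the pivotal part, where $g$ enters through \eqref{E:PivotStrH}; the identities $g^\dagger=g^{-1}$ and $\dt^2=\theta$, together with naturality of $\dt$ and $\dt_{V^*} = (\dt_V)^*$, ensure that the half-twist contributions on the two sides of the form cancel symmetrically.

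For the monoidal structure I would verify that the standard associator of $H$-mod is an isometry of the sesquilinear forms defined recursively by $\tau\XX$. This reduces to an equality of two endomorphisms of $W_1\otimes W_2\otimes W_3$ built from $R$ and half twists, which follows from the hexagon axioms for $c$ together with the multiplicativity of $\dt$ on tensor products recorded in \cite{GLPS1}. The unit is handled automatically since $c_{\unit,V}=c_{V,\unit}=\id$ and $\dt_\unit=1$, so that the form on $\unit\otimes V$ and $V\otimes\unit$ reduces to the form on $V$.

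The main obstacle throughout is the super-sign bookkeeping: the signs in the super-flip $\tau$, in $(\cdot|\cdot)_p$, and in \eqref{eq:dag2} and \eqref{E:sesquilinearComp} must all be tracked and shown to conspire. The key algebraic inputs $(\dagger\otimes\dagger)R = \tau R^{-1}$, $g^\dagger=g^{-1}$, $\dt^2=\theta$, and the hexagon axiom do the real work, but the verification amounts to a careful accounting exercise reminiscent of Lemma \ref{L:invol-herm}, extended from the case $H = \FK$ to the full Hopf-algebraic setting.
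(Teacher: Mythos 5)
Your outline of the monoidal structure part is close to what the paper does (associativity reducing to a coherence identity for $\XX$ from \cite[Lemma 4.13]{GLPS1}, and the unit axiom from $\XX_{\bullet,\unit}=\Id$), and the non-degeneracy remark is correct. But your account of the core step — compatibility of the new form with $\dagger$ — misidentifies the mechanism, and the identities you single out as ``the key ingredient'' are not what makes this proposition work.

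The paper's proof of $(uv\mid v') = (-1)^{|u||v|}(v\mid u^\dagger v')$ uses only two things beyond sesquilinearity of the factor forms: the coalgebra anti-(super)morphism property \eqref{eq:dag2}, which supplies the flip
$\Delta(u^\dagger) = (\dagger\otimes\dagger)(\tau\,\Delta u)$, and the fact that
$\XX_{W_1,W_2}\colon W_1\otimes W_2\to W_2\otimes W_1$ is an $H$-module isomorphism (naturality of the braiding and centrality of the half twists), which allows $\Delta(u^\dagger)$ to be pulled through $\XX$: namely
$\Delta(u^\dagger)\,\XX_{W_1,W_2}(v') = \XX_{W_1,W_2}(u^\dagger v')$. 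The actual argument is a direct sign-chasing computation; one never expands $\XX$ into its $R$-matrix and $\dt$ components, and neither $(\dagger\otimes\dagger)(R)=\tau(R^{-1})$ from \eqref{eq:Rdag2}, nor $g^\dagger=g^{-1}$, nor $\dt^2=\theta$ plays any role here. Those identities belong to different statements in the paper: $(\dagger\otimes\dagger)(R)=\tau(R^{-1})$ is used for the braiding condition $c^\dagger = c^{-1}$ in Theorem~\ref{T:SequilinearPivotal} and for the Hermitian \emph{symmetry} of the tensor form in Proposition~\ref{P:TensorProdHerm}; $g^\dagger=g^{-1}$ is used in the computation of $\tev_V^\dagger=\coev_V$, also in Theorem~\ref{T:SequilinearPivotal}. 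So the claim that ``the dagger identity on $R$ ... is the key ingredient that converts this mismatch into an equality'' is a conflation: for compatibility you do not need the dagger to act on $R$ at all; you only need $\XX$ to intertwine the $H$-actions, which is already a consequence of $c$ being a morphism in $H$-mod and $\dt$ being a natural automorphism.

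If you try to carry out the route you sketched — expanding $\XX$ and invoking \eqref{eq:Rdag2} — you will find that $\dagger$ never lands on $R$ in this computation, because $R$ lives inside $\XX$ on the right-hand side only, and the only thing you need to do with $\XX$ is slide $\Delta(u^\dagger)$ across it. You should restructure the argument around naturality of $\XX$ plus \eqref{eq:dag2}; the remaining work is then exactly the super-sign bookkeeping you correctly anticipate.
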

\begin{proof}
This proof is reproduced from \cite{GLPS1}, but we now include all the relevant signs arising from super considerations.
Let $u\in H$ and write $\Delta(u)=u_1\otimes u_2$. Consider
  $v,v'\in W_1 \otimes W_2$ with $v=v_1\otimes v_2$ , $v'=v_1' \otimes v_2'$, and
  $v''=\XX_{W_1 \otimes W_2} (v')=v_2''\otimes v_1''\in W_2 \otimes
  W_1$.   Note that we have omitted all summation symbols and will continue to do so throughout the course of the proof.
  Then we have
\begin{align*}
  (uv\mid v')
  &=(uv\mid \tau(\XX_{W_1 \otimes W_2} v'))_p
    =(uv\mid \tau (v''))_p
  \\
  & =\sig{u_2}{v_1}\sig{v''_1}{v''_2}(u_1v_1 \otimes u_2v_2\mid   v_1'' \otimes v_2'' )_p
  \\
  & = \sig{u_2}{v_1}\sig{v''_1}{v''_2}\sig{u_2v_2}{v''_1}
    (u_1v_1 \mid v''_1)_{W_1}(u_2v_2\mid v''_2)_{W_2}
  \\
  &
    = \sig{u_2}{v_1}\sig{v''_1}{v''_2}\sig{u_2v_2}{v''_1}
    (-1)^{|u_1| |v_1| + |u_2| |v_2|}
    (v_1\mid \dag{u_1}v''_1)_{W_1}(v_2\mid \dag{u_2}v''_2)_{W_2}
  \\
  & = \sig{u}{v}\sig{u_1}{v_2}\sig{v''_1}{v''_2}\sig{u_2v_2}{v''_1}
    (v_1\mid \dag{u_1}v''_1)_{W_1}(v_2\mid \dag{u_2}v''_2)_{W_2}
  \\
  & = \sig{u}{v}\sig{u_1}{v_2}\sig{v''_1}{v''_2}\sig{u_2v_2}{v''_1} \sig{\dag{u_1}v''_1}{v_2}
    (v_1\otimes v_2\mid \dag{u_1}v''_1\otimes \dag{u_2}v''_2)_p
  \\
  & = \sig{u}{v}(-1)^{|v''_1|(|v''_2|+|u_2|)}
    (v_1\otimes v_2\mid \dag{u_1}v''_1\otimes \dag{u_2}v''_2)_p
  \\
  & = \sig{u}{v}(-1)^{|u_1|(|v''_2|+|u_2|)}
    (v_1\otimes v_2\mid \tau(\dag{u_2}v''_2\otimes \dag{u_1}v''_1)_p
  \\
  & = \sig{u}{v}(v_1\otimes v_2\mid
    \tau(\sig{u_1}{u_2}(\dag{u_2}\otimes \dag{u_1})(v''_2\otimes v''_1))_p
  \\
  & = \sig{u}{v}
    (v_1\otimes v_2\mid (\tau(\Delta(\dag{u})\XX_{W_1 \otimes W_2} (v'_1\otimes v'_2)))_p
  \\
  & = \sig{u}{v}(v\mid \tau\circ\XX_{W_1 \otimes W_2} (\dag{u}v'))_p
  \\
   &= \sig{u}{v}(v\mid \dag{u}v')
  \end{align*}
  where we used
  $\Delta(\dag{u})=(-1)^{|u_2||u_1|}\dag{u_2}\otimes\dag{u_1}$.  Hence we have defined a
  compatible sesquilinear form on $W_1 \otimes W_2$.

  In order to check that this defines a tensor product on
  $H^\dagger$-mod, we need to check the unital and associativity
  axioms, which in turn follow from the properties of
  $\XX$.  The unital axiom follows from the fact that
  $$\bp{W\stackrel{\sim}\to W\otimes\unit\stackrel{\XX}\to\unit\otimes W\stackrel{\sim}\to W}
  =\Id_W=\bp{W\stackrel{\sim}\to \unit\otimes W\stackrel{\XX}\to W\otimes\unit\stackrel{\sim}\to W},$$
  which is a  direct consequence of the definition of $\XX$.
  The associativity
  axiom follows from the equality
  $$\XX_{V'\otimes V,V''}(\XX_{V,V'}\otimes\Id_{V''})
  =\XX_{V,V''\otimes V'}(\Id_{V}\otimes\XX_{V',V''}): V\otimes V'\otimes
  V''\to V''\otimes V'\otimes V,$$
  which is proved in \cite[Lemma 4.13]{GLPS1}.
\end{proof}

\begin{theorem}\label{T:SequilinearPivotal}
  $H^\dagger$-mod equipped with a choice of half twist $\dt$ is a sequilinear pivotal $\FK$-category with a not necessarily involutive conjugation determined by:
\[
\forall f\in\Hom_{H^\dagger\text{-}\mod}(V,W),\forall v\in V,\forall w\in W, \quad (f(v)|w)_W=(v|f^\dagger(w))_V.
\]
  Furthermore, $\dag{{c_{V,W}}}={c_{V,W}}^{-1}$ where $c_{V,W}$ is the braiding defined by $c_{V,W}(v\otimes w)=\tau (R(v\otimes w))$\footnote{If in addition, $(f^{\dagger})^{\dagger}=f$  then these conditions imply $H^\dagger$-mod is a Hermitian ribbon category in the sense of \cite[Definition 3.1]{GLPS1}.}.
  \end{theorem}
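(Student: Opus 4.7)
The plan is to verify in turn the three structural ingredients missing from Proposition~\ref{P:tensor}: (i) the assignment $f \mapsto f^\dagger$ descends to a well-defined conjugation on $H^\dagger$-mod; (ii) the pivotal data \eqref{E:PivotStrH} satisfies the sesquilinear identities \eqref{eq:Hermitian+dual}; and (iii) the braiding $c_{V,W} = \tau \circ R$ satisfies $c^\dagger = c^{-1}$. The monoidal structure is already in hand from Proposition~\ref{P:tensor}.

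First I would show that whenever $f \colon V \to W$ is an even $H$-module morphism between sesquilinear $H$-modules, the adjoint $f^\dagger$ defined by $(f(v)|w)_W = (v|f^\dagger(w))_V$ is again an $H$-module morphism. This is a short computation applying \eqref{E:sesquilinearComp} on each side of the pairing and using non-degeneracy of the form on $V$. The antilinearity $(af)^\dagger = \bar a f^\dagger$, the identity axiom $\Id^\dagger = \Id$, and reversal $(f \circ g)^\dagger = g^\dagger \circ f^\dagger$ are immediate from the definition. The delicate axiom is $(f \otimes g)^\dagger = f^\dagger \otimes g^\dagger$: since the tensor form is twisted by $\tau \circ \XX$, this identity reduces via naturality of $\XX$ in both arguments (itself a consequence of naturality of the half twist and of the braiding) to the corresponding tensor compatibility for the untwisted form \eqref{E:DefForm-p2}, where it is transparent.

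Next I would verify the pivotal identities \eqref{eq:Hermitian+dual}. Working with a basis $\{v_i\}$ of $V$ and its right sesquilinear dual $\{v_i^*\}$, I would pair the duality morphisms from \eqref{E:PivotStrH} against the twisted form on $V \otimes V^*$ and $V^* \otimes V$ and match them to one another. The crucial input is $g^\dagger = g^{-1}$ from \eqref{eq:Rdag2}, which converts the factor of $g^{-1}$ in $\tcoev_V$ into the factor of $g$ in $\tev_V$ under the dagger; the super signs in \eqref{E:PivotStrH} conspire correctly with the sign in \eqref{E:DefForm-p2}. Finally, for the ribbon compatibility I would unfold
\[
(c_{V,W}(v \otimes w)\mid v'\otimes w') \;=\; (\tau(R(v\otimes w))\mid v'\otimes w')
\]
using the twisted form, move the $R$ across the pairing using \eqref{E:sesquilinearComp}, and recognize the resulting insertion of $(\dagger \otimes \dagger)(R) = \tau(R^{-1})$, which is exactly the expression appearing in $c_{V,W}^{-1}$; this yields $c^\dagger = c^{-1}$.

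The main obstacle will be the tensor-compatibility step in (i). The sesquilinear form on a tensor product is deliberately twisted by the half twist and the R-matrix so as to be $H$-compatible, and one must verify that this same twist does not interfere with the composition of daggers on the tensor factors. This is a bookkeeping exercise relying on naturality of $\XX$ combined with the super signs in \eqref{E:DefForm-p2}, and it is the one place where the choice of half twist genuinely enters the argument for the pivotal structure; the braiding step (iii) is by comparison a direct unwinding once the framework is set up.
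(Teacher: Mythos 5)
Your plan is correct and follows essentially the same route as the paper: basis computations for the duality maps using the right sesquilinear dual basis and $g^\dagger=g^{-1}$, plus moving $R$ across the pairing via $(\dagger\otimes\dagger)(R)=\tau(R^{-1})$ for the braiding. Two points worth flagging. First, your part~(i) (well-definedness of the conjugation, in particular $(f\otimes g)^\dagger=f^\dagger\otimes g^\dagger$) is a genuine, correct addition: the paper implicitly assumes this and goes straight to \eqref{eq:Hermitian+dual}, so spelling out the naturality-of-$\XX$ argument improves completeness. Second, in parts~(ii) and~(iii) your account is a bit vague precisely where the half twist does real work: the pivotal computations hinge on the identities $\XX_{V^*,V}\circ\tcoev_V=\coev_V$ and $\XX_{V,V^*}\circ\coev_V=\tcoev_V$ (from \cite[Lemma 4.13]{GLPS1}), and the braiding computation needs $\XX_{V,W}\,c_{V,W}^{-1}=c_{W,V}^{-1}\,\XX_{W,V}$ before $R$ can be moved across; these should be invoked explicitly rather than folded into ``super signs conspiring.''
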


\begin{proof}
To check that  $H^\dagger$-mod is a sequilinear pivotal category, we need to show the relations in \eqref{eq:Hermitian+dual} hold.

 We first compute the adjoint of
  $\ev_V$.   To do this we fix a basis  $\{e_i\}_i$ of $V$ and let $\{e'_i\}_i$ be the right sesquilinear dual basis
 such that $(e_i\mid e'_j)_V=\delta_i^j$. Then  for all basis elements $e_j$ and $e_k^*$ we have   $$\bp{e_k^*\otimes e_j,\tcoev_V(1)}_{V^*\otimes V}=\bp{e_k^*\otimes e_j,\tau\XX_{V^*, V}\tcoev_V(1)}_p
  =\bp{e_k^*\otimes e_j,\tau\coev_V(1)}_p
 $$
  $$ =\bp{e_k^*\otimes e_j,\sum_i (-1)^{|e_i'|} (e_i')^*\otimes e_i'}_p
  = \sum_i (-1)^{|e_i'|+|e_i'||e_j|} \bp{e_k^*,(e_i')^*}_{V^*} \bp{e_j, e_i'}_{V}$$
  $$= \sum_i (-1)^{|e_i'|+|e_i'||e_j|}\sum_l \wb{e_k^*(e_l)}(e_i')^*(e_l')\delta_{i,j}
  =\sum_{i,l}(-1)^{|e_i|+|e_i||e_j|}\delta_{k,l}\delta_{i,l}\delta_{i,j}=\delta_{k,j}
  =\bp{\ev_V(e_k^*\otimes e_j),1}_\unit,$$
where the second equality follows from \cite[Lemma 4.13]{GLPS1}.  Hence $\tcoev_V=\ev_V^\dagger$.

Next we compute the adjoint of
  $\coev_V$.
  To do this 
  we use that for any vector $v\in V,$ $\wb{\bp{v,e_i'}_V}=e_i^*(v)$.
  Then for all basis elements $e_j$ and $e_k^*$ we have
$$\bp{e_j\otimes e_k^*,\coev_V(1)}_{V\otimes V^*}=\bp{e_j\otimes e_k^*,\tau\XX_{ V,V^*}\coev_V(1)}_p
  =\bp{e_j\otimes e_k^*,\tau\tcoev_V(1)}_p
 $$
 $$
  =\bp{e_j\otimes e_k^*,\sum_i g^{-1}e_i'\otimes (e_i')^*}_p= \sum_i (-1)^{|e_i'||e_k^*|} \bp{e_j, g^{-1}e_i'}_{V}\bp{e_k^*,(e_i')^*}_{V^*}
 $$
 $$
  = \sum_i (-1)^{|e_i'||e_k^*|} \bp{e_j, g^{\dagger}e_i'}_{V}\sum_l \wb{e_k^*(e_l)}(e_i')^*(e_l')
  = \sum_{i,l }(-1)^{|e_i'||e_k^*|} \bp{ge_j, e_i'}_{V}\delta_{k,l}\delta_{i,l}
 $$
  $$
  = (-1)^{|e_k'||e_k^*|} \bp{ge_j, e_k'}_{V}
=(-1)^{|e_k^*||e_j|}\wb{e_k^*(ge_j)}=\bp{\tev_V(e_j\otimes e_k^*),1}_{\unit}
 $$
where in the
second equality we use \cite[Lemma 4.13]{GLPS1}, in the fourth equality we use the fact that the pivotal element $g$ is group-like so is necessarily of degree zero, and in the
second to last equality we use the fact that $|e_k^*|=|e_j|$ if $e_k^*(ge_j)$ is non-zero.
So $\tev_V^\dagger=\coev_V$.

Next, using that the evaluations are determined by the zigzag relations, we have
$$\Id_V=\dag{\bp{(\Id_V\otimes\ev_V)(\coev_V\otimes\Id_V)}}=(\dag\coev_V\otimes\Id_V)(\Id_V\otimes\tcoev_V)$$
so that $\dag\coev_V=\tev_V$ and similarly, $\dag\tcoev_V=\ev_V$.

Finally, we check that $\dag{{c_{V,W}}}={c_{V,W}}^{-1}$.  To do this we will need the fact that
$$
\XX_{V,W} c_{V,W}^{-1}= c_{W,V}^{-1} \XX_{W,V}.
$$
This follows since $\XX$ commutes with $\Delta(\dt)$, $\dt\otimes \dt$, and $\XX$, and because by the definition of $\XX$ we have
 $$
c_{V,W}^{-1}=\dt_{V}\otimes \dt_{W} \XX^{-1}_{V,W} (\dt_{W \otimes V})^{-1}.
$$
Then
\begin{align*}
\label{}
  \bp{v\otimes w,c_{V,W}^{-1}( w'\otimes v')}_{V\otimes W}  & =   \bp{v\otimes w, \tau\XX_{V,W} c_{V,W}^{-1}( w'\otimes v')}_{p}  \\
    &  = \bp{v\otimes w, \tau c_{W,V}^{-1}\XX_{W,V}( w'\otimes v')}_{p}  \\
    &  = \bp{v\otimes w, \tau (R^{-1} \tau) \XX_{W,V}( w'\otimes v')}_{p}  \\
     &  = \bp{v\otimes w, R^{-1}_{21}  \XX_{W,V}( w'\otimes v')}_{p}  \\
      &  = \bp{v\otimes w, R^{\dagger \otimes \dagger}  \XX_{W,V}( w'\otimes v')}_{p}  \\
        &  = \bp{R(v\otimes w),  \XX_{W,V}( w'\otimes v')}_{p}  \\
          &  = \bp{\tau R(v\otimes w), \tau \XX_{W,V}( w'\otimes v')}_{p}  \\
      &=    \bp{c_{V,W}(v\otimes w),w'\otimes v'}_{V\otimes W}
\end{align*}
where $R^{-1}\tau$ and $ R^{-1}_{21}=\tau(R)^{-1}$ are the actions by the corresponding elements.

Finally, we note by assumption that $g^{\dagger}=g^{-1}$.
\end{proof}




We now consider existence and symmetry of compatible sesquilinear forms.
If $V$ is a $\FK$-vector space, let $\wb V=\{\bar v:v\in
V\}$ be the same additive group with antilinear scalar multiplication,
i.e. for any $k\in\FK,v\in V$, one has $\wb k.\wb
v=\wb{k.v}$.  Let $x\mapsto \ldag
x$ be the inverse isomorphism of $\dagger:H\to H$.  Then if
$V$ is an $H$-module, we can define a module $\con
V$ with underlying vector space $\wb V$ and action given by $\forall
x\in H,\forall v\in V$,
\begin{equation} \label{eq:Vbar-action}
x.\wb v= 
\wb{\ldag{S(x)}.v}.
\end{equation}
To see that this is a well-defined action, observe
\begin{align*}
 xy \cdot \wb v
 	&\refequal{\eqref{eq:Vbar-action}}  
    \wb{\ldag{S(xy)}.v}
	 \refequal{\eqref{eq:superanti}} 
             \sig xy\wb{\ldag{(S(y) S(x))}.v}	
	\\ &= 
		\wb{ \ldag{S(x)}\ldag{S(y)} .v}=x\cdot(y\cdot\wb v)
\end{align*}
where we used that $\ldag{(xy)} = \sig xy \ldag{y} \ldag{x}$.
\begin{lemma}\label{L:sesq}
  Let $V$ be an $H$-module. Then there exists a bijection between the
  set of compatible sesquilinear forms $f$ on $V$ and the set of
   even isomorphisms of $H$-modules $\vp:\con V\stackrel{\sim}\to V^*$.
\end{lemma}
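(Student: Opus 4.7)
The plan is to construct the bijection explicitly and then verify that the three conditions defining each side (sesquilinearity plus compatibility with $\dagger$; $H$-linearity plus being an isomorphism) correspond under this construction.

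First, I would define the forward map as follows. Given a compatible sesquilinear form $f=(\cdot\mid\cdot)_V$ on $V$, set
\[
\varphi_f \maps \con V \longrightarrow V^*, \qquad \varphi_f(\wb v)(v') := (v\mid v')_V.
\]
Because $f$ is antilinear in the first slot and linear in the second, $\varphi_f(\wb v)$ is a genuine element of $V^*$ and the assignment $\wb v \mapsto \varphi_f(\wb v)$ is $\FK$-linear on $\wb V$. The assumed parity of $f$ (which vanishes on pairs of opposite parity) forces $\varphi_f$ to be of degree $\wb 0$. The inverse map sends an even $H$-morphism $\varphi\maps\con V\to V^*$ to the form $f_\varphi(v,v') := \varphi(\wb v)(v')$, and these two assignments are manifestly mutually inverse at the level of maps of graded vector spaces.

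The central step is to match the two structural constraints. On one side, the compatibility \eqref{E:sesquilinearComp} states $(uv\mid v')_V=\sig u v(v\mid \dag u v')_V$. On the other side, $H$-linearity of $\varphi$ uses the action \eqref{eq:Vbar-action} on $\con V$ and the dual action \eqref{eq:superdual} on $V^*$. Unpacking, for $x\in H$ homogeneous,
\[
\varphi_f(x.\wb v)(v')=(\ldag{S(x)}v\mid v')_V,\quad (x.\varphi_f(\wb v))(v')=\sig x v\,\varphi_f(\wb v)(S(x)v').
\]
Applying the compatibility of $f$ with $u=\ldag{S(x)}$, and using $\dagp{\ldag{S(x)}}=S(x)$ together with $|S(x)|=|x|$, both sides agree. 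Thus compatibility of $f$ is equivalent to $H$-linearity of $\varphi_f$, and conversely the $H$-linearity of a given $\varphi$ forces $f_\varphi$ to satisfy \eqref{E:sesquilinearComp} by reversing the same calculation.

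Finally, I would observe that non-degeneracy of $f$ translates to $\varphi_f$ being an isomorphism: since $V$ is finite-dimensional, injectivity of $\varphi_f$ (i.e.\ triviality of the left radical of $f$) is equivalent to bijectivity, and by symmetry of non-degeneracy this is equivalent to the form being non-degenerate in the usual sense. The main point requiring care is just the bookkeeping of signs in the computation equating compatibility and $H$-linearity, in particular correctly combining $\sig x v$ from the dual action with the $\dagger$-compatibility sign; once this is done, the remaining verifications are formal.
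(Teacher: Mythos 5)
Your proposal is correct and follows essentially the same route as the paper: you construct the same explicit bijection $\wb v\mapsto f(v\mid\cdot)$, and you verify that the compatibility condition \eqref{E:sesquilinearComp} on the form is equivalent to $H$-linearity of $\vp_f$ by substituting $u=\ldag{S(x)}$ and using $\dagp{\ldag{S(x)}}=S(x)$ with $|S(x)|=|x|$, which is exactly the sign bookkeeping the paper carries out (the paper writes the chain of equalities starting from $\vp$ intertwining and deducing compatibility of $f$; you phrase it symmetrically in both directions, which is a fine and slightly cleaner presentation). You also make explicit the elementary point that non-degeneracy of the form corresponds to $\vp_f$ being an isomorphism via finite-dimensionality, which the paper leaves implicit.
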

\begin{proof}
  The sesquilinear form associated to an isomorphism
  $\vp: \con V\stackrel\sim\to V^*$ is given by
  $$f:V\times V\stackrel{\bar\Id\otimes\Id}\longrightarrow
  \con V\otimes V\stackrel{\vp\otimes\Id}\longrightarrow
  V^*\otimes V\stackrel\ev\longrightarrow \FK ,$$
  where we note that the first $\times$ above is not a tensor product, and $\wb\Id$ is not linear.
  The compatibility follows from:
 \begin{align*}
   f(x.v_1,v_2) &:=\ev(\vp(\wb{x.v_1})\otimes v_2)
                  =\ev(\vp(\wb{\ldag{S}(S^{-1}(\dag{x})).v_1})\otimes v_2)
   \\
                &\refequal{\eqref{eq:Vbar-action}}
                  \ev(\vp(S^{-1}(\dag x).\bar v_1)\otimes v_2)
   \\
                &\refequal{\varphi\text{ intertwines}} 
                  \ev(S^{-1}(\dag x).\vp(\bar v_1)\otimes v_2)
   \\
                & \refequal{\text{def of ev}}
                  [ S^{-1}(\dag x).\vp(\bar v_1)](v_2)
   \\
                &\refequal{\eqref{eq:superdual}}
                  \sig x{v_1}
                  [ \vp(\bar v_1)] (\dag x.v_2)
   \\
                & = 
                  \sig x{v_1}\ev(\vp(\bar v_1)\otimes\dag{x}.v_2)=\sig x{v_1} f(v_1|\dag{x}.v_2),
 \end{align*}
 where we used that $S$ and its inverse commute with $\dagger$ and its inverse, and that $|\varphi(v_1)|=|v_1|$ since $\varphi$ is an even map.
  Conversely,
  given a sesquilinear form $f$, $\vp$ is defined by
  $\vp(\wb v)=f(v|\cdot)$.
\end{proof}

\begin{lemma}\label{L:ProportionalHermitianSimple}
  Assume that $\FK=\C$ and $H$ has a superinvolutive dagger. If $V$
  is an irreducible sesquilinear module, then the form on $V$ is
  proportional to a Hermitian form.
\end{lemma}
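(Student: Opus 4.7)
The plan is to invoke Schur's lemma via the bijection of Lemma \ref{L:sesq} between compatible sesquilinear forms on $V$ and even $H$-module isomorphisms $\con V \to V^*$.

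First I would define the ``flipped conjugate'' form $\tilde f(v_1,v_2) := \overline{f(v_2,v_1)}$ on $V$ and check that $\tilde f$ is again a compatible sesquilinear form. Sesquilinearity is immediate; for compatibility with $\dagger$, I would start from the identity $f(x^\dagger v_2, v_1) = (-1)^{|x||v_2|} f(v_2, x^{\dagger\dagger} v_1)$ coming from the $\dagger$-compatibility \eqref{E:sesquilinearComp} of $f$. Plugging in superinvolutivity $x^{\dagger\dagger} = (-1)^{|x|} x$ and conjugating would give
\[
\tilde f(xv_1, v_2) \;=\; (-1)^{|x||v_2|+|x|}\,\tilde f(v_1, x^\dagger v_2).
\]
To match the desired sign $(-1)^{|x||v_1|}$, I would exploit that the form is graded, so on its nonzero support $|v_1|+|v_2| = |x|$; then the discrepancy $|x|(|v_1|+|v_2|+1) = |x|(|x|+1)$ vanishes mod $2$.

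Second, Lemma \ref{L:sesq} then sends $f$ and $\tilde f$ to even $H$-module isomorphisms $\varphi,\tilde\varphi : \con V \to V^*$. Since $V$ is irreducible, so are $\con V$ and $V^*$, and Schur's lemma over $\C$ forces $\tilde\varphi = \lambda\varphi$ for a unique $\lambda \in \C^*$, i.e.\ $\tilde f = \lambda f$. Iterating the flipped-conjugate operation once more, directly $\tilde{\tilde f} = f$ from the definition, while also $\tilde{\tilde f} = \bar\lambda\,\tilde f = |\lambda|^2 f$, so $|\lambda| = 1$.

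Finally, since $|\lambda|=1$ I would choose $\mu \in \C$ with $\mu/\bar\mu = \lambda$ (for instance, a square root of $\lambda$ on the unit circle). Then
\[
\widetilde{\mu f} \;=\; \bar\mu\,\tilde f \;=\; \bar\mu\lambda\, f \;=\; \mu f,
\]
which is the Hermitian condition on $\mu f$, so $f$ is proportional to the Hermitian form $\mu f$. I expect the sign bookkeeping in the first step to be the main technical obstacle: the superinvolutivity of $\dagger$ and the grading of the form must conspire precisely to produce a compatible $\tilde f$, particularly in the case where $|x|$ is odd.
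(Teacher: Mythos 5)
Your proof is correct but takes a genuinely different route from the paper's. The paper decomposes $V = V_0 \oplus V_1$ into parity subspaces, shows each is irreducible over the even subalgebra $H_0$, invokes the non-super \cite[Lemma 4.3]{GLPS1} to rescale the form on $V_0$ to be Hermitian and to write the form on $V_1$ as $\lambda$ times a Hermitian form, and then uses $h_1^{\dagger\dagger}=-h_1$ for odd $h_1$ in a short pairing computation to force $\lambda\in\R$. You instead work globally: the key observation, which you identify correctly, is that superinvolutivity combined with the graded support of the form (so $|v_1|+|v_2|\equiv|x|\pmod 2$ wherever the pairing is nonzero) gives $|x|(|v_1|+|v_2|+1)\equiv|x|(|x|+1)\equiv 0\pmod 2$, which is exactly what is needed for the flipped-conjugate form $\tilde f$ to again be $\dagger$-compatible. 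Lemma \ref{L:sesq} and Schur's lemma over $\C$ (applied to even morphisms $\con V\to V^*$ between simple supermodules, whose even endomorphism algebra is $\C$) then give $\tilde f=\lambda f$ with $|\lambda|=1$, and the rescaling by a square root of $\lambda$ on the unit circle is standard. Your route is more self-contained, dispensing with the degree-by-degree reduction and the citation to \cite{GLPS1}, at the cost of the sign bookkeeping you anticipated; it also handles the degenerate cases $V_0=0$ or $V_1=0$ uniformly, which the paper's final step (requiring a triple $v_0\in V_0$, $v_1\in V_1$, $h_1\in H_1$ with nonzero pairing) implicitly assumes away. Both arguments invoke superinvolutivity exactly once and essentially.
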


\begin{proof}
Let $V_0$ and $H_0$ be the degree zero subspaces of $V$ and $H$ respectively.
  First note that $V_0$ is an irreducible $H_0$-module because any $H_0$-submodule
  $W_0$ of $V_0$ would generate an $H$-submodule of $V$, namely
  $H\cdot V_0$.  Then one can reproduce the proof of \cite[Lemma 4.3]{GLPS1}
    to show that up to rescaling, we can
  assume the restriction of $(\cdot\mid\cdot)$ on $H_0\times H_0$ is
  Hermitian.  Similarly we can assume that the form on $H_1\times H_1$
  is $\lambda$ times a Hermitian form.  Next, since $V_0$ is not an
  $H$-submodule, there exists $v_0\in V_0$, $v_1\in V_1$,
  $h_1\in H_1$ such that
  $$1= (h_1v_0\mid v_1)=(v_0\mid \dag h_1v_1)=\wb{(\dag h_1v_1\mid v_0)}=
  -\wb{( v_1\mid  { h_1^{\dagger\dagger}}v_0)}=-\wb\lambda\lambda^{-1}(h_1^{\dagger\dagger}v_0\mid v_1)=\wb\lambda\lambda^{-1}$$
  so the form is also Hermitian on $V_1$.
\end{proof}
A module $W$ of $H^\dagger$-mod is called {\em Hermitian} if its
sesquilinear form is Hermitian.
The orthogonal direct sum of two sesquilinear modules is a
sesquilinear module.  Reciprocally, we say that a sesquilinear module
is split if it is a direct sum of sesquilinear modules with the induced
sesquilinear forms.  We say that a sesquilinear module is semi-split if 
all its summands can be equipped with compatible sesquilinear
forms (which are not necessarily the restrictions of the original
sesquilinear form).

\begin{proposition}\label{P:TensorProdHerm}
  Let $W_1,W_2$ be Hermitian modules of $H^\dagger$-mod which are
  indecomposable in $H$-mod.  If $W_1\otimes W_2$ is semi-split then
  $W_1\otimes W_2$ is Hermitian.
\end{proposition}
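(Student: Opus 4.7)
The plan is to establish the Hermitianness of the sesquilinear form $F$ on $W_1\otimes W_2$ supplied by Proposition~\ref{P:tensor} by comparing $F$ with its conjugate-transpose $F^H(v,v'):=\overline{F(v',v)}$ and showing that the two must coincide.

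The first step is to verify that $F^H$ is itself a compatible sesquilinear form on $W_1\otimes W_2$. Because the forms on $W_1$ and $W_2$ are Hermitian, the dagger on morphisms between their modules is superinvolutive by Lemma~\ref{L:invol-herm}(2); combined with the naturality of the half-twist $\dt$ and the defining formula~\eqref{defofX} for $\XX$, this lets one repeat the super-signed manipulation in the proof of Proposition~\ref{P:tensor} with the two arguments of $F$ swapped and conjugated, yielding the compatibility of $F^H$.

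Via Lemma~\ref{L:sesq}, the forms $F$ and $F^H$ correspond to $H$-module isomorphisms $\varphi,\varphi^H:\con{W_1\otimes W_2}\stackrel{\sim}{\to}(W_1\otimes W_2)^*$, and $F=F^H$ is equivalent to the automorphism $u:=\varphi^{-1}\varphi^H$ being the identity. Since the operation $F\mapsto F^H$ is (super)involutive, $u$ satisfies a quadratic relation of the form $u\circ\bar u=\pm\Id$ after appropriate identification of double-conjugates. The semi-split hypothesis now enters: decompose $W_1\otimes W_2=\bigoplus_\alpha V_\alpha$ into indecomposable $H$-submodules, each carrying a compatible sesquilinear form. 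By Krull-Schmidt this decomposition is essentially unique, the rings $\End_H(V_\alpha)$ are local, and $\Hom_H(\con{V_\alpha},V_\alpha^*)\neq 0$. A blockwise analysis of $u$ with respect to this decomposition, together with the involutivity relation, should force $u$ to act by a scalar on each indecomposable summand.

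Finally, these scalars are to be pinned down to $+1$ by evaluating $F$ and $F^H$ on pairings built from the pivotal duality morphisms of~\eqref{E:PivotStrH}, whose compatibility with the sesquilinear forms was verified as part of Theorem~\ref{T:SequilinearPivotal}. The main obstacle I expect is precisely this sign determination: ruling out $u$ acting by $-1$ on some indecomposable summand requires a careful tracking of the super-signs arising from $\dt$ and $\XX$. The semi-split hypothesis is essential here, because for non-simple indecomposable summands the space of compatible sesquilinear forms need not be one-dimensional, and one relies on the guaranteed existence of a compatible form on each $V_\alpha$ to apply Lemma~\ref{L:sesq} block-by-block.
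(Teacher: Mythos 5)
Your proposal takes a genuinely different route from the paper, but as written it has a gap precisely at the step you yourself flag as ``the main obstacle.'' The paper does not use a conceptual comparison of $F$ with its conjugate-transpose and a Schur/Krull--Schmidt argument; instead it verifies $(v'\mid v)=\overline{(v\mid v')}$ by a direct computation that unpacks the definition $(v\mid v')=(v\mid\tau\XX(v'))_p$ using the explicit form $\XX=\dt_{W_2\otimes W_1}^{-1}\circ\tau\circ R\circ(\dt_{W_1}\otimes\dt_{W_2})$, passes the half-twist and $R$-matrix across the pairing using $\dagger$, and uses $\overline{\dt_j}=\dt_j^{-1}$ to see that everything reassembles into $\tau\XX$ acting on $v'$. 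The two structural inputs that make this computation go through are exactly the ones your argument does not fully exploit: (i) from the semi-split hypothesis and Lemma~\ref{L:sesq}, $\con{V_i}\simeq V_i^*$, and (ii) from the non-degeneracy of the restricted pairing on $V_i'\times V_i$ one gets ${V_i'}^*\simeq\con{V_i}$ and hence $V_i\simeq V_i'$, so the half-twist scalar $\brk{\dt_{V_i}}=\brk{\dt_{V_i'}}$ agrees on the summand and its dual complement. That equality of scalars is what lets the computation close; it is the mechanism behind the sign.

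Two concrete problems with your proposal. First, the claim that ``a blockwise analysis of $u$ together with the involutivity relation forces $u$ to act by a scalar on each indecomposable summand'' is not justified: $u$ need not preserve the decomposition (it can permute isomorphic summands), and for non-simple indecomposable $V_\alpha$ the local ring $\End_H(V_\alpha)$ has a nontrivial nilradical, so a quadratic relation $u\bar u=\pm\Id$ does not force $u$ to be central; moreover $\bar u$ is not defined in your write-up. Second, and more seriously, the plan to ``pin the scalar down to $+1$'' via the pivotal duality morphisms of~\eqref{E:PivotStrH} is not carried out and is unlikely to succeed without re-deriving the paper's computation: in the actual proof the sign is controlled by $\overline{\dt_j}=\dt_j^{-1}$ and by $\brk{\dt_{V_i}}=\brk{\dt_{V_i'}}$, facts about the half-twist that the pivotal duality morphisms alone will not deliver. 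In short, you have reorganized the problem but deferred its essential content to a step you acknowledge you cannot complete; that step is precisely where the paper's proof does its work.
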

\begin{proof}
  Consider a direct sum decomposition $W_1 \otimes W_2=\bigoplus_iV_i$
  with $V_i$ indecomposable and the dual decomposition
  $V'_i=\bs{v'\in W_1 \otimes W_2 | (v',\bigoplus_{j\neq
      i}V_j)=\bs0}$.

  Fix a factor $V_i$. By Lemma \ref{L:sesq}, since $W_1\otimes W_2$ is semi-split, we have $\con{V_i}\simeq
  V_i^*$.  Similarly, since
  $(\cdot|\cdot)$ restricts to a non-degenerate sesquilinear pairing on $V'_i\times
  V_i$, we have ${V'_i}^*\simeq \con{V_i}$ where the isomorphism is given by
  $$\begin{array}[t]{rcl}
     \con{V_i}&\to&{V'_i}^*\\
     \bar v&\mapsto& \wb{\bp{\cdot,v}}_{\ |V_i}
   \end{array} .$$
    So in particular, $V_i\simeq V'_i$ and the half twist has the same value
    $\brk{\dt_{V_i}}=\brk{\dt_{V_i'}}$, which we denote by $\dt_0\in\C$. Let us call
    $\dt_1=\brk{\dt_{W_1}}$ and
    $\dt_2=\brk{\dt_{W_2}}$ and $\theta_j=\dt_j^2$. Let $(v,v')\in
    V_i\times V'_i$. Then
  \begin{align*}
    (v'|v)&=(v'|\tau\XX(v))_p \\
    	&= (v'|\tau\dt_{W_2 \otimes W_1}^{-1}\tau R(\dt_{W_1}\otimes\dt_{W_2})(v))_p\\
          &=\dt_0^{-1}\dt_1\dt_2\bp{v'|\sqr(\theta_0\Delta^{op}(\theta))
            R(\sqr(\theta_1^{-1}\theta^{-1})
            \otimes\sqr(\theta_2^{-1}\theta^{-1}) (v)) }_p
            \\
          &=\dt_0^{-1}\dt_1\dt_2\bp{\bp{\sqr(\wb\theta_1^{-1}\dagp{\theta^{-1}})
            \otimes\sqr(\wb\theta_2^{-1}\dagp{\theta^{-1}})}R_{21}^{-1}
            \sqr(\wb\theta_0\Delta(\dag{\theta}))
            )v'|v}_p\\
          &=\bp{\dt_0\dt_1^{-1}\dt_2^{-1}\bp{\sqr(\theta_1\theta)
            \otimes\sqr(\theta_2\theta)}R_{21}^{-1}
            \sqr(\theta_0^{-1}\Delta(\theta^{-1}))
            )v'|v}_p\\
          &=\bp{\tau(\dt_{W_2}^{-1}\otimes\dt_{W_1}^{-1})R^{-1}\tau
            \dt_{W_1 \otimes W_2}
            )v'|v}_p=\bp{\tau\XX^{-1}(v')|v}_p\\
          &=\bp{\tau\XX(v')|v}_p=\wb{\bp{v|\tau\XX(v')}_p}=\wb{\bp{v|v'}}.
  \end{align*}
  In the above equalities, the elements $\theta, \theta^{-1}$, etc.,
  are all acting on modules.  Note that the fifth equality above
  follows from $\wb\dt_j=+\dt_j^{-1}$.
\end{proof}

\section{Relative Hermitian-modular and Hermitian-spherical categories} \label{sec:relhermmodsph}
In this section we recall notions related to generically semisimple categories and consider Hermitian structures on these categories.

\subsection{Modified trace in a Hermitian pivotal category}
\subsubsection{Modified traces on projective modules.}  Let $\cat$ be a pivotal $\FK$-category and
let $\Proj$ be the full subcategory of $\cat$
consisting of projective objects.
%
For any objects $V,W$ of $\cat$, and any endomorphism $f$ of $V\otimes
W$, set
\begin{equation}\label{E:trL}
\ptr_{L}(f)=(\ev_{V}\otimes \Id_{W})\circ(\Id_{V^{*}}\otimes
f)\circ(\tcoev_{V}\otimes \Id_{W}) \in \End_{\cat}(W),
\end{equation} and
\begin{equation}\label{E:trR}
\ptr_{R}(f)=(\Id_{V}\otimes \tev_{W}) \circ (f \otimes \Id_{W^{*}})
\circ(\Id_{V}\otimes \coev_{W}) \in \End_{\cat}(V).
\end{equation}

\begin{definition}\label{D:trace}  A \emph{modified trace on $\Proj$} (or \emph{m-trace}) is a family of linear functions
$\{\mt_V:\End_\cat(V)\rightarrow \Bbbk\}$
where $V$ runs over all objects of $\Proj$, such that the following
conditions hold.
\begin{enumerate}
\item  If $U\in \Proj$, and $W\in \ob$, then for any $f\in \End_\cat(U\otimes W)$, we have
\begin{equation}\label{E:VW}
\mt_{U\otimes W}\left(f \right)=\mt_U \left( \ptr_R(f)\right).
\end{equation}
\item  If $U\in \Proj$, and $W\in \ob$, then for any $f\in \End_\cat(W\otimes U)$, we have
\begin{equation}\label{E:VWleft}
\mt_{W\otimes U}\left(f \right)=\mt_U \left( \ptr_L(f)\right).
\end{equation}
\item  If $U,V\in \Proj$, then for any morphisms $f:V\rightarrow U $, and $g:U\rightarrow V$  in $\cat$, we have
\begin{equation}\label{E:fggf}
\mt_V(g\circ f)=\mt_U(f \circ g).
\end{equation}
\end{enumerate}
\end{definition}
If $\mt$ satisfies only \eqref{E:VW} and \eqref{E:fggf}, it is called
a right trace, (and the notion of a left trace is similar). When the category is ribbon, there is no difference
between a right trace and a trace on $\Proj$.  If $\cat$ is unimodular,
there exists up to a scalar a unique right trace on $\Proj$; it is
non-degenerate (cf. Theorem 5.5 of \cite{GKP3}), in the following
way. Let $V,W\in\cat$ with $V$ projective.  Then the pairing
$\brk{\cdot,\cdot}_{V,W}: \Hom_\cat(W,V)\otimes\Hom_\cat(V,W)\to\C$
given by $$\brk{f,g}_{V,W}=\mt_V(fg)$$ is non-degenerate.  It is
symmetric in the following sense. If $W$ is also projective, then
\begin{equation}
  \label{eq:pairing}
  \brk{g,f}_{W,V}=\brk{f,g}_{V,W} \ .
\end{equation}
If $W$ is not projective, then we take \eqref{eq:pairing} as a
definition.

\subsubsection{Renormalized ribbon graph evaluations} \label{subsec:renorm-ribbon-graph}
We now recall the process of renormalizing colored ribbon graphs invariants first used in
\cite{GP1,GP2}.
Let $\cat$ be a $\FK$-linear pivotal category. Following Turaev (\cite{Tu}), by a $\cat$-colored ribbon graph we mean
an oriented surface obtained as the thickening of a graph 
whose oriented edges are colored by objects of $\cat$ and whose
vertices are divided in two sets: the boundary vertices which are univalent and the internal vertices which are
thickened to coupons colored by morphisms of~$\cat$.  Let
$ \Graph_\cat$ be the category of $\cat$-colored ribbon graphs
embedded in $\R \times [0,1]$ and $\Gfun: \Graph_\cat \to \cat$ be the
``planar'' Reshetikhin-Turaev $\FK$-linear
functor 
(see \cite{GPT2}).  When $\cat$ is ribbon, we also denote by $\Gfun$ the original Reshetikhin-Turaev functor from the category of $\cat$-colored ribbon graphs
embedded in $\R^2 \times [0,1]$ to $\cat$.

Let $\cat$ be a $\FK$-linear pivotal (resp. ribbon) category and let
$T\subset S^2$ (resp. $T\subset S^3$) be a closed $\cat$-colored ribbon
graph (i.e. with no boundary vertices).  Let $e$ be an edge of $T$ colored with a projective object $V$ of $\cat$.
Cutting $T$ at a point of $e$, we obtain a $\cat$-colored ribbon graph
$T_V $ in $\R\times [0,1]$ (resp., in $\R^2\times [0,1]$) where
$\Gfun(T_V)\in\End(V)$.  
We call $T_V $ a \emph{cutting
  presentation} of $T$. 
A trace $\mt = \{\mt_{V}\}_{V \in \Proj}$ determines an isotopy
invariant $\Gfun'$ of $\cat$-colored ribbon graphs determined by
\begin{equation} \label{eq:defG'mt}
  \Gfun'(T)=\mt_V(T_V)
\end{equation}
  where $T_V$ is any cutting
presentation of $T$ (see \cite{GKP1}).

Let $\A$ be the set of simple projective objects of $\cat$.  Define  a
function $\qd:\A\to \FK^\times$ by $\qd(V)=\mt_{V}(\Id_V)$ so that $\qd(V)=\qd(V^*)$ for all $V\in \A$.  If $T_V$ is as above with
$V\in\A$, then $\Gfun(T_V)\in\End(V)=\FK \Id_V$. Let
$\ang{T_V} \, \in \FK$ denote the isotopy invariant of $T_V$ defined
from the equality $\Gfun(T_V )= \, \ang{ T_V}\, \Id_V$.  Then
$\Gfun'(T)$ can be computed by:
\begin{equation} \label{eq:defG'}
  \Gfun'(T)=\qd(V)\ang{T_V}.
\end{equation}

\subsubsection{Modified trace and pairing of morphisms}
Let $\cat$ be
a 
Hermitian pivotal category and assume that $\cat$ has a trace on
projective objects.
We say a trace is {\em real} if for any endomorphism $f$ of a projective object, one has
$$\mt(f^\dagger)=\wb{\mt(f)}.$$

\begin{lemma} \label{lem:tracedagger} Assume that there exists a
  simple projective module in $\cal{C}$. Then the trace is proportional to a real
  one. In particular there is a non-zero real trace on $\Proj$ for
  which $\wb{\md(V)}=\md(V)$ for any $V\in\Proj$.
\end{lemma}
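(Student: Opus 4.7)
The plan is to construct a new trace from $\mt$ via Hermitian conjugation and invoke uniqueness of the trace on projectives. For each projective $V$ and $f \in \End_\cat(V)$, set $\mt'_V(f) := \overline{\mt_V(f^\dagger)}$. I would first verify that $\mt' = \{\mt'_V\}_{V \in \Proj}$ is itself a modified trace on $\Proj$. Linearity follows from the antilinearity of $\dagger$ combined with complex conjugation, and cyclicity from $(g \circ f)^\dagger = f^\dagger \circ g^\dagger$. The partial trace property is the only computation requiring the categorical structure: distributing $\dagger$ through $\ptr_R$ and applying the Hermitian duality compatibilities \eqref{eq:Hermitian+dual} (which interchange $\coev \leftrightarrow \tev$ and $\ev \leftrightarrow \tcoev$) yields $\ptr_R(f)^\dagger = \ptr_R(f^\dagger)$, and similarly for $\ptr_L$; taking complex conjugates of the trace axioms for $\mt$ then produces those for $\mt'$.

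Next, since a simple projective object exists, the trace on $\Proj$ is determined up to an overall scalar (its value on the identity of the simple projective fixes it through the trace axioms and the thick ideal that object generates), so $\mt' = \lambda\, \mt$ for some $\lambda \in \FK$. Because $\cat$ is Hermitian the conjugation is involutive, and applying the construction twice yields $(\mt')' = \mt$, forcing $\lambda \overline{\lambda} = 1$. Finally, I would rescale: choose $\mu \in \FK$ with $\mu/\overline{\mu} = \lambda$ (possible since $\lambda \overline{\lambda} = 1$; over $\C$ take $\mu = 1 + \lambda$ when $\lambda \neq -1$ and $\mu = i$ otherwise), and set $\widetilde{\mt} := \mu\, \mt$. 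Rewriting $\mt' = \lambda \mt$ as $\mt(f^\dagger) = \overline{\lambda}\,\overline{\mt(f)}$ and multiplying by $\mu$ gives
\[
\widetilde{\mt}(f^\dagger) \;=\; \mu \overline{\lambda}\, \overline{\mt(f)} \;=\; \overline{\mu}\, \overline{\mt(f)} \;=\; \overline{\widetilde{\mt}(f)},
\]
so $\widetilde{\mt}$ is real. Applying this to $f = \Id_V$ and using $\Id_V^\dagger = \Id_V$ yields $\overline{\widetilde{\md}(V)} = \widetilde{\md}(V)$ for all $V \in \Proj$.

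I expect the main obstacle to be the uniqueness-up-to-scalar step: the excerpt records this fact under unimodularity, whereas the lemma assumes only the existence of a simple projective. The resolution is that the trace is determined up to scalar on the thick ideal generated by any simple projective, which in the relevant setting exhausts $\Proj$; once that uniqueness is in hand, the remaining manipulations are elementary. A secondary subtlety is solving $\mu/\overline{\mu}=\lambda$ over the base field, but the two cases above cover $\FK=\C$ and a similar explicit choice is available in any field with involution containing a square root of $-1$.
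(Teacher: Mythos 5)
Your proof is correct and follows essentially the same strategy as the paper: define $\mt^\dagger(f) := \overline{\mt(f^\dagger)}$, check it is a modified trace using the Hermitian compatibility with the duality morphisms, and invoke uniqueness (up to scalar) of the trace on $\Proj$. The only organizational difference is that the paper rescales $\mt$ \emph{first} so that $\md(V_0)$ is real, which makes $\mt^\dagger(\Id_{V_0}) = \mt(\Id_{V_0}) \neq 0$ force $\lambda = 1$ immediately, whereas you first deduce $\lambda\overline{\lambda}=1$ from involutivity and then solve $\mu/\overline{\mu}=\lambda$; the paper's ordering is slightly more economical but the content is the same, and your worry about the unimodularity hypothesis applies equally to the paper's own argument (which cites \cite[Lemma 4.19]{GLPS1} for the uniqueness step).
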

\begin{proof}
  Let $V_0$ be a simple projective module.  Then the non-degeneracy of
  the trace implies that $\md(V_0)\neq0$.  Up to rescaling $\mt$, we
  can assume that $\md(V_0)$ is real.  We now show that $\mt$ is
  real. This follows as in \cite[Lemma 4.19]{GLPS1} from the unicity
  of the trace and the fact that
  $f\mapsto \mt^\dagger(f):=\wb{\mt(f^\dagger)}$ is firstly also a
  trace, thus proportional to $\mt$, and secondly
  $\mt^\dagger(\Id_{V_0})=\wb
  {\md(V_0)}=\md(V_0)=\mt(\Id_{V_0})\neq0$. From this we get
  $\mt=\mt^\dagger$.  The last statement follows from
  $\md(V)=\mt_V(\Id_V)$ and $\Id_V^\dagger=\Id_V$.
\end{proof}
From now on, we will always assume if a Hermitian pivotal category is
equipped with a trace, then it is real.
Then one has the following.
\begin{proposition}
  For any projective object $V$ and any object $W$ of $\cat$, the following are true.
  \begin{enumerate}
  \item There exists a non-degenerate bilinear pairing
    $$\Hom(W,V)\otimes_\FK\Hom(V,W)\to\FK$$
    $$f\otimes g\mapsto \mt_V( fg).$$
  \item There exists a non-degenerate Hermitian form on $\Hom(V,W)$ given by
    $$(f\mid g)=\mt_V(\dag f g).$$
  \end{enumerate}
\end{proposition}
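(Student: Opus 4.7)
The plan is to treat the two parts in sequence, reducing everything to properties already established earlier in the section.

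For part (1), I would simply invoke the non-degeneracy of the m-trace pairing recalled in the paragraph preceding Section \ref{subsec:renorm-ribbon-graph}: the pairing $\brk{\cdot,\cdot}_{V,W}\colon \Hom_\cat(W,V) \otimes \Hom_\cat(V,W) \to \FK$ defined by $\brk{f,g}_{V,W} = \mt_V(fg)$ is exactly the one asserted here, and its non-degeneracy is the content of Theorem 5.5 of \cite{GKP3} as cited in the excerpt. So (1) requires no additional work beyond pointing to this fact.

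For part (2), I would first check that the formula $(f\mid g) = \mt_V(f^\dagger g)$ is well-defined and sesquilinear: since $f\in\Hom(V,W)$ gives $f^\dagger\in\Hom(W,V)$, the composition $f^\dagger g\in\End(V)$ lies in $\End_\cat(V)$ for the projective object $V$, so the m-trace applies; conjugate linearity in $f$ and linearity in $g$ follow from the corresponding properties of $\dagger$ in Definition~\ref{def:GenConj}. The Hermitian symmetry is the key calculation: using that the trace is real (Lemma~\ref{lem:tracedagger}), the anti-multiplicativity $(f^\dagger g)^\dagger = g^\dagger f^{\dagger\dagger}$, and the involutive property $f^{\dagger\dagger}=f$ of a Hermitian pivotal category (Definition~\ref{sesribdef}), one computes
\begin{equation*}
\overline{(f\mid g)} \;=\; \overline{\mt_V(f^\dagger g)} \;=\; \mt_V\bigl((f^\dagger g)^\dagger\bigr) \;=\; \mt_V(g^\dagger f^{\dagger\dagger}) \;=\; \mt_V(g^\dagger f) \;=\; (g\mid f).
\end{equation*}

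Finally, non-degeneracy of $(\cdot\mid\cdot)$ is a direct consequence of part (1) together with the fact that $\dagger\colon \Hom(V,W)\to\Hom(W,V)$ is a bijection: given a non-zero $f\in\Hom(V,W)$, the element $f^\dagger\in\Hom(W,V)$ is non-zero, so by part (1) there exists $g\in\Hom(V,W)$ with $\brk{f^\dagger,g}_{V,W} = \mt_V(f^\dagger g) \neq 0$, which is precisely $(f\mid g)\neq 0$. The only step requiring genuine care is the Hermitian-symmetry computation, and the main conceptual ingredient there is the realness of the m-trace (which itself relied on uniqueness of the trace combined with $\Id^\dagger=\Id$); everything else is formal.
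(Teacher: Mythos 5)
Your proof is correct. The paper in fact supplies no proof for this proposition---it is stated as an immediate consequence of the preceding material---and the argument you give is the natural one: part (1) is exactly the cited non-degeneracy of the m-trace pairing from \cite{GKP3}, and for part (2) your computation of Hermitian symmetry (realness of $\mt$, anti-multiplicativity $(f^\dagger g)^\dagger = g^\dagger f^{\dagger\dagger}$, and involutivity of $\dagger$ in a Hermitian pivotal category) together with non-degeneracy deduced from part (1) via the bijection $\dagger\colon\Hom(V,W)\to\Hom(W,V)$ is precisely what one expects the authors had in mind.
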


\subsection{Generically semisimple categories} \label{sec:GenHerm}
Here we recall the definition of generically semisimple categories
that lead to Turaev-Viro type and Witten-Reshethikin-Turaev type non-semisimple topological invariants.

Let $\FK$ be a field and let $\Gr$ be an abelian group.
A pivotal $\FK$-category is {\em $\Gr$-graded} if
  for each $g\in \Gr$ we have a non-empty full subcategory $\cat_g$ of
  $\cat$ such that the following hold.
  \begin{enumerate}
  \item $\unit \in \cat_e$, (where $e$ is the identity element of $\Gr$).
  \item  $\cat=\bigoplus_{g\in\Gr}\cat_g$.
  \item  if $V\in\cat_g$, $V'\in\cat_{g'}$ then $V\otimes
    V'\in\cat_{g+g'}$.
  \item  if $V\in\cat_g$, $V'\in\cat_{g'}$ and $\Hom_\cat(V,V')\neq 0$, then
    $g=g'$.
  \end{enumerate}
For a subset $\X\subset\Gr$ we say:
\begin{enumerate}
\item $\X$ is \emph{symmetric} if $-\X^{}=\X$.
\item $\X$ is \emph{small} in $\Gr$ if the group $\Gr$ can not be covered by a
  finite number of translated copies of $\X$, in other words, for any
  $ g_1,\ldots ,g_n\in \Gr$, we have $ \bigcup_{i=1}^n (g_i+\X) \neq\Gr$.
\end{enumerate}

\begin{enumerate}
\item A $\FK$-category $\cat$ is \emph{semisimple} if all its objects are semisimple.
\item A $\FK$-category $\cat$ is \emph{finitely semisimple} if it is
  semisimple and has finitely many isomorphism classes of simple
  objects.
\item A $\Gr$-graded category $\cat$ is a \emph{generically
    $\Gr$-semisimple} category (resp.  \emph{generically finitely
    $\Gr$-semisimple} category) 
  if there exists a small symmetric subset $\X\subset \Gr$ such that
  for each $g\in\Gr\setminus\X$, $\cat_g$ is semisimple (resp.
  finitely semisimple). 
  By a \emph{generic simple} object we mean a simple object of
  $\cat_g$ for some $g\in\Gr\setminus\X$.
\end{enumerate}




\subsubsection{Relative Hermitian-spherical categories}

We now introduce the notion of an \emph{$(\X,\qd)$-relative
  $\Gr$-spherical} category which is equivalent to the one given in \cite{GPT2}.
\begin{definition}\label{D:G-spherical}
 Let $(\cat,\Gr,\X)$ be a generically finitely
    $\Gr$-semisimple category $\FK$-linear pivotal category.
 Let $\A$ be the class of all simple
  generic objects. We say that $\cat$ is
  \emph{$(\X,\qd)$-relative $\Gr$-spherical} if  the following hold. 
  \begin{enumerate}
  \item \label{ID:G-sph6} There exists a trace
    $\{\mt_V:\End_\cat(V)\rightarrow \Bbbk\}$ where $V$ runs over all
    objects of $\Proj$ so that the function $\qd:\A\to \FK^*$ is
    given by $\qd(V)=\mt_{V}(\Id_V)$.

  \item \label{ID:G-sph7} There exists a map $\bb:\A\to \FK^*$ such that
    $\bb(V)=\bb(V^*)$, $\bb(V)=\bb(V')$ for any isomorphic objects $V, V'\in
    \A$ and for any $g_1,g_2,g_1+g_2\in \Gr\setminus\X$ and $V\in \cat_{g_1+g_2}$
    we have
  \begin{equation}\label{eq:bb}
    \bb(V)=\sum_{V_1\in irr(\cat_{g_1}),\, V_2\in irr(\cat_{g_2})}
    \bb({V_1})\bb({V_2})\dim_\FK(\Hom_\cat(V, V_1\otimes V_2))
  \end{equation}
  where $irr(\cat_{g_i})$ denotes a representing set of the
  isomorphism classes of simple objects of $\cat_{g_i}$.
  \end{enumerate}
  If $\cat$ is a category with the above data, for brevity we say
  $\cat$ is a \emph{relative $\Gr$-spherical category}.
\end{definition}
The map $\bb$ always exists when $\FK$ is a field of characteristic $0$ and
$\cat$ is a category whose objects are finite-dimensional $\FK$-vector spaces.
In particular, in \cite{GPT2} it is shown that, for any $g\in \Gr\setminus\X$, the map
\begin{equation}\label{E:defbb}
   \bb(V)=\dim_\FK(V)/\left(\sum_{V'\in irr(\cat_g)}\dim_\FK(V')^2\right)
\end{equation}
is well-defined and satisfies all the properties above.  In particular, if for any generic degree $g$, $\cat_g$ has $N$ simple objects of dimension $d$, then $\bb$ can be chosen to be the constant map $\bb(V)=\frac1{Nd}$.

\begin{definition}
A relative $\Gr$-spherical category is called a \emph{relative
    $\Gr$-Hermitian-spherical category}, if it is simultaneously a Hermitian pivotal category, a relative
  $\Gr$-spherical category and the trace $\mt$ is real.
\end{definition}
\subsubsection{Relative Hermitian-premodular categories}

In \cite{D17} De Renzi gives the notion of a relative modular category
and shows that such categories lead to TQFTs.  A $\Gr$-relative
modular category is a ribbon category which is a generically
$\Gr$-semisimple category $\cat$ with symmetric small set $\X$, has a free
realization and non-zero m-trace satisfying compatibility conditions.
Let us recall this definition.

Let $\Zt$ be an abelian group.  A \textit{free realization of $\Zt$ in
  a ribbon category $\cat$} is a monoidal functor
$\sigma : \Zt \rightarrow \cat$, where $\Zt$ also denotes the discrete
category over $\Zt$ with tensor product given by the group operation
$+$, satisfying $\theta_{\sigma(k)} = \id_{\sigma(k)}$ for every
$k \in \Zt$, and inducing a free action on isomorphism classes of
simple objects of $\cat$ by tensor product with $\sigma(k)$.  Recall
the definition of a generically semisimple category at the beginning
of Subsection \ref{sec:GenHerm}.

\begin{definition}[\cite{D17}] \label{def:X} If $\Gr$ and $\Zt$ are
  abelian groups, and if $\X \subset \Gr$ is a small symmetric subset,
  then a \textit{$(\Zt,\X)$-relative $\Gr$-premodular category} is a
  generically semisimple category $(\cat,\Gr,\X)$ which is ribbon,
  $\Bbbk$-linear, has a free realization
  $\sigma : \Zt \rightarrow \cat_0$, and a non-zero m-trace $\mt$ on
  the projective objects of $\cat$. This data is subject to the
  following conditions.
  \begin{enumerate}
  \item \textit{Finitely many orbits}. For every
    $g \in \Gr \smallsetminus \X$, the homogeneous subcategory $\cat_g$
    is semisimple and dominated by
    $\Theta(\cat_g) \otimes \sigma(\PGr)$ for some finite set
    $\Theta(\cat_g) = \{ V_i \in \cat_g \mid i \in I_g \}$ of simple
    projective objects with epic evaluation.  Here, dominated means
    that the identity morphism of every object in the category can be
    written as a linear combination of maps which factor through
    objects in the dominating set.
  \item \label{X:2}\textit{Compatibility}. There exists a bilinear map
    $\psi : \Gr \times \Zt \rightarrow \Bbbk^*$ such that
    \begin{equation} \label{eq:psi}
    c_{\sigma(k),V} \circ c_{V,\sigma(k)} = \psi(g,k) \cdot \id_{V
      \otimes \sigma(k)}
      \end{equation}
      for every $g \in \Gr$, for every
    $V \in \cat_g$, and for every $k \in \Zt$.
 \end{enumerate}
\end{definition}

If $\cat$ is a category with the above data, for brevity we say $\cat$
is a relative $\Gr$-premodular category. If $\cat$ is a
$(\Zt,\X)$-premodular $\Gr$-category then the associated \textit{Kirby
  color of index $g \in \Gr \smallsetminus \X$} is the formal linear
combination of objects
\[
 \Omega_g := \sum_{i \in I_g} \md(V_i) \cdot V_i.
\]
In particular, there exist constants
$\Delta_{-\Omega},\Delta_{+\Omega} \in \Bbbk$, called
\textit{stabilization coefficients}, which realize the skein
equivalences of Figure \ref{F:stabilization_coefficients_Omega}, and
which are independent of both $V \in \cat_g$ and
$g \in \Gr \smallsetminus \X$. We say the relative premodular
category $\cat$ is \textit{non-\-de\-gen\-er\-ate} if
$\Delta_{-\Omega} \Delta_{+\Omega} \neq 0$.
\begin{figure}[tb]
 \centering
 \includegraphics{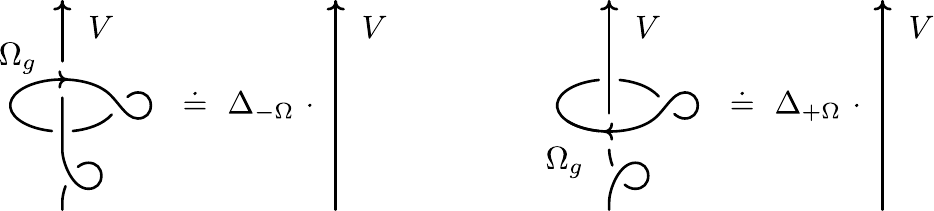}
 \caption{Skein equivalences defining $\Delta_{-\Omega}$ and $\Delta_{+\Omega}$.}
 \label{F:stabilization_coefficients_Omega}
\end{figure}

A non-degenerate relative $\Gr$-premodular category
gives rise to the construction of a
3-manifold invariant, (see for example \cite[Section 4.3]{CGP1}, where it is called relative $\Gr$-modular rather than the modern term relative $\Gr$-premodular).
A relative $\Gr$-modular category is a relative $\Gr$-premodular
category which satisfies the modularity condition given in \cite{D17}.
Note that relative $\Gr$-modular categories are automatically non-degenerate.
Such a category gives rise to a TQFT.  The exact form of the
modularity condition is not critical for this paper.  Next we give a
property which implies the TQFT is Hermitian.

\begin{definition}\label{D:GenericHermitian}
  A {\em relative Hermitian-(pre)modular category} is a Hermitian ribbon category which is also a relative (pre)modular category, such that the trace on projectives is real.
\end{definition}

\newcommand{\Herm}{{\operatorname{Herm}}}
\subsection{Relative Hermitian-premodular category in a sequilinear one}\label{s:sesqtoHerm}
Let $\cat$ be a sesquilinear pivotal category. A collection of objects
is called a {\em collection of Hermitian objects} in $\cat$ if the following hold (in the following, we call an element of the collection a {\em Hermitian object}).
\begin{enumerate}
\item For any two Hermitian objects $V,W$ and any
  $f\in\Hom_\cat(V,W)$, we have $(f^{\dagger})^{\dagger} = f$.
\item For any simple $V\in\cat$, $V$ is isomorphic to a Hermitian object.
\item If the tensor product of two Hermitian simple objects is
  semisimple, then it is Hermitian.
\end{enumerate}

If $S$ is a class of objects in a pivotal category $\cat$, then we
define the \emph{category generated} by $S$ as the full subcategory of
$\cat$ which has as objects, 
all tensor products of the form:
$$X_{1}\otimes X_{2}\otimes \cdots \otimes X_{p}\quad\text{ where }p\in\N\text{ and }X_i\in S\cup S^*.$$




\begin{theorem}
  If $\cat$ is a generically semisimple, sesquilinear pivotal category
  which has a collection of Hermitian objects where $\ev_{V}$ is an
  epimorphism for each generic simple Hermitian object $V$, then the
  simple Hermitian objects generate a
  subcategory $\cat^\Herm$ of $\cat$ which is a generically semisimple,
  Hermitian pivotal category.
  \end{theorem}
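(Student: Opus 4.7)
The plan is to verify in turn that $\cat^\Herm$ is a sesquilinear pivotal subcategory of $\cat$, that it is generically semisimple, and that the conjugation on $\cat^\Herm$ is involutive. The first two points are essentially formal; the involutivity of $(-)^\dagger$ is where the content lies.

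By construction $\cat^\Herm$ has as objects the iterated tensor products $X_1 \otimes \cdots \otimes X_p$ where each $X_i$ is a simple Hermitian object or the dual of one, so the object class is closed under $\otimes$ and under $(-)^*$, and contains $\unit$ as the empty tensor product. All duality and pivotal morphisms $\coev, \ev, \tcoev, \tev, \phi_V$ exist in $\cat$ between objects of $\cat^\Herm$, and the conjugation $(-)^\dagger$ sends a morphism of $\cat^\Herm$ to another morphism of $\cat^\Herm$ since source and target simply swap. Hence $\cat^\Herm$ inherits a sesquilinear pivotal structure with the same exceptional set $\X$.

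For generic semisimplicity, fix $g \in \Gr \setminus \X$, so $\cat_g$ is semisimple. Any $V \in \cat^\Herm_g$ decomposes in $\cat_g$ as a finite direct sum of simple objects of $\cat_g$, each of which is isomorphic to a Hermitian object by hypothesis (2) of the Hermitian collection. These simples lie up to isomorphism in $\cat^\Herm_g$, so $\cat^\Herm_g$ is semisimple, and its simples coincide with those of $\cat_g$ up to isomorphism.

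The heart of the proof is showing $(f^\dagger)^\dagger = f$ for every morphism $f$ in $\cat^\Herm$. Hypothesis (1) supplies this whenever source and target are Hermitian objects, so the natural strategy is to show that every object of $\cat^\Herm$ is Hermitian. For simple generators this is immediate. For a tensor product $V_1 \otimes V_2$ of simple Hermitian objects, hypothesis (3) yields the conclusion whenever $V_1 \otimes V_2$ is semisimple, which by generic semisimplicity holds whenever $|V_1| + |V_2| \in \Gr \setminus \X$. Proceeding by induction on the number of tensor factors, reassociating and decomposing intermediate products into their simple summands via the previous step, every object of $\cat^\Herm$ of generic total degree is seen to be Hermitian, and hypothesis (1) gives involutivity of $(-)^\dagger$ on all morphisms between such objects.

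The main obstacle, which I expect to be the technical heart of the argument, is handling objects whose total degree lies in $\X$, where the tensor products need not be semisimple and hypothesis (3) does not directly apply. The hypothesis that $\ev_V$ is epi for each generic simple Hermitian $V$ must play its role here. Informally, one tensors a non-generic object with a suitable generic Hermitian simple $W$, chosen to exist by smallness of $\X$, to land in a generic degree where Hermitian structures are controlled; one then uses the epicness of $\ev_W$ together with the sesquilinear duality identities $\ev_W^\dagger = \tcoev_W$ and $\coev_W^\dagger = \tev_W$ from \eqref{eq:Hermitian+dual} to descend the Hermitian data back to the original object as a retract. Verifying that this descent produces a genuinely compatible Hermitian form, rather than merely an abstract isomorphism, is where the bulk of the careful checking lies; once accomplished, hypothesis (1) yields $(f^\dagger)^\dagger = f$ on all of $\cat^\Herm$.
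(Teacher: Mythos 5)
Your formal observations about closure of the object class, the inherited pivotal and sesquilinear structure, and generic semisimplicity are fine, but the core strategy — showing every object of $\cat^\Herm$ is a Hermitian object and then invoking axiom (1) — has two genuine problems. First, the collection of Hermitian objects is given, not constructed: axiom (3) only places \emph{two-fold} products of Hermitian simples into the collection (and only when semisimple), and nothing asserts the collection is closed under direct sums, so ``decomposing intermediate products into their simple summands'' gives you an object merely \emph{isomorphic} to $X_1\otimes\cdots\otimes X_p$, not membership of $X_1\otimes\cdots\otimes X_p$ itself in the collection; for $p>2$ you cannot put it there, and enlarging the collection would require re-proving axiom (1), which is exactly what you are trying to establish. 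Second, even with generic total degree your induction collapses whenever an \emph{intermediate} partial product $X_1\otimes\cdots\otimes X_i$ has singular degree — you cannot then decompose $X_1\otimes X_2$ into Hermitian simples to start the process, and reassociation does not rescue this when several adjacent pair-degrees are simultaneously in $\X$. The correct ``regular'' condition requires that every partial product in both source and target have generic degree, not just the final one.

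The paper's proof avoids both issues by proving $(f^\dagger)^\dagger=f$ directly on morphisms, by induction on $m+n$ (the total number of tensor factors in source and target of $f$), decomposing $f=\sum_i h_i(g_i\otimes\Id)$ through projections $g_i:W_1\otimes W_2\to U_i$ onto simple Hermitian summands; axiom (3) is invoked only to place the two-fold product $W_1\otimes W_2$ in the collection so that $(g_i^\dagger)^\dagger=g_i$, and the inductive hypothesis handles the shorter $h_i$. The singular case is then reduced to this regular case by tensoring with a generic Hermitian simple $V$ — one exists because $\X$ is small and there are only finitely many partial products to push into generic degree — applying the regular case to $\Id_V\otimes f$, and cancelling $\Id_V$ via the epimorphism $\ev_V$. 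You correctly sensed that the $\ev_V$ epicness must handle the singular case, but what is transported is the scalar-level identity $(f^\dagger)^\dagger=f$ on morphisms, not a ``compatible Hermitian form'' on the non-generic object: in this abstract setting there is no form at all, only the conjugation $\dagger$ and the given axiomatic collection.
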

\begin{proof}
  The generically semisimple sesquilinear pivotal structure of $\cat$
  provides the same structure on $\cat^\Herm$.  Thus to prove the
  theorem we need to show that $(f^\dagger)^\dagger=f$ for any
  morphism $f$ in $\cat^\Herm$.  Let
  $f:W_1\otimes\cdots\otimes W_m\to W'_1\otimes\cdots\otimes W'_n$ be
  a morphism in $\cat^\Herm$ where $W_1,\ldots, W_m,W'_1,\ldots, W'_n$
  are Hermitian simple objects.

  We first consider the following ``regular'' case: Assume
  $m\ge1,n\ge1$ and for any $i\in\{1,\ldots m\}$, any
  $j\in\{1,\ldots n\}$, $W_1\otimes\cdots\otimes W_i$ and
  $W'_1\otimes\cdots\otimes W'_j$ are in generic degrees.  Then we
  prove $(f^\dagger)^\dagger=f$ by induction on $k=m+n$.

  The base case is $k=2$, which means that $m=n=1$. Then by the first
  axiom of a collection of Hermitian objects, $(f^\dagger)^\dagger=f$ holds.

  Assume true for $k\geq 2$ and consider a morphism $f$ where
  $m+n=k+1$. \\
  If $m\ge2$ then since $W_1\otimes W_2$ is generic, it is isomorphic
  to a direct sum of simple Hermitian modules $U_i$. Note that
  $(U_i,W_3,\ldots,W_m)$ still satisfies the ``regular'' case
  condition.  Let $g_i : W_1\otimes W_2 \to U_i$ be the projection
  morphisms corresponding to this direct sum which satisfy
  $(g_i^{\dagger})^{\dagger}=g_i$.  Then there exist morphisms
  $h_i: U_i \otimes W_3\otimes\cdots\otimes W_m\to
  W'_1\otimes\cdots\otimes W'_n$ such that
  $f=\sum_{i}h_i(g_i\otimes \Id_{W_3\otimes\cdots\otimes W_m}).$ Now
  by induction, $h_i$ satisfies $(h_i^\dagger)^\dagger=h_i$ and thus
  $(f^{\dagger})^{\dagger}=f$.\\
  If $n\ge2$, then we can proceed similarly with the $W'_i$. Since
  $W'_1\otimes W'_2$ is generic, it is isomorphic to a direct sum of
  simple Hermitian modules $U_i$.  Let $g_i : U_i\to W'_1\otimes W'_2$
  be the injection morphisms corresponding to this direct sum which
  satisfy $(g_i^{\dagger})^{\dagger}=g_i$.  Then there exist morphisms
  $h_i: W_1\otimes\cdots\otimes W_m\to U_i \otimes
  W'_3\otimes\cdots\otimes W'_n$ such that
  $f=\sum_{i}(g_i\otimes \Id_{W'_3\otimes\cdots\otimes W'_m})h_i.$
  Again by induction, $(h_i^{\dagger})^{\dagger}=h_i$ thus $(f^{\dagger})^{\dagger}=f$.

  Hence the theorem is proven in the ``regular'' case.  Now for the
  general case, let $V$ be a Hermitian simple generic object such that
  for any $i\in\{1,\ldots m\}$, and any $j\in\{1,\ldots n\}$,
  $V\otimes W_1\otimes\cdots\otimes W_i$ and
  $V\otimes W'_1\otimes\cdots\otimes W'_j$ are in generic degrees
  (such a $V$ exists because $\cat$ is generically semisimple with
  respect to a small set $\X$, and by assumption, any simple is
  isomorphic to a Hermitian object).  Then $\Id_V\otimes f$ is in the
  ``regular'' case so that
  $((\Id_V\otimes f)^\dagger)^\dagger=\Id_V\otimes f $.

  But $((\Id_V\otimes f)^\dagger)^\dagger=\Id_V\otimes (f^\dagger)^\dagger$ and since $\ev_{V}$ is an epimorphism, we have
\begin{equation}\label{E:IdfDagger2b}
\Id_V\otimes (f^\dagger)^\dagger=\Id_V\otimes f \text{ if and only if } (f^{\dagger})^{\dagger} = f.
\end{equation}
This complete the proof.
\end{proof}

\begin{corollary}\label{c:hpremod}
  If a relative premodular category $\cat$ is a sesquilinear ribbon
  category with a collection of Hermitian objects containing
  $\sigma(\Zt)$, then $\cat^\Herm$ is a Hermitian relative premodular
  category.
\end{corollary}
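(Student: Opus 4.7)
The plan is to invoke the preceding theorem to produce the Hermitian pivotal structure on $\cat^\Herm$, and then verify that each additional ingredient of Definition \ref{def:X} (ribbon structure, free realization, finitely many orbits, compatibility) and the reality of the m-trace all restrict from $\cat$ to $\cat^\Herm$.

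First I would check that the hypotheses of the preceding theorem apply: a relative premodular $\cat$ is generically semisimple and ribbon, hence in particular sesquilinear pivotal, and for every $g\in\Gr\setminus\X$ the dominating set $\Theta(\cat_g)$ consists of simple projectives with epic evaluation. Combined with axiom (2) of a collection of Hermitian objects, every generic simple is isomorphic to a Hermitian one, so every generic simple Hermitian $V$ has $\ev_V$ epic. The preceding theorem therefore yields that $\cat^\Herm$ is a generically semisimple Hermitian pivotal $\FK$-category.

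Next I would verify the remaining structure. Because $\cat^\Herm$ is a full subcategory closed under tensor products and duals, the braiding $c$ and twist $\theta$ of $\cat$ restrict to $\cat^\Herm$, giving it a ribbon structure; the sesquilinear ribbon identity $c_{V,W}^\dagger = c_{V,W}^{-1}$ inherited from $\cat$ promotes this to a Hermitian ribbon category. The hypothesis $\sigma(\Zt)\subset$ the collection of Hermitian objects ensures the free realization factors as $\sigma : \Zt \to \cat^\Herm_0$. For each $g\in\Gr\setminus\X$, replacing each $V_i\in\Theta(\cat_g)$ by an isomorphic Hermitian representative produces a dominating set of simple projective Hermitian objects of $\cat^\Herm_g$ with epic evaluation, yielding the finitely many orbits condition. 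The compatibility condition \eqref{eq:psi} transfers unchanged since the braiding and $\sigma$ are the restrictions of those on $\cat$.

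Finally I would restrict the m-trace $\mt$ from $\Proj(\cat)$ to $\Proj(\cat^\Herm)$; since every generic simple in $\cat$ is isomorphic to a Hermitian one, this restriction is still non-zero and satisfies the m-trace axioms, and Lemma \ref{lem:tracedagger} allows us to rescale it so that $\mt(f^\dagger) = \overline{\mt(f)}$, giving reality. Together with the Hermitian pivotal structure from the theorem and the inherited ribbon structure, this gives a relative Hermitian-premodular category in the sense of Definition \ref{D:GenericHermitian}.

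The main subtlety I expect is the reality and nondegeneracy of the restricted m-trace: one must know that at least one simple Hermitian projective object $V_0\in\cat^\Herm$ survives with $\md(V_0)\neq 0$ so that Lemma \ref{lem:tracedagger} applies inside $\cat^\Herm$ rather than merely in $\cat$. This is guaranteed because the finitely many orbits condition for $\cat^\Herm$ produces simple projective Hermitian generators in each generic degree, on which $\md$ is nonzero by non-degeneracy of $\mt$ on $\cat$.
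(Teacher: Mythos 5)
Your proposal is correct and reflects precisely the reasoning the paper leaves implicit: the paper states Corollary~\ref{c:hpremod} without a written proof, treating it as an immediate consequence of the preceding theorem together with the observation that the ribbon structure, the free realization $\sigma:\Zt\to\cat_0$, the dominating sets $\Theta(\cat_g)$, the compatibility pairing $\psi$, and a real m-trace all restrict from $\cat$ to the full subcategory $\cat^\Herm$. Your step-by-step verification of each clause of Definition~\ref{def:X} and Definition~\ref{D:GenericHermitian}, including the point that a simple projective Hermitian object with $\md\neq 0$ exists in $\cat^\Herm$ so that Lemma~\ref{lem:tracedagger} applies there and not merely in $\cat$, supplies exactly the details the paper omits.
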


\newcommand{\U}{{\mathcal U}}
\newcommand{\Xs}{{\mathcal X}}
\newcommand{\Unitary}{{\mathbb U}}
\subsection{Relative Hermitian-spherical category associated to a relative premodular category}
We describe some machinery on how to obtain a relative Hermitian-spherical category from a relative Hermitian-premodular category.
Assume that $\cat$ 
is a relative Hermitian-premodular category with grading group
$(\Gr,+)$, singular set $\Xs$, and translation group $(\Zt,+)$.  Let
$\Unitary=\{z\in\FK:\bar z z=1\}$.  Note that applying the dagger to
\eqref{eq:psi}, we get that the bilinear map $\psi$ from Definition~\ref{def:X} \eqref{X:2} takes values in $\Unitary$.

We assume the following additional hypothesis:
\begin{equation}
  \label{eq:sqrpsi}
  \exists \sqrt\psi:\Gr\times\Zt\to\Unitary,\text{ which is bilinear and } \sqrt\psi^2=\psi,
\end{equation}
\begin{equation}
  \label{eq:trivialbraiding}
  \forall \lambda,\mu\in\Zt, \quad c_{\sigma_\lambda,\sigma_\mu}=\Id_{\sigma_{\lambda+\mu}} .
\end{equation}

\begin{remark}\
  \begin{enumerate}
  \item If $\Zt/2\Zt$ is finite, then one can replace $\Zt$ with
    $2\Zt$.  Then the category is still relative premodular and one
    can define $\sqrt\psi(g,2\lambda):=\psi(g,\lambda)$.
  \item Identity \eqref{eq:trivialbraiding} with
    $\theta_{\sigma_\lambda}=\Id$ implies that
    $\dim_\cat(\sigma_\lambda)=1$.
     \item A third consequence of \eqref{eq:trivialbraiding} and
    $\sigma_\lambda\otimes\sigma_{-\lambda}=\unit$ is the existence of
    canonical isomorphisms
    $$(\Id_{\sigma_{-\lambda}}\otimes\ev_{\sigma_\lambda})
    (\Id_{\sigma_{-\lambda}\otimes\sigma_\lambda}\otimes\Id_{\sigma_{\lambda}^*})=
    (\tev_{\sigma_\lambda}\otimes\Id_{\sigma_{-\lambda}})
    (\Id_{\sigma_\lambda^*}\otimes\Id_{\sigma_{\lambda}\otimes{\sigma_{-\lambda}}})
    :\sigma_\lambda^*\stackrel{\cong}{\longrightarrow}\sigma_{-\lambda}.$$
    Through these isomorphisms, all (co)evaluation morphisms of the
    objects $\sigma_\lambda$ correspond to the identity of $\unit$.
    We use these isomorphisms to freely identify $\sigma_{\lambda}^*$ and
    $\sigma_{-\lambda}$.
  \end{enumerate}
\end{remark}

\begin{definition}
  For any $\lambda\in\Zt,$ and any $V\in\cat_g$, we let
  $$\XX_{\sigma_\lambda, V}=\sqrt\psi(g,\lambda)^{-1}c_{\sigma_\lambda, V}
  \et\XX_{
    V,\sigma_\lambda}=\sqrt\psi(g,\lambda)^{-1}c_{V,\sigma_\lambda}.$$
  Then $\XX_{\bullet,\sigma_\lambda}$ is a half braiding and
  $\XX_{\bullet,\sigma_\lambda}^\dagger=\XX_{\sigma_\lambda,
    \bullet}=\XX_{\bullet,\sigma_\lambda}^{-1}$.
\end{definition}


\begin{definition}\label{d:catZ}
  Let $\cat^\Zt$ be the category with the same objects as $\cat$ and
  whose morphisms are the $\Zt$-graded vector spaces given by
  $$\Hom_{\cat^\Zt}(V,W)=\bigoplus_{\lambda\in\Zt}\Hom_{\cat}(V,\sigma_\lambda\otimes W).$$
  For a morphism $f:V\to W$, we denote by
  $f_\lambda\in\Hom_{\cat}(V,\sigma_\lambda\otimes W)$ its degree
  $\lambda$ component. The composition of morphisms $f:V_1\to V_2$
  with $g:V_2\to V_3$ in degree $\lambda$ is the essentially finite
  sum
\begin{equation} \label{eq:CZ-comp}
    (g\circ f)_\lambda=\sum_{\lambda'+\lambda''=\lambda}
    (\Id_{\sigma_{\lambda'}}\otimes g_{\lambda''})f_{\lambda'}.
\end{equation}
  The tensor product of $f:V_1\to V_2$ and $h:V_3\to V_4$  is given by
\begin{equation} \label{eq:CZ-tensor}
  (f\otimes h)_\lambda=\sum_{\lambda'+\lambda''=\lambda}
  \bp{\Id_{\sigma_{\lambda'}}\otimes\XX_{V_2,\sigma_{\lambda''}}\otimes\Id_{V_4}}
  \bp{f_{{\lambda'}}\otimes h_{\lambda''}}.
\end{equation}
\end{definition}

\begin{proposition}
  There is a unique structure of a pivotal category on $\cat^\Zt$ such
  that the embedding $F^\Zt:\cat\to\cat^\Zt$ given by considering all
  morphisms of $\cat$ as degree zero morphisms of $\cat^\Zt$ is a
  pivotal functor.  The category $\cat^\Zt$ has a natural Hermitian
  structure compatible with $F^\Zt$.
\end{proposition}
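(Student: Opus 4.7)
The plan is to construct the pivotal structure on $\cat^\Zt$ directly from the data inherited from $\cat$ and then verify uniqueness; the Hermitian structure is then obtained component-wise. First, to see that composition as defined in \eqref{eq:CZ-comp} gives a well-defined category, I would verify associativity by a direct computation: expanding $(h\circ(g\circ f))_\mu$ using \eqref{eq:CZ-comp} and regrouping with the fact that $\Id_{\sigma_{\lambda'}}\otimes(-)$ is a functor gives $((h\circ g)\circ f)_\mu$, using only that $\sigma : \Zt \to \cat_0$ is monoidal so that $\sigma_{\lambda'}\otimes\sigma_{\lambda''}=\sigma_{\lambda'+\lambda''}$ without any choice of associator being involved. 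Unitality is immediate since $\Id_V$ sits in degree zero with $\sigma_0=\unit$.

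Next, for the tensor product \eqref{eq:CZ-tensor}, I would check bifunctoriality. The identity axiom is direct. For compatibility with composition, the key input is that $\XX_{\bullet,\sigma_\lambda}$ is a half-braiding (stated explicitly in the paper as $\XX_{\bullet,\sigma_\lambda}$ natural and satisfying the hexagon), whose naturality with respect to homogeneous morphisms follows from naturality of the braiding $c$ together with the bilinearity of $\sqrt\psi$ (so the normalizing scalar cancels). The hexagon gives the desired interchange law between the factors $\XX_{V_2,\sigma_{\lambda''}}$ appearing in $(f\otimes h)$ and $(f'\otimes h')$ when computing $(f'\circ f)\otimes(h'\circ h)$ vs.\ $(f'\otimes h')\circ(f\otimes h)$. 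The associator and unitors of $\cat$ induce those of $\cat^\Zt$; again the coherence (pentagon, triangle) reduces to the hexagon/coherence for $\XX$ together with that of $\cat$. I expect this step --- tracking factors of $\XX$ through the interchange law and the pentagon --- to be the main obstacle, though it is a routine but tedious diagram-chase.

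For duality, I would take the dual of $V$ in $\cat^\Zt$ to be $V^*$, with $\coev_V,\ev_V,\tcoev_V,\tev_V$ considered as degree-zero morphisms via $F^\Zt$; the zigzag relations then follow from those in $\cat$. The pivotal isomorphism is $\phi_V$ from $\cat$ placed in degree $0$. That $F^\Zt$ is a pivotal functor is then immediate by construction, and uniqueness follows because the pivotal data on degree-zero morphisms is forced by requiring $F^\Zt$ to be pivotal, while the tensor and composition rules on non-zero degree morphisms are determined by bifunctoriality together with the requirement that $\XX_{V,\sigma_\lambda}$ agree with $\sqrt\psi(g,\lambda)^{-1}c_{V,\sigma_\lambda}$ on degree-zero morphisms (after using the canonical identifications $\sigma_\lambda^*\cong\sigma_{-\lambda}$ noted in the remark).

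Finally, for the Hermitian structure, I would define, for $f_\lambda\in\Hom_\cat(V,\sigma_\lambda\otimes W)$, an adjoint $(f_\lambda)^\dagger\in\Hom_\cat(W,\sigma_{-\lambda}\otimes V)$ by taking the $\cat$-adjoint of $f_\lambda$ to land in $(\sigma_\lambda\otimes W)^*\cong W^*\otimes\sigma_{-\lambda}$ and then using the canonical identification $\sigma_{-\lambda}\otimes W\cong W\otimes\sigma_{-\lambda}$ (available via $\XX$) combined with evaluations to rewrite this as a map $W\to\sigma_{-\lambda}\otimes V$. Compatibility of $\dagger$ with composition uses the identity $\XX_{\bullet,\sigma_\lambda}^\dagger=\XX_{\bullet,\sigma_\lambda}^{-1}$, which guarantees that the $\sqrt\psi$-normalized half-braidings behave correctly under daggering and that the signs/factors introduced in defining $\otimes$ are cancelled by those introduced in $\dagger$. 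Compatibility with $F^\Zt$ is immediate since on degree-zero morphisms the construction reduces to the $\cat$-dagger. The involutivity $(f^\dagger)^\dagger=f$ reduces on each graded component to the same property in $\cat$, which holds because $\cat$ is Hermitian.
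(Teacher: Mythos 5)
Your proposal follows essentially the same route as the paper: composition is associative thanks to the strict identity $\sigma_{\lambda_1}\otimes\sigma_{\lambda_2}=\sigma_{\lambda_1+\lambda_2}$; tensor functoriality uses that $\XX_{\bullet,\sigma_\lambda}$ is a half-braiding (plus \eqref{eq:trivialbraiding}); duality data is inherited in degree zero; and the dagger on $\cat^\Zt$ is built from the $\cat$-dagger of the graded components combined with (co)evaluations for $\sigma_\lambda$ and the identity $\XX^\dagger=\XX^{-1}$. Your sketch of uniqueness and the reduction of involutivity to each graded component are fine additions consistent with what the paper implicitly relies on.

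There is, however, one genuine gap. You treat pivotality as ``immediate'' once you set the dual of $V$ to be $V^*$ and take $\phi_V$ in degree zero, asserting that ``the zigzag relations then follow from those in $\cat$.'' The zigzag relations do indeed carry over, but pivotality is the stronger assertion that the left dual $f^*$ and the right dual ${}^*f$ \emph{agree for all morphisms of $\cat^\Zt$}, not only the degree-zero ones. For a morphism $f$ with a nonzero degree $\lambda$ component, computing $f^*$ and ${}^*f$ in $\cat^\Zt$ unavoidably introduces factors of $\XX_{W^*,\sigma_\lambda}$, $\XX_{V^*,\sigma_\lambda}$ and the (co)evaluations of $\sigma_\lambda$, and the equality $(f^*)_\lambda=({}^*f)_\lambda$ is a nontrivial identity. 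The paper's proof devotes a multi-line graphical/algebraic computation to exactly this check, and the computation uses two facts you do not invoke at this step: the naturality of the half braiding $\XX_{\bullet,\sigma_\lambda}$ and the triviality of the twist $\theta_{\sigma_\lambda}=\Id$. Without this verification your proof of the ``pivotal category'' part is incomplete: it establishes rigidity and defines a candidate $\phi$, but does not show $\phi$ is a pivotal structure on all of $\cat^\Zt$.
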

\begin{proof}
  The associativity of composition comes from the strict relation
  $\sigma_{\lambda_1}\otimes
  \sigma_{\lambda_2}=\sigma_{\lambda_1+\lambda_2}$.  The associativity
  and the functoriality of the tensor product come from the fact that
  $\XX_{\bullet,\sigma_{\lambda}}$ are half braidings and Equation \eqref{eq:trivialbraiding} ensures that $f\otimes g=(f\otimes\Id)(\Id\otimes g)$.  The functor
  $F^\Zt$ is clearly monoidal since $\XX_{\bullet,\unit}$ is the
  identity functor.  The category $\cat^\Zt$ inherits the evaluation and coevaluation morphisms of Definition \ref{pivdef} from $\cat$.

  To see that $\cat^\Zt$ is pivotal, we need to
  check that the left dual $f^*$  and right dual $^*f$ of a $\cat^\Zt$-morphism
  $f:V\to W$ are equal.  Let us recall the definitions of   $f^*$ and $^*f$:
  $$f^*=(\ev_{W}\otimes \Id_{V^*})(\Id_{W^*}\otimes f \otimes \Id_{V^*})(\Id_{W^*}\otimes \coev_{V}),$$
and
$$ ^*f=(\Id_{V^*}\otimes \tev_W)(\Id_{V^*}\otimes f \otimes \Id_{W^*})(\tcoev_{V} \otimes \Id_{W^*}).$$

To compute these morphism in $\cat^\Zt$ we need to use the definitions of the composition and tensor products for $\cat^\Zt$ given in \eqref{eq:CZ-comp} and \eqref{eq:CZ-tensor}.  For example,
$$
(\ev_{W}^{\cat^\Zt} (\Id_{W^*}\otimes f))_\lambda
=(\Id_{\sigma_{\lambda}}\otimes \ev_{W})((\XX_{W^*,\sigma_{\lambda}}\otimes \Id_{W})(\Id_{W^*}\otimes f_\lambda))
$$
  where $\ev_{W}^{\cat^\Zt} $ denotes the evaluation in $\cat^\Zt$ and the last composition of maps is in $\cat$.  It follows that as morphisms of $\cat$ we have
  \begin{align*}
  (f^*)_\lambda
 &=\big(\big[(\Id_{\sigma_{\lambda}}\otimes \ev_{W})(\XX_{W^*,\sigma_{\lambda}}\otimes \Id_{W})(\Id_{W^*}\otimes f_\lambda)\big]\otimes \Id_{V^*}\big)(\Id_{W^*}\otimes \coev_{V})\\
 & =(\Id_{\sigma_{\lambda}}\otimes (f_\lambda)^*)(\XX_{W^*,\sigma_{\lambda}}\otimes \Id_{\sigma_{\lambda}^*})(\Id_{W^*}\otimes  \coev_{\sigma_{\lambda}})\\
  & =(\Id_{\sigma_{\lambda}}\otimes\, ^*(f_\lambda))(\XX_{W^*,\sigma_{\lambda}}\otimes \Id_{\sigma_{\lambda}^*})(\Id_{W^*}\otimes  \coev_{\sigma_{\lambda}})\\
  & = (\XX_{V^*,\sigma_{\lambda}}\otimes \tev_{W})(\Id_{V^*}\otimes f_\lambda \otimes \Id_{W^*})(\tcoev_V\otimes \Id_{W^*}) =(^*f)_\lambda
\end{align*}
  where the last equality follows from the naturality of
  $\XX_{\bullet,\sigma_\lambda}$ and $\theta_{\sigma_\lambda}=\Id$.

In order to construct the Hermitian structure on $\cat^\Zt$, let $ f \in \Hom_{\cat^\Zt}(V_1,V_2)$.
Now define a morphism $ f^{\dagger} $ in $\cat^\Zt$ by
\begin{equation}
(f^{\dagger})_{\lambda} := (\Id_{\sigma_{\lambda}} \otimes (f_{-\lambda})^{\dagger} ) \circ  (\tcoev_{\sigma_{-\lambda}} \otimes \Id_{V_2}) \colon
V_2 \rightarrow \sigma_{\lambda} \otimes V_1.
\end{equation}
In graphical terms,
\begin{equation} \label{defofCzfdagger}
(f^{\dagger})_{\lambda} :=
\hackcenter{ \begin{tikzpicture} [scale=.7,  ]
\draw[very thick, postaction={decorate}] (0,1.25) -- (0,-.25);
\draw[very thick, postaction={decorate}] (1.5,1.25) -- (1.5,.25);
\draw[very thick, ] (1.5,0) .. controls ++(0,.25) and ++(0,.5) ..  (2,-1.25);
\draw[very thick] (0,-.25) .. controls ++(0,-1.25) and ++(-.1,-1.25) .. (1.1,-.25);
\node at (0,1.5) {$\scs \sigma_{\lambda} $};
\node at (1.5,1.5) {$\scs V_1 $};
\node at (2,-1.5) {$\scs V_2$};
\node[draw, fill=black!20 ,rounded corners ] at (1.5,0) {$ (f_{-\lambda})^{\dagger}$};
\end{tikzpicture}}
\maps V_2 \rightarrow \sigma_{\lambda} \otimes V_1.
\end{equation}
Using this graphical calculus, it is straightforward to check that the dagger is now a contravariant functor on $\cat^\Zt$.

In order to check that the functor is monoidal, let $ f \colon V_1 \rightarrow V_2$ and $h \colon V_3 \rightarrow V_4$.
Unraveling the definitions, we get
\begin{equation}
((f \otimes h)^{\dagger})_{\lambda} =
\sum_{\lambda'+\lambda''=-\lambda}  (\Id_{\sigma_{-\lambda''-\lambda'}}  \otimes (((f_{\lambda'})^{\dagger} \otimes (h_{\lambda''})^{\dagger}  ) ) \circ (\Id_{\sigma_{\lambda'}} \otimes \XX^{\dagger}_{V_2, \sigma_{\lambda''}} \otimes \Id_{V_4} )) \circ (\tcoev_{\sigma_{\lambda'} \otimes \sigma_{\lambda''}} \otimes \Id_{V_2 \otimes V_4} ).
\end{equation}
On the other hand,
\begin{equation}
(f^{\dagger} \otimes h^{\dagger})_{\lambda} =
\sum_{\lambda'+\lambda''=\lambda}
(\Id_{\sigma_{\lambda'}} \otimes \XX_{V_1,\sigma_{\lambda''}} \otimes (h_{-\lambda''})^{\dagger}) \circ
(\Id_{\sigma_{\lambda'}} \otimes (f_{-\lambda'})^{\dagger}  \otimes \tcoev_{\sigma_{-\lambda''}} \otimes \Id_{V_4}) \circ
(\tcoev_{\sigma_{-\lambda'}} \otimes \Id_{V_2 \otimes V_4} ).
\end{equation}
One could most easily show $((f \otimes h)^{\dagger})_{\lambda} = (f^{\dagger} \otimes h^{\dagger})_{\lambda} $ in a graphical way by manipulating these expressions
using the fact $\XX^{\dagger}=\XX$, using the naturality of $\XX_{\bullet,\sigma_\lambda}$, and reindexing the sum.

The conditions of Definition \ref{sespivdef} are satisfied in $\cat^\Zt$ since they are satisfied in $\cat$ by assumption.
\end{proof}
The embedding $F^\Zt$ is not a full subcategory since
$\cat^\Zt$ has more morphisms.  As a consequence, the braiding of
$\cat$ is no longer dinatural in $\cat^\Zt$ but we will see that
$\cat^\Zt$ is generically finitely semisimple.

\begin{theorem}\label{T:relativeHer-premodularIsSpherical}
 If  $\cat$ is a relative Hermitian-premodular category with  $\sqrt\psi $ as in \eqref{eq:sqrpsi}, and the braiding assumption as in \eqref{eq:trivialbraiding}, then if
  $\cat^\Zt$ has a map $\bb$ as in Equation
  \eqref{eq:bb} it is a relative Hermitian-spherical category.
\end{theorem}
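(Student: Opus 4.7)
The plan is to verify each ingredient in the definition of a relative Hermitian-spherical category for $\cat^\Zt$ by transporting the corresponding structure from $\cat$ through the pivotal Hermitian functor $F^\Zt$ constructed in the preceding proposition. The $\Gr$-grading and singular set $\X$ of $\cat$ are inherited verbatim since $\cat$ and $\cat^\Zt$ have the same objects; the pivotal and sesquilinear structures come from the preceding proposition; and the map $\bb$ is supplied by hypothesis. Thus the real content is (i) generic finite $\Gr$-semisimplicity of $\cat^\Zt$, (ii) existence of an m-trace on $\Proj(\cat^\Zt)$, and (iii) the reality of that trace.

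For (i), fix $g\in\Gr\setminus\X$. By the ``finitely many orbits'' condition on $\cat$, the simples of $\cat_g$ form finitely many $\Zt$-orbits under $V\mapsto \sigma_\lambda\otimes V$. In $\cat^\Zt$ the identity morphism $\Id_{\sigma_\lambda\otimes V}\in\Hom_\cat(V,\sigma_\lambda\otimes V)$ (sitting in degree $\lambda$) becomes an isomorphism $V\stackrel{\sim}{\to}\sigma_\lambda\otimes V$, so the $\Zt$-orbits collapse to single isomorphism classes in $\cat^\Zt_g$. A routine Schur-type argument using the fact that $\Hom_\cat(V,\sigma_\lambda\otimes W)$ is at most one dimensional for simples $V,W\in\cat_g$ shows that the resulting finite list of representatives exhausts the simples of $\cat^\Zt_g$ and that $\cat^\Zt_g$ is semisimple; label them $\A=\sqcup_{g}I_g$.

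For (ii), define
\[
\mt^{\cat^\Zt}_V(f):=\mt^\cat_V(f_0),\qquad f\in\End_{\cat^\Zt}(V),\ V\in\Proj(\cat),
\]
where $f_0\in\End_\cat(V)$ is the degree-zero component of $f$. Linearity is immediate. To verify cyclicity for $f\colon V\to W$ and $h\colon W\to V$ in $\cat^\Zt$, I use \eqref{eq:CZ-comp} to write $(h\circ f)_0=\sum_\lambda (\Id_{\sigma_\lambda}\otimes h_{-\lambda})\circ f_\lambda$ and apply the cyclicity of $\mt^\cat$ summand-by-summand,
\[
\mt^\cat_V\bigl((\Id_{\sigma_\lambda}\otimes h_{-\lambda})\circ f_\lambda\bigr)
=\mt^\cat_{\sigma_\lambda\otimes W}\bigl(f_\lambda\circ(\Id_{\sigma_\lambda}\otimes h_{-\lambda})\bigr),
\]
then use the partial trace property of $\mt^\cat$ together with $\dim_\cat(\sigma_\lambda)=1$ (a consequence of \eqref{eq:trivialbraiding}) and the identification $\sigma_\lambda^*\cong\sigma_{-\lambda}$ to reduce to $\mt^\cat_W$, producing the corresponding term of $\mt^\cat_W((f\circ h)_0)$. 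The partial trace property in $\cat^\Zt$ is proved analogously: for $U\in\Proj(\cat^\Zt)$, $W\in\cat^\Zt$, and $F\in\End_{\cat^\Zt}(U\otimes W)$, I expand $F$ and $\ptr^{\cat^\Zt}_R(F)$ using \eqref{eq:CZ-tensor}, extract the degree-zero component, and sweep the half-braidings $\XX_{\bullet,\sigma_\lambda}$ across the diagram using their naturality; the $\sigma_\lambda$ strands can be closed off since $\dim_\cat(\sigma_\lambda)=1$, reducing everything to $\ptr^\cat_R$ in $\cat$. I expect this book-keeping with the $\Zt$-grading and the half-braiding to be the main obstacle, but it is essentially the same manipulation used to establish the pivotal structure on $\cat^\Zt$.

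For (iii), observe that because $\sigma_0=\unit$, formula \eqref{defofCzfdagger} specializes at $\lambda=0$ to $(f^\dagger)_0=(f_0)^\dagger$. Hence
\[
\mt^{\cat^\Zt}_V(f^\dagger)=\mt^\cat_V\bigl((f_0)^\dagger\bigr)=\overline{\mt^\cat_V(f_0)}=\overline{\mt^{\cat^\Zt}_V(f)},
\]
using realness of $\mt^\cat$ guaranteed by Lemma~\ref{lem:tracedagger} (applicable since $\cat$ is Hermitian-premodular). This gives realness of $\mt^{\cat^\Zt}$, and together with the given $\bb$ completes the verification that $\cat^\Zt$ is a relative Hermitian-spherical category.
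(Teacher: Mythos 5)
Your proof is correct and follows the same route as the paper, but supplies two verifications the paper leaves implicit. The paper's proof consists only of your step (i): a $\Zt$-orbit of simples in $\cat_g$ collapses to a single isomorphism class in $\cat^\Zt_g$, giving generic finite semisimplicity. Your steps (ii) and (iii) — defining $\mt^{\cat^\Zt}$ as the degree-zero component $\mt^\cat_V(f_0)$, sketching its trace axioms using $\dim_\cat(\sigma_\lambda)=1$ and the identification $\sigma_\lambda^*\cong\sigma_{-\lambda}$, and then getting reality from the observation that \eqref{defofCzfdagger} at $\lambda=0$ gives $(f^\dagger)_0=(f_0)^\dagger$ — are precisely what the paper takes as evident consequences of the preceding proposition, so your write-up is a useful expansion rather than a departure.
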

\begin{proof}
  We note that a $\Zt$-orbit of a simple object in $\cat$ becomes
  an isomorphism class of simple objects in $\cat^\Zt$.  Hence
  $\cat^\Zt=\bigoplus_{g\in\Gr}\cat^\Zt_g$ is finitely semisimple in
  each degree $g\in\Gr\setminus\Xs$.
\end{proof}

\section{Relative LW-data from relative Hermitian-spherical categories} \label{sec:relHermtodata}

In this section we show how the notion of a modified trace on a generically Hermitian category can be used to define the basic data needed for the non-semisimple Levin-Wen model from Section~\ref{sec:relativeLW}.

\subsection{Basic data and basis} \label{sec:basis}
Let $\cat$ be a relative $\Gr$-Hermitian-spherical category
 with a family of representative isomorphism classes of
generic simples objects $(V_i)_{i\in I}$. For $g\in \Gr\setminus\X$, let $I_g$ be the finite
set
\begin{equation} \label{eq:Ig}
  I_g=\bs{i\in I:V_i\in\cat_g}
\end{equation}
and call $g$ the degree of $i\in I_g$.
For $(i,j,k)\in I^3$, define the multiplicity
space $$\H^{ijk}=\Hom_\cat(\unit,V_i\otimes V_j\otimes V_k).$$ We call
this dimension the branching level:
\begin{equation} \label{eq:def-delta}
\delta_{ijk}=\dim_\FK(\H^{ijk}).
\end{equation}
Note that $\delta_{ijk}$ is zero unless the sum of the degrees
of $V_i$, $V_j$ and $V_k$ is zero in $\Gr$.  There is an involution on
$I$, $i\mapsto i^*$ uniquely determined by $V_{i^*}\simeq V_i^*$.

{\em Basic data} in $\cat$ is a family $(V_i)_{i\in I}$ as above
equipped with isomorphisms $\bp{w_i:V_i\to V_{i^*}^*}_{i\in I}$
satisfying $w_{i^*}=\bp{w_i}^*\circ\phi_{V_{i^*}}$, or equivalently
\begin{equation}
  \label{eq-w_i}
  \ev_{V_i}(w_{i^*}\otimes\Id_{V_i})=\tev_{V_{i^*}}(\Id_{V_{i^*}}\otimes w_{i})
  \maps V_{i^*}\otimes V_{i}\to\unit .
\end{equation}
The basic data induces non-degenerate bilinear pairings
$$\Theta=\bp{
  \begin{array}{rcl}
    \Theta_{ijk}:\H^{k^*j^*i^*}\otimes \H^{ijk}&\to&\FK\\
    y'\otimes y&\mapsto&\mt_{V_i\otimes V_j\otimes V_k}
                         \left(y(y')^*(w_i\otimes w_j\otimes w_k)\right)
  \end{array}}_{ijk\in I^3}$$
which satisfy $\Theta_{ijk}(y',y)=\Theta_{k^*j^*i^*}(y,y')$.
This pairing is also invariant by pivotal cyclic permutation as we now see. Define
$\rot_{ijk}:\H^{ijk}\to\H^{jki}$ by
$$\rot_{ijk}(y)=\bp{\ev_{V_i}\otimes \Id_{V_j\otimes V_k\otimes V_i}}
  \bp{\Id_{V_i^*}\otimes y\otimes \Id_{V_i}}\bp{\tcoev_{V_i}} .$$
Let $y \in \H^{ijk}$ and $y' \in \H^{i^* k^* j^*}$.
Then $\Theta_{jki}(y',\rot_{ijk}(y))=\Theta_{ijk}(\rot_{i^*k^*j^*}(y'),y)$.

{\em Hermitian basic data} is basic data together with a basis
$$(\y_n^{ijk})_{n=1,\ldots, \delta_{ijk}}\text{ of
}\H^{ijk}$$ such that
\begin{equation}
  \label{eq:dualbasis}
  \Theta_{ijk}\bp{\y_m^{k^*j^*i^*},\y_n^{ijk}}=\delta_m^n\ ,
\end{equation}
\begin{equation}
  \label{eq:unit-basis}
  (\y_n^{ijk}\mid \y_m^{ijk})=\left|
    \begin{array}{l}
      0\text{ if }m\neq n\\
      \gamma_n^{ijk}\in\FK^*\text{ if }m=n
    \end{array}\right.,
\end{equation}
\begin{equation}
  \label{eq:rot-basis}
  \rot_{ijk}(\y_n^{ijk})=\y_n^{kij}.
\end{equation}

We call $i,j,k$ the {\em string labels} and $n$ the {\em branching label} of
$\y_n^{ijk}$. By convention, for $n> \delta_{ijk}$ we set
$\y_n^{ijk}=0\in\H^{ijk}$.

Equation \eqref{eq:unit-basis} determines $(\wb\cdot)$-invariant scalars
$\gamma_n^{ijk}\in\FK^*$.  Another family of such scalars
$\beta(i)\in\FK^*$ is defined by
\begin{equation} \label{def:beta}
\dag w_i=\beta(i) w_i^{-1}\text{ so that }(w_i\mid w_i)=\beta(i) \md(V_i) .
\end{equation}
\begin{proposition} \label{prop:Herm-basicdata} If $\X$ contains all
  the order 2 and order 3 elements of $\Gr$, then the relative
  $\Gr$-Hermitian-spherical category $\cat$ has Hermitian basic data.
\end{proposition}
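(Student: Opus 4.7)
The plan is to construct the data explicitly, using the torsion hypothesis on $\X$ to guarantee that rotation orbits of triples behave simply. First I would pick isomorphisms $w_i: V_i \to V_{i^*}^*$. Since $\X$ contains all $2$-torsion of $\Gr$, every generic $i$ satisfies $|i|\neq -|i|$, so $i\neq i^*$ and the involution $i\mapsto i^*$ on $I$ is fixed-point free. I may therefore choose $w_i$ freely (up to a scalar, since $V_{i^*}^*$ is simple) for one representative of each pair $\{i,i^*\}$ and \emph{define} $w_{i^*} = w_i^*\circ\phi_{V_{i^*}}$ on the partner, so that \eqref{eq-w_i} holds by construction; the scalars $\beta(i)$ are then read off from \eqref{def:beta}.

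Next I would analyze orbits of triples $(i,j,k)$ with $|i|+|j|+|k|=0$ under cyclic rotation and the involution $(i,j,k)\mapsto(k^*,j^*,i^*)$. The $3$-torsion hypothesis forbids $i=j=k$ (which would force $3|i|=0$, impossible in generic degree), so the rotation $(i,j,k)\mapsto(j,k,i)$ has no fixed triples and every rotation orbit has size three. A direct case check using $i\neq i^*$, $j\neq j^*$, $k\neq k^*$ shows the rotation orbit of $(i,j,k)$ and that of $(k^*,j^*,i^*)$ are always disjoint, so triples group into orbit-pairs of size six on which the basis can be built coherently.

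Now fix a representative $(i,j,k)$ in each orbit-pair. By Lemma~\ref{lem:tracedagger} the Hermitian form $(y_1 \mid y_2)$ built from $\mt$ and $\dagger$ on $\H^{ijk}$ is non-degenerate with real diagonal, so I diagonalize it to obtain a basis $\y_n^{ijk}$ with $(\y_m^{ijk}\mid\y_n^{ijk})=\delta_{mn}\gamma_n^{ijk}$, $\gamma_n^{ijk}\in\FK^*$. Extend to the rest of the rotation orbit by iterated application of $\rot$, so \eqref{eq:rot-basis} holds by definition. Rotation is an isometry for $(\cdot\mid\cdot)$, by cyclicity of $\mt$ together with the sesquilinear duality relations $\dag\coev = \tev$ and $\dag\tcoev = \ev$, so the rotated bases remain orthogonal with the same $\gamma$'s. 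On the dual orbit I declare $\y_n^{k^*j^*i^*}$ to be the $\Theta_{ijk}$-dual basis of $\y_n^{ijk}$, forcing \eqref{eq:dualbasis}, and propagate by rotation; consistency across the dual orbit is guaranteed by the rotation-equivariance $\Theta_{jki}(y',\rot(y))=\Theta_{ijk}(\rot(y'),y)$. The remaining verification of \eqref{eq:unit-basis} on the dual orbit follows from a compatibility identity $(y_1 \mid y_2)_{\H^{ijk}} = \Theta_{ijk}(D(y_1),y_2)$, where $D:\H^{ijk}\to\H^{k^*j^*i^*}$ is the antilinear map obtained by transposing $y_1^\dagger$ through the pivotal structure and the $w_i$'s; this forces $D(\y_n^{ijk}) = \gamma_n^{ijk}\y_n^{k^*j^*i^*}$, and a symmetric application shows that $(\cdot\mid\cdot)$ on $\H^{k^*j^*i^*}$ is also diagonal in the dual basis.

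The principal obstacle is establishing the last compatibility between the Hermitian form, the $\Theta$-pairing, the pivotal duality, and the daggered $w_i$'s; tracking the effect of $\dag w_i = \beta(i) w_i^{-1}$ through the composition is a careful but essentially mechanical calculation once cyclicity and real-ness of $\mt$ are in hand. The torsion hypothesis on $\X$ is used precisely to avoid the pathological case where a rotation orbit coincides with its dual orbit, where the conditions \eqref{eq:dualbasis} and \eqref{eq:unit-basis} would impose a priori independent diagonalization requirements on a single multiplicity space.
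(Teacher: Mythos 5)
Your proposal follows essentially the same path as the paper's proof: deduce freeness of the dihedral action on triples from the torsion hypothesis on $\X$, diagonalize the Hermitian form on a chosen orbit representative, propagate by $\rot$, and verify orthogonality of the $\Theta$-dual basis via an antilinear map $\H^{ijk}\to\H^{k^*j^*i^*}$. The map $D$ you invoke is exactly the paper's $\ddagger\colon y\mapsto(w_{k^*}^{-1}\otimes w_{j^*}^{-1}\otimes w_{i^*}^{-1})\bp{y^\dagger}^*$, with the two compatibility identities $(y_1\mid y_2)=\Theta_{ijk}(y_1^\ddagger,y_2)$ and $(y^\ddagger)^\ddagger=\beta(i)^{-1}\beta(j)^{-1}\beta(k)^{-1}y$; the ``mechanical calculation'' you defer is precisely the short chain of equalities the paper carries out with these identities.
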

\begin{proof}
  Let $\A$ be the set of generic labels and choose a total order on
  $\A$.  The assumption on order $2$ elements implies for $i\in \A$,
  $i^*\neq i$. For each $i\in \A$ with $i<i^*$, choose an isomorphism
  $w_i:V_i\to V_{i^*}^*$ and define
  $$w_{i^*}=(\Id_{V_i^*}\otimes\ev_{V_{i}})(\Id_{V_{i}^*}\otimes w_i\otimes
  \Id_{V_{i^*}})(\tcoev_{V_i}\otimes\Id_{V_{i^*}}) :V_{i^*}\to V_{i}^*.$$
  This ensures that \eqref{eq-w_i} is satisfied for $i$ and $i^*$.

  Choose now a total order on
  $\A_3=\bs{(i,j,k)\in \A^3:\delta_{ijk}>0}$. The assumption on order
  $3$ elements implies that for any $i\in \A$, $(i,i,i)\notin \A_3$.
  The dihedral group $D_{2\times3}$ acts on $\A_3$ by
  $(i,j,k)\mapsto(j,k,i)$ and $(i,j,k)\mapsto(k^*,j^*,i^*)$.  This
  action is free so there are $6$ triples in each orbit.  For each
  orbit, pick $(i,j,k)$ to be the smallest element of this orbit and
  choose an orthogonal basis $(\y_n^{ijk})_n$ of $\H^{ijk}$ for the
  Hermitian form.  Define $\gamma_n^{ijk}=(\y_n^{ijk}\mid\y_n^{ijk})$.
  Let $\y_n^{k^*j^*i^*}$ be the dual basis of $\H^{k^*j^*i^*}$ for the
  pairing $\Theta$.  The bases of
  $\H^{jki},\H^{kij},\H^{j^*i^*k^*},\H^{i^*k^*j^*}$ are then defined
  using the mappings $\rot$ so that \eqref{eq:rot-basis} is satisfied
  (the composition of 3 $\rot$ maps is the identity).  We claim that
  all these bases are orthogonal for the Hermitian forms.  To show
  this for $(\y_n^{k^*j^*i^*})_n$, we introduce the semilinear
  bijections $\ddagger:\H^{ijk}\to \H^{k^*j^*i^*}$ defined by
  $y\mapsto y^\ddagger=(w_{k^*}^{-1}\otimes w_{j^*}^{-1}\otimes
  w_{i^*}^{-1})\bp{y^\dagger}^*$ which satisfy the property that for any
  $y_1,y_2\in\H^{ijk},$
  $$\bp{{y_1}^\ddagger}^\ddagger=\beta(i)^{-1}\beta(j)^{-1}\beta(k)^{-1}y_1\text{
    and }(y_1|y_2)=\Theta_{ijk}(y_1^\ddagger,y_2).$$
  Now, since $(\y_n^{ijk})_n$ is an orthogonal basis,
  $\bp{(\gamma_n^{ijk})^{-1}(\y_n^{ijk})^\ddagger}_n$ is the corresponding dual
  basis of $\H^{k^*j^*i^*}$.  So
  $(\y_n^{ijk})^\ddagger=\gamma_n^{ijk}\y_n^{k^*j^*i^*}$ and it is
  enough to show that $(\y_n^{ijk})^\ddagger$ is orthogonal.  This is true because
  $$\bp{(\y_n^{ijk})^\ddagger\mid(\y_m^{ijk})^\ddagger}=\Theta_{k^*j^*i^*}\bp{(\y_n^{ijk})^{\ddagger\ddagger},(\y_m^{ijk})^\ddagger}=\frac{\Theta_{ijk}\bp{(\y_m^{ijk})^\ddagger,\y_n^{ijk}}}{\beta(i)\beta(j)\beta(k)}=\frac{\bp{\y_m^{ijk}\mid\y_n^{ijk}}}{\beta(i)\beta(j)\beta(k)} . $$
\end{proof}


\subsection{Invariant of labeled uni-trivalent graphs}

Let $\cat$ be a relative $\Gr$-Hermitian-spherical category equipped with basic data as in Section~\ref{sec:basis}.  A $\cat$-labeled uni-trivalent graphs is an
unoriented graph $\Gamma$ embedded in the unit disc along with
\begin{itemize}
  \item a map $\sigma_1$: oriented edges$\to I$ with $\sigma_1(-e)=\sigma_1(e)^*$, and
  \item a map $\sigma_0$: trivalent vertices$\to \N$ such that if all edges labeled $(i,j,k)$ of a trivalent vertex $v$ are oriented inward toward the vertex, then $\sigma_0(v) \in [1,\delta_{ijk}]$.
\end{itemize}

To each $\cat$-labeled uni-trivalent graph we can associate a $\cat$-colored ribbon graph  $\tilde{\Gamma}$ by
\begin{itemize}
\item add a bivalent vertex $v_{\epsilon}$ that bisects each
  unoriented edge $\epsilon$ and orienting the two resulting edges
  $\overleftarrow{\epsilon}$ and $\overrightarrow{\epsilon}$ away from
  the vertex $v_{\epsilon}$
\item label each bivalent vertex $v_{\epsilon}$ with a coupon labeled
  $w_{\sigma_1(\vec{\epsilon})}$, which is well defined by
  \eqref{eq-w_i} and the equalities
  $\sigma_1(\overrightarrow{\epsilon}) =
  \sigma_1(-\overleftarrow{\epsilon}) =
  \sigma_1(\overleftarrow{\epsilon})^{\ast}$.
\item replace each resulting trivalent vertex $v$ with inward oriented
  edges $(i,j,k)$ by a coupon labeled $\y_{\sigma_0(v)}^{ijk}$.
\end{itemize}
%
%
Following the unit circle clockwise we see the univalent vertices of
$\wt\Gamma$   form  a word $((V_{i_1},+),\cdots, (V_{i_n},+))$.
%
To each such $\cat$-colored ribbon graph $\tilde{\Gamma}$, the
RT-functor produces an element
$\Gfun(\wt\Gamma)\in\Hom_\cat(\unit,V_{i_1}\otimes\cdots\otimes
V_{i_n})$.  We can then define an operation on labeled uni-valent
graphs via
\begin{equation}\label{eq:label-uni-eval1}
  \brk{\Gamma}:=\Gfun(\wt\Gamma),
\end{equation}
that
only depends on $\Gamma$.
 For a closed (without univalent vertices)
graph $\Gamma$,
we modifiy the definition of $\brk\Gamma$ to
\begin{equation} \label{eq:label-uni-eval}
  \brk\Gamma ':= \Gfun'(\tilde{\Gamma})
\end{equation}
using a cutting presentation and the modification from \eqref{eq:defG'}.
Since $\cat$ is relative $\Gr$-spherical, then the graph evaluation
\eqref{eq:label-uni-eval} extends to a well-defined evaluation for
spherical ribbon graphs, i.e. $\cat$-ribbon graphs in $S^2$,
see~\cite[Lemma 9]{GP3}.

\subsection{Conjugations for labeled uni-trivalent graphs}
Now suppose that $\cat$ is a relative $\Gr$-Hermitian-spherical
category equipped with basic data as in Section~\ref{sec:basis}.
There is a non-involutive semilinear conjugation operator
$\Gamma\mapsto \Gamma^\ddagger$ on the space generated by labeled
planar uni-trivalent networks as follows. The definition
of this operator is inspired by the $\dagger$ functor and Equation
\eqref{eq:label-uni-eval1}. The image of a graph $\Gamma$ with state
$\sigma$, with set of trivalent vertices $\Gamma_0$, with set of
univalent vertices $\Gamma'_0$ and with set of edges $\Gamma_1$ is
given by
\begin{equation}
  \label{eq:plan-dag}
  \Gamma^\ddagger=c(\Gamma)\mirror \Gamma
\end{equation}
where $\mirror \Gamma$ is the mirror image of $\Gamma$  with the dual state and the complex coefficient $c(\Gamma)$ is given by
$$c(\Gamma)=\prod_{v\in \Gamma_0}\gamma(\sigma(v))\prod_{u\in \Gamma'_0}\beta(\sigma_1(u))^{-1}\prod_{e\in \Gamma_1}\beta(\sigma_1(e))$$
where $\sigma_1(u)$ is $\sigma_1$ of the unique edge adjacent to $u$
and
$\gamma(\sigma(v))=\gamma^{\sigma_1(e_1)\sigma_1(e_2)\sigma_1(e_3)}_{\sigma_0(v)}$
where $e_1,e_2,e_3$ are the $3$ ordered edges adjacent to $v$ oriented
toward $v$.  A network with no univalent vertices is closed.
\begin{remark}
Note that for an oriented edge $e$, $\beta(\sigma(-e))=\beta(\sigma(e))$.  Thus $\beta(\sigma(e))$ make sense even if we forget the orientation of $e$.
\end{remark}
\begin{proposition}Let $\cat$ be a relative Hermitian-pivotal (resp. relative Hermitian-spherical) category,
then for $\Gamma$  a closed planar (or spherical) network, the
  evaluation of the network $\Gamma^\ddagger$ is the conjugate of the
  value of $\Gamma$
  \begin{equation}
    \label{eq:ddagger}
    \brk{\Gamma^\ddagger} '=\overline{\brk{\Gamma} '}.
  \end{equation}
\end{proposition}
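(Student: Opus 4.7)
The plan is to reduce to a cutting presentation, apply reality of the modified trace, and then translate the Hermitian conjugate of the resulting morphism into a graphical reflection with rescaled coupons. Pick any edge $e_0$ of $\Gamma$ colored by a generic simple $V_i$ and form a cutting presentation $\tilde{T}_{V_i}$ of the ribbon graph $\tilde\Gamma$, so that $\brk\Gamma' = \mt_{V_i}(\Gfun(\tilde{T}_{V_i}))$ by \eqref{eq:label-uni-eval}. Since the modified trace is assumed real (Lemma~\ref{lem:tracedagger}), we obtain
\[
\overline{\brk\Gamma'} \;=\; \mt_{V_i}\!\bigl(\Gfun(\tilde{T}_{V_i})^\dagger\bigr),
\]
so the task becomes that of expressing $\Gfun(\tilde{T}_{V_i})^\dagger$ as the RT-evaluation of the suitably decorated mirror graph.

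For this, use that in a Hermitian pivotal category the identities \eqref{eq:Hermitian+dual} imply that $\dagger$ can be realized geometrically as reflection of the underlying planar (or spherical) ribbon graph combined with replacement of each coupon by its dagger; contravariance of $\dagger$ matches reversal of the height function on the ribbon graph, and $\coev_V^\dagger=\tev_V$, $\ev_V^\dagger=\tcoev_V$ handle the turnbacks. Thus $\Gfun(\tilde{T}_{V_i})^\dagger$ equals the evaluation of $\mirror{\tilde{T}_{V_i}}$ in which each bivalent $w$-coupon is replaced by $\dag{w_i}$ and each trivalent coupon by $\dag{(\y_n^{ijk})}$.

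Next, rewrite each daggered coupon in terms of the standard basic data on $\mirror\Gamma$, which now carries the dual state $\sigma^\ast$. For every edge $e$, the identity $\dag{w_i}=\beta(i)\,w_i^{-1}$ from \eqref{def:beta} together with \eqref{eq-w_i} identifies $\dag{w_i}$ with $\beta(\sigma_1(e))$ times the canonical $w$-coupon prescribed on $\mirror{\tilde\Gamma}$. For every trivalent vertex $v$, the formula $(\y_n^{ijk})^\ddagger=\gamma_n^{ijk}\,\y_n^{k^\ast j^\ast i^\ast}$ from the proof of Proposition~\ref{prop:Herm-basicdata}, where $\ddagger$ is defined as $(w^{-1}\otimes w^{-1}\otimes w^{-1})(\cdot)^{\dagger\ast}$, lets one express $\dag{(\y_n^{ijk})}$ as $\gamma(\sigma(v))$ times the canonical $\y$-coupon at the corresponding vertex of $\mirror\Gamma$, up to three $w^{-1}$ bivalent insertions along the incident edges and pivotal-star isomorphisms absorbed into the reflection. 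These auxiliary $w^{-1}$ coupons merge via \eqref{eq-w_i} with the $w$-coupons already present on each edge of $\mirror{\tilde\Gamma}$, so that after zig-zag cancellation one is left with the honest ribbon graph of $\mirror\Gamma$ multiplied by $\prod_{v\in\Gamma_0}\gamma(\sigma(v))\prod_{e\in\Gamma_1}\beta(\sigma_1(e))=c(\Gamma)$; there are no univalent contributions because $\Gamma$ is closed. Applying $\mt_{V_i}$ and using the cutting-presentation formula once more yields
\[
\overline{\brk\Gamma'} \;=\; c(\Gamma)\,\brk{\mirror\Gamma}' \;=\; \brk{\Gamma^\ddagger}',
\]
as desired.

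The main obstacle is the bookkeeping in the third paragraph: one must verify that the three $w^{-1}$'s and pivotal star introduced by the $\ddagger$-formula at each trivalent vertex are exactly compensated by the pivotal isomorphisms created when the ambient ribbon graph is reflected, so that, after fusing with the $w$-coupons sitting on each edge of the mirror graph, no stray pivotal factors survive and only the scalars $\gamma(\sigma(v))$ and $\beta(\sigma_1(e))$ remain. The spherical case follows identically once one invokes the $\Gr$-spherical cutting independence (cf.\ \cite[Lemma 9]{GP3}) to conclude that the choice of cutting edge is immaterial.
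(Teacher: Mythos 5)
The paper states this proposition without proof, so there is no in-paper argument to compare against. Your outline is the natural one: reduce to a cutting presentation, use reality of the m-trace (Lemma~\ref{lem:tracedagger}) to turn $\overline{\brk{\Gamma}'}$ into $\mt(\Gfun(\tilde T_{V_i})^\dagger)$, realize $\dagger$ geometrically as reflection with daggered coupons via \eqref{eq:Hermitian+dual}, and then rewrite each daggered coupon through $\dag{w_i}=\beta(i)w_i^{-1}$ and $(\y_n^{ijk})^\ddagger=\gamma_n^{ijk}\y_n^{k^*j^*i^*}$ so that the scalars assemble into $c(\Gamma)=\prod_v\gamma(\sigma(v))\prod_e\beta(\sigma_1(e))$. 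The numerology is consistent: along each edge one gets two $w$-contributions from the $\ddagger$-rewrite at the two incident trivalent vertices and one $w^{-1}$ from daggering the midpoint coupon, leaving a net single $w$ as required by $\widetilde{\mirror\Gamma}$, with the scalar $\beta(\sigma_1(e))$ extracted; each trivalent vertex contributes one $\gamma(\sigma(v))$.

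The place where your argument stops short is precisely the pivotal bookkeeping you flag in the last paragraph. You assert, rather than verify, that the left dual $(\cdot)^*$ introduced by the $\ddagger$-formula and the reorientations forced by passing to $\mirror\Gamma$ cancel cleanly against the reflection of the ambient ribbon graph, with no leftover $\phi$-factors, and that the fused $w$-coupon on each edge agrees with the canonical one dictated by the dual state $\sigma^*$ via \eqref{eq-w_i}. Since that cancellation is essentially the entire content of the proposition, the writeup is a correct and complete \emph{strategy}, but not yet a complete proof; one would want either a careful type-chase on a single edge and a single vertex (showing explicitly where $w_{i^*}=(w_i)^*\circ\phi_{V_{i^*}}$ is used to merge the three $w$-insertions into one), or an appeal to sphericality to kill the stray pivotal factors, spelled out rather than gestured at. None of this changes the conclusion, but it is the step a reader would most want to see.
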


\subsection{Scalar 6j-symbols in relative Hermitian-spherical categories}
Now assume that $\cat$ is relative Hermitian-spherical equipped with basic data.    We define scalar $6j$ symbols from  the tetrahedral graph ribbon graph $\tilde{\Gamma}$
\begin{equation} \label{eq:mod6j}
N^{j_1 j_2 j_3}_{j_4 j_5 j_6}(^{a_1a_2}_{a_3a_4})=\Gfun'\left({\hackcenter{\begin{tikzpicture}[   decoration={markings, mark=at position 0.6 with {\arrow{>}};}, scale =0.7]
        \draw[thick,  postaction={decorate}, out=-30, in=100] (-.7,3) to (0,2);
        \draw[thick,  postaction={decorate}] (-.7,3) .. controls ++(-.35,-.5) and ++(-.35,.5) .. (-.7,1);
        \draw[thick,  postaction={decorate}, out=-70, in=140] (-.7,1) to (0,0);
        \draw[thick,  postaction={decorate}] (0,2) -- (-.7,1);
        \draw[thick,  postaction={decorate}] (0,2) .. controls ++(.35,-.5) and ++(.35,.5) ..  (0,0);
        \draw[thick, ] (0,0) .. controls ++(0,-.5) and ++(0,-.5) ..  (1,0);
        \draw[thick,  postaction={decorate}] (1,0) to (1,3);
        \draw[thick, ] (-.7,3) .. controls ++(0,.75) and ++(0,.55) ..  (1,3);
        \draw[gray,fill=gray] (-.7,1) circle (.5ex);
        \draw[gray,fill=gray](0,2) circle (.5ex);
        \draw[gray,fill=gray] (-.7,3) circle (.5ex);
        \draw[gray,fill=gray] (0,0) circle (.5ex);
        \node at (-1.2,2.3) {$\scs j_1$};
        \node at (-.35,1.9) {$\scs j_2$};
        \node at (-.85,.55) {$\scs j_3$};
        \node at (.5,1.2) {$\scs j_4$};
        \node at (.2,2.5) {$\scs j_6$};
        \node at (1.3,2) {$\scs j_5$};
      \end{tikzpicture}}} \right)
\end{equation}
 where the coupons at vertices are filled with
$$\y_{a_1}^{j_1 j_2 j_3^*},\quad \y_{a_2}^{j_3j_4j_5^*},\quad \y_{a_3}^{j_5j_6^*j_1^*},\quad \y_{a_4}^{j_6j_4^*j_2^*} .$$

\begin{proposition}  \label{prop:prop6j}
The $6j$ symbols associated with a relative Hermitian-spherical category $\cat$ satisfy the equalities \eqref{eq:theta} and  \eqref{eq:symm} --  \eqref{eq:6j}.
\end{proposition}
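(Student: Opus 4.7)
The plan is to verify each of the five identities in turn, using the Hermitian basic data from Section~\ref{sec:basis} and the conjugation formula \eqref{eq:ddagger}.

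For \eqref{eq:theta}, I would apply the semilinear bijection $\ddagger$ from the proof of Proposition~\ref{prop:Herm-basicdata} twice to $\y_n^{ijk}$. The first application gives $(\y_n^{ijk})^\ddagger=\gamma_n^{ijk}\,\y_n^{k^*j^*i^*}$, and a second application yields $\wb{\gamma_n^{ijk}}\gamma_n^{k^*j^*i^*}\y_n^{ijk}$. Since $\wb{\gamma}=\gamma$ and the general formula $((\y_n^{ijk})^\ddagger)^\ddagger=\beta(i)^{-1}\beta(j)^{-1}\beta(k)^{-1}\y_n^{ijk}$ holds, comparing the two expressions gives \eqref{eq:theta} immediately. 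For \eqref{eq:symm}, the tetrahedral graph in \eqref{eq:mod6j} carries a natural $3$-fold rotational symmetry on $S^2$; the two displayed equalities encode the two generators of the orientation-preserving cyclic subgroup of the tetrahedral symmetry group. Since $\Gfun'$ is an isotopy invariant of spherical $\cat$-colored graphs and the chosen basis satisfies the rotation compatibility \eqref{eq:rot-basis}, applying the relevant planar isotopy together with \eqref{eq:rot-basis} at each of the three permuted vertices transforms the graph producing $N^{j_1j_2j_3}_{j_4j_5j_6}$ into the graphs producing the two relabelings on the right.

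For the orthogonality \eqref{eq-ortho} and pentagon \eqref{eq:pent}, the key ingredient is a completeness relation in $\Hom_\cat(V_i\otimes V_j,V_i\otimes V_j)$ for generic degrees: the non-degeneracy of $\Theta_{ijk}$ combined with generic semisimplicity yields
\[
\Id_{V_i\otimes V_j}=\sum_{k}\sum_{n=1}^{\delta_{ijk^*}}\md(V_k)\,\iota_{k,n}\circ\pi_{k,n},
\]
where $\iota_{k,n},\pi_{k,n}$ are constructed from $\y_n^{ijk^*}$, its $\Theta$-dual, and $w_k$. Inserting this resolution of the identity into a theta-type graph and applying \eqref{eq:dualbasis} produces \eqref{eq-ortho}, because two nested tetrahedra collapse to a coefficient $\md(V_k)^{-1}\Id_{V_k}$. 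The pentagon \eqref{eq:pent} is the classical Biedenharn--Elliot identity: evaluating the ribbon graph associated to a $4$-simplex in two different ways (triangulating its boundary in the two ways corresponding to the two sides of the pentagon equation) and using the same completeness relation to compare the two evaluations yields the identity. All terms are tracked using $\md(j)$ as the dimension weight in the sum over the intermediate simple object.

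For \eqref{eq:6j}, apply the operator $\ddagger$ of \eqref{eq:plan-dag} to the tetrahedral graph $\Gamma$ defining $N^{j_1j_2j_3}_{j_4j_5j_6}(^{a_1a_2}_{a_3a_4})$. By \eqref{eq:ddagger},
\[
\wb{N^{j_1j_2j_3}_{j_4j_5j_6}(^{a_1a_2}_{a_3a_4})}=\brk{\Gamma^\ddagger}'=c(\Gamma)\,\brk{\mirror\Gamma}'.
\]
Since $\Gamma$ is closed (no univalent vertices), the coefficient $c(\Gamma)$ consists of exactly the four $\gamma$-factors indexed by the four vertices and the six $\beta$-factors indexed by the six edges, which match the prefactor on the right-hand side of \eqref{eq:6j}. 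Finally, $\mirror\Gamma$ is a tetrahedron with reversed edge orientations and dualized string labels; matching its $6j$ data to the prescription in \eqref{eq:mod6j} shows that $\brk{\mirror\Gamma}'=N^{j_2^*j_1^*j_3^*}_{j_5j_4j_6}(^{a_1a_2}_{a_4a_3})$, completing \eqref{eq:6j}.

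The main obstacle will be the pentagon \eqref{eq:pent}, since the presence of multiplicities forces one to track all four branching labels $c_1,c_2,c_3,c_4$ carefully through the completeness relation; the other four identities reduce to direct manipulations with the dagger, rotation, and isotopy invariance already established.
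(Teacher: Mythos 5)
Your proof is correct and follows essentially the same route as the paper, which defers \eqref{eq:symm} to tetrahedral symmetry, \eqref{eq:pent} and \eqref{eq-ortho} to \cite[Theorems 11--12]{GP3}, and \eqref{eq:theta}, \eqref{eq:6j} to \cite[Appendix A.15]{GLPS2}. Your fleshing-out of those citations --- applying the $\ddagger$-bijection of Proposition~\ref{prop:Herm-basicdata} twice for \eqref{eq:theta}, using the conjugation identity \eqref{eq:ddagger} on the tetrahedral graph for \eqref{eq:6j}, and the $\md$-weighted resolution of the identity for \eqref{eq-ortho} and \eqref{eq:pent} --- is exactly what those references do.
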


\begin{proof}
The definition of modified $6j$ symbols in the relative spherical category $\cat$ implies they have the symmetries of an oriented tetrahedron implying \eqref{eq:symm}.   Equation \eqref{eq:pent} follows from \cite[Theorem 11]{GP3} and \eqref{eq-ortho} from \cite[Theorem 12]{GP3}.  Finally, \eqref{eq:theta} and \eqref{eq:6j} follow as in the proof of \cite[Appendix A.15]{GLPS2}.
\end{proof}

\subsection{Relative $\Gr$-LW-data from relative Hermitian-spherical}
Finally, we explain how the data of an $(\X,\qd)$-relative $\Gr$-Hermitian-spherical category provides $(\X,\qd)$-relative $\Gr$-spherical LW-data in the sense of Section~\ref{subsec:basicinput}.
\begin{theorem} \label{thm:sphtodata}
Let $\cat$ be a $(\X,\qd)$-relative Hermitian-spherical category where $\X$ contains all elements of $\Gr$ of order $2$ and $3$.  Then $\cat$ determines relative $LW$-data
\[
(\Gr, \X, (I_g)_{g\in\Gr}, \delta, \mb, \md,N,\beta,\gamma)
\]
 where $I_g$ is as in \eqref{eq:Ig}, $\delta$ is defined in \eqref{eq:def-delta},  $\mb, \md$ come from the relative spherical data, $N$ are the modified 6$j$ symbols from \eqref{eq:mod6j},    $\beta$ are defined in \eqref{def:beta},   and $\gamma$ are defined in the proof of Proposition~\ref{prop:Herm-basicdata}.
 \end{theorem}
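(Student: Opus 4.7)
The plan is to assemble the LW-data directly from the structure of $\cat$ and verify each axiom of Section \ref{subsec:basicinput} by invoking results already proved in the paper. Because $\X$ contains all order 2 and 3 elements of $\Gr$, Proposition \ref{prop:Herm-basicdata} applies and supplies Hermitian basic data: isomorphisms $w_i\colon V_i\to V_{i^*}^*$ satisfying \eqref{eq-w_i}, an orthogonal basis $(\y_n^{ijk})_n$ of each multiplicity space $\H^{ijk}$, with associated scalars $\gamma_n^{ijk}$ from \eqref{eq:unit-basis} and $\beta(i)$ from \eqref{def:beta}. These, together with $I_g$ from \eqref{eq:Ig}, $\delta_{ijk}$ from \eqref{eq:def-delta}, $\md$ from the real m-trace on $\Proj$, $\mb$ from the relative spherical data of Definition \ref{D:G-spherical}, and the modified $6j$-symbols $N$ from \eqref{eq:mod6j}, form the entire proposed tuple.

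Next I would verify the axioms one by one. The involution $*$ is induced by duality via $V_{i^*}\simeq V_i^{*}$, and $I_g^{*}=I_{-g}$ holds since the grading on $\cat$ is compatible with duals. The dihedral symmetry of $\delta$ follows because $\rot_{ijk}\colon\H^{ijk}\to\H^{jki}$ and $y\mapsto y^{\ddagger}$ (from the proof of Proposition \ref{prop:Herm-basicdata}) furnish linear (resp. semilinear) bijections realizing the cyclic and dualizing symmetries; the vanishing condition on $\delta$ is just the $\Gr$-grading. For $\md$: realness of the m-trace (Lemma \ref{lem:tracedagger}) gives $\wb{\md}=\md$, and $\md(i)=\md(i^{*})$ follows from the standard identity $\mt_V(\Id_V)=\mt_{V^{*}}(\Id_{V^{*}})$ in a pivotal category. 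For $\mb$: the conjugation-invariance may be arranged using the formula \eqref{E:defbb}, $\mb(i)=\mb(i^{*})$ is part of Definition \ref{D:G-spherical}\eqref{ID:G-sph7}, and equation \eqref{eq:b} is precisely \eqref{eq:bb}.

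For $\beta$ and $\gamma$: since the Hermitian form on $\H^{ijk}$ is used in \eqref{eq:unit-basis}, the diagonal entries $\gamma_n^{ijk}$ are real, hence $\wb{\gamma}=\gamma$; similarly $\wb{\beta}=\beta$ follows from \eqref{def:beta} and the fact that $(w_i\mid w_i)=\beta(i)\md(V_i)$ pairs two real quantities. The identity $\beta(i^{*})=\beta(i)$ follows from the construction of $w_{i^{*}}$ out of $w_i$ via \eqref{eq-w_i}, combined with the compatibility of $\dagger$ with duality expressed in \eqref{eq:Hermitian+dual}. Equation \eqref{eq:theta} comes from Proposition \ref{prop:prop6j} (interpreting the theta graph as a closed network and comparing its value with that of its $\ddagger$-image via \eqref{eq:ddagger}). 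Finally, the symmetry \eqref{eq:symm}, the pentagon \eqref{eq:pent}, the orthogonality \eqref{eq-ortho}, and the conjugation identity \eqref{eq:6j} for $N$ are exactly the content of Proposition \ref{prop:prop6j}.

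The proof is therefore essentially a bookkeeping exercise once Propositions \ref{prop:Herm-basicdata} and \ref{prop:prop6j} are in hand. The only place requiring genuine care is the matching between the algebraic conjugation identities (for $\beta$, $\gamma$, and the entries of $N$) and the graphical operation $\Gamma\mapsto\Gamma^{\ddagger}$ from \eqref{eq:plan-dag}: one must check that the scalar factors inserted by $\ddagger$ at vertices, edges, and univalent endpoints match the scalar factors $\gamma$, $\beta$, and $\beta^{-1}$ appearing in the basic data, and this is the main place where the assumption that $\X$ contains order 2 and 3 elements is used (to make sure no triples $(i,i,i)$ or self-dual labels obstruct the orbit-based construction of the basis). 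Once this matching is verified, the remaining verifications are immediate rewrites of the corresponding categorical identities in terms of the scalar components, completing the proof.
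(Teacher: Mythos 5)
Your proposal is correct and takes the same route as the paper, which dispatches the theorem in one sentence as ``immediate from the definitions and Proposition \ref{prop:prop6j}''; you have simply unpacked that sentence by explicitly matching each item of the LW-data tuple to the output of Proposition \ref{prop:Herm-basicdata} and checking each axiom of Section \ref{subsec:basicinput} against the corresponding paper result (realness of the m-trace via Lemma \ref{lem:tracedagger}, $\bb$-axioms via Definition \ref{D:G-spherical}, the $6j$ identities via Proposition \ref{prop:prop6j}). This is a faithful and accurate expansion of the paper's bookkeeping argument.
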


\begin{proof}
This is immediate from the definitions and Proposition~\ref{prop:prop6j}.
\end{proof}

There is a modified Turaev-Viro TQFT associated to the data of an $(\X,\qd)$-relative $\Gr$-spherical category~\cite{GP3}.  This  is a $(2+1)$-dimensional TQFT defined on
a 3-manifold $M$ with the additional data of an isotopy class of an
embedded graph $Y$ in $M$ and a cohomology class
$[\col]\in H^1(M,\Gr)$.  Given a triangulation $\cal{T}_M$ of $M$, the
graph $Y$ is assumed to be Hamiltonian, meaning that it intersects
every vertex of the triangulation exactly once, and its edges are part
of the triangulation.  When the manifold $M$ has boundary, the triangulation of $M$ induces a
triangulation $\cal{T}$ on the boundary $\Sigma$ and the cohomology
class $[\col]\in H^1(M,\Gr)$ restricts
to $[\col_{|\Sigma}]\in H^1(\Sigma,\Gr)$.
In \cite{GP3}, it is assumed the graph $Y$ is embedded transversely to
the boundary surface.  Thus, the modified Turaev-Viro invariant for
the surface $\Sigma$ includes the additional data of a finite set of
marked points ${\mathsf m} \subset \Sigma$ on the surface
corresponding to where the graph intersects the surface.

Assume that the set of vertices in the triangulation $\Sigma$ is exactly the set of marked points
As explained in \cite[Section F]{GLPS2}, we have the following.
\begin{theorem} \label{thm:ground=TV}
  The degenerate ground state of the Hamiltonian in \eqref{eq:Hamiltonian0} defined on a surface $\Sigma$ is isomorphic to the vector space assigned to the surface by the modified TQFT from \cite{GP3} defined from the corresponding relative $\Gr$-Hermitian-spherical category:
  $$\operatorname{Ker} H\simeq TV(\Sigma,{\mathsf m},[\col])$$
  where $[\col]\in H^1(\Sigma,\Gr)$ and the cardinality of
  ${\mathsf m}$ is the number of vertices in the triangulation $\cal{T}$ of $\Sigma$.

 In particular, the ground state of the system is a topological invariant of the surface $\Sigma$ equipped with the set of points $\m$ and the cohomology class $[\col]\in H^1(\Sigma,\Gr)$.
\end{theorem}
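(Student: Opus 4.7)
The plan is to generalize the argument sketched in \cite[Section F]{GLPS2}, with the main conceptual input being that $TV(\Sigma,\mathsf{m},[\col])$ can be realized as the image of a \emph{cylinder projector}: one applies the modified TV state sum of \cite{GP3} to $\Sigma \times [0,1]$, triangulated so that each boundary copy carries the triangulation $\cal{T}$ dual to $\Gamma$, equipped with the pulled-back cohomology class. This produces an endomorphism $P_{\Sigma \times I}$ of the space of admissible-colored states which, by the gluing and normalization axioms of the modified TQFT, is a projector onto $TV(\Sigma,\mathsf{m},[\col])$.

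First I would verify that the $+1$ eigenspace of $\prod_v Q_v$ is exactly the subspace $\HHH^{\text{adm}} \subset \HHH(\Gamma,\col)$ spanned by admissible states, which is immediate from \eqref{eq:vertexQ} since $Q_v$ is a diagonal delta function enforcing $1 \leq \sigma_0(v) \leq \delta_{\sigma(e_1),\sigma(e_2),\sigma(e_3)}$. Consequently
\[
\operatorname{Ker} H = \operatorname{Im}\left(\prod_p B_p\right) \cap \HHH^{\text{adm}},
\]
using Theorem \ref{prop:plaquette} and Corollary \ref{corplaqcomm} to see that the $B_p$ are mutually commuting projectors on $\HHH^{\text{adm}}$.

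Next I would identify the total plaquette operator $B := \prod_p B_p$ with the cylinder projector $P_{\Sigma \times I}$. The operator $B_p^g$ from \eqref{eq:Balpha} is, by construction, a sum over $s \in I_g$ (weighted by $\mb(s)$) of an insertion of a loop labeled $s$ running around the plaquette $p$; the identity $B_p^{g_1}B_p^{g_2}=B_p^{g_1+g_2}$ of Proposition \ref{prop:plaqexp} together with the fusion identity for $\mb$ in item (6) of Section \ref{subsec:basicinput} makes the Kirby-color interpretation precise. Stacking these insertions across all plaquettes and applying the $6j$-symbol identities of Proposition \ref{prop:prop6j} — specifically the pentagon \eqref{eq:pent} to glue adjacent plaquette contributions and the orthogonality \eqref{eq-ortho} to eliminate internal labels — one rewrites $B$ as the full state sum on a triangulation of $\Sigma \times [0,1]$ whose boundary restricts to $\cal{T}$ on both copies of $\Sigma$. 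The renormalization via $\md$ and $\beta,\gamma$ entering \eqref{eq:def-eta} matches the weights of the modified TV construction of \cite{GP3}. This identifies $B = P_{\Sigma \times I}$ and yields $\operatorname{Ker} H \simeq TV(\Sigma,\mathsf{m},[\col])$. Topological invariance of the right-hand side under change of triangulation of $\Sigma$ is then inherited from the TQFT axioms of the modified TV invariant.

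The main obstacle will be the identification $B = P_{\Sigma \times I}$. One must choose a systematic triangulation of $\Sigma \times [0,1]$ interpolating between the two copies of $\cal{T}$, then reduce its state sum by Pachner-type moves to a product of plaquette insertions. In the multiplicity-free setting of \cite{GLPS2} this reduction is essentially local and combinatorially transparent; here the presence of higher branching multiplicities $\delta_{ijk} > 1$ forces one to track branching labels $\sigma_0(v)$ and to contract against the dual basis $(\y_n^{ijk})$ constructed in Proposition \ref{prop:Herm-basicdata}. The bookkeeping of these branching indices through \eqref{eq:symm}--\eqref{eq:6j} — in particular ensuring compatibility of the phases $\gamma_n^{ijk}$ and $\beta(i)$ appearing in the inner product \eqref{eq:Aindef-inner} with those appearing in the TV state sum via \eqref{eq:plan-dag} and Proposition \ref{prop:prop6j} — is the genuinely new technical content beyond \cite{GLPS2}.
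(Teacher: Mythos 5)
The paper gives no independent proof here—it simply defers to \cite[Section~F]{GLPS2}, stating the theorem as a direct generalization of the multiplicity-free $\mathfrak{sl}_2$ case proved there. Your proposal correctly reproduces that argument's structure: identify $\operatorname{Ker} H$ as the simultaneous $+1$-eigenspace of the commuting projectors $Q_v$ and $B_p$, identify the total plaquette projector with the cylinder state sum of \cite{GP3} on $\Sigma\times[0,1]$ (the standard string-net/Turaev--Viro correspondence of Kirillov and K\"onig--Kuperberg--Reichardt, cited in the paper), and note that the genuinely new bookkeeping is carrying the branching labels $\sigma_0(v)$ and the associated bases $(\y_n^{ijk})$ and phases $\gamma_n^{ijk},\beta(i)$ through the Pachner-move reduction, which is precisely the content of Propositions~\ref{prop:Herm-basicdata} and \ref{prop:prop6j}.
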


\section{Hermitian surgery TQFT} \label{sec:hermribbon}
We review a WRT-style TQFT having a Hermitian structure.

\subsection{Hermitian structure on decorated cobordisms}
\newcommand{\ve}{\varepsilon}
\newcommand{\Cob}{{\mathcal{C}ob}}
Let $\cat$ be a relative $\Gr$-Hermitian-modular category.
We follow Turaev \cite[I.II.5.1]{Tu} and define the dagger of a $\cat$-colored
ribbon graph $T$ as the following transformation: invert the
orientation of the surface of $T$, reverse directions of bands
and annuli of $T$, exchange bottom and top bases of coupons, and
replace the colors $f$ of coupons with $f^\dagger$.  The colors of
edges do not change.

The boundary of a $\cat$-colored ribbon graph is a set of
$\cat$-colored framed points where a framed point $p=(V,\ve)$ is a
point equipped with a sign $\ve$, a non-zero vector (its framing) tangent to the surface $T$ and to the direction of the band, and
a color which is an object $V\in \cat$.   Let $\Gfun(p)=V$ if
$\ve=+$ and $\Gfun(p)=V^*$ if $\ve=-$, where $\Gfun$ is the Reshetikhin-Turaev functor from Section \ref{subsec:renorm-ribbon-graph}. The conjugate $\wb p$ of a
$\cat$-colored framed point $p$ is obtained by changing the sign
and framing to their opposites.

Recall the category of decorated cobordisms first introduced in \cite{D17}, for a short description also see \cite[Section 1.3]{DGP2}.
\begin{definition}[Objects of $\Cob$]
  A \emph{decorated surface} is a $4$-tuple $\dSu =(\Su,\{p_i\},\coh,
  {\La})$ where:
\begin{itemize}
\item $\Su$ is a closed, oriented surface which is
  an ordered disjoint union of connected surfaces each having a distinguished base point $*$;
\item $\{p_i\}$ is a finite (possibly empty) set of framed points colored with 
  objects of $\cat$
  whose framings are tangent to the surface $\Su$;
\item $\coh\in H^1(\Su\setminus \{p_1,\ldots, p_k\}, *;\Gr)$ is a
  cohomology class;
\item {\em compatibility condition}: letting $\coh( m_i)=a_i\in\Gr\setminus \X$
  where $m_i$ is a positively oriented circle around $p_i$, then we
  require $F(p_i)\in\cat_{a_i}$;
\item ${\La}$ is a Lagrangian subspace of $H_1(\Su;\R)$.
\end{itemize}
A decorated surface is \emph{admissible} if each component of $\dSu$
either has a framed point colored by a projective object or it
contains a closed curve $\gamma$ such that $\coh(\gamma)\notin\X$.
Objects of $\Cob$ are admissible decorated surfaces.
\end{definition}

\begin{definition}[Morphisms of $\Cob$]\label{D:Cobordisms}
  Let ${\dSu}_\pm=(\Su_\pm,\{p_i^\pm \}, \coh_\pm, \La_\pm)$ be admissible,
  decorated surfaces.  A \emph{decorated cobordism} from ${\dSu}_-$ to
  ${\dSu}_+$ is a $5$-tuple $\dM=(M,T,f, \coh,n)$ where:
\begin{itemize}
\item $M$ is an oriented $3$-manifold with boundary $\partial M$;
\item $f: \overline{\Su_-}\sqcup \Su_+\to \partial M$ is a
  diffeomorphism preserving the orientation, and the image under
  $f$ of the base points of $\overline{\Su_-}\sqcup \Su_+$ is denoted by $*$;
\item $T$ is a $\cat$-colored ribbon graph in $M$ such that
  $\partial T=\{\wb{f(p_i^-)}\}\cup \{f(p_i^+)\}$;
\item $\coh\in H^1(M\setminus T,*;\Gr)$ is a cohomology class
  relative to the base points on $\partial M$, such that the
  restriction of $\coh$ to
  $(\partial M\setminus \partial T)\cap \Su_\pm$ is
  $(f^{-1})^*(\coh_{\pm})$;
\item the coloring of $T$ is compatible with $\coh$, i.e. each
  oriented edge $e$ of $T$ is colored by an object in
  $\cat_{\coh(m_e)}$ where $m_e$ is the oriented meridian of $e$;
  \item all the edges of $T$ are colored by Hermitian objects of $\cat$;
\item $n$ is an arbitrary integer
  called the {\em signature-defect} of $\dM$.
\end{itemize}
We can summarize the first four items by saying that
$\partial\dM=\dSu_-^*\sqcup \dSu_+$ where the dual of a decorated
surface $\dSu =(\Su,\{p_i\},\coh, {\La})$ is defined to be
$\dSu^*=(\wb\Su ,\wb{\{p_i\}},\coh,{\La})$.
A decorated cobordism is {\em admissible} if each component of $M$ contains
either a component of $T$ with an edge colored by a projective object or it contains a
closed curve $\gamma$ such that $\coh(\gamma)\notin\X$.  This
condition is automatically satisfied by components of $\dM$ with
non-empty boundary.  Hence
morphisms of $\Cob$ are orientation preserving diffeomorphism classes of
admissible decorated cobordisms.
\end{definition}

\begin{proposition}
  Let $\dM=(M,T,f, \coh,n):\dSu_-\to\dSu_+$ be a cobordism in $\Cob$.
  Then the following defines a cobordism $\dM^\dagger:\dSu_+\to\dSu_-$
  in $\Cob$:
  $$\dM^\dagger=(\wb M,T^\dagger,\wb f, \coh,-n),$$
  where $\wb M$ is $M$ with opposite orientation, and
  $\wb f=\overline{\Su_+}\sqcup \Su_-\to \partial \wb M$ is the same
  set-theoretic map as $f$.
  Furthermore, with the above assignment, $\Cob$ is a Hermitian ribbon
  category.
\end{proposition}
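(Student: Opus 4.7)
The argument splits into two parts: first verify that $\dM^\dagger=(\wb M,T^\dagger,\wb f, \coh,-n)$ is a well-defined morphism in $\Cob$, and then verify the axioms of a Hermitian ribbon category from Definition \ref{sesribdef}.

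For the first part, I will run down the list of data in Definition \ref{D:Cobordisms} one by one. The manifold $\wb M$ is oriented and has boundary $\partial(\wb M)=\wb{\partial M}$, and the set-theoretic map $\wb f$ is orientation preserving as a diffeomorphism $\overline{\Su_+}\sqcup \Su_-\to \partial \wb M$ precisely because reversing the orientation on both source and target produces an orientation preserving map. The ribbon graph $T^\dagger$ has the same underlying set as $T$, and since the operation $(\cdot)^\dagger$ reverses the framing of framed points on the boundary while keeping colors, one checks $\partial T^\dagger=\{\wb{f(p_i^+)}\}\cup\{f(p_i^-)\}$ matches what the dual boundary requires. The cohomology class $\coh$ can be kept unchanged because $H^1(\wb M\setminus T,*;\Gr)=H^1(M\setminus T,*;\Gr)$ as abelian groups; the subtle point is that reversing the orientation of $M$ reverses the orientation of every meridian $m_e$, but this is cancelled by the orientation reversal of the edge $e$ that occurs in $T^\dagger$ (the band is reversed), so $\coh(m_e)$ is unchanged and the color constraint $\Gfun(e)\in\cat_{\coh(m_e)}$ still holds. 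Since the colors of edges themselves are unchanged, they remain Hermitian. Admissibility is preserved since it is a condition only on $T$ and $\coh$.

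For the second part, I need to check the axioms of Definition \ref{def:GenConj}, Definition \ref{sespivdef}, and Definition \ref{sesribdef}. Contravariance $(\dM_2\circ\dM_1)^\dagger=\dM_1^\dagger\circ \dM_2^\dagger$ is immediate from the fact that composition in $\Cob$ is gluing along boundaries, which commutes with reversing the ambient orientation and the dagger on ribbon graphs (indeed the dagger on ribbon graphs is defined by Turaev precisely to be compatible with gluing). Antilinearity comes from the fact that colored coupons inside $T$ have their labels $f$ replaced by $f^\dagger$, and the category $\cat$ is itself sesquilinear. Compatibility with the monoidal structure (disjoint union of cobordisms) is similarly inherited from $\cat$. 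The signature-defect $n$ simply tracks via the $2$-cocycle controlling the anomaly and changes sign under orientation reversal, so $(-n)+(-n')$ agrees with what one gets by composing $\dM_1^\dagger$ and $\dM_2^\dagger$.

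The main obstacle—and the step I would spend the most care on—is verifying the duality compatibilities $\coev_V^\dagger=\tev_V$, $\ev_V^\dagger=\tcoev_V$, and the braiding compatibility $c_{V,W}^\dagger=c_{V,W}^{-1}$, all now at the level of cobordisms rather than inside $\cat$. Each of these morphisms in $\Cob$ is represented by a standard cylinder cobordism carrying a simple ribbon graph (cup, cap, or crossing). Orientation-reversing the cylinder and dagger-ing the ribbon graph produces exactly the cobordism representing the intended dual morphism: for instance the ``cup'' cobordism for $\coev_V$, once flipped upside down and with its ribbon graph bands reoriented, is the cobordism for $\tev_V$. Here one must be careful that the framings, Lagrangian subspaces, and the signature-defect $n=0$ of these elementary cobordisms are preserved, but no new inputs are needed because the analogous identities are built into the ribbon-graph $\dagger$ used by Turaev in \cite[I.II.5.1]{Tu}, and we are only repackaging them at the cobordism level. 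Once these elementary identities are verified, the axioms of a sesquilinear pivotal category with braiding (hence a sesquilinear ribbon category) follow, and Hermitianness $(\dM^\dagger)^\dagger=\dM$ is immediate from double orientation reversal of $M$ and the fact that the ribbon-graph dagger on morphisms labelled by Hermitian objects of $\cat$ is involutive by Definition \ref{sesribdef} applied to $\cat$.
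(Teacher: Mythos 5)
Your verification plan is correct in outline and follows the same direct-check strategy that the paper itself delegates to \cite[Proposition 5.4]{GLPS1}: run through the items of Definition~\ref{D:Cobordisms} to confirm $\dM^\dagger$ is well-defined (your observation that reversing both the ambient orientation and the band orientations of $T$ cancels out in the meridian constraint is exactly the key point), then verify the Hermitian ribbon axioms by tracking how the dagger interacts with gluing, disjoint union, the elementary (co)evaluation and braiding cobordisms, and the signature defect. No gaps in approach.
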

\begin{proof}
The proof is the same as the proof of \cite[Proposition 5.4]{GLPS1}.
\end{proof}



\subsection{The $3$-manifold invariant}
Let $\cat$ be a relative $\Gr$-Hermitian-modular category with stabilization coefficients $\Delta_{\pm\Omega}$.  If $T_{-\Omega_g}^V$ and $T_{+\Omega_g}^V$ are the graphs on the left side of the equalities in Figure \ref{F:stabilization_coefficients_Omega} (here the open strand is colored with $V$) then
$$
\Delta_{\pm\Omega}\Id_V=\Gfun(T_{\pm\Omega_g}^{V})=\Gfun((T_{\mp\Omega_{-g}}^{V^*})^\dagger)=\Gfun(T_{\mp\Omega_{-g}}^{V^*})^\dagger=(\Delta_{\mp\Omega}\Id_{V^*})^\dagger=\wb{\Delta_{\mp\Omega}}\Id_V.
$$
Thus, for a relative $\Gr$-Hermitian-modular category we have
$\Delta_{\pm\Omega}=\wb{\Delta_{\mp\Omega}}$ .

Fix a choice of a
square root $\calD_{\Omega} \in \Bbbk$ of
$\Delta_{- \Omega} \Delta_{+ \Omega}$ (this might require one to
replace $\Bbbk$ with a quadratic extension) and set
$\delta_{\Omega}=\frac{\calD_{\Omega}}{\Delta_{-
    \Omega}}=\frac{\Delta_{+\Omega}}{\calD_{\Omega}}$.  Notice that
since $\Delta_{\pm\Omega}=\wb{\Delta_{\mp\Omega}}$, we are able to
deduce that $\wb\calD_{\Omega}=\calD_{\Omega}$ and
$\wb{\delta_{\Omega}}=\delta_{\Omega}^{-1} $.

If $\dM=(M,T,f, \coh,n)$ is a closed decorated manifold (i.e
$\dM\in\End_\Cob(\emptyset)$) which is connected, a surgery
presentation of $\dM$ is a $\cat$-colored ribbon graph
$T\cup L\subset S^3$ such that $M$ is obtained from $S^3$ by surgery
on $L$ and each component of $L$ is colored by a Kirby color
$\Omega_g$ of
degree $g$ equal to the value of the cohomology class on its meridian.
Then the invariant of $\dM$ is given by
\begin{equation}
  \label{eq:Zrconnected}
  \Zr_\cat(\dM)= \calD_{\Omega}^{-1-\ellr} \delta_{\Omega}^{-\sigma(L)+n}\Gfun'(L\cup T)
\end{equation}
where $l\in\N$ is the number of components of $L$ and $\sigma\in\Z$ is the
signature of the linking matrix $\lk(L)$.   The invariant is extended multiplicatively for disjoint unions.

\begin{lemma}  \label{lem:dagofZ}
Let $\dM$ be a closed decorated manifold then
\[
\Zr_\cat (\dM^\dagger) =
\overline{\Zr_\cat(\dM)}
\]
\end{lemma}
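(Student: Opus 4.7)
The plan is to choose a surgery presentation $L \cup T \subset S^3$ of $\dM$ and exhibit a surgery presentation of $\dM^{\dagger}$ obtained by applying the Hermitian dagger of $\cat$-colored ribbon graphs to $L\cup T$, then match the scalar factors in \eqref{eq:Zrconnected}. By multiplicativity it suffices to treat the case where $\dM$ is connected. So suppose $L = L_1\sqcup \cdots \sqcup L_\ellr$ with each component $L_j$ colored by the Kirby color $\Omega_{g_j}$ where $g_j= \coh(\mu_j)$ for $\mu_j$ a positively oriented meridian of $L_j$. Reversing the orientation of $M$ corresponds to mirroring the surgery diagram in $S^3$; the mirror of an $\Omega_{g_j}$-colored component, with its orientation reversed, should be interpreted as an $\Omega_{-g_j}$-colored component serving as meridian for the class $\coh$ on $\wb M$. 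The first task is thus to verify that $(L\cup T)^{\dagger}$ (dagger in the Hermitian ribbon sense) is indeed a surgery presentation of $\dM^{\dagger}$. The key point here is that because $\md$ is real and $\md(V^{*})=\md(V)$, the formal sum defining $\Omega_g$ satisfies $\Omega_g^{\dagger}=\Omega_{-g}$ (where the dagger acts componentwise on a cabling by simple objects), so the dagger sends a Kirby-colored component to a Kirby-colored component of the correct degree.

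Next I would track the effect on the scalar factors. The number of components $\ellr$ is unchanged, while mirroring negates the linking matrix, so $\sigma(L^{\dagger})=-\sigma(L)$; by definition the signature defect of $\dM^{\dagger}$ is $-n$. The relations $\wb{\calD_{\Omega}}=\calD_{\Omega}$ and $\wb{\delta_{\Omega}}=\delta_{\Omega}^{-1}$, both proved using $\Delta_{\pm\Omega}=\wb{\Delta_{\mp\Omega}}$, imply
\[
\calD_{\Omega}^{-1-\ellr}\delta_{\Omega}^{-\sigma(L^{\dagger})+(-n)}
= \overline{\calD_{\Omega}^{-1-\ellr}\delta_{\Omega}^{-\sigma(L)+n}}.
\]
Thus, once the graph-evaluation identity
\[
\Gfun'((L\cup T)^{\dagger}) = \overline{\Gfun'(L\cup T)}
\]
is established, the proof concludes by comparing the two normalizations.

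The main content is therefore the graph-evaluation identity. To prove it, cut $L\cup T$ along an edge colored by a projective object $V$ (such an edge exists because each $L_j$ carries a Kirby color, all of whose simple summands are projective); then $\Gfun'(L\cup T)=\mt_V((L\cup T)_V)$. Applying the Hermitian dagger of ribbon graphs to $L\cup T$ produces an endomorphism $((L\cup T)_V)^{\dagger}$ of $V$ in the Hermitian ribbon sense, because all structural morphisms attached to the graph (evaluations, coevaluations, braidings, twists, coupon labels) are sent to their daggers, and these are governed by the identities \eqref{eq:Hermitian+dual}, \eqref{eq:Hermitian+braiding}, and the reality of $\mt$. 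Since $\mt$ is real, we have
\[
\mt_V\bigl(((L\cup T)_V)^{\dagger}\bigr)=\overline{\mt_V((L\cup T)_V)},
\]
and linearity in the Kirby-color expansion preserves this conjugation because the coefficients $\md(V_i)$ are real. This is essentially a three-dimensional analogue of \eqref{eq:ddagger}, proved by the same bookkeeping with the Hermitian structure now also acting on the braidings.

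The main obstacle is verifying that the Hermitian dagger of the $\cat$-colored ribbon graph $L\cup T$, viewed intrinsically as a categorical operation, really coincides with the topological operation (mirror plus reversal of band directions plus coupon daggers) used to construct a surgery presentation of $\wb M$; in particular, one must check that framings, meridian orientations, and the prescribed coloring of each $L_j^{\dagger}$ by $\Omega_{-g_j}$ all match. Once this matching is verified, assembling the conjugated scalar factors with the conjugated graph evaluation gives $\Zr_\cat(\dM^{\dagger})=\overline{\Zr_\cat(\dM)}$, and disjoint unions extend the identity to arbitrary closed $\dM$.
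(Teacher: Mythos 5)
Your proposal is correct and follows essentially the same approach as the paper: choose a surgery presentation $L\cup T$ of $\dM$, pass to the mirror $\wb L$ (whose linking-matrix signature negates), invoke the conjugation identity $\Gfun'(\wb L\cup T^\dagger)=\wb{\Gfun'(L\cup T)}$ for the renormalized graph evaluation, and combine this with $\wb{\calD_{\Omega}}=\calD_{\Omega}$ and $\wb{\delta_{\Omega}}=\delta_{\Omega}^{-1}$ to match the scalar normalizations. The paper states the graph-evaluation conjugation more tersely as a consequence of the Hermitian ribbon structure; you flesh this out via a cutting presentation and the reality of $\mt$, which is a valid expansion of the same step rather than a different route.
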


\begin{proof}
  If a framed link $L$ gives rise via surgery to the manifold $M$,
  then the manifold $\overline{M}$ may be constructed from
  $\overline{L}$ where $\overline{L}$ is the mirror image of $L$.  It
  follows from the conjugation on the underlying Hermitian category
  that $\Gfun'(\overline{L}\cup T^\dagger)=\overline{\Gfun'(L\cup T)}$.

  If the linking matrix of $L$ has signature $\sigma$, then the
  signature of the linking matrix of $\overline{L}$ is $-\sigma$.  Thus,
  $$\Zr_\cat (\dM^\dagger) =
  \Zr_\cat (\wb M,T^\dagger,\wb f, \coh,-n)
  =\calD_{\Omega}^{-1-\ellr} \delta_{\Omega}^{-\sigma(\wb L)-n}\Gfun'(\overline{L}\cup T^\dagger)
    =\wb{\calD_{\Omega}}^{-1-\ellr} \wb{\delta_{\Omega}}^{-\sigma(L)+n}\overline{\Gfun'(L\cup T)}
=
\overline{\Zr_\cat(\dM)}
$$
where the third equality follows from facts that $\calD_{\Omega}$ is real and $\wb{\delta_{\Omega}}=\delta_{\Omega}^{-1}$.
\end{proof}

\begin{corollary} \label{cor:Hersymcob}
  The pairing $\Cob(\emptyset,\dSu)\times\Cob(\emptyset,\dSu)\to\C$, given by
  $$(\dM_1,\dM_2)\mapsto \Zr_\cat (\dM_1^\dagger\circ\dM_2)\in\C$$
  has Hermitian symmetry.
\end{corollary}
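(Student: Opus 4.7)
The plan is to reduce the statement to Lemma~\ref{lem:dagofZ} together with the fact that the dagger operation on $\Cob$ is a contravariant involution, both of which are essentially already in place from the previous proposition establishing $\Cob$ as a Hermitian ribbon category.

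First I would observe that if $\dM_1,\dM_2\in\Cob(\emptyset,\dSu)$, then $\dM_1^\dagger\in\Cob(\dSu,\emptyset)$ and the composition $\dM_1^\dagger\circ\dM_2$ is a well-defined morphism in $\End_\Cob(\emptyset)$, i.e.\ a closed decorated 3-manifold. Hence $\Zr_\cat(\dM_1^\dagger\circ\dM_2)$ makes sense via \eqref{eq:Zrconnected} (extended multiplicatively to possibly disconnected closed manifolds), and the same holds swapping the roles of $\dM_1$ and $\dM_2$. The claim of Hermitian symmetry is precisely
\[
\Zr_\cat(\dM_1^\dagger\circ\dM_2) \;=\; \overline{\Zr_\cat(\dM_2^\dagger\circ\dM_1)}.
\]

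Next, I would apply Lemma~\ref{lem:dagofZ} to the closed decorated manifold $\dM_2^\dagger\circ\dM_1$, yielding
\[
\overline{\Zr_\cat(\dM_2^\dagger\circ\dM_1)}\;=\;\Zr_\cat\bigl((\dM_2^\dagger\circ\dM_1)^\dagger\bigr).
\]
To finish, I need two formal properties of the dagger on $\Cob$: (i) contravariance with respect to composition, $(\dM_2^\dagger\circ\dM_1)^\dagger=\dM_1^\dagger\circ(\dM_2^\dagger)^\dagger$, and (ii) involutivity, $(\dM^\dagger)^\dagger=\dM$. Both are built into the statement of the preceding proposition that $\Cob$ is a Hermitian ribbon category; concretely, (ii) follows by inspecting the definition $\dM^\dagger=(\wb M,T^\dagger,\wb f,\coh,-n)$, since double-reversing the orientation returns $M$ and the signature-defect, while the coupon labels $f$ of $T$ satisfy $f^{\dagger\dagger}=f$ by the Hermitian hypothesis on $\cat$ (and the edge colorings are unchanged by the dagger on ribbon graphs). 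Combining (i) and (ii) gives $(\dM_2^\dagger\circ\dM_1)^\dagger=\dM_1^\dagger\circ\dM_2$, and substituting into the previous display yields the desired equality.

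The steps are all bookkeeping once the preceding proposition is accepted; the only point of possible friction is confirming that the definitional dagger on ribbon graphs (reversing surface orientation, swapping coupon bases, replacing $f$ with $f^\dagger$) genuinely squares to the identity on the nose in the category of admissible decorated cobordisms, rather than only up to some canonical identification. I would address this by noting that each geometric operation involved (orientation reversal of $M$, reversal of bands, base-swap of coupons) is manifestly an involution, and the only categorical input is $f^{\dagger\dagger}=f$, which is exactly the Hermitian axiom from Definition~\ref{sesribdef}. Thus the expected main obstacle is not an obstacle here, and the corollary follows at once.
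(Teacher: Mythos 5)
Your proof is correct and takes the same approach as the paper: the paper's proof consists solely of the remark that the corollary follows directly from Lemma~\ref{lem:dagofZ}, and you have simply unpacked the same deduction (contravariance and involutivity of the dagger on $\Cob$, then Lemma~\ref{lem:dagofZ} applied to $\dM_2^\dagger\circ\dM_1$) in explicit detail.
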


\begin{proof}
This follows directly from Lemma \ref{lem:dagofZ}.
\end{proof}

\subsection{The $(2+1)$-TQFT}
Fix a choice $g_0 \in \Gr\setminus \X$ and a generic Hermitian simple object $V_{g_0}$ in $\cat_{g_0}$.
For $k\in \Z$, let $\hS_k$ be the decorated sphere
\[
 \hS_{k} = (S^2,\{(V_{g_0},1),(\sigma(k),1),(V_{g_0},-1)\},\coh,\{ 0 \})
\]
of $\Cob$ which is determined by the framed colored points $\{(V_{g_0},1),(\sigma(k),1),(V_{g_0},-1)\}$ in $S^2$.
Now we define the state space associated to a decorated surface in the following way:
$$\V(\dSu)=\Span_\C\qn{\Hom_{\Cob}(\emptyset,\dSu)}/K_{\dSu}$$
where $K_{\dSu}$ is the right kernel of the bilinear pairing given on
generators
$$
\begin{array}{ccl}
  \Span_\C\qn{{\Cob}(\dSu,\emptyset)}\otimes
  \Span_\C\qn{{\Cob}(\emptyset,\dSu)}&\to&\C\\ {}
  [{\dM_1}]\otimes[\dM_2]&\mapsto& \Zr_\cat (\dM_1\circ\dM_2)
\end{array}
$$
$$\VV(\dSu)=\bigoplus_{k\in\Z}\VV_k(\dSu)\text{ where }\VV_k(\dSu)=\V(\dSu\sqcup\hS_k) .$$

These state spaces are part of a $(2+1)$-TQFT constructed in  \cite{D17}.  Since $\dagger:{\Cob}(\emptyset,\dSu)\to {\Cob}(\dSu,\emptyset)$ is bijective, the pairing described in Corollary \ref{cor:Hersymcob} descends to a non-degenerate Hermitian pairing on the state spaces $ \VV(\dSu)$ of the TQFT.

\begin{theorem} \label{thm:hermWRT}
The TQFT $(\VV,\Zr_\cat) $ is Hermitian.
More specifically, for any decorated surface $\dSu$, there is a non-degenerate Hermitian pairing
\[
\langle \cdot, \cdot \rangle_{\VV(\dSu)}
\]
and
for any $y \in \VV(\dSu_-)$ and for any $x \in \VV(\dSu_+)$, and
for any decorated cobordism $\dM : \dSu_- \to \dSu_+$ between decorated surfaces, there is an equality
\begin{equation} \label{adjofsurface}
\brk{ x,\VV(\dM)(y) }_{\VV(\dSu_+)}
=
\brk{  \VV(\dM^\dagger)(x),y }_{\VV(\dSu_-)}
.
\end{equation}
\end{theorem}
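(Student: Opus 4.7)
The plan is to construct the Hermitian pairing on each state space $\VV(\dSu)$ by descending the cobordism pairing of Corollary \ref{cor:Hersymcob}, verify its non-degeneracy directly from the defining quotient, and then deduce the adjoint relation \eqref{adjofsurface} from the contravariance and involutivity of $\dagger$ on $\Cob$ together with the functoriality of $\VV$.

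First I would work graded-piece by graded-piece. For $k \in \Z$, on $\VV_k(\dSu) = \V(\dSu \sqcup \hS_k)$ set
\[
\brk{[\dM_1],[\dM_2]}_k := \Zr_\cat(\dM_1^\dagger \circ \dM_2)
\]
for representatives $\dM_1, \dM_2 \in \Hom_{\Cob}(\emptyset, \dSu \sqcup \hS_k)$. Well-definedness in the second slot is precisely the defining property of the quotient $\V(\dSu \sqcup \hS_k)$: if $[\dM_2]$ lies in the right kernel $K_{\dSu \sqcup \hS_k}$, then $\Zr_\cat(\dM \circ \dM_2) = 0$ for every $\dM \in \Cob(\dSu \sqcup \hS_k, \emptyset)$, and in particular for $\dM = \dM_1^\dagger$. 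Well-definedness in the first slot then follows from the Hermitian symmetry of the cobordism pairing together with the fact that $\dagger$ restricts to a bijection $\Cob(\emptyset, \dSu \sqcup \hS_k) \to \Cob(\dSu \sqcup \hS_k, \emptyset)$. Non-degeneracy is immediate from the same observation: if $\brk{x, y}_k$ vanishes for all $x$, then $y$ pairs trivially with every element of $\Cob(\dSu \sqcup \hS_k, \emptyset)$, so $y = 0$ in $\VV_k(\dSu)$. Summing over $k$ yields a non-degenerate Hermitian pairing on $\VV(\dSu) = \bigoplus_{k \in \Z} \VV_k(\dSu)$.

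For \eqref{adjofsurface}, the key input is that $\Cob$ is a Hermitian ribbon category, so $\dagger$ is involutive and anti-multiplicative with respect to composition. Given $\dM : \dSu_- \to \dSu_+$ together with representatives $\dM_1 : \emptyset \to \dSu_+ \sqcup \hS_{k'}$ of $x$ and $\dM_2 : \emptyset \to \dSu_- \sqcup \hS_k$ of $y$,
\[
\brk{x,\VV(\dM)(y)} = \Zr_\cat\bigl(\dM_1^\dagger \circ (\dM \sqcup \id_{\hS_k}) \circ \dM_2\bigr),
\]
whereas
\[
\brk{\VV(\dM^\dagger)(x),y} = \Zr_\cat\bigl(((\dM^\dagger \sqcup \id_{\hS_{k'}}) \circ \dM_1)^\dagger \circ \dM_2\bigr).
\]
Applying $(f \circ g)^\dagger = g^\dagger \circ f^\dagger$, the identity $(\dM^\dagger)^\dagger = \dM$, and the fact that $\id_{\hS_k}^\dagger = \id_{\hS_k}$ transforms the second expression into the first.

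The main obstacle I anticipate is not conceptual but organizational: keeping track of the $\Z$-gradings $k, k'$ under the shift induced by the signature defect of $\dM$, and verifying that the assignment $\dM \mapsto \dM \sqcup \id_{\hS_k}$ is compatible with $\dagger$ (which holds since identity cobordisms are self-conjugate and $\dagger$ commutes with $\sqcup$). Once this bookkeeping is handled, both non-degeneracy and the adjoint identity become formal consequences of Corollary \ref{cor:Hersymcob} together with the Hermitian structure on $\Cob$.
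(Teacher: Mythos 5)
Your proposal is correct and follows the same route as the paper, which states the argument tersely and defers the remaining details to Turaev's Theorem III.5.3. The paper's proof relies on exactly the ingredients you spell out: the Hermitian symmetry of the cobordism pairing from Corollary \ref{cor:Hersymcob} (itself a consequence of Lemma \ref{lem:dagofZ}), the bijectivity of $\dagger \colon \Cob(\emptyset,\dSu) \to \Cob(\dSu,\emptyset)$ to pass from the right kernel of the bilinear pairing to non-degeneracy of the sesquilinear one, and the contravariance and involutivity of $\dagger$ on $\Cob$ combined with functoriality of the TQFT to obtain the adjoint relation.
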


\begin{proof}
The fact that the pairing is non-degenerate and Hermitian follows from the discussion above.

The equality \eqref{adjofsurface} follows from the functoriality of the TQFT and Lemma \ref{lem:dagofZ}.
The details follow as in \cite[Theorem III.5.3]{Tu}.
\end{proof}



\section{Examples from quantum (super)groups} \label{sec:examples}
In this section we provide examples of  $\Gr$-LW-systems coming from quantum groups and super quantum groups.

\subsection{Complex finite-dimensional Lie algebras}
Let $\mathfrak{g}$ be a simple finite-dimensional complex Lie algebra.
Let $r$ be an odd integer such that $r \geq 3$, (and
$ r \notin 3 \mathbb{Z}$ if $\mathfrak{g}=G_2$).

Fix a set of simple roots $\{ \alpha_1, \ldots, \alpha_n \}$ of
$\mathfrak{g}$.  Let $\Delta$ and the $\Delta^+$ be the corresponding
sets of roots and positive roots respectively.  Let $A=(a_{ij})$ be
the Cartan matrix corresponding to the set of simple roots.  Let
$D=(d_1,\ldots,d_n)$ be a diagonal matrix such that $DA$ is symmetric
and positive definite.  Let $\mathfrak{h}$ be the Cartan subalgebra of
$\mathfrak{g}$ spanned by elements $H_1, \ldots, H_n$ where
$\alpha_i(H_j)=a_{ji}$.  Denote by $L_R$ (the root lattice), the
$\mathbb{Z}$-module generated by the simple roots $\{\alpha_i\}$.  Let
$\langle , \rangle$ be the form on $L_R$ determined by
$\langle \alpha_i, \alpha_j \rangle = d_i a_{ij}$.  The weight lattice
$L_W$ is the $\mathbb{Z}$-module generated by the elements of
$\mathfrak{h}^*$ which are dual to the elements $\{H_1,\ldots,H_n\}$.
Let $L'_R=\{\beta\in\mathfrak{h}^*:\brk{\beta,L_R}\subset\Z\}$ be the lattice
dual to $L_R$.
We have
$L_R\subset L_W\subset L'_R\subset\mathfrak{h}^*$. Let
$\rho=\frac{1}{2} \sum_{\alpha \in \Delta^+} \alpha \in L_W$.

Let $q=e^{\frac{2\pi i }{r}}$ and for $i=1,\ldots,n$ set $q_i=q^{d_i}$.

The Drinfeld-Jimbo quantum group $\mathcal{U}$ associated to $\mathfrak{g}$ is the $\mathbb{C}$-algebra generated by
$K_{\beta}, X_i, X_{-i}$ for $\beta \in L_W$ and $i=1,\ldots,n$ subject to relations:
\begin{equation}
\label{DJ1}K_0=1, \quad   K_{\beta}K_{\gamma}=K_{\beta+\gamma}, \quad  K_{\beta} X_{\pm i} K_{-\beta} = q^{\pm \langle \beta, \alpha_i \rangle} X_{\pm i},
\end{equation}
\begin{equation}
\label{DJ2}
[X_i, X_{-j}]= \delta_{ij} \frac{K_{\alpha_i}- K_{\alpha_i}^{-1}}{q_i-q_i^{-1}}, \quad
\sum_{k=0}^{1-a_{ij}} (-1)^k {1-a_{ij} \brack k}_{q_i} X_{\pm i}^j X_{\pm j} X_{\pm i}^{1-a_{ij}-k} = 0, \text{ if } i \neq j.
\end{equation}

The quantum group $\mathcal{U}$ can be given the structure of a Hopf algebra with coproduct $\Delta$, counit $\epsilon$ and antipode $S$ defined by
\begin{align}
\label{DJH1}\Delta (X_i)&=X_i \otimes K_{\alpha_i} + 1 \otimes X_i, & \epsilon(X_i)&=0, & S(X_i)&=-X_i K_{\alpha_i}^{-1},\\
\label{DJH2} \Delta(X_{-i})&=X_{-i} \otimes 1+K_{\alpha_i}^{-1} \otimes X_{-i}, & \epsilon(X_{-i})&=0, & S(X_{-i})&=-K_{\alpha_i} X_{-i},\\
\label{DJH3}\Delta(K_{\beta})&=K_{\beta} \otimes K_{\beta} , & \epsilon(K_{\beta})&=1 , & S(K_{\beta})&=K_{-\beta}.
\end{align}

The unrolled quantum group $\mathcal{U}^H$ associated to $\mathfrak{g}$ is the $\mathbb{C}$-algebra generated by
$K_{\beta}, X_i, X_{-i}, H_i$ for $\beta \in L_W$ and $i=1,\ldots,n$ subject to relations \eqref{DJ1}, \eqref{DJ2}, and:
\begin{equation}
\label{UR1}
[H_i, X_{\pm j}]=\pm a_{ij} X_{\pm j}, \quad
[H_i, H_j]=[H_i, K_{\beta}]=0.
\end{equation}
The algebra $\U^H$ is also a Hopf algebra with coproduct, counit, and antipode defined by \eqref{DJH1}, \eqref{DJH2}, \eqref{DJH3}, and
\begin{equation}
\label{UH1}
\Delta(H_i)= H_i \otimes 1 + 1 \otimes H_i, \quad \epsilon(H_i)=0, \quad S(H_i)=-H_i.
\end{equation}
The Hopf algebra $\U^H$ is pivotal with pivot $g=K_{2\rho}^{1-r}$.

Let $\alpha_1, \ldots, \alpha_n$ be any ordering of the simple roots and let $s_{i_1} \cdots s_{i_N}$ be a reduced decomposition of the longest element of the Weyl group.
Then
$
\beta_1=\alpha_1, \beta_2=s_{i_1} \alpha_{i_2}, \ldots, \beta_N=s_{i_1} \cdots s_{i_{N-1}} \alpha_{i_N}
$
is a total ordering of the set of positive roots.
We call $\beta_{*}=(\beta_1, \ldots, \beta_N)$ the convex order of $\Delta^+$.
Let $X_{\pm \beta_i}$ be the corresponding positive and negative root vectors respectively (see \cite[Definition 8.1.4]{CP}).
Denote by $\mathcal{Z}^0$ the commutative subalgebra
\[
\mathcal{Z}^0 = \mathbb{C} \langle X_{\pm \beta}^{r}, K_{\gamma}^r  \mid  \beta \in \beta_*, \gamma \in L_W  \rangle  .
\]

Let
$I=\operatorname{Span}_\C\bs{X_{\pm \beta}^{r}:\beta\in\beta_{*}}$.
$I$ generates ideals in the algebras $\U$ and $\U^H$ which are Hopf
ideals.  Correspondingly, we call $\wb \U$ and $\wb \U^H$ the quotient Hopf algebras.


A weight is a $\mathbb{C}$-algebra homomorphism $\lambda$
from the $\mathbb{C}$-algebra generated by the $H_{i}$ to
$\mathbb{C}$.  For a $\wb\U^H$-module $M$, the weight space
$M_{\lambda}$ is the subspace of $M$ where the $H_i$ act as a scalar
via $\lambda$ and if $\beta=\sum_i b_i \alpha_i$, then $K_{\beta}$
acts as the scalar
$\prod_i q_i^{b_i \lambda(H_i)}=q^{\langle \beta, \lambda \rangle} $.
A $\wb\U^H$-module $M$ is called a weight module if it is equal
to a direct sum of its weight spaces.

Let $\mathcal{D}$  be the category of
finite-dimensional weight modules over
$\wb\U^H$.
Let $\Gr$ be the group $\mathfrak{h}^*_\R/L_R\cong (\Unitary)^n$ where $\mathfrak{h}^*_\R=L_W\otimes_\Z\R$ is the set of real weights and $\Unitary \simeq S^1$ is the group of $1\times1$ unitary matrices.
For $g\in \Gr$, let $\mathcal{D}_{g}$ be the full subcategory of
$\mathcal{D}$ whose objects are the $\wb\U^H$-weight modules
whose weights are all in the class $g \;(\text{mod } L_R)$.
%
%
 Define
\begin{equation}
\exp_q^<(x) = \sum_{0 \leq n<r} \frac{q^{n(n-1)/2}}{[n]!} x^n\in\C[x]
\end{equation}

\begin{lemma}
We have the identity
$ \exp_q^<(x) \exp^<_{q^{-1}}(-x)=1$ modulo $x^r$.
\end{lemma}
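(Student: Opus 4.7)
The plan is to compute the coefficient of $x^k$ in $\exp_q^<(x)\exp_{q^{-1}}^<(-x)$ for each $0\le k<r$ and show it equals $\delta_{k,0}$. First I would note that in the symmetric convention $[n]=(q^n-q^{-n})/(q-q^{-1})$ employed throughout the paper, one has $[n]_{q^{-1}}=[n]$, hence $[n]!_{q^{-1}}=[n]!$, so $\exp_{q^{-1}}^<(-x)=\sum_{0\le n<r}(-1)^nq^{-n(n-1)/2}x^n/[n]!$. For $k<r$ the constraint $a+b=k$ automatically keeps $a,b<r$, so the $x^k$-coefficient of the product is
$$c_k=\sum_{a+b=k}\frac{q^{a(a-1)/2}(-1)^bq^{-b(b-1)/2}}{[a]!\,[b]!}.$$
Since $r$ is odd with $r\ge 3$, $[k]!$ is nonzero at $q=e^{2\pi i/r}$ for each $0\le k<r$, so $1/[k]!$ can be pulled out and $c_k$ rewritten using the $q$-binomial $\binom{k}{a}=[k]!/([a]!\,[k-a]!)$.

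Next, using the one-line exponent identity $a(a-1)/2-(k-a)(k-a-1)/2=(k-1)a-k(k-1)/2$ together with $(-1)^{k-a}=(-1)^k(-1)^a$, the sum collapses to
$$c_k=\frac{(-1)^kq^{-k(k-1)/2}}{[k]!}\sum_{a=0}^k\binom{k}{a}(-q^{k-1})^a.$$
The key input is the symmetric-convention $q$-binomial theorem
$$\sum_{a=0}^k\binom{k}{a}(-t)^a=\prod_{i=0}^{k-1}(1-q^{k-1-2i}t),$$
which I would prove by a short induction on $k$ via the $q$-Pascal rule (both factorizations $(q^{a}+q^{-a})[n-a]+[a]=[n]$ give a recursion matching that of the product on the right). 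Substituting $t=q^{k-1}$ turns the product into $\prod_{i=0}^{k-1}(1-q^{2(k-1-i)})$; the factor at $i=k-1$ is $1-q^0=0$, so $c_k=0$ whenever $k\ge 1$, while for $k=0$ the empty product yields $c_0=1$.

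The main obstacle, if one can call it that, is purely bookkeeping: confirming the $q$-integer convention, checking the exponent simplification, and verifying the symmetric form of the $q$-binomial theorem. There is no substantive mathematical difficulty. As an alternative route, one could deduce the lemma by noting that the identity $\exp_q(x)\exp_{q^{-1}}(-x)=1$ holds as formal power series over $\C(q)$, then specializing at $q=e^{2\pi i/r}$; the coefficient of $x^k$ for $k<r$ has denominator dividing $[k]!$, which is nonzero at this root of unity, and the truncation does not affect these coefficients.
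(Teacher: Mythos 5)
The paper states this lemma without proof, treating it as a standard fact. Your direct computation is correct: the exponent manipulation $a(a-1)/2-(k-a)(k-a-1)/2=(k-1)a-k(k-1)/2$ is right; the factorization $\sum_{a=0}^k\binom{k}{a}(-t)^a=\prod_{i=0}^{k-1}(1-q^{k-1-2i}t)$ is the correct symmetric-convention $q$-binomial theorem; at $t=q^{k-1}$ the $i=k-1$ factor vanishes, giving $c_k=0$ for $1\le k<r$ while $c_0=1$; and invertibility of $[k]!$ for $k<r$ at $q=e^{2\pi i/r}$ with $r$ odd is precisely what makes these coefficients well defined. The alternative route you sketch---specializing the classical untruncated identity $\exp_q(x)\exp_{q^{-1}}(-x)=1$ over $\C(q)$ and observing that truncation at $x^r$ leaves the coefficients of $x^k$ for $k<r$ unchanged because terms with $a+b=k<r$ automatically have $a,b<r$---is the shorter of the two and is the usual justification offered in the quantum groups literature. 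One small slip in your write-up: the parenthetical formula $(q^a+q^{-a})[n-a]+[a]=[n]$ offered as the $q$-Pascal rule is not an identity; the correct symmetric forms are $[n]=q^a[n-a]+q^{a-n}[a]$ and $[n]=q^{-a}[n-a]+q^{n-a}[a]$. This does not affect your argument, since the $q$-binomial theorem is standard and the induction goes through with the correct recursion, but it is worth fixing.
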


For a root $\beta$, let $q_{\beta}=q^{\frac{\langle \beta, \beta \rangle}{2}}$.
\begin{equation}
  \check{R}=\prod_{\beta \in \beta_{*}} \exp^<_{q_{\beta}}((q_{\beta} - q_{\beta}^{-1}) X_{\beta} \otimes X_{-\beta})
\end{equation}
The following elements define operators on the category $\mathcal{D}$, (alternatively, they are elements of the nuclear completion of $\wb\U^H$ see \cite{GHP})
\begin{equation}
\mathcal{H} = q^{\sum_{i,j} d_i (A^{-1})_{ij} H_i \otimes H_j} , \quad \quad R = \mathcal{H} \check{R} .
\end{equation}

Consider the operator $u_{\mathcal{H}} = q^{\sum_{i,j} -d_i(A^{-1})_{ij} H_i H_j } $.
Note that $S(u_{\mathcal{H}})= u_{\mathcal{H}} $ and if $M$ is a $\wb\U^H$-module and $m \in M$ is an element of weight $\lambda$,
then $u_{\mathcal{H}} m = q^{- \langle \lambda, \lambda \rangle} m$.
Also note that if $x \in \wb\U^H$ has weight $\alpha$, then
$ u_{\mathcal{H}}  x = q^{- \langle \alpha, \alpha \rangle}x K^2_{-\alpha} u_{\mathcal{H}}  $.
Then conjugation $Ad_{u_{\mathcal{H}}^{-1}}$ by $u_{\mathcal{H}}^{-1}$ is an automorphism of $\mathbb{C}[H_1,\ldots, H_n]$.
Let $u = \mu \circ ((Ad_{u_{\mathcal{H}}^{-1}} \circ S) \otimes \Id)(\check{R}_{21}) $ where $\mu $ is multiplication.
Let $\nu$ and $\theta$ be the operators defined by
$\nu = u_{\mathcal{H}}  u$ and $\theta= \nu K^{r-1}_{2 \rho}=\nu g^{-1}$.  We now recall the following key result.

\begin{proposition} \cite[Theorem 19]{GP4} \label{prop:DHribbon}
The category $\mathcal{D}$ is a $\mathbb{C}$-linear ribbon category with twist
$\theta_M \colon M \rightarrow M, m \mapsto \theta^{-1}(m)$.
\end{proposition}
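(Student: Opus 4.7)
The plan is to verify the axioms of a ribbon category step by step, using the Hopf algebra structure of $\wb\U^H$ together with the fact that on weight modules the formally infinite expressions $\check R$, $\cal H$, $u_\cal H$, $u$ all descend to well-defined $\C$-linear operators.

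First I would establish that $\mathcal{D}$ is a $\C$-linear pivotal tensor category. The tensor product and duals come directly from the coproduct, antipode and pivotal element $g = K_{2\rho}^{1-r}$ of $\wb\U^H$: the point is that the subcategory of weight modules is closed under tensor product (the Cartan elements $H_i$ are primitive, so the weight of a tensor product is the sum of weights) and under duals (using \eqref{E:PivotStrH} with the stated $g$). The fact that $g$ is indeed a pivot reduces to the identity $S^2(x) = gxg^{-1}$ in $\wb\U^H$, which can be checked on generators.

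Next, the braiding. I would show that $\check R = \prod_{\beta \in \beta_*} \exp^<_{q_\beta}\!\bigl((q_\beta-q_\beta^{-1}) X_\beta \otimes X_{-\beta}\bigr)$ and $\cal H = q^{\sum d_i (A^{-1})_{ij} H_i \otimes H_j}$ give well-defined operators on $M \otimes N$ for any $M,N \in \mathcal{D}$: the product defining $\check R$ is finite because $X_{\pm\beta}^r = 0$ in $\wb\U^H$, and $\cal H$ acts as a scalar $q^{\sum d_i (A^{-1})_{ij} \lambda(H_i)\mu(H_j)}$ on $M_\lambda \otimes N_\mu$. Then $R = \cal H \check R$; I would verify the quasi-triangular identities
\[
(\Delta \otimes \id)(R) = R_{13} R_{23}, \quad (\id \otimes \Delta)(R) = R_{13} R_{12}, \quad \Delta^{op}(x) R = R\,\Delta(x)
\]
as identities of operators on triple tensor products of weight modules. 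The middle relations for $\cal H$ come from $\alpha(H_i) = \sum_j a_{ji} \alpha(H_j^{-1}...)$-type manipulations using $A^{-1}$, and the relations for $\check R$ are the standard Levendorskii--Soibelman identities for the quasi R-matrix, now truncated modulo the $r$-th powers. Combining these yields the hexagon axioms, so $c_{V,W}(v \otimes w) = \tau(R(v \otimes w))$ is a braiding.

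The third and main step is verifying that $\theta_M(m) = \theta^{-1}\cdot m$ is a ribbon twist. The element $\nu = u_\cal H u$ is the Drinfeld element relative to the quasi R-matrix, and the standard computation gives $\Delta(\nu) = (\nu \otimes \nu)(R_{21}R)^{-1}$ and $S(\nu) = \nu$ as operators; the key subtlety is handling $u_\cal H$, and for this I would use the stated intertwining relation $u_\cal H x = q^{-\langle\alpha,\alpha\rangle} x K^2_{-\alpha} u_\cal H$ for $x$ of weight $\alpha$ to reduce everything to checks on homogeneous elements. Multiplying by the central group-like element $K_{2\rho}^{r-1} = g^{-1}$ (which satisfies $\Delta(g^{-1}) = g^{-1}\otimes g^{-1}$) gives $\theta = \nu g^{-1}$, and one obtains
\[
\Delta(\theta) = (\theta \otimes \theta)(R_{21} R)^{-1}, \qquad S(\theta) = \theta,
\]
which translate directly into the ribbon axiom $\theta_{V\otimes W} = (\theta_V \otimes \theta_W)\circ c_{W,V}\circ c_{V,W}$ and the compatibility $\theta_{V^*} = (\theta_V)^*$ with the pivotal structure.

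The hard part will be the last step: carefully justifying that $u$ and $u_\cal H$, which are not elements of $\wb\U^H$ but only operators on weight modules (or elements of a suitable completion, see \cite{GHP}), still satisfy the required coalgebraic identities when applied to weight modules. One must check that every sum in sight is finite on any given module (which follows from the nilpotency $X_{\pm\beta}^r=0$) and that the formulas for $\Delta(u_\cal H)$ make sense factor-by-factor via the exponent $\sum_{i,j} d_i(A^{-1})_{ij}H_iH_j$. Once these completion issues are settled, the ribbon identities reduce to the classical computations for quantum groups at generic $q$, carried out with care in the completed setting; this is exactly the content of \cite[Theorem~19]{GP4}, which we invoke to conclude.
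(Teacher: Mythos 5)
The paper gives no proof of this proposition; it is cited directly as \cite[Theorem 19]{GP4}, and your proposal ultimately does the same after laying out an accurate roadmap of the argument (pivotality from $g=K_{2\rho}^{1-r}$, quasi-triangularity of the truncated $R=\mathcal{H}\check{R}$ as operators on weight modules, and the ribbon axioms for $\theta=\nu g^{-1}$ handled in a suitable completion). So you take essentially the same approach as the paper, with the caveat that your closing line invoking \cite[Theorem~19]{GP4} is circular if read as a self-contained proof of that very statement rather than, as here, a deliberate deferral to the cited reference.
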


\begin{lemma}\label{L:antiautomorphismUHg}
There is an antiautomorphism $\dagger$ of $\wb\U^H$ (as in Equation \eqref{eq:dag2}) given by
\begin{equation}
\dagger(E_i)=F_i, \;\; \dagger(F_i)=E_i, \;\; \dagger(K_{\mu}) = K_{-\mu}, \;\; \dagger(H_{i}) = H_{i}.
\end{equation}
We will write $x^{\dagger}$ in place of $\dagger(x)$ when no confusion is likely to arise.
\end{lemma}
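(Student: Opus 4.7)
The plan is to verify the lemma in three stages: (i) $\dagger$ extends to a well-defined antilinear antiautomorphism of the full algebra $\U^H$ (identifying $E_i=X_i$, $F_i=X_{-i}$); (ii) the coalgebra compatibility $\Delta(x^\dagger)=(\dagger\otimes\dagger)(\tau(\Delta x))$ required by \eqref{eq:dag2} holds; (iii) $\dagger$ preserves the ideal $I=\mathrm{Span}_{\C}\{X_{\pm\beta}^r\}$ so that it descends to $\wb\U^H$. Since everything is concentrated in even degree, the sign factors in \eqref{eq:dag2} are trivial.

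For (i), I would extend the formulas to the free algebra on the generators as an antilinear anti-homomorphism and then verify, one by one, the relations \eqref{DJ1}, \eqref{DJ2}, and \eqref{UR1}. The key observation is that $q=e^{2\pi i/r}$ satisfies $\bar q=q^{-1}$, and consequently $\overline{q_i-q_i^{-1}}=-(q_i-q_i^{-1})$ and $\overline{\genfrac{[}{]}{0pt}{}{n}{k}_{q_i}}=\genfrac{[}{]}{0pt}{}{n}{k}_{q_i}$. The Cartan-type relations are manifestly symmetric under the swap $X_i\leftrightarrow X_{-i},\,K_\beta\mapsto K_{-\beta}$ combined with complex conjugation; for the Chevalley relation, applying $\dagger$ turns $[X_i,X_{-j}]$ into $[X_j,X_{-i}]=-[X_{-i},X_j]$, while the right-hand side is negated both by conjugating the denominator and by swapping $K_{\alpha_i}\leftrightarrow K_{-\alpha_i}$ in the numerator, and the two signs cancel. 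The $q$-Serre relations are preserved because the substitution $k\mapsto 1-a_{ij}-k$ together with the symmetry of the quantum binomials produces the Serre polynomial of the opposite sign (up to an overall factor), which still vanishes. The unrolled relations \eqref{UR1} are compatible since $\dagger(H_i)=H_i$ but $\dagger$ exchanges $X_{\pm j}$; again both sides of $[H_i,X_{\pm j}]=\pm a_{ij}X_{\pm j}$ change sign simultaneously.

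For (ii), because both $\Delta\circ\dagger$ and $(\dagger\otimes\dagger)\circ\tau\circ\Delta$ are antilinear anti-algebra maps $\U^H\to \U^H\otimes\U^H$, it suffices to verify the identity on generators. For $X_i$ one computes $\tau(\Delta X_i)=K_{\alpha_i}\otimes X_i+X_i\otimes 1$ so $(\dagger\otimes\dagger)\tau\Delta X_i=K_{-\alpha_i}\otimes X_{-i}+X_{-i}\otimes 1=\Delta(X_{-i})=\Delta(X_i^\dagger)$; the cases $X_{-i}$, $K_\beta$, and $H_i$ are equally direct from \eqref{DJH1}--\eqref{DJH3} and \eqref{UH1}.

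For (iii), I need to show $\dagger(X_{\pm\beta}^r)\in I$ for every $\beta\in\beta_*$. For simple roots this is immediate: $\dagger(X_{\alpha_i}^r)=X_{-\alpha_i}^r\in I$. For non-simple roots, $X_{\pm\beta}$ is defined by iterating Lusztig's braid automorphisms $T_{i_1}\cdots T_{i_{k-1}}$ applied to $X_{\pm i_k}$ along a reduced expression of the longest Weyl group element. I would show, by induction on the length of this expression, that $\dagger(X_\beta)=c_\beta\,X_{-\beta}\,K_{\mu_\beta}$ for some nonzero scalar $c_\beta\in\C^*$ and weight $\mu_\beta\in L_W$, which implies $\dagger(X_\beta^r)=c_\beta^r X_{-\beta}^r K_{r\mu_\beta}$ (using the commutation relation between $X_{-\beta}$ and $K$, which only introduces a scalar), and this lies in $I$. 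Equivalently, one may identify $\dagger$ with the composition of the Chevalley involution $\omega:X_i\mapsto X_{-i},\,K_\beta\mapsto K_{-\beta},\,H_i\mapsto H_i$ and the $\C$-antilinear bar involution $q\mapsto q^{-1}$, and invoke the well-known compatibility of both with Lusztig's root vector construction.

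The main obstacle is step (iii); steps (i) and (ii) are routine symbol manipulation that exploits the symmetries of the defining relations under $X_i\leftrightarrow X_{-i}$ combined with $q\leftrightarrow q^{-1}$. The descent to $\wb\U^H$ is the only place where the interaction between $\dagger$ and the non-simple root vectors needs to be controlled, and this requires either direct bookkeeping with Lusztig's $T_i$-operators or invocation of the standard behavior of the Chevalley involution on higher root vectors.
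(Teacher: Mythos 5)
Your proposal is correct and fills in exactly the "straightforward verification" that the paper's proof leaves unwritten: the three-stage decomposition into (i) preservation of the defining relations, (ii) coalgebra compatibility with $\tau\circ\Delta$, and (iii) descent to the quotient $\wb\U^H$ via preservation of the ideal generated by the $X_{\pm\beta}^r$ is precisely what is needed, and the computations you indicate in (i) and (ii) check out. You also correctly flag (iii) as the only non-routine step; the paper implicitly handles it through the same compatibility of $\dagger$ with Lusztig's braid operators that it later invokes explicitly in the proof of Lemma~\ref{LAbraidherm} (there asserting $\dagger(X_\beta)=X_{-\beta}$, a slightly stronger form of your $\dagger(X_\beta)=c_\beta X_{-\beta}K_{\mu_\beta}$, either of which suffices for ideal preservation).
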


\begin{proof}
This is a straightforward verification.
\end{proof}

\begin{lemma} \label{LAbraidherm}
  For all finite-dimensional simple complex Lie algebras, one has the identities:
  \begin{equation}
    (\dagger\otimes\dagger)\bp{R}=\tau(R^{-1})\et\dagger(g)=g^{-1}.
  \end{equation}
  As a consequence, $\dagger(\theta)=\theta^{-1}$.
\end{lemma}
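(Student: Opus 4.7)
The overall strategy is to verify each identity by direct computation from the explicit formulas for $g$, $R$, and $\theta$ recalled just above; the twist identity is then extracted from the first two, consistent with the phrase ``as a consequence.''

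The pivot identity is immediate: from $g = K_{2\rho}^{1-r}$ and the rule $\dagger(K_\mu) = K_{-\mu}$ of Lemma \ref{L:antiautomorphismUHg}, one gets $\dagger(g) = K_{-2\rho}^{1-r} = K_{2\rho}^{r-1} = g^{-1}$.

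For $(\dagger \otimes \dagger)(R) = \tau(R^{-1})$, the plan is to decompose $R = \mathcal{H}\check{R}$ and handle the two factors separately. For the Cartan part $\mathcal{H} = q^{\sum_{i,j} d_i (A^{-1})_{ij} H_i \otimes H_j}$: since $\dagger$ fixes each $H_i$ but is antilinear (so $q \mapsto q^{-1}$), one obtains $(\dagger \otimes \dagger)(\mathcal{H}) = \mathcal{H}^{-1}$. Moreover $\mathcal{H}$ is invariant under the flip (the matrix $D A^{-1}$ is symmetric, as a consequence of $DA$ being symmetric), so this also equals $\tau(\mathcal{H}^{-1})$. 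The delicate factor is $\check{R} = \prod_{\beta \in \beta_*} \exp_{q_\beta}^<\bigl((q_\beta - q_\beta^{-1})\, X_\beta \otimes X_{-\beta}\bigr)$. Because $\dagger \otimes \dagger$ is an antialgebra map on $\bar{\mathcal{U}}^H \otimes \bar{\mathcal{U}}^H$, it reverses the order of the convex product; the identity $\exp_q^<(x)\exp_{q^{-1}}^<(-x) \equiv 1 \pmod{x^r}$ then converts each factor into its inverse once the sign flip $q_\beta - q_\beta^{-1} \mapsto -(q_\beta - q_\beta^{-1})$ is accounted for; and the reversed product order matches precisely the ordering in $\tau(\check{R}^{-1})$. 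The key input needed to close the computation is $\dagger(X_\beta) = X_{-\beta}$ (up to controlled scalars): for simple roots this is the definition, but for non-simple roots the root vectors are constructed via Lusztig's braid automorphisms $T_i$, and one must verify that $\dagger$ intertwines each $T_i$ with its inverse (or an appropriate modification), either by an inductive check on the length of the convex factorization or by invoking known compatibilities of the compact-form star structure with Lusztig's operators. I expect this braid-compatibility step to be the main technical obstacle.

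Finally, $\dagger(\theta) = \theta^{-1}$ follows by applying $\dagger$ to $\theta = \nu g^{-1} = u_{\mathcal{H}}\, u\, g^{-1}$. A short check on generators gives $\dagger \circ S = S \circ \dagger$, and the same antilinear argument as for $\mathcal{H}$ yields $\dagger(u_{\mathcal{H}}) = u_{\mathcal{H}}^{-1}$. The $R$-matrix identity just established gives $(\dagger \otimes \dagger)(\check{R}_{21}) = \check{R}^{-1}$ after applying $\tau$; propagating this through the definition $u = \mu \circ ((\mathrm{Ad}_{u_{\mathcal{H}}^{-1}} \circ S) \otimes \Id)(\check{R}_{21})$ together with $\dagger(g^{-1}) = g$ produces $\dagger(\theta) = \theta^{-1}$ once the Cartan corrections are collected.
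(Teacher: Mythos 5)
Your plan matches the paper's (very terse) proof, which likewise rests the $R$-matrix identity on the truncated-exponential inverse identity $\bigl(\exp^<_{q_\beta}(X)\bigr)^{-1}=\exp^<_{q_\beta^{-1}}(-X)$ together with $\dagger(X_\beta)=X_{-\beta}$ obtained from compatibility with Lusztig's braid operators, and then treats $\dagger(\theta)=\theta^{-1}$ as a formal consequence of the first two identities. On the step you flag as the main obstacle: the paper asserts that $\dagger$ \emph{commutes} with the braid operators $T_i$ (not that it conjugates $T_i$ to $T_i^{-1}$), from which $\dagger(X_\beta)=X_{-\beta}$ follows immediately by applying $\dagger$ to the convex root-vector construction $X_\beta=T_{i_1}\cdots T_{i_{k-1}}(X_{i_k})$.
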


 \begin{proof}
  First, note that
   $ \left( \exp^<_{q_{\beta_i}}(X) \right)^{-1}=
   \exp^<_{q^{-1}_{\beta_i}}(-X) $ and $g=K_{2 \rho}^{1-r}$.
   It is straightforward to check that $\dagger(X_{\beta})=X_{-\beta}$. One way to see this is to use the fact that $\dagger$ commutes with Lusztig's braid operators.
   The first result now follows.

It is clear that $\dagger(g)=g^{-1}$ and we note further that the third equality is
   a consequence of the two first.
\end{proof}
Let  $\catdSeq$ be the category of sesquilinear  weight modules over $\wb\U^H$ with real weights (see Equation \eqref{E:sesquilinearComp}).
%
Recall a sesquilinear module $W$ is Hermitian if its sesquilinear form has Hermitian symmetry: if for any $v_1,v_2\in W$, $(v_2|v_1)=\wb{(v_1|v_2)}$.
The next lemma is similar to \cite[Proposition 4.5]{GLPS1}.

\begin{lemma} \label{L:irraresesq}
 Any simple module $V$ in $\mathcal{D}$ with real highest weight has
  a sesquilinear structure.
  Moreover, if $v_0$ is a highest
  weight vector of $V$ there is a unique form $(\cdot,\cdot)$ on $V$
  such that $(v_0,v_0) = 1$ which is Hermitian.
\end{lemma}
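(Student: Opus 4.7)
The plan is to invoke Lemma \ref{L:sesq}, which gives a bijection between compatible sesquilinear forms on $V$ and even isomorphisms $\con V \to V^*$ of $\wb U^H$-modules. Since both $\con V$ and $V^*$ are simple (the first because the functor $V\mapsto\con V$ is an auto-equivalence of $\wb U^H$-mod, the second by duality), Schur's lemma implies the space of such isomorphisms is at most one-dimensional. Thus existence of one nonzero isomorphism, combined with the normalization $(v_0,v_0)=1$, will give both existence and uniqueness of the desired form.

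To construct the isomorphism, I would first analyze the weights. Starting from $x\cdot\wb v = \wb{\ldag{S(x)}\cdot v}$ together with $\dag{H_i}=H_i$ and $S(H_i)=-H_i$ from Lemma \ref{L:antiautomorphismUHg}, and using the reality of the weights of $V$, one computes $H_i\cdot\wb v = -\mu(H_i)\wb v$ for $v\in V_\mu$, so the weights of $\con V$ are precisely $\{-\mu : V_\mu\neq 0\}$; an analogous check gives the same weight set for $V^*$. A direct computation using $S(F_i)=-K_{\alpha_i}F_i$ and $\dag{F_i}=E_i$ yields $F_i\cdot\wb{v_0}=\wb{-E_iK_{-\alpha_i}v_0}=0$, and the parallel fact $F_i\cdot v_0^*=0$ is standard. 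Both $\wb{v_0}$ and $v_0^*$ therefore span the one-dimensional lowest-weight space (of weight $-\lambda$) in their respective simple modules, generating them cyclically under the $E_i$'s. Matching annihilators weight by weight provides a well-defined nonzero $\wb U^H$-linear map $\con V\to V^*$ sending $\wb{v_0}\mapsto v_0^*$, which is an isomorphism by simplicity.

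The normalization $(v_0,v_0)=1$ fixes the scalar in this isomorphism uniquely. For Hermitian symmetry I would consider the auxiliary form $(v,w)' := \wb{(w,v)}$. Using that $\dagger$ is involutive on $\wb U^H$ (immediate from its action on generators in Lemma \ref{L:antiautomorphismUHg}), a short computation shows $(\cdot,\cdot)'$ is sesquilinear and satisfies the compatibility relation \eqref{E:sesquilinearComp}; moreover $(v_0,v_0)' = \wb{(v_0,v_0)} = 1$. By the uniqueness just established, $(\cdot,\cdot)'=(\cdot,\cdot)$, which is precisely Hermitian symmetry.

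The main obstacle will be verifying that the candidate map $\wb{v_0}\mapsto v_0^*$ genuinely intertwines the full $\wb U^H$-action, rather than just the Cartan part and the annihilation by $F_i$'s at the lowest weight vector. This amounts to tracing through the interplay of the antipode $S$, the dagger $\dagger$, and complex conjugation on weight-dependent scalars of the form $\wb{q^{\langle\alpha_i,\mu\rangle}}$. The reality assumption on $\lambda$ is essential here: because every weight $\mu$ of $V$ lies in $\mathfrak h^*_\R$, one has $\wb{q^{\langle\alpha_i,\mu\rangle}} = q^{-\langle\alpha_i,\mu\rangle}$, which is exactly the identity needed for the twists appearing on the $\con V$ side to match those on the $V^*$ side so that the map extends consistently.
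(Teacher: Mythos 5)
Your proposal is correct, and it shares with the paper the key move of reducing existence of a compatible sesquilinear form to existence of an isomorphism $\con V\to V^*$ via Lemma~\ref{L:sesq}. The two proofs diverge in how they finish. For the isomorphism $\con V\simeq V^*$, the paper compares characters directly: since $\dag{H_i}=H_i$, the character of $\con V$ is the complex conjugate of that of $V^*$, which agrees with it because the weights are real, and simple weight modules with the same character are isomorphic. You instead check that $\wb{v_0}$ and $v_0^*$ are both lowest-weight vectors of the same weight $-\lambda$ and appeal to simplicity; this is essentially the same content but requires you to be careful that the highest-weight vector characterizes the simple module, which the paper's character comparison handles more robustly. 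For Hermitian symmetry the difference is more substantive: the paper invokes Lemma~\ref{L:ProportionalHermitianSimple} (whose proof rescales the restriction to the degree-zero part and then analyzes the odd part), whereas you observe that involutivity of $\dagger$ implies $(v,w)':=\wb{(w,v)}$ is again a compatible sesquilinear form with $(v_0,v_0)'=1$, so it must coincide with $(\cdot,\cdot)$ by the uniqueness you establish via Schur's lemma. Your argument is cleaner, self-contained, and makes the uniqueness assertion of the statement explicit (the paper leaves it implicit); the one ingredient it uses that deserves being stated is that $\dagger$ is involutive on $\wb\U^H$, which does hold since it is a conjugate-linear anti-automorphism fixing the generators up to the involution on generators, hence squares to the identity.
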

\begin{proof}
  Let $V$ be a simple module in
  $\mathcal{D}$ with real weights.  Since $\dag{H_i}=H_i$, we have $\con
  V$ is a simple module with character that is the conjugate of that
  of $V^*$ (where the $\wb\U^H$-action on $\con
  V$ is given in Equation \eqref{eq:Vbar-action}).  Since the weights of the module $V$ are real, $\con V$ and
  $V^*$ have the same character so they are isomorphic.  Then Lemma
  \ref{L:sesq} applies.  Finally, the form is Hermitian by Lemma
  \ref{L:ProportionalHermitianSimple}.
 \end{proof}

Let
$\Zt=rL'_R\cap L_R$.  For $k\in
\Zt$ define $\sigma(k)$ to be the vector space
$\C$ with the action determined by $E_i, F_i,
H_i-k(H_i)$ act by zero.  This module is Hermitian with form
$(z_1,z_2)=\overline{z_1}z_2$.

\begin{proposition}\label{P:DH-Hermitian-modular}
  The category
 $\catdSeq$
   is a  sesquilinear ribbon category and a relative modular category
  with grading group
  $\Gr=\mathfrak{h}_\R^*/L_R$, symmetric small set
  $$\X=\{[\zeta]\in  \Gr  \mid \text{there exists } \alpha \in \Delta^+
  \text{ satisfying } 2\langle \alpha, \zeta \rangle\in \Z\},$$ free
  realization $\{\sigma(k)\}_{k\in rL'_R\cap L_R}$, and  bilinear map $\psi : \Gr \times \Zt \rightarrow \C^*$ given by $([\zeta], k ) \mapsto q^{2 \langle \zeta, k\rangle}$.
  \end{proposition}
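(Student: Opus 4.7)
The plan is to establish the two claims in parallel: the sesquilinear ribbon structure comes from a direct application of the abstract machinery of Section \ref{SS:HopfSuAlg} to the Hopf algebra $\wb\U^H$, while the relative modular structure is essentially the one already constructed on $\mathcal D$ in \cite{GP4, D17}, restricted to objects that admit compatible sesquilinear forms.

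For the first claim, I would verify that $\wb\U^H$ together with the map $\dagger$ of Lemma \ref{L:antiautomorphismUHg} satisfies the hypotheses of Theorem \ref{T:SequilinearPivotal}: the identities \eqref{eq:dag2} are immediate from the definitions of $\dagger$ on the generators and the Hopf structure \eqref{DJH1}--\eqref{DJH3}, \eqref{UH1}, while Lemma \ref{LAbraidherm} supplies the two identities \eqref{eq:Rdag2} needed for the R-matrix and pivot. A half twist $\dt$ exists because $\C$ is algebraically closed of characteristic zero, so \cite[Proposition 4.12]{GLPS1} applies. Theorem \ref{T:SequilinearPivotal} then gives a sesquilinear pivotal structure on $\wb\U^H{}^\dagger$-mod together with $c_{V,W}^\dagger = c_{V,W}^{-1}$, and together with $\dagger(\theta) = \theta^{-1}$ from Lemma \ref{LAbraidherm} this is exactly the data of a sesquilinear ribbon category. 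To show that $\catdSeq$ is nonempty and closed under the operations, I would use Lemma \ref{L:irraresesq} to endow every simple weight module with real weights with a Hermitian form, extend to arbitrary weight modules by direct sums, and use Proposition \ref{P:tensor} for the tensor product.

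For the relative modular structure, the underlying category $\mathcal D$ is already known to be $\Gr$-graded with $\Gr = \mathfrak h^*_\R/L_R$, since the weights of a weight module take real values and their class modulo $L_R$ is constant on any indecomposable. The singular set $\X$ is dictated by when the standard $r^N$-dimensional highest weight modules of $\wb\U^H$ remain simple and projective: this fails precisely when some Shapovalov-type form degenerates, which happens exactly when $2\langle \alpha, \zeta\rangle \in \Z$ for some positive root $\alpha$; this is a small symmetric subset of $\Gr$. For the free realization, the one-dimensional modules $\sigma(k)$ with $k \in rL'_R \cap L_R$ lie in $\cat_0$ (since $k \in L_R$), carry trivial twist (since $k \in rL'_R$ ensures all $q^{\langle \cdot, k\rangle}$ factors coming from $\mathcal H$ and $\nu$ trivialize on $\sigma(k)$), and translate weights by $k$, producing a free action on isoclasses of simple objects. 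The remaining items of Definition \ref{def:X} — finitely many orbits in each generic degree with epic evaluations — are the content of the existing modular construction in \cite{GP4}.

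For the bilinear form $\psi$, the key computation is that on $\sigma(k) \otimes V$ with $V \in \cat_{[\zeta]}$ and $v \in V$ of weight $\lambda$, only the abelian part $\mathcal H = q^{\sum d_i (A^{-1})_{ij} H_i \otimes H_j}$ of $R$ contributes (since the nilpotent part $\check R$ kills the generator of $\sigma(k)$), so that $c_{\sigma(k),V}\circ c_{V,\sigma(k)}$ acts as $q^{2\sum_{i,j} d_i (A^{-1})_{ij} k(H_i)\lambda(H_j)} = q^{2\langle \zeta, k\rangle}$, using that $\sum_{i,j} d_i (A^{-1})_{ij} \alpha_i \otimes \alpha_j$ is the inverse of the form on the weight lattice. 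The value depends only on $[\zeta] \in \Gr$ because $k \in rL'_R$ forces $q^{2\langle \alpha, k\rangle}=1$ for $\alpha \in L_R$, and bilinearity in $([\zeta], k)$ is then immediate. I expect the main obstacle to be the bookkeeping needed to verify that every relevant object of $\mathcal D$ (including non-semisimple projective covers in generic degree) admits a compatible sesquilinear form, so that the full relative modular data descends to $\catdSeq$; here one reduces to the simple case via Lemma \ref{L:irraresesq} and the tensor structure of Proposition \ref{P:tensor}, but verifying compatibility with the specific dominating set $\Theta(\cat_g)\otimes \sigma(\PGr)$ requires some case analysis on the root data.
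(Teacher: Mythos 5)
Your proof of the sesquilinear ribbon structure matches the paper's: Lemma \ref{L:antiautomorphismUHg} gives the anti-automorphism, Lemma \ref{LAbraidherm} gives \eqref{eq:Rdag2}, and Theorem \ref{T:SequilinearPivotal} does the rest. For the relative modular structure, however, you take a different and more laborious route. The paper simply cites \cite[Theorem 1.3]{DGP2} for the full relative modular structure on $\mathcal D$ with grading group $\mathfrak{h}^*/L_R$, and then observes that $\mathfrak{h}_\R^*/L_R$ is a subgroup so the structure restricts. You instead attempt to reconstruct the grading, singular set, free realization, and compatibility map $\psi$ from first principles — this is correct in spirit but duplicates the content of \cite{DGP2}, which you misattribute to \cite{GP4, D17}: GP4 establishes only the ribbon structure (cited as Proposition \ref{prop:DHribbon}), D17 defines relative modularity abstractly, but the verification of the axioms for $\mathcal D$ is in \cite{DGP2}. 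Your direct computation of $\psi$ via $\mathcal H$ acting on $V \otimes \sigma(k)$ is a reasonable alternative to citing, and the observation that $k \in rL'_R$ makes $\psi$ well-defined on $\Gr$ is on the mark. Finally, you flag a genuine subtlety that the paper's terse proof glosses over — whether every object relevant to the relative modular axioms (not just simples) admits a compatible sesquilinear form so that the structure really lives on $\catdSeq$ — but you leave it at "requires some case analysis." The resolution the paper implicitly relies on is that the axioms of relative modularity only require the data (dominating sets, Kirby colors, $\psi$) at the level of generic simple objects, which Lemma \ref{L:irraresesq} covers, so the obstruction you anticipate does not actually arise; worth noting that this is also why the paper passes to $\catdHem$ before proving the Hermitian statement in Theorem \ref{T:DH-Hermitian-modular}.
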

\begin{proof}
By \cite[Theorem 1.3]{DGP2}, the category $\mathcal{D}$  is a relative modular category with grading group $\mathfrak{h}^*/L_R$ and the free realization given in the statement of the theorem we are proving.  Since $\mathfrak{h}_\R^*/L_R$ is a subgroup of $\mathfrak{h}^*/L_R$ this structure induces a relative modular structure  on   $\catdSeq$.
 By Lemma \ref{LAbraidherm} the dagger on $\wb\U^H$ satisfies Equation \eqref{eq:Rdag2} and so by Theorem \ref{T:SequilinearPivotal} the category $\catdSeq$  is a sequilinear ribbon $\C$-category.
\end{proof}
Let $S$ be the set of all objects of $\catdSeq$ such that the modules are simple and their forms are Hermitian. As above, such modules are called Hermitian.  Let $\catdHem$ be the full subcategory $\catdSeq$ generated by the set $S$ as defined in Section \ref{s:sesqtoHerm}.
By Lemma \ref{L:irraresesq} every simple module in $\catdSeq$ is isomorphic to a simple module endowed with a Hermitian form.  Lemma \ref{L:invol-herm}  implies that if $V,W$ are objects of $\catdHem$ and any
  $f\in\Hom_\cat(V,W)$, then $(f^{\dagger})^{\dagger} = f$.
 Finally, Proposition
\ref{P:TensorProdHerm} implies that a semi-split tensor product of two Hermitian simple objects in $\catdSeq$ is also Hermitian. Therefore, the set $S$ is a collection of Hermitian objects as defined in Section \ref{s:sesqtoHerm}.   Thus, Corollary \ref{c:hpremod} applies and we have the following theorem.

\begin{theorem}\label{T:DH-Hermitian-modular}
  The category $\catdHem$ is a
  relative Hermitian-modular category with $\Gr=\mathfrak{h}_\R^*/L_R$, $\X$, $\{\sigma(k)\}_{k\in rL'_R\cap L_R}$ and $\psi$ as in Proposition \ref{P:DH-Hermitian-modular}.
\end{theorem}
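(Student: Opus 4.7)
The plan is to apply Corollary \ref{c:hpremod} to the category $\catdSeq$, which was already shown in Proposition \ref{P:DH-Hermitian-modular} to be a sesquilinear ribbon category and a relative modular (hence premodular) category with grading group $\Gr=\mathfrak{h}_\R^*/L_R$, singular set $\X$, free realization $\{\sigma(k)\}_{k\in\Zt}$, and bilinear compatibility map $\psi$. To apply the corollary, I would verify that the set $S$ of simple Hermitian modules constitutes a \emph{collection of Hermitian objects} in the sense of Section~\ref{s:sesqtoHerm}, and that $\sigma(\Zt)\subset S$.

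First I would check the three axioms for a collection of Hermitian objects. Condition (1), involutivity of $(\cdot)^\dagger$ on morphisms between Hermitian modules, follows from Lemma \ref{L:invol-herm}(1), since by definition the forms on objects of $S$ are Hermitian symmetric (not super Hermitian, because $\wb\U^H$ is purely even). Condition (2), that every simple object is isomorphic to a Hermitian one, is exactly the content of Lemma \ref{L:irraresesq}: any simple $\wb\U^H$-weight module with real highest weight admits a unique (up to scalar) Hermitian sesquilinear form. Condition (3), that a semi-split tensor product of two Hermitian simple indecomposables is again Hermitian, is precisely Proposition \ref{P:TensorProdHerm}. For the inclusion $\sigma(\Zt)\subset S$, each one-dimensional module $\sigma(k)$ comes equipped with the explicit form $(z_1,z_2)=\overline{z_1}z_2$; since the generators $E_i,F_i,H_i-k(H_i)$ act as zero, the compatibility relation \eqref{E:sesquilinearComp} is trivial, and the form is manifestly Hermitian.

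With these verifications, Corollary \ref{c:hpremod} yields that $\catdHem$, the full subcategory of $\catdSeq$ generated by $S$, is a Hermitian relative premodular category with the inherited structure $(\Gr,\X,\{\sigma(k)\},\psi)$. To upgrade from premodular to modular, I would observe that by condition (2) every simple generic object of $\catdSeq$ is isomorphic to one in $\catdHem$, so the sets $\Theta(\cat_g)=\{V_i\mid i\in I_g\}$ of simple projective generic representatives, together with the Kirby colors $\Omega_g$ and stabilization coefficients $\Delta_{\pm\Omega}$, are all realized inside $\catdHem$; thus the modularity condition established for $\catdSeq$ in Proposition \ref{P:DH-Hermitian-modular} descends to the Hermitian subcategory. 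Finally, reality of the m-trace on projectives, which is required by Definition \ref{D:GenericHermitian}, is automatic by Lemma \ref{lem:tracedagger}, since $\catdHem$ contains simple projective objects.

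The main subtlety I anticipate lies in checking axiom (3), for which one must know that tensor products of Hermitian simples decompose into Hermitian pieces; this is handled by Proposition \ref{P:TensorProdHerm} provided the tensor product is semi-split, and in the generic degree subcategories of $\catdSeq$ this semi-splitting follows from the semisimplicity asserted in Proposition \ref{P:DH-Hermitian-modular}. The remaining verifications are direct applications of the machinery developed in Sections~\ref{sec:sespiv} and \ref{sec:relhermmodsph}.
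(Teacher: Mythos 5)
Your proof follows essentially the same route as the paper: the paper's own argument (placed in the paragraph immediately preceding the theorem) defines $S$ to be the simple Hermitian modules, invokes Lemma \ref{L:irraresesq}, Lemma \ref{L:invol-herm}, and Proposition \ref{P:TensorProdHerm} to verify that $S$ is a collection of Hermitian objects, and then applies Corollary \ref{c:hpremod}. Your additional remarks making explicit that $\sigma(\Zt)\subset S$, that the modularity condition descends to $\catdHem$, and that reality of the m-trace follows from Lemma \ref{lem:tracedagger} fill in details the paper leaves implicit but do not change the argument.
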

Let us increase $\X$ to
$\X'=\{[\zeta]\in \Gr \mid \text{there exists } \alpha \in \Delta^+ \text{
  satisfying } 6\langle \alpha, \zeta \rangle\in \Z\}$.  Then
$\Gr\setminus \X'$
 has no elements of order $2$ or $3$.
Recall the construction of the category $\cat^\Zt$ from Definition \ref{d:catZ}.

\begin{theorem}\label{T:DHZ-Hermitian-spherical}
  The category
  $(\catdHem)^{\Zt}$
  is a relative Hermitian-spherical
  category with grading group $\Gr=\mathfrak{h}_\R^*/L_R$, symmetric
  small set
  $$\X'=\{[\zeta]\in  \Gr  \mid \text{there exists } \alpha \in \Delta^+
  \text{ satisfying } 6\langle \alpha, \zeta \rangle\in \Z\},$$ and
  where the map $\bb$ is the constant
  $r^{-\operatorname{card}(\Delta_+)}(\operatorname{card}(L_R/\Zt))^{-1}$.
\end{theorem}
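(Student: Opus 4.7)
The plan is to invoke Theorem~\ref{T:relativeHer-premodularIsSpherical} applied to the relative Hermitian-modular category $\cat=\catdHem$ produced by Theorem~\ref{T:DH-Hermitian-modular}. This reduces the proof to three tasks: (i) exhibit a bilinear square root $\sqrt\psi$ satisfying \eqref{eq:sqrpsi}; (ii) verify the trivial-braiding hypothesis \eqref{eq:trivialbraiding} for the free realization $\sigma$; and (iii) produce the constant map $\bb$ and verify the balance identity \eqref{eq:bb}. Replacing $\X$ by $\X'$ preserves smallness, since $\X'$ is still a countable union of real hyperplanes in $\mathfrak{h}_\R^*/L_R$, and it guarantees that $\Gr\setminus\X'$ contains no element of order $2$ or $3$, which is the standing assumption needed to combine the spherical structure with the Hermitian basic data of Proposition~\ref{prop:Herm-basicdata}.

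For (i), I will set $\sqrt\psi([\zeta],k):=q^{\langle\zeta,k\rangle}$. Replacing $\zeta$ by $\zeta+\alpha$ with $\alpha\in L_R$ multiplies this by $q^{\langle\alpha,k\rangle}$, which is $1$ because $k\in rL'_R$ forces $\langle\alpha,k\rangle\in r\Z$; bilinearity, unitarity, and $\sqrt\psi{}^2=\psi$ are then immediate. For (ii), each $\sigma(k)$ is one-dimensional with all $X_{\pm i}$ acting as zero, so $\check R$ acts trivially on $\sigma(k_1)\otimes\sigma(k_2)$ and the $R$-matrix reduces to the Cartan factor $\mathcal H$, whose scalar action is $q^{\langle k_1,k_2\rangle}$. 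Writing $k_1=r\mu_1$ with $\mu_1\in L'_R$ and using $k_2\in L_R$ gives $\langle k_1,k_2\rangle\in r\Z$, so the scalar equals $1$; under the canonical identifications $\sigma(k_1)\otimes\sigma(k_2)\cong\sigma(k_1+k_2)\cong\sigma(k_2)\otimes\sigma(k_1)$ the flip is the identity, hence $c_{\sigma(k_1),\sigma(k_2)}=\Id_{\sigma(k_1+k_2)}$.

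For (iii), the count proceeds as follows: generic simple objects of $\catdHem_g$ are parametrized by their highest weights, which form an $L_R$-torsor of real weights in the coset $g$, and each has $\C$-dimension $r^{|\Delta_+|}$. The $\Zt$-action by tensoring with $\sigma(k)$ shifts highest weights by $k$ and is free on isomorphism classes, so $(\catdHem^\Zt)_g$ has exactly $|L_R/\Zt|$ simple isomorphism classes of dimension $r^{|\Delta_+|}$, whence \eqref{E:defbb} yields the constant $\bb=r^{-|\Delta_+|}|L_R/\Zt|^{-1}$. To verify \eqref{eq:bb} I will fix a generic simple $V\in(\catdHem^\Zt)_{g_1+g_2}$ and any generic simple $V_2\in(\catdHem^\Zt)_{g_2}$; then $V\otimes V_2^{\ast}$ is semisimple of dimension $r^{2|\Delta_+|}$ with simple constituents of dimension $r^{|\Delta_+|}$, so $\sum_{V_1}\dim\Hom(V,V_1\otimes V_2)=r^{|\Delta_+|}$, and summing over the $|L_R/\Zt|$ choices of $V_2$ produces $|L_R/\Zt|r^{|\Delta_+|}=\bb^{-1}$, as required.

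The step I expect to be the principal obstacle is (iii): while the highest-weight classification and the dimension formula $r^{|\Delta_+|}$ for generic simples are standard, one must justify carefully that the $\Zt$-action on isomorphism classes is free (equivalently, that no nonzero element of $\Zt$ stabilizes a generic highest weight) and that every simple summand of a generic tensor product is again generic of the expected dimension. Both facts follow from the control over typical modules at a root of unity used in Theorem~\ref{T:DH-Hermitian-modular}, but they carry the combinatorial content driving the value of $\bb$.
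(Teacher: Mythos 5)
Your proposal is correct and follows essentially the same route as the paper: invoke Theorem~\ref{T:relativeHer-premodularIsSpherical} with the premodular input from Theorem~\ref{T:DH-Hermitian-modular}, take $\sqrt\psi([\zeta],k)=q^{\langle\zeta,k\rangle}$, check the trivial-braiding condition on $\sigma(\Zt)$, and count $\operatorname{card}(L_R/\Zt)$ isomorphism classes of generic simples, each of dimension $r^{\operatorname{card}(\Delta_+)}$ by the PBW theorem. The one slight difference is in the $\bb$ step: you re-derive the identity \eqref{eq:bb} by a direct tensor-product dimension count, whereas the paper just observes that the objects of $(\catdHem)^{\Zt}$ are ordinary finite-dimensional vector spaces and therefore formula \eqref{E:defbb} (whose validity is established in \cite{GPT2}) applies and produces the constant map. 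Your extra verification is correct but redundant given that citation; the freeness of the $\Zt$-action that you flag as a concern is already part of the definition of a free realization in the premodular structure and so needs no new argument.
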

\begin{proof}
  It suffices to show that the hypothesis of Theorem
  \ref{T:relativeHer-premodularIsSpherical} are satisfied.  Theorem
  \ref{T:DH-Hermitian-modular} implies $\catdHem$ is a relative
  Hermitian-premodular category.  Let
  $\sqrt\psi:\G\times\Zt\to\Unitary$ be defined by
  $$\sqrt\psi([\gamma],\lambda)=q^{\ang{\gamma,\lambda}}.$$
  If $V$ is a module of degree $[\gamma]$, the R-matrix acts on
  $V\otimes\sigma(\lambda)$ and $\sigma(\lambda)\otimes V$ by the
  scalar $\sqrt\psi([\gamma],\lambda)$.  In particular
  $c_{\sigma(\lambda),\sigma(\mu)}=\Id_{\sigma(\lambda+\mu)}$.  Thus,
  $\sqrt\psi$ satisfies \eqref{eq:sqrpsi} and the braiding assumption
  in Equation \eqref{eq:trivialbraiding}.

  Finally, generic simple modules of $\catdHem$ are free (generated by
  their highest weight vectors) over the negative Borel of
  $U_q\mathfrak g$ whose dimension is
  $\ellr^{\operatorname{card}(\Delta_+)}$ (by the PBW theorem).  Their
  isomorphism classes in generic degree $g$ are in bijection with the
  set of weights of degree $g$ on which $L_R$ acts simply
  transitively. But two such modules are isomorphic in
  $(\catdHem)^{\Zt}$ if and only if they belong to the same $\Zt$
  orbit.  Thus there are $\operatorname{card}(L_R/\Zt)$ isomorphism
  classes in $(\catdHem)^{\Zt}$.  Since the objects of the category
  $(\catdHem)^{\Zt}$ are vector spaces (with usual direct sum and
  tensor product of vector spaces), Formula \eqref{E:defbb}
  can be used to define the map $ \bb$ which is the given constant
  map.
 \end{proof}

\begin{remark} 
It is not clear if the category of weight modules over $\wb \U$ (see \cite{GP3})  is a relative Hermitian-spherical category.  If so, it would be interesting to check if it is equivalent to the category $(\catdHem)^{\Zt}$.
\end{remark}

 \subsection{The special linear quantum Lie superalgebra }
Here we consider two categories of modules associated to the unrolled quantum group of $\mathfrak{sl}(m|n)$.  We use the notation of \cite{GPP}, see this paper for more details.

We assume that $m\neq n$ are non-zero positive integers,
$\ellr\ge \rk= m+n-1$ is an odd integer and $q$ is the root of unity
$q=\exp\bp{2i\pi/\ellr}$.  Let $A$ be the Cartan matrix of
$\mathfrak{sl}(m|n)$:
\begin{equation} \label{E:cartansuper}
A_{ii}=2 \text{ for } i \neq m, \quad
A_{mm}=0, \quad
A_{i,i\pm1}=-1 \text{ for } i \neq m, \quad
A_{m, m \pm 1}= \pm 1, \quad
A_{ij}=0 \text{ otherwise} .
\end{equation}
Define $d_i=1$ if $i\le m$ and $d_i=-1$ if $i>m$.
Then $(d_i A_{ij}) $ is a symmetric matrix.

Let $q$ an
$\ellr$th root of unity.  Let
$\roots^\pm=\roots_{\p 0}^{\pm} \cup \roots_{\p 1}^{\pm}$ to be the
set of positive and negative roots, respectively.  Here the subscripts
$\p 0$ and $\p 1$ denote the even and odd roots, respectively.  Let
$\rho_{\bar{0}}$ and $\rho_{\bar{1}}$ be half the sum of the positive
even and odd roots respectively.  Let
$\rho= \rho_{\bar{0}} -
\rho_{\bar{1}}$.  
A positive root is called simple if it cannot be decomposed into a sum
of two positive roots.  Denote by $L_R$ (the root lattice), the
$\mathbb{Z}$-module generated by the simple roots $\{\alpha_{i}\}$,
see \cite{GPP}.

\begin{definition}
Define  $U_q\mathfrak{sl}(m|n)$ as the $\mathbb{C}$-algebra with generators $E_i,F_i,K_i,K_i^{-1}$ with $i=1,...,m+n-1$ satisfying relations
\begin{align}
\label{A1}K_iK_j&=K_jK_i, &   K_iK_i^{-1}&=K_i^{-1}K_i=1,\\
\label{A2}K_iE_jK_i^{-1}&=q^{d_ia_{ij}}E_j, & K_iF_jK_i^{-1}&=q^{-d_ia_{ij}}F_j,\\
\label{A3}[E_i,F_j]&=\delta_{ij}\frac{K_i-K_i^{-1}}{q^{d_i}-q^{-d_i}}, & E_m^2&=F_m^2=0,\
\end{align}
and for $X_i=E_i,F_i$
\begin{align}
\label{A4}[X_i,X_j]=0 \quad &\mathrm{if} \; |i-j|>2,\\
\label{A5}X_i^2X_j-(q+q^{-1})X_iX_jX_i+X_jX_i^2=0 \quad &\mathrm{if} \; |i-j|=1, \; \mathrm{and} \; i\not=m,
\end{align}
\begin{align}
\label{A6}X_mX_{m-1}X_mX_{m+1}+&X_mX_{m+1}X_mX_{m-1}+X_{m-1}X_mX_{m+1}X_m\\
\nonumber &+X_{m+1}X_mX_{m-1}X_m-(q+q^{-1})X_mX_{m-1}X_{m+1}X_m=0 .
\end{align}
All generators are even except for $E_m,F_m$ which are odd and $[-,-]$ is the supercommutator given by $[x,y]=xy-(-1)^{|x||y|}yx$. $U_q(\mathfrak{sl}(m|n))$ can be given the structure of a Hopf superalgebra with coproduct $\Delta$, counit $\epsilon$ and antipode $S$ by
\begin{align}
\label{H1}\Delta (E_i)&=E_i \otimes 1+K_i^{-1} \otimes E_i, & \epsilon(E_i)&=0, & S(E_i)&=-K_iE_i,\\
\label{H2} \Delta(F_i)&=F_i \otimes K_i+1 \otimes F_i, & \epsilon(F_i)&=0, & S(F_i)&=-F_iK_i^{-1},\\
\label{H3}\Delta(K_i^{ \pm 1})&=K_i^{\pm 1} \otimes K_i^{\pm 1} , & \epsilon(K_i^{\pm 1})&=1 , & S(K_i^{\pm 1})&=K_i^{\mp 1} .
\end{align}
The unrolled quantum group $U_q^H\mathfrak{sl}(m|n)$ is the $\mathbb{C}$-algebra with generators $H_i,E_i,F_i,K_i^{\pm 1}$ with $i=1,...,m+n-1$ and relations \eqref{A1}-\eqref{A6} plus the relations
\begin{equation*}
[H_i,H_j]=[H_i,K_j^{\pm}]=0, \qquad [H_i,E_j]=a_{ij}E_j, \qquad [H_i,F_j]=-a_{ij}F_j
\end{equation*}
$$
E_\alpha^\ellr = F_\alpha^\ellr =0, \text{ for all } \alpha \in  \roots_{\p 0}^{+}
$$
where $E_{\alpha}$ and $F_{\alpha}$, for $\alpha \in \roots^{+}$, are
  the $q$-analogs
 of the Cartan-Weyl generators such that $E_{\alpha_{i}}=E_{i}$ and
 $F_{\alpha_{i}}=F_{i}$ for any simple root $\alpha_{i}$ (see Section 5.1.1 of \cite{Yam94}).
The $\mathbb{C}$-algebra  $U_q^H\mathfrak{sl}(m|n)$ becomes a Hopf algebra with coproduct, counit, and antipode satisfying relations \eqref{H1}-\eqref{H3} and
\begin{equation}\label{H4} \Delta(H_i)=H_i \otimes 1 + 1 \otimes H_i, \qquad \epsilon(H_i)=0, \qquad S(H_i)=-H_i  . \end{equation}
\end{definition}
We call a $U_q^H\mathfrak{sl}(m|n)$-module $V$ a \emph{weight module} if it is finite-dimensional and the following hold.
\begin{enumerate}
\item The elements $H_i,i=1,..,m+n-1$ act semisimply on $V$.
\item $K_i=q^{d_iH_i}$ as operators on $V$.
\end{enumerate}

We denote by
$\mathcal{D}$ the category whose objects are $U_{q}^H\mathfrak{sl}(m|n)$ weight modules with real weights where the morphisms of $\mathcal{D}$ are equivariant linear maps that respect the parity (i.e. all morphisms are even).  Let $\mathfrak{h}:=\mathrm{Span}\{H_1,...,H_{m+n-1}\}$. Any weight module $V$ admits a basis $\{v_j\}_{j \in I}$ and a family of linear functionals $\{\lambda_j\}_{j \in I} \in \mathfrak{h}^*$ such that
\[ H_iv_j=\lambda_j(H_i)v_j\]
for all $i$ and $j$. We call $\lambda_j$ the weight of $v_j$ and a weight of $V$ if it a weight for some $v \in V$. We call a weight $\lambda \in \mathfrak{h}^*$ of a module $V$ perturbative if $\lambda(H_i) \in \mathbb{Z}$ for all $i \not = m$ and a $U_{q}^H\mathfrak{sl}(m|n)$-module is called perturbative if all of its weights are. Let $\mathcal{D}^{\wp}$ be the full subcategory $\mathcal{D}$ of perturbative $U_{q}^H\mathfrak{sl}(m|n)$-modules.

Proposition 3.20 of \cite{GPP} implies both $\mathcal{D}$ and $\mathcal{D}^{\wp}$ are pivotal categories where the pivotal element is defined to be $K_{\pi}=\prod_iK_i^{n_i}\in U_{q}^H\mathfrak{sl}(m|n)$ where $\pi = 2 \rho - 2 \ellr \rho_{\bar{0}}=\sum_i{n_i\alpha_i}$ for some $n_i$.

Khoroshkin, Tolstoy \cite{KTol} and Yamane \cite{Yam94} showed that the $h$-adic completion version of quantum $\mathfrak{sl}(m|n)$ has an $R$-matrix.  This leads to the following truncated $R$-matrix and braiding in $\mathcal{D}$ and $\mathcal{D}^{\wp}$, see \cite[Section 3.2]{GPP} for details.

We follow the exposition of \cite{KTol}.
  First, let $(d_{ij})$ be the inverse of
the matrix $(a_{ij}/d_j)$.  Set
\begin{equation}
  \label{E:FormOfK}
  \HR=q^{\sum_{i,j}^{\rk}d_{ij}H_{i}\otimes H_{j}}.
\end{equation}
  Then the $R$-matrix is of the form
\begin{equation}
  \label{eq:Rh}
  {R}=\check{{R}}\HR.
\end{equation}
where $\check{{R}}$ is described as follows.   Consider the truncated quantum exponential:
  $$\exp_{q}^<(x):=\sum_{n=0}^{\ellr-1} \frac{x^n}{(n)_{q}!}.$$
  Let
\begin{equation*}
  \check{{R}}=\prod_{\alpha \in\roots^{+}}\exp_{q_{\alpha}}^<\big((-1)^{\p
    \alpha}a_{\alpha}^{-1}(q-q^{-1})(E_{\alpha}\otimes F_{\alpha})\big), \quad \quad (n)_q=\frac{1-q^n}{1-q},
\end{equation*}
where again $E_{\alpha}$ and $F_{\alpha}$ are
  the $q$-analogs
 of the Cartan-Weyl generators (see \cite[Definition 3.2]{KTol}), $q_\alpha=(-1)^{| \alpha|}q^{-\brk{\alpha,\alpha}}$ (here  $|\alpha|$ is
the parity of $E_\alpha$)  and $a_\alpha$ is defined by
 $$
 [E_\alpha,F_\alpha]=a_\alpha(q^{h_\alpha}-q^{-h_\alpha})/(q-q^{-1})
 $$
  for $\alpha \in \roots^{+}$ and $h_\alpha:=[e_\alpha,f_\alpha]$ is a relation in the Lie algebra.
The ordering in the product of $ \check{R}$ is given by a chosen fixed normal
ordering of $\Delta^{+}$.

Recall our conventions for antiautormorphisms from \eqref{eq:dag2}.
\begin{definition}
 Consider the bijection $\dagger$ of $U_{q}^{H}(\mf{g})$ defined by
\begin{align}
  \dagger(E_i) = F_i \quad \text{$i\neq m$}, \quad \dagger(E_m) = -F_m,
  \quad \dagger(F_i) = E_i, \quad \dagger(K_i) = K_i^{-1}, \quad \dagger(H_i) = H_i.
\end{align}
Observe that this map is a superinvolution.
\end{definition}

\begin{lemma}\label{L:antiautomorphismUHgSuper}
  The map
  $\dagger \maps U_{q}^{H}(\mf{g}) \to U_{q}^{H}(\mf{g})$ is a
  anti-superautomorphism as in Equation \eqref{eq:dag2}.
\end{lemma}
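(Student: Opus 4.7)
The plan is to extend $\dagger$ from generators to all of $U_q^H\mathfrak{sl}(m|n)$ by antilinearity together with the anti-super product rule $(xy)^\dagger=(-1)^{\de x\de y}y^\dagger x^\dagger$, then verify three things: (i) this extension is well-defined, i.e.\ respects the defining relations \eqref{A1}--\eqref{A6} and the unrolled relations; (ii) the specified values on $H_i$ and on $E_\alpha^\ellr,F_\alpha^\ellr$ are consistent; and (iii) the coalgebra anti-supermorphism condition $\Delta(x^\dagger)=(\dagger\otimes\dagger)(\tau\Delta(x))$ holds. Condition (iii) is a quick check on generators: e.g.\ $\tau\Delta(E_i)=1\otimes E_i+E_i\otimes K_i^{-1}$, whose image under $\dagger\otimes\dagger$ equals $\Delta(F_i)$ for $i\neq m$, and equals $\Delta(-F_m)=\Delta(E_m^\dagger)$ for $i=m$, with similar short computations for $F_i,K_i,H_i$.

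For condition (i), the commutation relations with $K_i$ and $H_i$ follow immediately from $|K_i|=|H_i|=\bar 0$ and the fact that the exponents $a_{ij},d_i$ are real. The relation $[E_i,F_j]=\delta_{ij}(K_i-K_i^{-1})/(q^{d_i}-q^{-d_i})$ is the first interesting check: one uses $\bar q=q^{-1}$ (so $\overline{q^{d_i}-q^{-d_i}}=-(q^{d_i}-q^{-d_i})$) together with the fact that for $i=m$ both $E_m,F_m$ are odd, so the supercommutator $[E_m,F_m]=E_mF_m+F_mE_m$ is $\dagger$-invariant, while the sign from $\overline{q-q^{-1}}$ cancels the sign from $\dagger(K_m)=K_m^{-1}$. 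The nilpotency $E_m^2=F_m^2=0$ is preserved because $(E_m^2)^\dagger=(-1)(E_m^\dagger)^2=-F_m^2$.

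The main obstacle is the degenerate quartic Serre relation \eqref{A6} in the case $X=E$ (the $X=F$ case follows by applying $\dagger$ once we know this case). Here one must track super signs systematically: for any homogeneous word $x_1\cdots x_n$,
\[
(x_1\cdots x_n)^\dagger=(-1)^{\sum_{i<j}\de{x_i}\de{x_j}}x_n^\dagger\cdots x_1^\dagger,
\]
and in \eqref{A6} every monomial contains $E_m$ exactly twice, so each term accumulates a common sign $(-1)^1=-1$ from the reversal of parities $(1,0,1,0)$ or its rotations, as well as a factor $(-1)^2=+1$ from $\dagger(E_m)=-F_m$ appearing twice. The resulting equation, after rewriting indices, is precisely the Serre relation with $X=F$, which holds in $U_q^H\mathfrak{sl}(m|n)$. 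The ordinary quantum Serre relations in \eqref{A5} for $i\neq m$ are easier because no odd generator appears, and the case $j=m$, $i=m\pm 1$ produces the correct relation after collecting the pair of sign-flips from $\dagger(E_m)=-F_m$. Finally, the truncation relations $E_\alpha^\ellr=F_\alpha^\ellr=0$ for even positive roots $\alpha$ are preserved because $\dagger$ sends Lusztig's braid-group-defined root vectors $E_\alpha$ to (a scalar times) $F_\alpha$; checking this for even $\alpha$ reduces, as in the non-super case, to verifying that $\dagger$ commutes with the action of Lusztig's braid operators $T_i$ for $i\neq m$, which is standard, together with a direct inspection for roots involving the odd simple root $\alpha_m$.

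Combining (i), (ii), and (iii) then shows that $\dagger$ satisfies all three conditions of \eqref{eq:dag2}, and bijectivity follows from $\dagger^2=\id$ on generators (the two signs in $\dagger(E_m)=-F_m$ and $\dagger(F_m)=E_m$ compose to $+1$), which extends to an involution on all of $U_q^H\mathfrak{sl}(m|n)$ by \eqref{eq:sdag-prod}. Hence $\dagger$ is a superinvolutive anti-superautomorphism, as claimed.
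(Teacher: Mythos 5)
Your proposal is correct in its essentials and follows the same core observation as the paper's (very terse) proof: the crucial point for the coalgebra condition in \eqref{eq:dag2} is that since $\Delta$ of each generator has one even tensor factor in every summand, the flip and the superflip $\tau$ agree on $\Delta(\text{generators})$, so the verification reduces to the non-super case. The paper stops there and treats the algebra conditions as a ``straightforward verification'' (parallel to Lemma~\ref{L:antiautomorphismUHg}); you instead spell out the relation checks, which is more complete but not a genuinely different route. Your sign bookkeeping for the quartic Serre relation is right, though you should note explicitly that matching the fifth term requires $F_{m-1}$ and $F_{m+1}$ to commute (they do, since nodes $m-1$ and $m+1$ are not adjacent), which you elide with ``after rewriting indices.''

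One slip worth flagging: the claim that ``$\dagger^2 = \id$ on generators'' is incorrect. Since $\dagger$ is antilinear, $\dagger^2(E_m) = \dagger(-F_m) = \overline{(-1)}\,\dagger(F_m) = -E_m$, and similarly $\dagger^2(F_m) = -F_m$, so $\dagger^2 = (-1)^{\de{\,\cdot\,}}$ on generators. This is precisely the \emph{super}involutive property $f^{\dagger\dagger}=(-1)^{\de f}f$ of Lemma~\ref{L:invol-herm}(2), which is why the paper says ``Observe that this map is a superinvolution'' rather than an involution. The error is harmless for what you use it for --- bijectivity of $\dagger$ still follows since $(-1)^{\de{\,\cdot\,}}$ is invertible --- but the stated identity should be corrected.
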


\begin{proof}
  First observe that the effect of the flip map and superflip map $\tau$
  on the output of $\Delta$ on any generator is identical.  It is
  therefore straightforward as in the non-super case to see that
  $\Delta(X^{\dagger}) = (\tau \Delta(X))^{\dagger}$ for all
  generators $X$. \end{proof}

\begin{lemma} \label{SLAbraidherm}
One has the identity:
\begin{equation}
  (\dagger\otimes\dagger)\bp{R}=\tau(R^{-1}).
 \end{equation}
 \end{lemma}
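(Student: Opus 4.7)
The plan is to adapt the proof of Lemma \ref{LAbraidherm} (the non-super case) with careful attention to the parity signs. Using the factorization $R = \check{R}\HR$ from \eqref{eq:Rh}, the identity decouples into a Cartan part and a Chevalley part.

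For the Cartan part, I will show $(\dagger \otimes \dagger)(\HR) = \HR^{-1} = \tau(\HR^{-1})$. The first equality is immediate from $\dagger(H_i) = H_i$, the reality of the entries $d_{ij}$, and $\overline{q} = q^{-1}$ (since $q = \exp(2i\pi/\ellr)$ is on the unit circle). The second follows from $\tau(\HR) = \HR$, which uses the symmetry of $(d_{ij})$ inherited from the symmetry of $(d_i A_{ij})$.

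For the Chevalley part, I will first extend the simple-root computation of Lemma \ref{L:antiautomorphismUHgSuper} by induction on root height, using the fact that the Khoroshkin–Tolstoy/Yamane recursive construction of the $q$-analogs of the Cartan–Weyl generators $E_\alpha, F_\alpha$ is intertwined by $\dagger$. This will give $\dagger(E_\alpha) = \epsilon_\alpha F_\alpha$ and $\dagger(F_\alpha) = \epsilon'_\alpha E_\alpha$ for signs satisfying $\epsilon_\alpha \epsilon'_\alpha = (-1)^{|\alpha|}$ (forced by $\dagger$ being a superinvolution). Then, for each factor $\exp^<_{q_\alpha}(c_\alpha\, E_\alpha \otimes F_\alpha)$ appearing in $\check{R}$, I apply $(\dagger \otimes \dagger)$, viewed as a $\C$-antilinear anti-superhomomorphism of $U \otimes U$ with respect to its super tensor product structure. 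The key ingredients are: $\overline{q_\alpha} = q_\alpha^{-1}$ (since $q_\alpha = (-1)^{|\alpha|} q^{-\langle \alpha,\alpha\rangle}$ lies on the unit circle); the inversion formula $\exp^<_q(x)\exp^<_{q^{-1}}(-x) = 1$ modulo truncation, which is applicable because $(E_\alpha \otimes F_\alpha)^{\ellr}$ acts as zero on every object of $\mathcal{D}$; and the super-flip identity $\tau(E_\alpha \otimes F_\alpha) = (-1)^{|\alpha|} F_\alpha \otimes E_\alpha$. The order reversal produced by the anti-homomorphism property of $\dagger$ matches precisely the order reversal in $\check{R}^{-1}$ relative to $\check{R}$, so the product reassembles into $\tau(\check{R}^{-1})$.

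The main obstacle will be bookkeeping of the super signs, which arise from three compounding sources: the factor $(-1)^{|a||b|}$ in $(ab)^\dagger = (-1)^{|a||b|} b^\dagger a^\dagger$, the factor $(-1)^{|b||c|}$ in the super tensor product multiplication $(a \otimes b)(c \otimes d) = (-1)^{|b||c|} ac \otimes bd$, and the factor $(-1)^{|a||b|}$ in $\tau(a \otimes b) = (-1)^{|a||b|} b \otimes a$. For each $n$-th power $(E_\alpha \otimes F_\alpha)^n$ in the expansion of $\exp^<_{q_\alpha}$, one must verify that these signs telescope correctly; the parity-zero character of $E_\alpha \otimes F_\alpha$ (as $|E_\alpha| = |F_\alpha| = |\alpha|$) simplifies things for even roots but requires care for odd $\alpha$. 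A useful consistency check is that the resulting identity is forced abstractly by applying $(\dagger \otimes \dagger)$ to the quasi-triangular axiom $R\Delta(x) = \Delta^{op}(x)R$ and invoking the compatibility between $\dagger$ and $\Delta$ from \eqref{eq:dag2}.
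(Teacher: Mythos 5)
Your proposal takes essentially the same approach as the paper's (very terse) proof: the decomposition $R = \check{R}\HR$, the truncated quantum exponential inversion $\exp^<_{q_\alpha}(X)^{-1} = \exp^<_{q_\alpha^{-1}}(-X)$, the fact that $\dagger$ swaps root vectors $E_\alpha\leftrightarrow\pm F_\alpha$, and the observation that a super sign from $\tau$ cancels against a sign from $\dagger$ on odd generators. You supply substantially more bookkeeping than the one-line proof in the paper, but the underlying argument is identical.
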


 \begin{proof}
 The result follows easily from
$ \left( \exp_{q_{\alpha}}(X) \right)^{-1}=
\exp_{q^{-1}_{\alpha}}(-X) $
and the fact that $\dagger(E_\alpha)=\pm F_{\alpha}$ and $\dagger(F_\alpha)=\pm E_{\alpha}$.  Note that in the super case, a sign arising from $\tau$ gets canceled by a sign coming from the definition of $\dagger$ on odd elements.
 \end{proof}

\begin{lemma} \label{SLApivotlemma}
The dagger of the pivotal element is given by $K_{\pi}^{\dagger} = K_{\pi}^{-1} $.
\end{lemma}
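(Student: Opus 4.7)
The plan is to reduce the identity directly to the action of $\dagger$ on the generators $K_i$, using that $K_\pi = \prod_i K_i^{n_i}$ is a product of mutually commuting even elements. I proceed as follows.

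First, I recall that $K_\pi$ is defined by $K_\pi = \prod_{i=1}^{\rk} K_i^{n_i}$ where $\pi = \sum_i n_i \alpha_i$. By the relation \eqref{A1}, the elements $K_i$ commute pairwise, and each $K_i$ is declared to be even in the $\Z/2\Z$-grading. Consequently $K_\pi^{-1} = \prod_i K_i^{-n_i}$, independent of the order of the factors.

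Next, I invoke Lemma \ref{L:antiautomorphismUHgSuper}, which states that $\dagger$ is an anti-superautomorphism satisfying $\dagp{xy} = (-1)^{|x||y|} \dag y \dag x$. Since each $K_i$ is even, the sign $(-1)^{|x||y|}$ is $+1$ whenever $x,y$ are products of the $K_j$'s, so $\dagger$ reverses the order of such a product without introducing signs. Iterating, I obtain
\[
K_\pi^\dagger = \bp{\prod_{i=1}^{\rk} K_i^{n_i}}^\dagger = \prod_{i=\rk}^{1} (K_i^\dagger)^{n_i}.
\]
Using that $\dag{K_i} = K_i^{-1}$ by definition of $\dagger$ on generators, this becomes $\prod_{i=\rk}^{1} K_i^{-n_i}$. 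Finally, commutativity of the $K_i$'s allows the factors to be reordered back into increasing order, giving
\[
K_\pi^\dagger = \prod_{i=1}^{\rk} K_i^{-n_i} = K_\pi^{-1}.
\]

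There is no substantive obstacle here: the result is a formal consequence of $\dag{K_i}=K_i^{-1}$, the evenness of the Cartan generators, and their pairwise commutation. The only care needed is to verify that no sign arises from the anti-super rule when reversing the product, which is immediate because all $K_i$ lie in the even part of $U_q^H\mathfrak{sl}(m|n)$.
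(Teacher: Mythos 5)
Your proof is correct and takes the same approach as the paper, whose own proof is simply the one-line remark that the claim ``follows trivially from the definition of $\dagger$ on this super Hopf algebra.'' You have merely made explicit the routine bookkeeping (evenness and commutativity of the $K_i$, and $\dag{K_i}=K_i^{-1}$) that the authors left implicit.
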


\begin{proof}
This follows trivially from the definition of $\dagger$ on this super Hopf algebra.
\end{proof}

\begin{proposition}
The category $\mathcal{D}$ of weight modules is a $\mathbb{C}$-sesquilinear ribbon category.
\end{proposition}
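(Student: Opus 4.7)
The plan is to show that this proposition is the direct super-analog of Proposition \ref{P:DH-Hermitian-modular}, and essentially reduces to verifying that all hypotheses of Theorem \ref{T:SequilinearPivotal} are now in place for the Hopf superalgebra $H = U_q^H\mathfrak{sl}(m|n)$. Indeed, the statement should be read (as in the non-super case) as a statement about the category of sesquilinear weight modules, i.e.\ the $H^\dagger$-mod of Section \ref{SS:HopfSuAlg}; the preceding lemmas have been tailored precisely to supply the algebraic input required by that theorem.

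First, I would explicitly check off the axioms in \eqref{eq:dag2} and \eqref{eq:Rdag2}. The antilinear, anti-super and twisted co-super compatibility of $\dagger$ are exactly the content of Lemma \ref{L:antiautomorphismUHgSuper}. The $R$-matrix identity $(\dagger\otimes\dagger)(R) = \tau(R^{-1})$ is Lemma \ref{SLAbraidherm}, and $g^\dagger = g^{-1}$ (with $g = K_\pi$ in the present set-up) is Lemma \ref{SLApivotlemma}. The ribbon structure itself — the $R$-matrix from \eqref{eq:Rh} and the twist $\theta$ — has already been recalled from \cite{KTol,Yam94,GPP}, and $\theta^\dagger = \theta^{-1}$ follows formally from the previous two identities via the standard manipulation given in the proof of Lemma \ref{LAbraidherm} (using that $S$ commutes with $\dagger$ in both the super and the non-super case, which one reads directly from the formulas on generators).

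Next I would ensure the existence of a half twist $\sqrt\theta$, which is the last ingredient needed to invoke Theorem \ref{T:SequilinearPivotal}. Since we work over $\C$, an algebraically closed field of characteristic $0$, the argument of \cite[Proposition 4.12]{GLPS1} carries over verbatim to the present setting (the argument only uses the ribbon structure on each simple object together with algebraic closedness), giving a natural automorphism of the identity functor whose square is the twist and whose dual is itself.

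With these pieces, Theorem \ref{T:SequilinearPivotal} applies to the category $H^\dagger$-mod of sesquilinear weight modules: it is a sesquilinear pivotal $\C$-category whose dagger on morphisms is the right adjoint with respect to the forms, and whose braiding $c_{V,W}$ satisfies $c_{V,W}^\dagger = c_{V,W}^{-1}$. Combined with $\theta^\dagger = \theta^{-1}$, this is precisely Definition \ref{sesribdef} of a sesquilinear ribbon category, and the proposition follows. The only place where genuinely new super-signs could cause trouble is the half-twist construction and its interaction with $\tau$; these have already been absorbed into Proposition \ref{P:tensor} and Theorem \ref{T:SequilinearPivotal}, so in the present proposition the role of the proof is chiefly organizational — assembling the super-versions of the three lemmas above into the hypotheses of that theorem.
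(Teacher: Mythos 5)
Your proof is correct and follows essentially the same route as the paper's: the paper also deduces the proposition by combining the ribbon structure from \cite{GPP} with Lemmas \ref{L:antiautomorphismUHgSuper}, \ref{SLAbraidherm} and \ref{SLApivotlemma} as input to Theorem \ref{T:SequilinearPivotal}, exactly in parallel with Proposition \ref{P:DH-Hermitian-modular}. You are somewhat more explicit than the paper's one-line proof in tracking the half twist (whose existence over $\C$ is already a standing assumption in Section \ref{SS:HopfSuAlg}) and the identity $\theta^\dagger=\theta^{-1}$, but the argument is the same.
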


\begin{proof}
By \cite{GPP}, $\mathcal{D}$ is a ribbon category.  Lemmas \ref{SLAbraidherm} and \ref{SLApivotlemma} show that the dagger involution is compatible with the braiding and pivotal structure, and consequently the ribbon structure.
\end{proof}
\begin{lemma} \label{L:sirraresesq}
 Any simple module $V$ in $\mathcal{D}$ with real highest weight has
  a sesquilinear structure.   Moreover, if $v_0$ is a highest
  weight vector of $V$ there is a unique form $(\cdot,\cdot)$ on $V$
  such that $(v_0,v_0) = 1$ which is Hermitian.
\end{lemma}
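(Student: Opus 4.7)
The plan is to mirror the proof of Lemma \ref{L:irraresesq}, adapted to the super setting.

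First, I would invoke Lemma \ref{L:sesq} (stated and proved in the general super Hopf algebra context earlier in the paper), which provides a bijection between compatible sesquilinear forms on $V$ and even $U_q^H\mathfrak{sl}(m|n)$-module isomorphisms $\varphi \colon \con V \stackrel\sim\to V^*$. To apply it, I need to show that $\con V$ and $V^*$ are isomorphic. Using the formula $\dag{H_i} = H_i$ together with \eqref{eq:Vbar-action}, a direct computation shows that if $v \in V$ has weight $\lambda$, then $\bar v \in \con V$ has weight $-\bar\lambda$; similarly, by the super dual action \eqref{eq:superdual}, $V^*$ carries the weights $\{-\mu : \mu\text{ a weight of }V\}$. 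Since the weights of $V$ are real, both $\con V$ and $V^*$ have the same character. As $V$ is simple, both $\con V$ and $V^*$ are simple $U_q^H\mathfrak{sl}(m|n)$-modules with the same highest weight (namely $-\lambda_0$, where $\lambda_0$ is the highest weight of $V$), hence they are isomorphic.

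Next, Schur's lemma implies $\Hom_{U_q^H\mathfrak{sl}(m|n)}(\con V, V^*)$ is one-dimensional. Consequently, there is a compatible sesquilinear form on $V$, unique up to a non-zero scalar. To pin down this scalar, I would observe that the form pairs weight spaces of opposite weights, so it restricts non-degenerately to the one-dimensional highest weight space spanned by $v_0$, forcing $(v_0,v_0) \neq 0$. Thus rescaling produces a unique form with $(v_0, v_0) = 1$.

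Finally, to establish Hermiticity, I would invoke Lemma \ref{L:ProportionalHermitianSimple}. Its hypotheses require $\FK = \C$ and a superinvolutive dagger; the former is clear, and the latter was observed immediately after the definition of $\dagger$ on $U_q^H\mathfrak{sl}(m|n)$. The lemma then yields a Hermitian form $H(\cdot,\cdot)$ on $V$ and a scalar $c \in \C$ with $(\cdot,\cdot) = c\, H(\cdot,\cdot)$. Since $H$ is Hermitian, $H(v_0,v_0) \in \R$, and the normalization $(v_0,v_0) = 1$ forces $c = 1/H(v_0,v_0) \in \R$. Therefore $(\cdot,\cdot)$ is itself Hermitian.

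The main technical point — and the only place where the super structure intervenes meaningfully beyond what is already packaged in the machinery from Section \ref{SS:HopfSuAlg} — is confirming the identification of weights on $\con V$ in the super setting (since $S$ is a super-antipode and $\dagger$ has signs on odd generators). This is straightforward given the explicit action formulas, after which the argument is formally identical to the non-super case. Uniqueness, which was not required in Lemma \ref{L:irraresesq}, follows essentially for free from Schur's lemma plus the non-vanishing of the form on the highest weight space.
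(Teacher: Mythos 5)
Your outline closely mirrors the paper's own proof, but there is a genuine gap at the point where you pass from ``same character and same highest weight'' to ``hence $\con V$ and $V^*$ are isomorphic.'' In the super setting this inference does not yet produce an \emph{even} isomorphism, which is precisely what Lemma~\ref{L:sesq} requires. Two simple $U_q^H\mathfrak{sl}(m|n)$-modules can have identical (non-super) characters and the same highest weight and yet differ by a parity shift $\Pi$, in which case every isomorphism between them is odd and hence is not a morphism of $\mathcal{D}$. The paper closes this hole with an extra step your proof omits: the highest-weight spaces of both $\con V$ and $V^*$ inherit the parity of the \emph{lowest}-weight space of $V$ (both dualizations invert the weight order and carry the lowest-weight line of $V$ to the highest-weight line of the dual), so the two highest-weight vectors have matching parity and the isomorphism can be chosen even. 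Without this observation the appeal to Lemma~\ref{L:sesq} is not justified; and note that this, not the weight computation you single out, is the substantive super-specific subtlety. There is also a small slip: the common highest weight of $\con V$ and $V^*$ is $-\lambda_{\min}$ where $\lambda_{\min}$ is the lowest weight of $V$, not $-\lambda_0$; this does not affect the argument but should be corrected.

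On the other hand, the part of your proof addressing uniqueness and normalization---Schur's lemma giving a one-dimensional space of intertwiners, non-degeneracy on the highest-weight line forcing $(v_0,v_0)\ne 0$, and reality of the proportionality constant from Lemma~\ref{L:ProportionalHermitianSimple}---is sound and in fact spells out details that the paper's proof leaves implicit. Once you insert the parity argument above, the proof is complete.
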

\begin{proof}
  The isomorphism class of a simple module $V$ of
  $\mathcal{D}$ is uniquely determined by its highest weight and the
  parity of its highest weight vector. Since the weights of the module
  $V$ are real, $\con V$ and
  $V^*$ have the same character.  Also, the highest weight space of
  $\con V$ and
  $V^*$ both have the parity of the lowest weight space of
  $V$, so $\con V\simeq
  V^*$ with an even isomorphism. Then Lemma \ref{L:sesq} applies.
  Finally, the form is Hermitian by Lemma
  \ref{L:ProportionalHermitianSimple}.
\end{proof}

Following \cite{GPP}, let
$L_W$ be the \emph{weight lattice}, which is the
$\Z$-lattice generated by the fundamental dominant weights.  Let
$\Gr$ be the group $\mathfrak{h}^*_\R/L_R\cong
(\Unitary)^{m+n-1}$ where
$\mathfrak{h}^*_\R=L_W\otimes_\Z\R$ is the set of real weights.  Let
$\Xs=\{g\in\Gr:\cat_g\text{ is not semisimple or
}6g=0\}$.  Then
$\Gr\setminus\Xs$ is a Zariski open dense subset of
$\Gr$ and in particular $\Xs$ is small. Let
$\Zt=\Z/2\Z\times\Zt_0$ where $\Zt_0=\ellr L_W\cap
L_R$.  For $z=(p,\lambda)\in\Zt$,
$\sigma(z)$ is the unique module structure on the vector space
$\C$ with parity $p$, weight $\lambda$ and Hermitian form $(v\mid
v')=\bar vv'$.  The bilinear pairing
$\psi:\Gr\times\Zt\to\C$ is given by $\psi([\zeta],(p,k))=q^{2 \langle
  \zeta, k\rangle}$.
 The m-trace on projective modules is discussed in \cite[Section 3.7]{GPP}.
 The relative modularity of
$\catdSeq$ follows from \cite[Theorem 3.37]{GPP}.  Equation
\eqref{eq:trivialbraiding} applies if we restrict the translation
group to
$\Zt_0\simeq\{0\}\times\Zt_0\subset\Zt$.  The rest is similar to the
Lie algebra case and we have the following.
\begin{theorem} \label{T:superex}
  With the above data associated to $U_q\mathfrak{sl}(m|n)$ at an odd root of unity we have the following.
  \begin{enumerate}
  \item The category $\catdSeq$ of sesquilinear real weight module is ribbon sesquilinear.
  \item Its subcategory $\catdHem$ generated by simple Hermitian
    modules a is relative Hermitian-modular giving rise to a Hermitian
    $(2+1)$-TQFT.
  \item The associated category $(\catdHem)^{\Zt_0}$ with the map $\bb$ is given by \eqref{E:defbb}, is relative
    Hermitian-spherical giving rise to a relative $\Gr$-LW-system.
  \end{enumerate}
\end{theorem}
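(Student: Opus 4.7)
The plan is to verify each of the three claims by reducing to the general machinery developed earlier, in close parallel with the Lie algebra case (Theorems \ref{T:DH-Hermitian-modular} and \ref{T:DHZ-Hermitian-spherical}). The superalgebra setting introduces parity signs, but Lemmas \ref{L:antiautomorphismUHgSuper}, \ref{SLAbraidherm} and \ref{SLApivotlemma} package these so that the standard arguments go through after the necessary sign bookkeeping.

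For part (1), I would start from the fact (cited from \cite{GPP}) that $\mathcal{D}$ is a pivotal ribbon category. Lemma \ref{L:antiautomorphismUHgSuper} shows that $\dagger$ is an anti-superautomorphism of Hopf superalgebras as in \eqref{eq:dag2}, while Lemmas \ref{SLAbraidherm} and \ref{SLApivotlemma} supply the two relations in \eqref{eq:Rdag2}. The category $\catdSeq$ is then built from real-weight sesquilinear $U_q^H \mathfrak{sl}(m|n)$-modules with even morphisms, and Theorem \ref{T:SequilinearPivotal} applies verbatim (using the half-twist produced by $q^{-\frac{1}{2}\sum d_{ij}H_i\otimes H_j}$) to produce a sesquilinear ribbon structure.

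For part (2), let $S$ denote the class of simple Hermitian objects of $\catdSeq$. I would check that $S$ is a collection of Hermitian objects in the sense of Section \ref{s:sesqtoHerm}: (a) Lemma \ref{L:sirraresesq} shows every simple object is isomorphic to a Hermitian one; (b) Lemma \ref{L:invol-herm}(ii), applied to Hermitian forms on modules with even morphisms, gives $(f^\dagger)^\dagger = f$; (c) since the tensor product of two generic simple objects of $\catdSeq$ is semisimple (hence semi-split), Proposition \ref{P:TensorProdHerm} forces it to be Hermitian. Corollary \ref{c:hpremod} then yields a relative Hermitian-premodular structure on $\catdHem$. Since every simple object of $\catdSeq$ is isomorphic to a Hermitian one, the generic semisimple subcategories $\catdHem_g$ and $\catdSeq_g$ have the same isomorphism classes of simples, and the relative modular structure of \cite[Theorem 3.37]{GPP} restricts. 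The m-trace on projective Hermitian modules is real by Lemma \ref{lem:tracedagger} (after rescaling by an element of $\R^\times$), so $\catdHem$ is relative Hermitian-modular. The Hermitian $(2+1)$-TQFT is then Theorem \ref{thm:hermWRT}.

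For part (3), I would apply Theorem \ref{T:relativeHer-premodularIsSpherical} to $\catdHem$ with the restricted translation group $\Zt_0 = \ellr L_W\cap L_R$. The bilinear square root $\sqrt\psi : \Gr\times \Zt_0\to\Unitary$ given by $\sqrt\psi([\gamma],\lambda) = q^{\langle \gamma,\lambda\rangle}$ satisfies \eqref{eq:sqrpsi}, and on the one-dimensional objects $\sigma(\lambda)$ with $\lambda\in\Zt_0$ the $R$-matrix acts by this scalar, giving \eqref{eq:trivialbraiding}. Since generic simples of $\catdHem$ are free of rank $\ellr^{|\Delta^+_{\bar 0}|}\cdot 2^{|\Delta^+_{\bar 1}|}$ over the negative Borel and isomorphism classes of simples in generic degree $g$ form a torsor under $L_R/\Zt_0$, formula \eqref{E:defbb} defines a constant $\bb$ satisfying \eqref{eq:bb}. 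The smallness of $\Xs$ and absence of elements of order $2$ or $3$ (built into the definition of $\Xs$) allow us to apply Theorem \ref{thm:sphtodata} to produce the $\Gr$-LW data.

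The main obstacle I anticipate is the third part, specifically the bookkeeping of super signs in verifying that the half-braidings $\XX_{\bullet,\sigma_\lambda}$ behave properly inside Definition \ref{d:catZ} when $\sigma(z)$ carries parity information from $\Zt = \Z/2\Z\times \Zt_0$. Restricting to $\Zt_0$ eliminates the parity part and makes the arguments match the Lie algebra case, but one must check that the resulting category still contains enough objects to witness modularity — specifically, that the one-dimensional representatives $\sigma(\lambda)$ for $\lambda\in\Zt_0$ still generate a full set of $\Zt_0$-orbits on generic simples. The other mildly delicate point is verifying that Proposition \ref{P:TensorProdHerm}'s semi-split hypothesis holds for tensor products of Hermitian simples of $U_q^H\mathfrak{sl}(m|n)$, which requires checking that in generic degree the summands of such a tensor product are themselves Hermitian modules (and not merely sesquilinear), so that Lemma \ref{L:irraresesq}'s super analog \ref{L:sirraresesq} can be invoked summand by summand.
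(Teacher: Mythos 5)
Your proposal follows essentially the same route as the paper, which itself defers most details by declaring "the rest is similar to the Lie algebra case": part (1) reduces to Lemmas \ref{L:antiautomorphismUHgSuper}, \ref{SLAbraidherm}, \ref{SLApivotlemma} and Theorem \ref{T:SequilinearPivotal}; part (2) to Lemmas \ref{L:sirraresesq}, \ref{L:invol-herm}, Proposition \ref{P:TensorProdHerm} and Corollary \ref{c:hpremod}; part (3) to Theorem \ref{T:relativeHer-premodularIsSpherical}. Your identification of the delicate points (semi-splitness for the third axiom of a collection of Hermitian objects, and the harmlessness of passing from $\Zt$ to $\Zt_0$) is correct, and the latter is precisely the content of the remark following \eqref{eq:sqrpsi}--\eqref{eq:trivialbraiding}: since $\Zt/\Zt_0 \cong \Z/2\Z$ is finite, the finitely-many-orbits/domination condition survives restriction of the translation group.

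Two small corrections. First, the expression $q^{-\frac{1}{2}\sum d_{ij}H_i\otimes H_j}$ you offer as the half twist is a two-tensor, not an operator on a single module; a half twist is a natural endomorphism of the identity functor, so it acts diagonally. The paper does not construct an explicit formula here — it invokes the general existence result \cite[Proposition 4.12]{GLPS1} over an algebraically closed field of characteristic $0$. Second, your worry that $\sigma(\lambda)$ for $\lambda\in\Zt_0$ might not "generate a full set of $\Zt_0$-orbits" is not quite the right formulation: the orbits are tautologically $\Zt_0$-orbits, and the only thing that needs checking is that a finite set of simples still dominates each generic $\cat_g$ after replacing $\Zt$ by a finite-index subgroup, which follows by taking a slightly larger finite dominating set incorporating coset representatives of $\Zt/\Zt_0$. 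Neither of these affects the overall correctness of the argument.
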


Next we define a relative premodular category from the subcategory of
perturbative modules by taking a quotient with respect to the
negligible morphism of a different m-trace than the m-trace used
above, and on a different
ideal than $\Proj$, see \cite[Section 4.3]{GPP} for more details.
An interesting feature of the associated
ribbon graph invariants is that they are   specializations at root of unity of invariants coming from quantum
$U_q\mathfrak{sl}(m|n)$ for generic $q$ and in particular they   are related to the perturbative expansion after setting $q=\exp(h)$.
Let us quickly recall
the categories involved.

There exists a replete collection of
objects in $\mathcal{D}^{\wp}$ called negligible, see Definition 5.5 of \cite{GPP}. A morphism $f$ of
$\mathcal{D}^{\wp}$ is called negligible if it factors through a
negligible object.  Note that we automatically have that $f^\dagger$
is negligible if and only if $f$ is negligible.  Then
$\mathcal{D}^{\aleph}$ is a quotient of a full subcategory of
$\mathcal{D}^{\wp}$ by the negligible morphisms
($\Hom_{\mathcal{D}^{\aleph}}(V,W)$ is the quotient of
$\Hom_{\mathcal{D}^{\wp}}(V,W)$ by the vector space of negligible
morphisms).  Let $\Zt^\wp=\Z/2\Z\times\Zt^\wp_0\subset\Zt$ where
$\Zt^\wp_0\simeq\Z$ is the subgroup of $\Zt_0$ generated by
$\frac{\left|m-n\right|}{\gcd(m,n)}\ellr$ times the
$m$th fundmental
 weight.   Then the realization $\sigma$ induces
by restriction a functor
$\sigma^{\aleph}:\Zt^\wp\to \mathcal{D}^{\aleph}$. The categories
$\mathcal{D}^{\wp}$ and $\mathcal{D}^{\aleph}$ are $\Unitary$-graded
where a module is of degree $z$ if $K_m^\ellr$ acts by the scalar $z$
on it. Let $\Xs=\{\pm1\}$.
\begin{theorem}\cite[Theorem 5.17]{GPP}
  $\mathcal{D}^{\aleph}$ is a premodular $\Unitary$-category relative to $(\Zt^\wp,\Xs^\wp)$.
\end{theorem}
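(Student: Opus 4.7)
The plan is to verify each axiom of Definition \ref{def:X} for $\cat = \mathcal{D}^{\aleph}$, grading group $\Unitary$, translation group $\Zt^\wp$, and singular set $\Xs^\wp=\{\pm1\}$. Throughout, the main tool is the realization of $\mathcal{D}^{\aleph}$ as a quotient of a full subcategory of $\mathcal{D}^{\wp}$ by a tensor ideal of negligible morphisms, which ensures that all ambient structural data (ribbon structure, grading, free realization, and m-trace candidates) from $\mathcal{D}^{\wp}$ descends functorially to $\mathcal{D}^{\aleph}$.

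First I would check that $\mathcal{D}^{\aleph}$ inherits the structure of a $\mathbb{C}$-linear ribbon category from $\mathcal{D}^{\wp}$. The negligibility property is closed under composition, tensor product (on both sides), and taking of duals, so the tensor ideal of negligible morphisms is a monoidal ideal; hence the quotient is ribbon. Next I would verify the $\Unitary$-grading: the element $K_m^\ellr$ is central in the relevant completion and acts as a scalar $z\in \Unitary$ on each indecomposable module in $\mathcal{D}^{\wp}$ (the value is unitary because the appropriate Hermitian considerations force eigenvalues of $K_m^\ellr$ to lie on the unit circle). This descends to the quotient, and the graded decomposition follows. Then I would verify the free realization $\sigma^{\aleph}\colon\Zt^\wp\to\mathcal{D}^{\aleph}_0$: that $\sigma^\wp$ lands in the trivial graded component follows from the action of $K_m^\ellr$ being trivial on the relevant 1-dimensional modules, and freeness of the action on simple objects is inherited from $\sigma$ acting freely on isomorphism classes of simples of $\mathcal{D}^{\wp}$, together with the observation that tensoring with an invertible object cannot collapse isomorphism classes in the quotient.

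The next and most delicate step is to produce the non-zero m-trace and verify the finitely-many-orbits condition. Unlike the case of $\Proj$, here we use the modified trace on a different ideal of $\mathcal{D}^{\wp}$ (defined in Section 4.3 of \cite{GPP}), and I would argue that: (i) this m-trace vanishes exactly on negligible morphisms in the ambient ideal, so it descends to a well-defined non-zero m-trace on the quotient; (ii) in each generic degree $z\in \Unitary\setminus\{\pm1\}$, the graded component $\mathcal{D}^{\aleph}_z$ becomes semisimple because the negligible simple modules (those with vanishing modified dimension) have been killed in the quotient; and (iii) the $\Zt^\wp$-action on simple objects has only finitely many orbits in each generic degree. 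Point (iii) is the hard part: one must analyze the action of the free realization on the specific set of perturbative simples with nonzero modified dimension specialized at $q=\exp(2i\pi/\ellr)$, and show that, up to tensoring with $\sigma^{\aleph}(\Zt^\wp)$, only finitely many isomorphism classes remain. This is where the particular choice $\Zt^\wp_0\simeq \frac{|m-n|}{\gcd(m,n)}\ellr\cdot\varpi_m\cdot\Z$ becomes essential: it is precisely the subgroup along which twisting by $\sigma$ preserves the property of having nonzero modified dimension while generating the full (infinite) orbit of generic simples. Finally, the compatibility bilinear map $\psi^{\aleph}\colon \Unitary\times\Zt^\wp\to\mathbb{C}^*$ is inherited by restriction from $\psi$ of the ambient relative modular structure of $\catdSeq$, i.e.\ $\psi^{\aleph}(z,(p,\lambda))=z^{?}$ on the appropriate indices, with bilinearity and the braiding identity \eqref{eq:psi} passing through the quotient since the braiding is ribbon in the quotient.

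The main obstacle I expect is the semisimplicity and finitely-many-orbits condition in generic degree, together with the positivity/non-vanishing of the m-trace after descent. In particular, one must show that the projection of the chosen set of dominating simple objects $\Theta(\mathcal{D}^{\aleph}_z)$ is a finite set modulo the $\Zt^\wp$-action, which requires a careful weight-theoretic analysis specific to $\mathfrak{sl}(m|n)$ at the odd root of unity $\ellr$, leveraging the Cartan-Weyl basis normalization and the relation $E_\alpha^\ellr=F_\alpha^\ellr=0$ for even positive roots to cut down the infinite family of perturbative weights to a finite transversal for the $\Zt^\wp$-orbit. All other structural verifications (ribbon, grading, free realization, compatibility) are essentially formal consequences of the corresponding properties of $\mathcal{D}^{\wp}$ and the tensor ideal nature of negligible morphisms.
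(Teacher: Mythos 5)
The paper does not prove this statement; it is quoted verbatim as \cite[Theorem 5.17]{GPP} and the surrounding text merely recalls the definitions of $\mathcal{D}^{\wp}$, $\mathcal{D}^{\aleph}$, $\Zt^\wp$, and the $\Unitary$-grading so that the citation is intelligible. There is therefore no ``paper's own proof'' to compare against: a correct answer would have been to recognize that this is an external citation and to refer to \cite{GPP} for the argument.

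That said, your outline of how one would reprove it is a reasonable reconstruction of the expected strategy, with two caveats worth flagging. First, the compatibility map $\psi^{\aleph}:\Unitary\times\Zt^\wp\to\C^*$ is \emph{not} literally a restriction of the $\psi$ appearing in the relative modular structure on $\catdSeq$, since that $\psi$ has domain $\Gr\times\Zt$ with $\Gr=\mathfrak{h}^*_\R/L_R$, whereas here the grading group is $\Unitary$; one has to factor through the surjection $\Gr\twoheadrightarrow\Unitary$ induced by evaluation of $K_m^\ellr$, and check bilinearity with respect to that quotient. Second, your claim that passing to the negligible quotient ``kills the negligible simples'' and hence yields semisimplicity in generic degree glosses over the real content: quotienting by a tensor ideal of negligible morphisms does not automatically force all objects in a generic graded piece to split as direct sums of simples. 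What is actually needed is a module-theoretic argument (carried out in \cite[Section 5]{GPP}) showing that in generic degree the surviving indecomposables are simple and that the m-trace pairing is non-degenerate on them. You correctly identify the finitely-many-orbits condition and the descent of the m-trace as the genuinely hard steps, but since you only name them rather than carry them out, the proposal should be regarded as a plan rather than a proof, and the details should be taken from \cite[Theorem 5.17]{GPP} directly.
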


Then Equation \eqref{eq:trivialbraiding} holds if we
restrict the translation group to
$\Zt_0\simeq\{0\}\times\Zt_0\subset\Zt$.  Moreover, $\sqrt\psi:\Gr\times\Zt\to\Unitary$ defined by $\sqrt\psi([\zeta],(p,k))=q^{ \langle
  \zeta, k\rangle}$ satisfies Equation \eqref{eq:sqrpsi}.  Thus, the category $((\mathcal{D}^{\aleph})^\Herm)^{\Zt_0}$ satisfies all the hypothesis of Theorem \ref{T:relativeHer-premodularIsSpherical} except the existence of a map
  $\bb$.  We are lead to the following conjecture.
\begin{conjecture}\label{c:bb=qdim}
  The category $((\mathcal{D}^{\aleph})^\Herm)^{\Zt_0}$ admits a map
  $\bb$ satisfying Equation \eqref{eq:bb}. A candidate is given on
  generic simple $V$ of degree $g$ by
  $$\bb(V)=\operatorname{oqdim}(V)/\sum_{V'}\operatorname{oqdim}(V')^2,$$
  where the sum is over isomorphism classes of simples of degree $g$
  and oqdim is the ordinary (non super) quantum dimension of the
  underlying $U_q\mathfrak{g}_{\bar0}$-module given by
  $\operatorname{oqdim}(V)=\tr_V(K_{2\rho_{\bar0}})$.
\end{conjecture}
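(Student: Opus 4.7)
My plan is to verify that the candidate $\bb(V) = \operatorname{oqdim}(V)/N_g$, with $N_g = \sum_{V'} \operatorname{oqdim}(V')^2$ summed over isomorphism classes of simples of degree $g$, satisfies Equation \eqref{eq:bb} and is well-defined on $((\mathcal{D}^{\aleph})^\Herm)^{\Zt_0}$. The key observation to exploit is that $\operatorname{oqdim}(V) = \tr_V(K_{2\rho_{\bar 0}})$ is a genuine (non-super) trace: since $K_{2\rho_{\bar 0}}$ is a purely even group-like element, its character is additive on direct sums and multiplicative on tensor products of weight modules, with no super signs. Moreover, $K_{2\rho_{\bar 0}}$ serves as the pivot on the underlying $U_q\mathfrak{g}_{\bar 0}$-module category, yielding the duality identity $\operatorname{oqdim}(V^*) = \operatorname{oqdim}(V)$.

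Granting that $N_g$ is well-defined, Equation \eqref{eq:bb} can be verified as follows. For generic $g_1, g_2, g_1+g_2$, semisimplicity in degree $g_1+g_2$ gives $V_1 \otimes V_2 \cong \bigoplus_V V^{\oplus n_V}$ with $n_V = \dim \Hom(V, V_1 \otimes V_2)$, whence $\operatorname{oqdim}(V_1)\operatorname{oqdim}(V_2) = \sum_V n_V \operatorname{oqdim}(V)$. Substituting the candidate into the right-hand side of Equation \eqref{eq:bb} and invoking pivotal Frobenius reciprocity $\dim \Hom(V, V_1 \otimes V_2) = \dim \Hom(V \otimes V_2^*, V_1)$, the sum over $V_1$ collapses via semisimplicity in degree $g_1$ to $\operatorname{oqdim}(V \otimes V_2^*) = \operatorname{oqdim}(V)\operatorname{oqdim}(V_2)$, and a subsequent sum over $V_2$ produces $\operatorname{oqdim}(V) \cdot N_{g_2}$. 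The right-hand side thus simplifies to $\operatorname{oqdim}(V)/N_{g_1}$, matching $\operatorname{oqdim}(V)/N_{g_1+g_2}$ precisely when $N_g$ is constant on the generic stratum. Invariance under the $\Zt_0$-action follows from $\operatorname{oqdim}(\sigma(\lambda)) = q^{\langle 2\rho_{\bar 0},\lambda\rangle} = 1$ for $\lambda \in \Zt_0 \subset \ellr L_W \cap L_R$, since that pairing lies in $\ellr\Z$; hence tensoring with $\sigma(\lambda)$ preserves $\operatorname{oqdim}$ and $\bb$ descends to the quotient.

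The main obstacle is therefore establishing that $N_g$ is a finite, nonzero sum that is locally constant on the generic stratum. My strategy would be to parametrize the generic typical simples of $(\mathcal{D}^{\aleph})^\Herm_g$ modulo $\Zt_0$ by a finite translate of a lattice fundamental domain, express $\operatorname{oqdim}(V)$ through a Weyl-type character formula for $U_q\mathfrak{g}_{\bar 0}$ evaluated at $K_{2\rho_{\bar 0}}$, and reduce the identity $N_{g_1} = N_{g_1+g_2}$ to a character identity that is manifestly invariant under weight shifts by $L_R$. The proof of Conjecture \ref{c:bb=qdim} for $\mathfrak{sl}(2|1)$ underlying Theorem \ref{T:sl21ConjTrue} should provide a template: there the characters factor sufficiently for $N_g$ to be evaluated in closed form and shown to be degree-independent. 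Reproducing this in higher rank will require detailed control over the typical module structure of $U_q\mathfrak{sl}(m|n)$ at a root of unity and a careful analysis of how the passage to the negligible quotient $\mathcal{D}^{\aleph}$ interacts with the underlying $U_q\mathfrak{g}_{\bar 0}$-characters; alternatively, a more conceptual route would identify $N_g$ with a global-dimension-type invariant of the ambient relative modular structure, whose degree independence is built in. This character-theoretic evaluation is where the principal difficulty lies.
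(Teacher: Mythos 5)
The statement you are addressing is a \emph{conjecture}: the paper does not prove it, and explicitly says only that it is verified for $\mathfrak{sl}(2|1)$ in the reference \cite{GPA}, leaving the general case open (Theorem \ref{T:sl21ConjTrue} is stated conditionally on the conjecture). So there is no ``paper proof'' to compare your attempt against.

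Your reduction is correct and is the natural one. Treating $\operatorname{oqdim}$ as a ring homomorphism from the Grothendieck ring of weight modules to $\C$ (additive on direct sums, multiplicative on tensor products, and dual-invariant since $K_{2\rho_{\bar 0}}$ is a group-like pivot for the underlying $U_q\mathfrak{g}_{\bar 0}$-module), then inserting the candidate into the right-hand side of \eqref{eq:bb}, applying $\dim\Hom(V,V_1\otimes V_2)=\dim\Hom(V\otimes V_2^*,V_1)$, and using generic semisimplicity to collapse the $V_1$-sum yields $\operatorname{oqdim}(V)/N_{g_1}$. Equation \eqref{eq:bb} therefore holds if and only if $N_{g}:=\sum_{V'}\operatorname{oqdim}(V')^2$ is constant on the generic stratum. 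This mirrors exactly the mechanism that makes \eqref{E:defbb} work in the non-super case (there $N_g$ is constant because each generic $\cat_g$ has the same number of simples of the same dimension).

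The gap is precisely what you flag yourself: you do not establish the degree-independence of $N_g$. In $\mathcal{D}^{\aleph}$ the grading group is the circle $\Unitary$, the ``simples'' are equivalence classes after killing negligibles, and the underlying $U_q\mathfrak{g}_{\bar0}$-characters of the perturbative simple objects at a root of unity do not trivially organize into a fixed finite family independent of degree; so the Weyl-character strategy you sketch is genuinely nontrivial to carry out and interacts subtly with the passage to the negligible quotient. Your proposal correctly identifies the crux and gives a plausible plan of attack, but it is a reduction to the hard step, not a proof. As a proof of the conjecture it is incomplete; as an explanation of \emph{why} this is a conjecture, it is accurate and in line with what the authors presumably have in mind.

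One small point worth tightening: the $\Zt_0$-invariance argument should be phrased with the perturbative translation group $\Zt_0^\wp\subset\Zt_0$ used in the definition of $\mathcal{D}^{\aleph}$; the pairing $\langle 2\rho_{\bar0},\lambda\rangle$ indeed lies in $\ellr\Z$ for $\lambda$ in that lattice, so your conclusion that $\operatorname{oqdim}(\sigma(\lambda))=1$ stands, but the lattice being invoked should be named correctly.
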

In \cite{GPA}, Conjecture \ref{c:bb=qdim} is shown to hold for
$\mathfrak{sl}(2|1)$.
\begin{theorem}\label{T:sl21ConjTrue}
  If Conjecture \ref{c:bb=qdim} is true, the category
  $((\mathcal{D}^{\aleph})^\Herm)^{\Zt_0}$ is relative
  Hermitian-spherical giving rise to a relative
  $\Unitary$-LW-system.

  In particular,   $((\mathcal{D}^{\aleph})^\Herm)^{\Zt_0}$ is relative
  Hermitian-spherical giving rise to a relative
  $\Unitary$-LW-system for $\mathfrak{sl}(2|1)$.
\end{theorem}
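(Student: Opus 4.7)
The strategy is a direct application of Theorem \ref{T:relativeHer-premodularIsSpherical} followed by Theorem \ref{thm:sphtodata}. The paragraph preceding Conjecture \ref{c:bb=qdim} already records that the bilinear square root $\sqrt\psi([\zeta],(p,k)) = q^{\langle \zeta,k\rangle}$ satisfies \eqref{eq:sqrpsi}, and that the trivial-braiding identity \eqref{eq:trivialbraiding} holds on the subgroup $\Zt_0 \simeq \{0\}\times\Zt_0 \subset \Zt^\wp$. Thus the only hypothesis of Theorem \ref{T:relativeHer-premodularIsSpherical} that is not yet in place is the existence of the map $\bb$; that is precisely what Conjecture \ref{c:bb=qdim} supplies.

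First I would verify that $(\mathcal{D}^{\aleph})^{\Herm}$ is a relative Hermitian-premodular category so that Corollary \ref{c:hpremod} applies. Starting from the premodularity of $\mathcal{D}^{\aleph}$ (Theorem 5.17 of \cite{GPP}), the sesquilinear ribbon structure on $\mathcal{D}^{\wp}$ inherited from Lemmas \ref{SLAbraidherm}, \ref{SLApivotlemma}, and Theorem \ref{T:SequilinearPivotal} must be shown to descend to the quotient by negligible morphisms; this amounts to observing that $f$ factors through a negligible object if and only if $f^{\dagger}$ does, which is immediate from the symmetry of the m-trace and the fact that $V$ is negligible precisely when $V^{*}$ is. A collection of Hermitian objects containing $\sigma^{\aleph}(\Zt^\wp)$ is then provided by the analogue of Lemma \ref{L:sirraresesq} in the perturbative setting, combined with Lemma \ref{L:invol-herm} for involutivity of $\dagger$ and Proposition \ref{P:TensorProdHerm} for closure under tensor products of simples, noting that the one-dimensional modules $\sigma^{\aleph}(z)$ carry the trivial Hermitian form $(v\mid v') = \bar v v'$.

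With this in hand, Theorem \ref{T:relativeHer-premodularIsSpherical} applies using $\sqrt\psi$, the trivial-braiding condition on $\Zt_0$, and the map $\bb$ furnished by Conjecture \ref{c:bb=qdim}, and concludes that $((\mathcal{D}^{\aleph})^{\Herm})^{\Zt_0}$ is relative Hermitian-spherical with grading group $\Unitary$ and singular set $\Xs^\wp = \{\pm 1\}$. To then obtain a relative $\Unitary$-LW-system via Theorem \ref{thm:sphtodata}, the singular set must contain all elements of order $2$ and $3$ in $\Unitary = S^1$; since $\{\pm 1\}$ already contains the order-$2$ elements, I enlarge $\Xs^\wp$ by the two primitive cube roots of unity to obtain a four-element, symmetric, still small set, and observe that enlarging the singular set of a relative spherical category preserves the relative spherical structure. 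The second assertion is then immediate, since Theorem 5.6.1 of \cite{GPA} establishes Conjecture \ref{c:bb=qdim} in the case of $\mathfrak{sl}(2|1)$.

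The main obstacle is the descent of the Hermitian structure to $\mathcal{D}^{\aleph}$: one has to check not only that $\dagger$ is well-defined modulo negligible morphisms, but also that enough simple objects of the quotient admit compatible Hermitian forms, and that the half-twist used in Proposition \ref{P:tensor} and Theorem \ref{T:SequilinearPivotal} exists in the perturbative regime. All of this parallels the root-of-unity case handled in Theorem \ref{T:superex}, but the replacement of an explicit highest-weight argument by an indirect semisimplification requires care; everything else in the plan is bookkeeping invoking results already proved in the paper.
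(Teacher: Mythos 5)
Your proposal matches the paper's (implicit) argument: the paragraph preceding Conjecture \ref{c:bb=qdim} already verifies every hypothesis of Theorem \ref{T:relativeHer-premodularIsSpherical} except the existence of $\bb$, the conjecture supplies $\bb$, Theorem \ref{thm:sphtodata} then yields the LW-system, and \cite{GPA} settles the $\mathfrak{sl}(2|1)$ case. You go slightly further than the text by making explicit two points the paper leaves tacit --- the descent of $\dagger$ to $\mathcal{D}^{\aleph}$ modulo negligibles (which the paper records in one sentence just before defining $\mathcal{D}^{\aleph}$), and the need to enlarge $\Xs^{\wp}=\{\pm 1\}$ to include the primitive cube roots of unity so that Proposition \ref{prop:Herm-basicdata} and hence Theorem \ref{thm:sphtodata} apply; both additions are correct and worth recording.
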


\end{document}